\documentclass[reqno]{amsart}
\usepackage{amssymb}
\usepackage{amscd}
\usepackage[all,cmtip,rotate]{xy}
\usepackage{tikz}
\usetikzlibrary{matrix,shapes,arrows,positioning,decorations.pathmorphing}
\usepackage{tikzscale}
\tikzstyle{block} = [rectangle, draw, fill=blue!20,
    text width=6em, text centered, rounded corners, minimum height=6em]
\tikzstyle{line} = [draw, -latex']
\usepackage{comment}

\theoremstyle{plain}
\newtheorem{theorem}{Theorem}[section]
\newtheorem{proposition}[theorem]{Proposition}
\newtheorem{lemma}[theorem]{Lemma}
\newtheorem{corollary}[theorem]{Corollary}

\theoremstyle{definition}

\newtheorem{example}[theorem]{Example}
\newtheorem{question}[theorem]{Question}
\newtheorem{remark}[theorem]{Remark}

\setcounter{section}{-1}

\DeclareMathOperator{\Cl}{{\it Cl}}
\DeclareMathOperator{\Pic}{{Pic}}
\DeclareMathOperator{\reg}{{reg}}

\DeclareMathOperator{\rdim}{reg-dim}
\DeclareMathOperator{\Hom}{Hom}
\DeclareMathOperator{\GV}{GV}
\DeclareMathOperator{\rGV}{rGV}
\DeclareMathOperator{\h}{ht}
\DeclareMathOperator{\Max}{Max}
\DeclareMathOperator{\Inv}{Inv}
\DeclareMathOperator{\Prin}{Prin}

\begin{document}

\title[Prime Factorization of Ideals]{Prime Factorization of ideals in commutative rings, with a focus on Krull rings}

\author[G.W. Chang]{Gyu Whan Chang}
\address{Gyu Whan Chang \\ Department of Mathematics Education \\ Incheon National University \\ Incheon 22012 \\ Republic of Korea}
\email{whan@inu.ac.kr}

\author[J.S. Oh]{Jun Seok Oh}
\address{Jun Seok Oh \\ Department of Mathematics Education \\ Jeju National University \\ Jeju 63243 \\ Republic of Korea}
\email{junseok.oh@jejunu.ac.kr}

\subjclass[2020]{13A15, 13B25, 13E05, 13F05}
\keywords{Krull domain, SPR, general Krull ring, $u$-operation, Nagata ring, Noetherian ring}

\maketitle

\begin{abstract}
Let $R$ be a commutative ring with identity.
The structure theorem says that $R$ is a PIR (resp., UFR, general ZPI-ring, $\pi$-ring) if and only if $R$ is a finite direct product of PIDs (resp., UFDs, Dedekind domains, $\pi$-domains) and special primary rings.
All of these four types of integral domains are Krull domains, so motivated by the structure theorem, we study the prime factorization of ideals in a ring that is a finite direct product of Krull domains and special primary rings.
Such a ring will be called a general Krull ring.
It is known that Krull domains can be characterized by the star operations $v$ or $t$ as follows:
An integral domain $R$ is a Krull domain if and only if every nonzero proper principal ideal of $R$ can be written as a finite $v$- or $t$-product of prime ideals.
However, this is not true for general Krull rings.
In this paper, we introduce a new star operation $u$ on $R$, so that $R$ is a general Krull ring if and only if every proper principal ideal of $R$ can be written as a finite $u$-product of prime ideals.
We also study several ring-theoretic properties of general Krull rings including  Kaplansky-type theorem, Mori-Nagata theorem, Nagata rings, and Noetherian property.
\end{abstract}

\vspace{.5cm}
\setcounter{tocdepth}{1}
\tableofcontents

\section{Introduction}

Multiplicative Ideal Theory is a branch of Commutative Ring Theory in which (unique or non-unique) ideal factorization properties have been studied.
The class of integral domains with ideal factorization properties includes principal ideal domains (PIDs), Dedekind domains, unique factorization domains (UFDs), $\pi$-domains, and Krull domains, and the following diagram shows the relationship of these five types of integral domains;

\begin{figure}[h]
\resizebox{.8\textwidth}{!}{%
\begin{tikzpicture}[node distance=2cm]
\title{Untergruppenverband der}
\node(PID)    at (0,0)     {\bf PID};
\node(UFD)    at (3,1)     {\bf UFD};
\node(Dd)     at (3,-1)    {\bf Dedekind domain};
\node(PiD)    at (6.5,0)   {\bf $\pi$-domain};
\node(KD)     at (9.5,0)   {\bf Krull domain};

\draw[->]  (PID)  -- (UFD);
\draw[->]  (PID)  -- (Dd);
\draw[->]  (UFD)  -- (PiD);
\draw[->]  (Dd)   -- (PiD);
\draw[->]  (PiD)  -- (KD);
\end{tikzpicture}
}
\end{figure}

A rank-one discrete valuation ring (DVR) is just a PID with a unique nonzero prime ideal.
A Krull domain is defined by a locally finite intersection of DVRs,
a UFD is a Krull domain with trivial divisor class group, and
a PID that is not a field is a one-dimensional UFD.
A Dedekind domain (resp., $\pi$-domain) is an integral domain in which each ideal
(resp. principal ideal) can be written as a finite product of prime ideals.
A Krull domain also has a prime ideal factorization property, which is similar to those of Dedekind domains and $\pi$-domains.
But, in order to characterize Krull domains by a prime ideal factorization property,
we need the notion of star operations called the $v$- and $t$-operation.
Then an integral domain $D$ is a Krull domain if and only if every nonzero principal ideal $aD$ of $D$ can be written as a finite $v$-product of prime ideals of $D$, i.e., $aD = (P_1 \cdots P_n)_v$ for some prime ideals $P_1, \ldots, P_n$, if and only if every nonzero principal ideal of $D$ is a finite $t$-product of prime ideals.

The five types of integral domains with prime ideal factorization properties can be generalized to rings with zero divisors by at least two ways:
\begin{itemize}
\item PIR, UFR, general ZPI-ring, $\pi$-ring,

\item regular PIR, factorial ring, Dedekind ring, regular $\pi$-ring, Krull ring.
\end{itemize}
For example, $R$ is a UFR (resp., factorial ring) if and only if every element (resp., regular element) of $R$ can be written as a finite product of prime elements.
A PIR is a special primary ring (SPR) if it has only one prime ideal.
Then $R$ is a PIR (resp., UFR, general ZPI-ring, $\pi$-ring) if and only if $R$ is a finite direct product of PIDs (resp., UFDs, Dedekind domains, $\pi$-domains) and SPRs.
Now, we will call a ring $R$ {\em a general Krull ring} if $R$ is a finite direct product of Krull domains and SPRs.
Hence, $\pi$-rings are general Krull rings and general Krull rings are Krull rings.

Recall that $R$ is a Krull ring if and only if every regular principal ideal of $R$ can be written
as a finite $v$-product (or $t$-product) of prime ideals \cite[Theorem 13]{7}.
However, the next example shows that this is not true of general Krull rings.

\begin{example} \label{ex0.1}
Let $\mathbb{Z}$ be the ring of integers, $\mathbb{Q}$ be the field of rational numbers, and
$R = \mathbb Z \times \mathbb Q$ be the direct product of $\mathbb{Z}$ and $\mathbb{Q}$.
Then $\mathbb Z$ and $\mathbb Q$ are Krull domains, so $R$ is a general Krull ring.
However, if $\langle(1,0)\rangle$ is the ideal of $R$ generated by $(1,0)$,
then $\langle(1,0)\rangle_t = \langle(1,0)\rangle_v = R$. Hence, $\langle(1,0)\rangle$
cannot be written as a finite $t$- nor $v$-product of prime ideals.
\end{example}

Let $D$ be a Krull domain. It is easy to see that $D$ is a Krull domain if and only if
there is a star operation $*$ on $D$ such that each nonzero proper principal ideal of
$D$ can be written as a finite $*$-product of prime ideals \cite[Theorem 1.1]{l72}.
Hence, we have the following natural question.

\begin{question} \label{quest}
Is there a star operation $*$ on a ring so that a general Krull ring can be characterized as a ring
in which each principal ideal can be written as a finite $*$-product of prime ideals?
\end{question}

The purpose of this paper is to give an answer to Question \ref{quest} affirmatively.
That is, in this paper, we introduce a new star operation $u$ on a ring $R$ and
we show that $R$ is a general Krull ring if and only if every proper principal ideal of $R$ is written as a finite $u$-product of prime ideals.

This paper consists of eight sections including introduction.
In Section~\ref{1}, we review the definition and basic properties of star operations for easy reference of the reader.
Section~\ref{2} is devoted to the historical overview of PIDs, UFDs, Dedekind domains, $\pi$-domains, Krull domains,
and their generalizations to rings with zero divisors. In Section 3,
we study the $w$-operation on integral domains and its two generalizations to rings with zero divisors.
We also review some basic properties of the finite direct product of rings,
which are very useful for the study of general Krull rings.
In Section~\ref{3}, we introduce and study
a new star operation $u$ and its relationship with two other star operations which are
reviewed in Section 3.
In Section~\ref{4}, which is the main section of this paper, we characterize
a general Krull ring via the star operation $u$ introduced in Section~\ref{3}.
Among others, we show that $R$ is a general Krull ring if and only if every proper principal ideal of
$R$ can be written as a finite $u$-product of prime ideals, if and only if
every nonprincipal prime ideal of $R$ contains a $u$-invertible prime ideal.
In Section~\ref{5}, we study the Nagata ring of general Krull rings,
which is a generalization of Krull domains that $R$ is a Krull domain if and only if
the $t$-Nagata ring of $R$ is a PID. We prove that $R$ is a $u$-Noetherian ring if and only if
the polynomial ring $R[X]$ is a $u$-Noetherian ring, if and only if the $u$-Nagata ring of $R$ is a Noetherian ring.
It is shown, in Section~\ref{4}, that a general Krull ring is a $u$-Noetherian ring.
Finally, in Section~\ref{6}, we study when a $u$-Noetherian ring is a general Krull ring.
For example, we show that $R$ is a general Krull ring if and only if
$R$ is a $u$-Noetherian ring such that $R_P$ is a DVR or an SPR for all maximal $u$-ideals $P$ of $R$.
We also study a ring $R$ in which $R_P$ is a DVR or an SPR for all maximal $u$-ideals $P$ of $R$.

\section{Star operations on rings} \label{1}

All rings considered in this paper are commutative rings with (nonzero multiplicative) identity.
Let $R$ be a ring with total quotient ring $T(R)$.
An overring of $R$ means a subring of $T(R)$ containing $R$.
Let $Z(R)$ be the set of zero divisors in $R$ containing the zero element of $R$.
A regular element is an element that is not a zero divisor.
For a subset $E$ of $R$, let $\reg (E)$ be the set of regular elements of $R$ in $E$,
so $\reg(R) = R \setminus Z(R)$ and $T(R) = R_{\reg(R)}$.

\subsection{Star operations}

An $R$-submodule of $T(R)$ is called a {\it Kaplansky fractional ideal} \cite[page 37]{8}.
A Kaplansky fractional ideal of $R$ is regular if it contains a regular element of $R$.
Let $\mathsf{K} (R)$ be the set of Kaplansky fractional ideals of $R$,
$F(R)$ be the set of fractional ideals of $R$ (i.e., $I \in F(R)$ if and only if
$I \in \mathsf K(R)$ and $dI \subseteq R$ for some $d \in \reg (R)$), and $F^{\reg} (R)$ be
the set of regular fractional ideals of $R$; so $F^{\reg} (R) \subseteq F(R) \subseteq \mathsf K(R)$.
An (integral) ideal of $R$ is a fractional ideal of $R$ that is contained in $R$.

A mapping $* \colon \mathsf K (R) \to \mathsf K (R)$, given by $I \mapsto I_*$, is called a {\it star operation} on $R$
if the following four conditions are satisfied for all $I, J \in \mathsf K (R)$ and $a \in T(R)$:
\begin{enumerate}
\item $R_* = R$,

\item $aI_* \subseteq (aI)_*$, and equality holds when $a$ is regular.

\item $I \subseteq I_*$, and $I \subseteq J$ implies that $I_* \subseteq J_*$.

\item $(I_*)_* = I_*$.
\end{enumerate}
For all $I \in \mathsf K (R)$, let
\[
  I_{*_f} = \bigcup \{ J_* \mid J \in \mathsf K (R) \mbox{ is finitely generated and } J \subseteq I \} \,.
\]
Then $*_f$ is also a star operation on $R$. The star operation $*$ is said to be {\it of finite type} if $* = *_f$,
and $*$ is said to be {\it reduced} if $(0)_* = (0)$.
Clearly, $*_f$ is of finite type, and $*$ is reduced if and only if $*_f$ is reduced.

\smallskip
\begin{lemma}{\cite[page 94]{e19}} \label{star}
Let $*$ be a star operation on a ring $R$. Then $(IJ_* )_* = (IJ)_*$ for all $I, J \in \mathsf K (R)$.
\end{lemma}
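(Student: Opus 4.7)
The plan is to establish the two inclusions separately, relying on axioms (2), (3), and (4) of the definition of a star operation.

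The easy direction is $(IJ)_* \subseteq (IJ_*)_*$: by axiom (3) we have $J \subseteq J_*$, so multiplying (at the level of Kaplansky fractional ideals) gives $IJ \subseteq IJ_*$, and a second application of the monotonicity in axiom (3) yields $(IJ)_* \subseteq (IJ_*)_*$.

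For the reverse inclusion, I would first reduce to proving $IJ_* \subseteq (IJ)_*$; once that is in hand, axiom (3) gives $(IJ_*)_* \subseteq ((IJ)_*)_*$, and axiom (4) collapses the right-hand side to $(IJ)_*$. To prove the reduced inclusion, I would argue element-by-element. A typical element of $IJ_*$ has the form $\sum_{i=1}^{n} a_i c_i$ with $a_i \in I \subseteq T(R)$ and $c_i \in J_*$. For each index $i$, axiom (2) applied to the element $a_i \in T(R)$ and the Kaplansky fractional ideal $J$ yields
\[
  a_i J_* \subseteq (a_i J)_* \,.
\]
Since $a_i J \subseteq IJ$, axiom (3) gives $(a_i J)_* \subseteq (IJ)_*$, and hence $a_i c_i \in (IJ)_*$. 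Because $(IJ)_*$ is an $R$-submodule of $T(R)$, summing over $i$ produces $\sum_i a_i c_i \in (IJ)_*$, as required.

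There is no serious obstacle here; this is a formal consequence of the four axioms. The one subtlety worth flagging is that axiom (2) is stated for $a \in T(R)$, so I need the (immediate) observation that every element of the Kaplansky fractional ideal $I$ lies in $T(R)$. Notably, the proof does not require regularity of the elements $a_i$, since only the inclusion half of axiom (2), rather than the equality that holds for regular elements, is used.
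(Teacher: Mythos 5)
Your proof is correct and follows essentially the same route as the paper: the paper's one-line argument also rests on the chain $IJ \subseteq IJ_* \subseteq (IJ)_*$ (its properties (2) and (3)) followed by idempotence (property (4)). Your element-by-element verification of $IJ_* \subseteq (IJ)_*$ simply spells out the detail that the paper leaves implicit in citing property (2), and your closing observation that regularity of the $a_i$ is not needed is accurate.
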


\begin{proof}
If $I, J \in \mathsf K (R)$, then $IJ \subseteq IJ_* \subseteq (IJ)_*$ by the properties (2) and (3) of star operations.
Thus, $(IJ_*)_* = (IJ)_*$ by the property (4).
\end{proof}

\smallskip
An $I \in \mathsf K (R)$ is said to be a {\it $*$-ideal} if $I_* = I$.
A $*$-ideal $I$ is {\it of finite type} if $I = J_*$ for some finitely generated subideal $J$ of $I$.
Clearly, a Kaplansky fractional $*$-ideal of finite type is a fractional ideal.
A $*$-ideal is a {\it maximal $*$-ideal} if it is maximal among proper integral $*$-ideals.
Let $*$-$\Max (R)$ denote the set of maximal $*$-ideals of $R$.
The following proposition shows that if $*$ is of finite type and $R \subsetneq T(R)$,
then $*$-$\Max (R) \neq \emptyset$. The results of Proposition \ref{prop1.2}
have been noted in the literature (for example, see \cite{7})
but we have been unable to find proofs, so we include the proofs for easy reference.

\begin{proposition} \label{prop1.2}
Let $*$ be a star operation of finite type on a ring $R$.
Then the following statements hold.
\begin{enumerate}
\item A prime ideal minimal over an integral $*$-ideal is a $*$-ideal.

\item A proper integral $*$-ideal is contained in a maximal $*$-ideal.

\item A maximal $*$-ideal is a prime ideal.
\end{enumerate}
\end{proposition}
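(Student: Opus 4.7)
The plan is to handle the three items essentially independently, relying only on the star-operation axioms, the finite-type hypothesis, and Lemma~\ref{star}. The main substance is in (1); parts (2) and (3) follow from the usual Zorn and maximality arguments.

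For (2), I would run Zorn's lemma on the poset of proper integral $*$-ideals of $R$ containing the given proper $*$-ideal $I$. The one nontrivial step is that the union $J = \bigcup_\alpha J_\alpha$ of a chain of proper $*$-ideals is again a proper $*$-ideal. Propriety is immediate since $1 \notin J_\alpha$ for any $\alpha$, and the $*$-ideal property is exactly where the finite-type hypothesis enters: given $x \in J_*$, because $* = *_f$ one can find a finitely generated $K \subseteq J$ with $x \in K_*$, and since $K$ is finitely generated and the $J_\alpha$ form a chain, $K \subseteq J_\alpha$ for some single $\alpha$, whence $x \in K_* \subseteq (J_\alpha)_* = J_\alpha \subseteq J$. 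Any maximal element produced by Zorn is then automatically a maximal $*$-ideal in the full poset.

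For (3), let $P$ be a maximal $*$-ideal and suppose, for contradiction, $ab \in P$ with $a,b \notin P$. The $*$-ideal $(P+aR)_*$ strictly contains $P$, so by the maximality of $P$ it must equal $R$; likewise $(P+bR)_* = R$. Two applications of Lemma~\ref{star} then give
\[
R = (R \cdot R)_* = \bigl((P+aR)_*(P+bR)_*\bigr)_* = \bigl((P+aR)(P+bR)\bigr)_*,
\]
but $(P+aR)(P+bR) \subseteq P^2 + Pa + Pb + abR$ is contained in $P$ because $ab \in P$, so its $*$-closure is contained in $P_* = P$. This forces $R \subseteq P$, a contradiction, so $P$ is prime.

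For (1), let $P$ be a prime minimal over the integral $*$-ideal $I$ and pick $x \in P_*$; the goal is $x \in P$. Since $*$ is of finite type, there is a finitely generated $J \subseteq P$ with $x \in J_*$. Minimality of $P$ over $I$ means $PR_P$ is the nilradical of $IR_P$, and since $JR_P$ is finitely generated this yields an integer $n \ge 1$ with $J^n R_P \subseteq IR_P$. Clearing denominators simultaneously over the finitely many generators of $J^n$ then produces a single $s \in R \setminus P$ with $sJ^n \subseteq I$. Iterating Lemma~\ref{star} gives $(J_*)^n \subseteq (J^n)_*$, so $x^n \in (J^n)_*$, and hence
\[
sx^n \in s(J^n)_* \subseteq (sJ^n)_* \subseteq I_* = I \subseteq P.
\]
Primality of $P$ together with $s \notin P$ then forces $x \in P$. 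The only genuinely delicate step in the whole proposition is the production of this uniform $s$ annihilating all of $J^n$ modulo $I$ at once; everything else is a direct application of the axioms and the finite-type property.
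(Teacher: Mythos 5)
Your proof is correct. Parts (1) and (3) follow the paper's argument essentially verbatim: the same clearing-of-denominators step producing $s \in R \setminus P$ with $sJ^n \subseteq I$ (the paper cites Huckaba for this where you localize at $P$ and clear denominators by hand), the same iteration of Lemma~\ref{star} to get $s(J_*)^n \subseteq (sJ^n)_* \subseteq I$, and the same $\bigl((P+aR)_*(P+bR)_*\bigr)_*$ computation for primality of a maximal $*$-ideal. The only real divergence is in (2): the paper applies Zorn's lemma to chains of \emph{prime} $*$-ideals containing $I$, which implicitly leans on part (1) to guarantee the poset is nonempty and still owes a word on why a maximal element of that smaller poset is maximal among \emph{all} proper integral $*$-ideals; you instead run Zorn on the full poset of proper integral $*$-ideals containing $I$ and verify directly, via the finite-type hypothesis, that the union of a chain of proper $*$-ideals is again a proper $*$-ideal. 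Your version is self-contained and arguably tighter at exactly the point the paper glosses over; the paper's is shorter but delegates more to the reader. Both arguments are standard and both succeed.
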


\begin{proof}
(1) Let $I$ be an integral $*$-ideal of $R$ and $P$ be a prime ideal of $R$ that is minimal over $I$.
Let $J$ be a finitely generated subideal of $P$. Since $P$ is minimal over $I$,
there are an integer $n \geq 1$ and an element $y \in R \setminus P$
such that $yJ^n \subseteq I$ (cf. \cite[Theorem 2.1]{4}).
Hence, by Lemma~\ref{star},
$$y(J_*)^n \subseteq y(J^n)_* \subseteq I_* = I \subseteq P,$$
whence $J_* \subseteq P$. Thus, $P_* = P$ because $*$ is of finite type.

(2) Let $I$ be an integral $*$-ideal of $R$.
Clearly, if $\{P_{\alpha}\}$ is a chain of prime $*$-ideals of $R$ containing $I$, then
$\bigcup_{\alpha}P_{\alpha}$ is a prime $*$-ideal of $R$  containing $I$. So, by
Zorn's lemma, there is at least one maximal $*$-ideal of $R$ containing $I$.

(3) Let $P$ be a maximal $*$-ideal of $R$ and $a, b \in R \setminus P$.
Then $(P + aR)_* = (P+bR)_* = R$ by assumption. Hence, by Lemma~\ref{star},
\begin{eqnarray*}
R &=& (P + aR)_*(P + bR)_* \\ &\subseteq& ((P + aR)_*(P + bR)_*)_* = ((P + aR)(P + bR))_* \\ &\subseteq& (P+abR)_* \subseteq R \,,
\end{eqnarray*}
so $(P+abR)_* = R$. Thus, $ab \not\in P$, which shows that $P$ is a prime ideal.
\end{proof}

For a prime ideal $P$ of $R$ and an ideal $I$ of $R$, let
$R_{(P)} =\{ \frac{a}{b} \mid a \in R$ and $b \in \reg(R \setminus P)\}$,
 $R_{[P]} =\{ z \in T(R) \mid zs \in R$ for some $s \in R \setminus P\}$, and
 $[I]R_{[P]} =\{ z \in T(R) \mid zs \in I$ for some $s \in R \setminus P\}$.
Then $R_{(P)}$ and $R_{[P]}$ are overrings of $R$, $R_{(P)} \subseteq R_{[P]}$,
$[I]R_{[P]}$ is an ideal of $R_{[P]}$, and if $I$ is a prime
ideal of $R$ with $I \subseteq P$, $[I]R_{[P]}$ is also a prime ideal of $R_{[P]}$.
The ring $R_{[P]}$ is called the large quotient ring of $R$ with respect to $P$,
which was introduced by Griffin \cite{m70} in order to study the Pr\"ufer rings with zero divisors.
Obviously, $P \subseteq Z(R)$ if and only if $R_{[P]} = R_{(P)} = T(R)$.
It is also clear that if $R$ is an integral domain,
then $R_P = R_{(P)} = R_{[P]}$. Moreover,
if $R$ is a Marot ring, which is a ring whose regular
ideals are generated by their subsets of regular elements, then $R_{[P]}= R_{(P)}$  and $[P]R_{[P]}= PR_{(P)}$ \cite[Theorem 7.6]{4}.

\begin{corollary}
Let $*$ be a star operation of finite type on a ring $R$
and $\Lambda = \{P \in *$-$\Max(R) \mid P$ is regular$\}$.
Then $R = \bigcap_{P \in *\textnormal{-} \hspace{-2pt}\Max(R)}R_{[P]} = \bigcap_{P \in \Lambda}R_{[P]}
= \bigcap_{P \in *\textnormal{-} \hspace{-2pt}\Max(R)}R_{(P)} = \bigcap_{P \in \Lambda}R_{(P)}$.
\end{corollary}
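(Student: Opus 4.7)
The plan is to reduce the four equalities to a single nontrivial containment, and then prove that one by analysing the conductor of a putative element of the intersection back into $R$. The preliminary reduction combines the trivial inclusions $R \subseteq R_{(P)} \subseteq R_{[P]}$ (valid for every prime $P$) with $\Lambda \subseteq *$-$\Max(R)$: these sandwich all four candidate rings between $R$ (the smallest) and $\bigcap_{P \in \Lambda} R_{[P]}$ (the largest), so the corollary reduces to showing $\bigcap_{P \in \Lambda} R_{[P]} \subseteq R$.

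For $z$ in the intersection, I would introduce the conductor ideal $(R : z) = \{r \in R \mid rz \in R\}$. Writing $z = a/b$ with $b \in \reg(R)$ shows $b \in (R:z)$, so $(R:z)$ is regular. The heart of the argument is to prove that $(R:z)$ is itself a $*$-ideal: for any finitely generated subideal $J \subseteq (R:z)$ one has $zJ \subseteq R$, and axiom~(2) of star operations yields $zJ_* \subseteq (zJ)_* \subseteq R_* = R$, hence $J_* \subseteq (R:z)$. Because $*$ is of finite type, $(R:z)_*$ is the union of such $J_*$'s, so $(R:z)_* = (R:z)$.

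Once $(R:z)$ is known to be a regular $*$-ideal, the conclusion is immediate. If $z \notin R$ then $1 \notin (R:z)$, so $(R:z)$ is a proper regular $*$-ideal, hence by Proposition~\ref{prop1.2}(2) it is contained in some maximal $*$-ideal $P$; regularity of $(R:z)$ forces $P$ to be regular, i.e., $P \in \Lambda$. But $z \in R_{[P]}$ provides some $s \in R \setminus P$ with $sz \in R$, meaning $s \in (R:z) \subseteq P$, a contradiction. I expect the main obstacle to be verifying the $*$-ideal property of $(R:z)$: since $z$ may be a zero divisor of $T(R)$, only the containment half of axiom~(2) is available, and one must avoid invoking the equality $(zJ)_* = zJ_*$, which would require $z$ to be regular.
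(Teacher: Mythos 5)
Your proposal is correct and follows essentially the same route as the paper's proof: reduce to $\bigcap_{P\in\Lambda}R_{[P]}\subseteq R$, form the conductor of $z$ into $R$, observe it is regular and closed under $*$ via the containment $zJ_*\subseteq (zJ)_*$, and invoke Proposition~\ref{prop1.2}(2) against the witness $s\in R\setminus P$ with $sz\in R$. The only differences are cosmetic (you keep the conductor inside $R$ and phrase the last step as a contradiction, whereas the paper concludes $I_*=R$ directly), so nothing further is needed.
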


\begin{proof}
It is clear that $R \subseteq \bigcap_{P \in *\textnormal{-} \hspace{-2pt}\Max(R)}R_{(P)} \subseteq \bigcap_{P \in \Lambda}R_{(P)}
 \subseteq \bigcap_{P \in \Lambda}R_{[P]}$
and $R \subseteq \bigcap_{P \in *\textnormal{-} \hspace{-2pt}\Max(R)}R_{[P]}
 \subseteq \bigcap_{P \in \Lambda}R_{[P]}$, so it suffices to show that $\bigcap_{P \in \Lambda}R_{[P]} \subseteq R$.
For $z \in  \bigcap_{P \in \Lambda}R_{[P]}$, let $I = \{a \in R \mid az \in R\}$.
Then $I$ is a regular ideal of $R$ and $I \nsubseteq P$ for all $P \in \Lambda$. Hence,
$I_* = R$ by Proposition \ref{prop1.2}(2). Now, note that $zI \subseteq R$, so $zI_* \subseteq (zI)_* \subseteq R_* = R$, and hence
$I_* = I$. Thus, $z \in R$.
\end{proof}

Let $*_1$ and $*_2$ be star operations on $R$.
We say that {\it $*_1 \le *_2$} if $I_{*_1} \subseteq I_{*_2}$ for all $I \in \mathsf K (R)$.
It is clear that $*_1 \le *_2$ if and only if $(I_{*_1})_{*_2} = I_{*_2}$,
if and only if $(I_{*_2})_{*_1} = I_{*_2}$ for all $I \in \mathsf K (R)$,
i.e., $*_2$-ideals are $*_1$-ideals.
A ring is called a {\it $*$-Noetherian ring}
if it satisfies the ascending chain condition on its integral $*$-ideals.
It is routine to check that $R$ is $*$-Noetherian if and only if every $*$-ideal of $R$ is of finite type, and in this case, $* = *_f$.
Clearly, if $*_1 \le *_2$, then $*_1$-Noetherian rings are $*_2$-Noetherian.
Hence, a Noetherian ring $R$ is $*$-Noetherian for all star operations $*$ on $R$.

\begin{remark}
(1)  Krull introduced the concept of a star operation,
which was denoted by $'$-operation, on an integral domain \cite[page 118]{k35}.
Gilmer used the notation $*$-operation for the first time \cite[Section 32]{3}. After Gilmer's book \cite{3},
the star operation has become an indispensable means of studying the
ideal factorization properties of commutative rings.

(2) The star operation of this paper is called a (unital) semistar operation in \cite[Definition 2.4.1]{e19}
and the star operation of \cite[Definition 2.4.32]{e19} is just a function from $F^{\reg} (R)$ into $F^{\reg} (R)$ satisfying the properties (1) - (4) above.
Moreover, if $R$ is an integral domain, then $F^{\reg} (R)$ is just the set of nonzero fractional ideals of $R$ and the ``classical" star operation on $R$ is a function from $F^{\reg} (R)$ into itself \cite[Section 32]{3}.

(3) Okabe and Matsuda, in \cite{om94}, introduced the notion of a semistar operation on an integral domain $R$, which is a function $*$ from $\mathsf K_0 (R):=\mathsf K (R) \setminus \{(0)\}$ into itself such that
(i) $(aI)_* = aI_*$, (ii) $I \subseteq I_*$, and $I \subseteq J$ implies that $I_* \subseteq J_*$,
and (iii) $(I_*)_* = I_*$ for all $I, J \in \mathsf K_0 (R)$ and $0 \neq a \in T(R)$. Then, in \cite{fl03}, Fontana and Loper said that $*$ is a (semi)star operation if $R_*=R$, in order to emphasize the fact that
$*$ is an extension of a star operation on $R$ to $\mathsf{K}_0(R)$.
\end{remark}

\subsection{The $d$-, $v$- and $t$-operations}

For $I \in \mathsf K (R)$, let $I^{-1} = (R :_{T(R)} I) = \{ x \in T(R) \mid xI \subseteq R \}$,
then $I^{-1} \in \mathsf K (R)$.
Now, the $d$-, $v$-, and $t$-operation are the star operations on $R$ defined by
\[
  I_d = I \,, \quad I_v = (I^{-1})^{-1} \mbox{ for all } I \in \mathsf K (R) \,, \mbox{ and } \,\, t = v_f \,.
\]
It is well known and easy to see that $d \le *_f \le *$, $*_f \le t \le v$, and $* \le v$ for any star operation $*$ on $R$ \cite[Proposition 2.4.10]{e19}.

Assume that $R$ is an integral domain.
Then $(0)$ is a prime ideal of $R$,
\[
  (0)_v = (0) \quad \Leftrightarrow \quad R \neq T(R) \quad \Leftrightarrow \quad (0)_t = (0) \,,
\]
if $I \in \mathsf K (R)$ and $R \subsetneq T(R)$, then $I_v \subsetneq T(R)$ if and only if $I$ is a fractional ideal of $R$,
and $* \leq v$ for any star operation $*$ on $R$.
By this fact, it is reasonable that the star operation
on an integral domain $R$ is defined on the set of nonzero fractional ideals of $R$.

\begin{remark}
The idea of localization comes from algebraic geometry. The localization is a crucial tool for studying varieties locally near a point $p$, and so it allows us to only focus on rational functions that are well-defined at the point $p$. A star operation is a similar tool for studying commutative rings in the sense that we are just interested in ideals that we certainly have in mind. For instance, in Krull domains, every nonzero proper principal ideal is a unique finite $v$-product of height-one prime ideals, whence it is enough to look into the height-one prime ideals.
\end{remark}

\subsection{The $*$-invertibility}

Let $*$ be a star operation on a ring $R$.
An $I \in \mathsf K(R)$ is said to be {\it invertible} if $II^{-1} = R$.
Clearly, if $I$ is invertible, then $I$ is a finitely generated fractional ideal of $R$ \cite[Theorem 7.1]{3}.
As the $*$-operation analog, $I \in K(R)$ is said to be {\it $*$-invertible} if $(II^{-1})_* = R$.
Hence, if $*$ is of finite type, then $I$ is $*$-invertible
if and only if $II^{-1} \nsubseteq P$ for all $P \in *$-Max$(R)$ by Proposition \ref{prop1.2}(2).
It is well known that an invertible ideal is regular \cite[Theorem 7.1]{3},
while a $*$-invertible ideal need not be regular (see, for example, Example~\ref{ex0.1}).

\begin{proposition}\label{prop1.5} {\em  (cf. \cite[Propositions 2.6, 2.8(3) and Lemma 3.17]{k89-1} for an integral domain)}
If $*$ is a star operation of finite type, then
every $*$-invertible Kaplansky fractional $*$-ideal is of finite type
and a $t$-invertible $t$-ideal.
\end{proposition}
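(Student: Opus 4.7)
Let $I \in \mathsf K(R)$ be a $*$-invertible $*$-ideal with $*$ of finite type. My plan is to produce a finitely generated sub-ideal $A\subseteq I$ with $I=A_*$, and then bootstrap from this to get the $t$-statements.

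First I would exploit $*=*_f$. Since $(II^{-1})_*=R$ and $*$ is of finite type, $1\in J_*$ for some finitely generated $J\subseteq II^{-1}$. Writing the generators of $J$ as finite sums $\sum a_{ij}b_{ij}$ with $a_{ij}\in I$ and $b_{ij}\in I^{-1}$, and letting $A,B$ be the finitely generated ideals on the $a$'s and $b$'s respectively, I obtain $J\subseteq AB$, hence $(AB)_*=R$ with $A\subseteq I$ and $B\subseteq I^{-1}$ both finitely generated.

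Next I would show $I=A_*$, which yields that $I$ is of finite type (and, by the remark in Section~1.1, a fractional ideal). Using Lemma~\ref{star} and the fact $II^{-1}\subseteq R$,
\[
 I \;=\; I\cdot R \;=\; I\cdot (AB)_* \;\subseteq\; (I\cdot AB)_* \;=\; (A\cdot II^{-1})_* \;\subseteq\; (A\cdot R)_* \;=\; A_* \,,
\]
while $A\subseteq I$ gives the reverse inclusion $A_*\subseteq I_*=I$. So $I=A_*$.

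For the $t$-statements, since $*$ is of finite type we have $*=*_f\le t$, so from $(AB)_*=R$ we get $(AB)_t=R$, which already gives $(II^{-1})_t=R$, i.e. $I$ is $t$-invertible. It remains to show $I$ is a $t$-ideal. Because $*\le t$, $(A_*)_t=A_t$, so $I_t=A_t$ and it suffices to prove $A_t\subseteq A_*$. Here I use that the property $aJ_*\subseteq(aJ)_*$ extends to $xA_t\subseteq (xA)_t\subseteq R$ for any $x\in A^{-1}$, hence $A_t A^{-1}\subseteq R$; since $B\subseteq I^{-1}\subseteq A^{-1}$, this gives $A_t B\subseteq R$. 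Then
\[
 A_t \;=\; A_t\cdot (AB)_* \;\subseteq\; (A_t\cdot AB)_* \;=\; \bigl(A\cdot (A_tB)\bigr)_* \;\subseteq\; (A\cdot R)_* \;=\; A_* \,,
\]
completing the proof.

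The main potential obstacle is the last inclusion $A_t\subseteq A_*$: one must avoid the trap of thinking $A_*=A_t$ holds in general, and the trick is to pass through $A_t B\subseteq R$, which in turn relies on the not-entirely-trivial observation that inverses absorb arbitrary $t$-closures, $A_t A^{-1}\subseteq R$. Everything else is a bookkeeping application of Lemma~\ref{star} together with the inequality $*\le t$ that is valid because $*$ is of finite type.
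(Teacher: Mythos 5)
Your proof is correct and follows essentially the same route as the paper's: the same appeal to Lemma~\ref{star}, the same extraction of finitely generated $A \subseteq I$ and $B \subseteq I^{-1}$ with $(AB)_* = R$, and the same key observation that inverses absorb $t$-closures ($J^{-1}J_t \subseteq R$); the paper merely proves $I_t = I$ directly on $I$ via $I^{-1}I_t \subseteq R$ before the finite-type step, whereas you route that claim through $A$. The only blemish is that your step $(I\cdot AB)_* = (A\cdot II^{-1})_*$ should be an inclusion $\subseteq$ (since $B \subseteq I^{-1}$ need not be an equality), which does not affect the chain.
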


\begin{proof}
Let $I$ be a $*$-invertible Kaplansky fractional $*$-ideal of a ring $R$. Then
$$I_t = I_t (II^{-1})_* \subseteq ( I_t (II^{-1})_{*} )_{*} = ( I(I_t I^{-1}))_{*} \subseteq I_* = I \subseteq I_t,$$
and thus $I_t = I$. Next, note that $R = (II^{-1})_* \subseteq (II^{-1})_t \subseteq R$. Thus, $(II^{-1})_t = R$.
Finally, since $*$ is of finite type, there is a finitely generated ideal $J$ of $R$ such that
 $J \subseteq I$ and $(JA)_* = R$ for some Kaplansky fractional ideal $A$ of $R$ with $A \subseteq I^{-1}$.
 Hence, $I = I (JA)_* \subseteq (I(JA)_* )_* = (J (IA))_* \subseteq J_* \subseteq I_* = I$.
 Thus, $I = J_*$.
\end{proof}

It is known that if $P$ is a $t$-invertible prime $t$-ideal of an
integral domain $D$, then $P$ is a maximal $t$-ideal of $D$ \cite[Proposition 1.3]{hz89}.
The next corollary shows that this is true for rings with zero divisors.
The proof of Corollary \ref{coro1.6} is similar to that of \cite[Proposition 1.3]{hz89}.

\begin{corollary} \label{coro1.6}
If $*$ is a star operation of finite type, then every $*$-invertible prime
$*$-ideal is a maximal $t$-ideal.
\end{corollary}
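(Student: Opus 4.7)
\emph{Plan.} The strategy is to use Proposition~\ref{prop1.5} to reduce to the $t$-operation and then adapt the classical integral-domain argument to rings with zero divisors. By Proposition~\ref{prop1.5}, a $*$-invertible $*$-ideal is automatically a $t$-invertible $t$-ideal, so it suffices to prove that every $t$-invertible prime $t$-ideal $P$ of $R$ is a maximal $t$-ideal.

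I will argue by contradiction. Assume $P$ is not a maximal $t$-ideal. By Proposition~\ref{prop1.2}(2) and~(3), pick a maximal $t$-ideal $M \supsetneq P$; then $M$ is prime. Applying Lemma~\ref{star} twice and using the $t$-invertibility $(PP^{-1})_t = R$, one obtains the central identity
\[
\bigl(P \cdot (MP^{-1})_t\bigr)_t \;=\; (PMP^{-1})_t \;=\; \bigl(M \cdot (PP^{-1})_t\bigr)_t \;=\; (MR)_t \;=\; M.
\]
Observe that $(MP^{-1})_t$ cannot equal $R$, for that would collapse the identity to $M = (PR)_t = P$, contradicting $P \subsetneq M$. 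Combined with $(MP^{-1})_t \supseteq (PP^{-1})_t = R$, we get $(MP^{-1})_t \supsetneq R$, and any $y \in (MP^{-1})_t \setminus R$ satisfies $yP \subseteq M \subseteq R$ and hence lies in $P^{-1} \setminus R$.

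The remaining step---and the principal obstacle---is to convert these data into an explicit contradiction despite $(MP^{-1})_t$ escaping $R$. The plan is to invoke Proposition~\ref{prop1.5} once more: write $P = J_t$ for a finitely generated $J \subseteq P$ and produce a finitely generated $K \subseteq P^{-1}$ with $(JK)_t = R$. Since $JK$ is finitely generated, $(JK)_t = R$ is not contained in the proper $t$-ideal $M$, so there exist explicit $p \in J \subseteq P$ and $q \in K \subseteq P^{-1}$ with $pq \in R \setminus M$. Combining these concrete elements with $y$, Lemma~\ref{star}, and the primeness of $M$ should force some element of $R$ to lie simultaneously in $M$ and outside $M$. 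The subtlety is that $y$ and $q$ live only in $T(R)$, so the prime-ideal dichotomy of $M$ is not directly applicable; the finite-type data $(J, K)$ furnished by Proposition~\ref{prop1.5} is precisely what enables the reduction to elements of $R$ where primeness can be invoked.
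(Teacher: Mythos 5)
Your reduction to the $t$-operation via Proposition~\ref{prop1.5} matches the paper, and the computations you actually carry out are correct: $(P(MP^{-1})_t)_t = M$, hence $(MP^{-1})_t \supsetneq R$, hence there is $y \in P^{-1}\setminus R$ with $yP \subseteq M$, and $JK \nsubseteq M$ yields $p \in J$, $q \in K$ with $pq \in R \setminus M$. But the proof stops exactly where the work begins. The promised contradiction is never produced, and the data you have assembled do not obviously produce one: every product you can form that lies in $M$ (such as $yp$) has the non-ring element $y$ as a factor, so the primeness of $M$ cannot be applied to it, while the product $pq \in R\setminus M$ already has its factor $p$ inside $P \subseteq M$, so the dichotomy ``one factor lies in $M$'' yields nothing. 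Note also that the existence of $y \notin R$ with $yP \subseteq M$ is not absurd by itself --- in $k[X,Y]$ the element $Y/X$ sends $(X)$ into $(X,Y)$ --- so a contradiction must exploit that $M$ is a $t$-ideal in some further way, and your outline uses that fact only to conclude $M \neq R$.

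The paper's proof runs along a genuinely different line that your sketch does not contain. Write $P = I_t$ with $I$ finitely generated, fix an arbitrary $a \in R\setminus P$, and take $x \in (I+aR)^{-1}$. Then $xa \in R$, so $xaI \subseteq I$ (because $I$ is an ideal of $R$), hence $xaP = xaI_t \subseteq (xaI)_t \subseteq I_t = P$; since $xP \subseteq (xI)_t \subseteq R$ and $P$ is prime with $a \notin P$, this forces $xP \subseteq P$, and then $t$-invertibility gives $x \in x(PP^{-1})_t \subseteq (xPP^{-1})_t \subseteq (PP^{-1})_t = R$. Thus $(I+aR)^{-1} = R$, so $(I+aR)_v = (I+aR)_t = R$ and no proper $t$-ideal contains both $P$ and $a$. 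The decisive steps --- applying the primeness of $P$ itself (not of a hypothetical larger $M$) to the inclusion $xaP \subseteq P$, and then using $(PP^{-1})_t = R$ to pull $x$ back into $R$ --- are exactly what is missing from your plan; without them, or a genuine substitute, the argument is incomplete.
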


\begin{proof}
Let $P$ be a $*$-invertible prime $*$-ideal of a ring $R$.
Then,  by Proposition \ref{prop1.5}, $P$ is a $t$-invertible $t$-ideal and $P = I_t$ for some
finitely generated ideal $I$ of $R$.
Choose $a \in R \setminus P$ and $x \in (I+aR)^{-1}$. Then
$xaI \subseteq I \Rightarrow xaP = xaI_t \subseteq (xaI)_t \subseteq I_t = P,$
and since $P$ is a prime ideal, $xP \subseteq P$. Hence,
$$x \in xR = x(PP^{-1})_t \subseteq (xPP^{-1})_t \subseteq (PP^{-1})_t = R.$$
Thus, $(I+aR)^{-1} = R$, which implies $(I+aR)_v = (I+aR)_t = R$.
Therefore, $P$ is a maximal $t$-ideal.
\end{proof}

Let $\Inv (R)$ be the group of invertible fractional ideals of $R$ under the usual multiplication
and $\Prin (R)$ be its subgroup of regular principal fractional ideals of $R$.
Then the factor group $\Pic (R) = \Inv (R) / \Prin (R)$ is called the {\it Picard group} (or {\it ideal class group}) of $R$.
Now, let $\Inv^* (R)$ (resp., $\Inv_* (R)$) be the group of $*$-invertible Kaplansky fractional
(resp., $*$-invertible regular fractional) $*$-ideals of $R$ under $I \times J = (IJ)_*$. Then
\[
  \Prin (R) \subseteq \Inv (R) \subseteq \Inv_* (R) \subseteq \Inv^* (R) \,,
\]
so if we let $\Cl_* (R) = \Inv_* (R) / \Prin (R)$ (resp., $\Cl^* (R) = \Inv^* (R) / \Prin (R)$)
be the factor group of $\Inv_* (R)$ (resp., $\Inv^* (R)$) modulo $\Prin (R)$, then
\[
  \Pic (R) \subseteq \Cl_* (R) \subseteq \Cl^* (R)
\]
(cf. \cite[pp. 109-110]{e19}).
We say that $\Cl_* (R)$ is the {\it $*$-class group} of $R$ and $\Cl^* (R)$ is the
{\it semistar $*$-class group} of $R$ as in \cite[Definition 2.5.21]{e19}.
By Proposition \ref{prop1.5}, $\Cl_* (R) \subseteq \Cl_t (R)$ and $\Cl^* (R) \subseteq \Cl^t (R)$.
The $t$-class group $\Cl_t (R)$ is usually denoted by $\Cl (R)$ and called the {\it class group} of $R$.
If $R$ is a Krull domain, then $\Cl (R)$ is the usual divisor class group of $R$.

For more on basic properties of star operations, the reader can refer
to \cite[Chapter 2]{e19}, \cite[Chapter 3]{kk14}, \cite[Section 32]{3}, and \cite[Lemma 3.3]{hp80}.

\subsection{Terminology}
Let $\h P$ denote the height of a prime ideal $P$, $\dim (R)$ be the Krull dimension of $R$,
$\rdim (R)$ be the regular dimension of $R$,
and Max$(R)$ be the set of maximal ideals of $R$.
Let $N(R)$ be the nilradical of $R$, i.e., $N(R) = \{x \in R \mid x^n = 0$ for some $n \geq 1\}$.
Then $N(R)$ is the intersection of all prime ideals of $R$, $N(R) \subseteq Z(R)$,
and $N(R) = \{0\}$ if and only if $R$ is reduced.

A ring $R$ has few zero divisors if $Z(R)$ is a finite union of prime ideals,
$R$ is said to be {\it additively regular} if for each $z \in T(R)$, there is an $x \in R$
such that $z+x$ is regular, and $R$ is a {\it Marot ring} if each regular ideal of $R$ is generated by a set of regular elements in $R$.
Then a Noetherian ring has few zero divisors,
a ring with few zero divisors is additively regular, and an additively regular
ring is a Marot ring \cite[Theorem 7.2]{4}, but the reverse implications do not hold in general. In particular,
if dim$(T(R)) = 0$, then $R$ is additively regular \cite[Theorem 7.4]{4}.

A ring $R$ satisfies {\it Property(A)} if each finitely generated ideal $I \subseteq Z (R)$ has a nonzero annihilator.
Then $R$ has Property(A) if and only if $T(R)$ has Property(A) \cite[Corollary 2.6]{4}.
The class of rings with Property(A) includes Noetherian rings \cite[Theorem 82]{8},
the polynomial ring \cite[Corollary 2.9]{4}, and integral domains.
A ring $R$ is an {\it r-Noetherian ring} if each regular ideal of $R$ is finitely generated.
Clearly, Noetherian rings are r-Noetherian, r-Noetherian rings need not be Noetherian
(see, for example, \cite[Section 5]{ck21-1}), and the polynomial ring $R[X]$ is r-Noetherian if
and only if $R$ is Noetherian.

\section{Historical overview} \label{2}

A principal ideal domain (PID) is an integral domain all of whose ideals are principal;
a unique factorization domain (UFD) is an integral domain in which each (nonzero)
nonunit can be written as a finite product of prime elements; and
a Dedekind domain (resp., $\pi$-domain) is an integral domain
whose (nonzero) ideals (resp., principal ideals)
can be written as a finite product of prime ideals.

\subsection{Krull domains}

Let $D$ be an integral domain with quotient field $K$ and $X^1(D)$ be the set of nonzero minimal
(i.e., height-one) prime ideals of $D$. Recall from \cite[$\S$ 2]{n55} that $D$ is a Krull domain if
\begin{enumerate}
\item $D = \bigcap_{P \in X^1(D)}D_P$,

\item $D_P$ is a DVR for all $P \in X^1(D)$, and

\item each nonzero nonunit of $D$ is contained in only a finitely many
prime ideals in $X^1(D)$.
\end{enumerate}
In \cite[Proposition 3]{n55},
Nagata proved that $D$ is a Krull domain if and only if there exists a family $\Delta$
of DVRs with quotient field $K$ such that (i) $D$ is the intersection of all rings in $\Delta$
and (ii) every nonzero element of $D$ is a unit in all but a finite number of rings in $\Delta$.

The theory of Krull domains was originated by Krull \cite{k31, k31-1}.
Also, in \cite[43. $V$-ideale]{k35}, Krull stated (without proof) that $D$ is a Krull domain if and only if
each $v$-ideal $I$ of $D$ is a unique finite $v$-product of height-one prime ideals of $D$,
i.e., $I = (P_1^{e_1} \cdots P_n^{e_n})_v$ for some distinct height-one prime ideals $P_1,
\dots , P_n$ and positive integers $e_1, \dots, e_n$ such that the expression $I = (P_1^{e_1} \cdots P_n^{e_n})_v$ is unique. Then,
in \cite{n63}, Nishimura showed that $D$ is a Krull domain if and only if
each $v$-ideal of $D$ is a unique finite $v$-product of height-one prime ideals of $D$,
if and only if $D$ is a completely integrally closed $v$-Noetherian domain (i.e., Mori domain).
In 1968, Tramel showed that $D$ is a Krull domain if and only if each nonzero proper principal ideal of $D$
can be written as a finite $v$-product of prime ideals \cite[Theorem 3.1]{t68}, which
also shows that the uniqueness of Nishimura's result is superfluous. In \cite[Theorem]{n73},
Nishimura showed that $D$ is a Krull domain if and only if
each $t$-ideal of $D$ is a finite $t$-product of height-one prime ideals of $D$
under the additional condition that the expression $I = (P_1^{e_1} \cdots P_n^{e_n})_t$ is unique.
Then, by \cite[Theorem 1.1.]{l72}, $D$ is a Krull domain if and only if
each nonzero proper principal ideal of $D$
can be written as a finite $t$-product of prime ideals, if and only if
each nonzero $t$-ideal of $D$ is a finite $t$-product of height-one prime ideals of $D$.
In \cite[Theorem 3.5]{k89}, Kang showed that if each nonzero prime ideal of $D$ contains a
$t$-invertible prime ideal, then $D$ is a Krull domain.

\subsection{PIR, UFR, general ZPI-ring, and $\pi$-ring}

A ring $R$ is said to be a {\it special primary ring} (SPR) or a {\it special principal ideal ring}
(SPIR) if $R$ is a local ring with maximal ideal $M$
such that each proper ideal of $R$ is a power of $M$, equivalently, $M$ is principal and $M^n = (0)$ for some
integer $n \geq 1$. It is worth noting that a field is a Krull domain and an SPR.

A {\it principal ideal ring} (PIR) is a ring all of whose ideals are principal.
The structure theorem for PIRs says that (i) a finite direct product of PIRs is a PIR
and (ii) $R$ is a PIR if and only if $R$ is a finite direct product of PIDs and SPRs \cite[Theorem 33, page 245]{zs60}.
Moreover, Hungerford showed that each direct summand of a PIR is the homomorphic image of a PID \cite[Theorem 1]{h68}.

A {\it general ZPI-ring} is a ring in which each ideal can be expressed as a finite product of prime ideals.
The letters ZPI stand for Zerlegung Primideale. Mori first studied the general ZPI-ring \cite{m40-1}
and Asano proved that $R$ is a general ZPI-ring if and only if $R$ is a finite direct product of Dedekind domains
and SPRs \cite[Satz 21]{a51}.
For more properties of general ZPI-rings, see \cite[Section 39]{3} or \cite[Chapter 9]{10}.

In \cite{f69}, Fletcher introduced the notion of a {\it unique factorization ring} (UFR),
which is just a UFD in case of integral domains.
He showed that a finite direct product of UFRs is a UFR, a PIR is a UFR \cite[Theorem 4]{f69},
and a UFR is a finite direct product of UFDs and SPRs \cite[Theorem 19]{f70}.
Later, in \cite[Theorem 4]{f71}, Fletcher proved that $R$ is a UFR
if and only if every element of $R$ is a finite product of prime elements of $R$.

A {\it $\pi$-ring} is a ring in which each principal ideal can be written as a finite product of prime ideals.
Mori gave a complete description of $\pi$-domains in \cite{m39},
where it was noted that every nonzero minimal prime ideal of a $\pi$-domain is invertible.
Mori also proved that $R$ is a $\pi$-ring if and only if $R$ is a finite direct product of $\pi$-domains and SPRs \cite{m40}.
A proof of Mori's result may also be found in \cite[Section 46]{3}.
Mori showed that the polynomial ring $R[ \{ X_{\alpha} \} ]$ over a $\pi$-domain $R$ is also a $\pi$-domain \cite{m40-1}.
As in the case of integral domains, we have the following implications except the counter part of Krull domains;

\begin{figure}[h]
\resizebox{.8\textwidth}{!}{%
\begin{tikzpicture}[node distance=2cm]
\title{Untergruppenverband der}
\node(PIR)    at (0,0)   {\bf PIR};
\node(UFR)    at (3,1)   {\bf UFR};
\node(ZPI)    at (3,-1)  {\bf general ZPI ring};
\node(PiR)    at (6,0)   {\bf $\pi$-ring};
\node(?)      at (9,0)   {\bf $\framebox{ \quad ? \quad}$};

\draw[->]  (PIR)  -- (UFR);
\draw[->]  (PIR)  -- (ZPI);
\draw[->]  (UFR)  -- (PiR);
\draw[->]  (ZPI)  -- (PiR);
\draw[->]  (PiR)  -- (?);
\end{tikzpicture}
}
\end{figure}

\subsection{regular PIR, factorial ring, Dedekind ring, and regular $\pi$-ring}

A {\it regular PIR} (resp., {\it Dedekind ring}) is a ring in which each regular ideal is principal (resp., invertible).
An invertible ideal is finitely generated, so a regular PIR and a Dedekind ring
are both r-Noetherian rings.
It is known that a ring $R$ is a Dedekind ring if and only if $R/I$ is a PIR for all regular ideals $I$ of $R$ \cite[Proposition 7.9]{m85},
if and only if each regular ideal of $R$ is a (unique) finite product of prime ideals \cite[Theorem 10]{m63}.
We say that $R$ is a multiplication ring if, given any two ideals $A$ and $B$ of $R$
with $A \subseteq B$, there exists an ideal $Q$ such that $A = BQ$.
Clearly, an integral domain is a multiplication ring if and only if it
is a Dedekind domain \cite[Proposition 9.13]{10}.
Moreover, $R$ is a multiplication ring if and only if $T(R)$ is a multiplication ring, $R$ is a Dedekind ring,
and non-maximal prime ideals of $R$ are idempotent \cite[Theorem 13]{g74}.

A ring $R$ is called a {\it factorial ring} if every nonunit regular element of $R$ can be written as a finite product of prime elements.
Then the polynomial ring $R[X]$ is a factorial ring if and only if $R[X]$ is a UFR,
if and only if $R$ is a finite direct product of UFDs \cite[Corollary 5.8]{aam85}.
A ring is said to be a {\it regular $\pi$-ring} if every regular principal ideal is a finite product of prime ideals.
It is known that $R$ is a regular $\pi$-ring if and only if
every regular prime ideal of $R$ contains an invertible prime ideal \cite[Lemma 2]{k91}.

\subsection{Rank-one discrete valuation rings}

A valuation on a ring $R$ is a mapping $v$ from $R$ onto a totally
ordered abelian group with $\infty$ adjoined such that
(i) $v(ab) = v(a) + v(b)$ and (ii) $v(a+b) \geq \min\{v(a), v(b)\}$ for all $a,b \in R$.
It is clear that if $v$ is a valuation on $R$, then $v(1) = 0$ and $v(0) = \infty$.
Moreover, if $R_v = \{x \in R \mid v(x) \geq 0\}$ and $P_v = \{x \in R \mid v(x) > 0\}$,
then $R_v$ is a subring of $R$, $P_v$ is a prime ideal of $R_v$, and $(R_v, P_v)$
is called a valuation pair of $R$ \cite[page 193]{m69}.
The valuation $v$ on $R$ was first studied by Manis \cite{m69} when $R$ is a ring with zero divisors.

Let $\mathbb{Z}$ be the additive group of integers.
Then a valuation $v$ from $T(R)$ onto $\mathbb{Z} \cup \{\infty\}$ is called a {\it rank-one discrete valuation} on $T(R)$,
and in this case, if $V = \{x \in T(R) \mid v(x) \geq 0\}$   and  $P = \{x \in T(R) \mid v(x) > 0\}$,
then $(V, P)$ is called a {\it rank-one discrete valuation ring} (rank-one DVR).
It is easy to see that if $(V, P)$ is a rank-one DVR, then $V = V_{[P]}$ and $PV_P$ is principal.
Moreover, if $PV_P = pV_P$, then $[P]V_{[P]} = [pV]V_{[P]}$. Also,
if $I$ is an ideal of $V$ with $P \subsetneq I$, then $[I]V_{[P]} = V_{[P]}$.
It should be noted that if $T(R)$ is a field, then $P$
is the maximal ideal of $V$, but this is not true in general
(see, for example, \cite[Example 5.4]{ck21-1} or \cite[Example 7 on page 182]{4}),
which also shows that a rank-one DVR need not be a regular PIR.

\subsection{Krull rings with zero divisors}

A ring $R$ is a {\em Krull ring} if there exists a family
$\{(V_{\alpha}, P_{\alpha}) \mid \alpha \in \Lambda\}$ of rank-one DVRs such that (i) $R =\bigcap_{\alpha \in \Lambda}
V_{\alpha}$, (ii) for each $a \in \reg(R)$, $aV_{\alpha} = V_{\alpha}$ for almost all $\alpha \in \Lambda$,
and (iii) $P_{\alpha}$ is a regular ideal for all $\alpha \in \Lambda$ \cite[page 132]{9}.
In particular, if $\{V_{\alpha} \} = \emptyset$, then $R= T(R)$, so $T(R)$
is regarded as a Krull ring. If $R =\bigcap_{\alpha \in \Lambda} V_{\alpha}$ is irredundant, then
$\{(V_{\alpha}, P_{\alpha}) \mid \alpha \in \Lambda\}$ is said to be a defining family for a Krull ring $R$.
Let $X^1_r(R)$ be the set of minimal regular prime ideals of $R$.
Then, as in the case of Krull domains \cite[Corollary 43.9]{3}, $\{(R_{[P]}, [P]R_{[P]}) \mid P \in X^1_r(R)\}$
is the unique defining family for a Krull ring $R$, which was proved by Potelli and Spangher \cite[Proposition 40]{12}
in the Marot Krull ring case and by Alajbegovi\'c and Osmanagi\'c \cite[Propositions 2.10 and 2.11]{ao90} and
Chang and Kang \cite[Theorem 4.3]{ck21} in the general case.

Krull rings with zero divisors were first introduced by Marot \cite[page 27]{ma68} under the condition that the rings are Marot.
A complete generalization of Krull domains to rings with zero divisors
was done by Kennedy \cite{k73, 9} and Huckaba \cite[Definition 2.1]{h76} respectively.
Kennedy showed that if $R$ is a Krull ring, then $R$ is completely integrally closed and
$R$ satisfies the ascending chain condition on integral regular $v$-ideals \cite[Proposition 2.2]{9}.
Matsuda proved that the converse of Kennedy's result is also true \cite[Theorem 5]{m82} as in the case of integral domains.
Huckaba proved that the integral closure of a Noetherian ring is a Krull ring \cite[Theorem 2.3]{h76}.
Portelli and Spangher also studied the notion of Krull rings, which is just a Marot Krull ring \cite{12}.
In \cite{k91}, Kang gave some characterizations of a Marot Krull ring.
Later, in \cite{7}, Kang completely extended his results of \cite{k91}
to rings with zero divisors without the Marot condition. Among them, he
showed that $R$ is a Krull ring if and only if
every regular prime ideal of $R$ contains a $t$-invertible regular prime ideal,
if and only if every proper regular principal ideal of $R$ can be written as a finite $t$-product of prime ideals.
Then a regular $\pi$-ring is a Krull ring, so we have the following implications;

\begin{figure}[h]
\resizebox{.9\textwidth}{!}{%
\begin{tikzpicture}[node distance=2cm]
\title{Untergruppenverband der}
\node(rPIR)         at (0,0)   {\bf regular PIR};
\node(factorial)    at (3,1)   {\bf factorial ring};
\node(Dr)           at (3,-1)  {\bf Dedekind ring};
\node(rPiR)         at (6,0)   {\bf regular $\pi$-ring};
\node(Krull)        at (9.5,0)   {\bf Krull ring};

\draw[->]  (rPIR)       -- (factorial);
\draw[->]  (rPIR)       -- (Dr);
\draw[->]  (factorial)  -- (rPiR);
\draw[->]  (Dr)         -- (rPiR);
\draw[->]  (rPiR)       -- (Krull);
\end{tikzpicture}
}
\end{figure}

The next result shows when a Krull ring is a regular PIR, a factorial ring, a Dedekind ring,
or a regular $\pi$-ring. We recall that (i) $\rdim (R) = 0$ if and only if $R = T(R)$; (ii)
$\rdim (R) = 1$ if and only if $R \neq T(R)$ and each regular prime ideal of $R$ is a maximal ideal; and
(iii) if $R$ is an integral domain, then $\dim (R) = \rdim (R)$.

\begin{theorem} \label{Krull ring property}
Let $R$ be a Krull ring, which is not a total quotient ring, i.e., $R \subsetneq T(R)$.
Then the following statements hold.
\begin{enumerate}

\item $R$ is a regular $\pi$-ring if and only if $\Cl (R) = \Pic (R)$.

\item $R$ is a factorial ring if and only if $\Cl (R) = \{ 0 \}$.

\item $R$ is a Dedekind ring if and only if $\rdim (R) = 1$.

\item $R$ is a regular PIR if and only if $\rdim (R) = 1$ and $\Cl (R) = \{ 0 \}$.
\end{enumerate}
\end{theorem}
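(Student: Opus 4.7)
My strategy is to reduce these four equivalences, which are the regular-element analogues of classical characterizations of Krull domains via class group and dimension, to Kang's characterization \cite[Theorem 13]{7}: in a Krull ring, every proper regular principal ideal factors as a finite $t$-product of primes, and every regular prime contains a $t$-invertible regular prime. As a preliminary I would record that (i) by Corollary \ref{coro1.6} and the unique defining family $\{(R_{[P]},[P]R_{[P]}) : P \in X^1_r(R)\}$, the $t$-invertible regular prime $t$-ideals of $R$ are precisely the members of $X^1_r(R)$, and (ii) by Kang's factorization, $\Cl (R)$ is generated by the classes $\{[P] : P \in X^1_r(R)\}$.

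For (1), I would invoke \cite[Lemma 2]{k91}: $R$ is a regular $\pi$-ring if and only if every regular prime contains an invertible prime. Since every regular prime contains some $P \in X^1_r(R)$ and such $P$ is minimal regular, the condition reduces to every $P \in X^1_r(R)$ being invertible, i.e.\ $[P] \in \Pic (R)$ for every generator, equivalently $\Cl (R) = \Pic (R)$. For (2), $R$ is factorial if and only if every proper regular principal ideal is a product of principal primes; writing $aR = (P_1 \cdots P_n)_t$ by Kang and using minimality of $X^1_r(R)$ to match factorizations, this becomes ``every $P \in X^1_r(R)$ is principal,'' equivalently $\Cl (R) = \{0\}$.

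For (3), the forward direction is the classical observation that regular primes in a Dedekind ring are maximal. For the converse, suppose $\rdim (R) = 1$. Each $P \in X^1_r(R)$ is a maximal ideal and is $t$-invertible; then $PP^{-1}$ cannot be contained in $P$ (else $(PP^{-1})_t \subseteq P \neq R$, contradicting $t$-invertibility), so $PP^{-1} = R$ and $P$ is invertible. Since invertible ideals are $t$-ideals, Kang's factorization $aR = (P_1 \cdots P_n)_t$ for a regular principal ideal collapses to an ordinary product $aR = P_1 \cdots P_n$. To extend to an arbitrary regular ideal $I$, I would localize at each regular maximal $M$: $R_{[M]}$ is a rank-one DVR, and $I$ lies in only finitely many regular primes (since any regular $a \in I$ has $aR = M_1 \cdots M_n$), yielding $I = \prod M_i^{e_i}$ as a product of pairwise comaximal invertibles; hence $I$ is invertible and $R$ is Dedekind. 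Finally, (4) follows by combining (2) and (3): a regular PIR is both Dedekind and factorial, and conversely when every regular ideal factors as a product of principal primes it is itself principal.

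The main obstacle is the converse of (3): extending invertibility from regular principal to arbitrary regular ideals. Kang's factorization is given only for principal ideals, so lifting it requires either the careful localization sketched above (with some attention to regularity and the possible failure of the Marot property, so that $[M]R_{[M]}$ and $MR_{(M)}$ behave correctly) or else an appeal to \cite[Theorem 10]{m63}, which directly characterizes Dedekind rings via unique factorization of regular ideals into primes.
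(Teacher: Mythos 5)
Your argument is correct, but it is organized quite differently from the paper's proof, which is largely a proof by citation: part (2) is delegated to Anderson and Markanda's theorem on factorial Krull rings, the forward half of (3) to Mott, and (4) to Elliott, while (1) is dispatched by simply unwinding definitions ($\Cl (R) = \Pic (R)$ says exactly that every regular $t$-invertible $t$-ideal is invertible, which together with the $t$-factorization of regular principal ideals is the definition of a regular $\pi$-ring). Your reduction of everything to the generators $\{[P] \mid P \in X^1_r(R)\}$ of $\Cl (R)$ yields self-contained proofs of (1) and (2), which is a genuine gain over the citations; the factorization-matching argument for (2) in particular is sound, since a prime divisor of a regular element generates a regular principal prime inside any given $P \in X^1_r(R)$, and minimality forces equality. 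The one place where you work harder than necessary is the converse of (3): the paper never factors arbitrary regular ideals. It instead uses Corollary \ref{coro4.7}, that in a Krull ring every regular ideal $I$ is already $t$-invertible; when $\rdim (R) = 1$ every regular maximal ideal contains, hence equals, a member of $X^1_r(R)$ and so is a maximal $t$-ideal, whence $II^{-1}$ (regular, and contained in no maximal $t$-ideal) is contained in no maximal ideal at all, i.e.\ $II^{-1} = R$. This two-line upgrade from $t$-invertible to invertible bypasses your localization and comaximal-power argument entirely; your sketch is workable and your fallback citation of Mott is also legitimate, but both are heavier than needed. Your derivation of (4) from (2) and (3) is the natural one and accomplishes what the paper's citation does.
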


\begin{proof}
(1) $\Cl (R) = \Pic (R)$ means that each regular $t$-invertible $t$-ideal of $R$ is invertible.
Thus, the result follows directly from the definition of regular $\pi$-rings.

(2) \cite[Theorem]{am95-2} (cf. \cite[Proposition 7.10]{m85} for the Marot Krull ring).

(3) It is clear that if $R$ is a Dedekind ring, then $\rdim (R) = 1$ \cite[Remark (7) on page 24]{m63}.
Conversely, assume that $\rdim (R) = 1$.
Then each regular prime $t$-ideal of $R$ is a maximal ideal,
and hence every regular ideal of $R$ is invertible. Thus, $R$ is a Dedekind ring.

(4) This follows from (1) and \cite[page 112]{e19}.
\end{proof}

It is worthwhile to note that the prime factorization of regular ideals
in Krull rings, regular PIRs, Dedekind rings, factorial rings, and regular $\pi$-rings
is unique. However, the prime factorization in PIRs, general ZPI-rings, UFRs,
and $\pi$-rings need not be unique (for example, if $R = \mathbb{Z}/4\mathbb{Z}$
and $P = 2\mathbb{Z}/4\mathbb{Z}$, then $R$ is an SPR and $P^2 = P^{2+n}$ for all integers $n \geq 1$).

\subsection{Nagata rings}

Let $X$ be an indeterminate over a ring $R$ and $R[X]$ be the
polynomial ring over $R$. For $f \in R[X]$, let $c(f)$ denote the ideal of $R$
generated by the coefficients of $f$. McCoy's theorem says that
$f \in R[X]$ is a zero divisor if and only if $af = 0$ for some $0 \neq a \in R$.
Hence, $f \in R[X]$ is regular if and only if $c(f)$ is semiregular,
i.e., $xc(f) = (0)$ implies that $x = 0$ for all $x \in R$.
It is clear that if $R$ satisfies Property(A), then
$f \in R[X]$ is regular if and only if $c(f)$ is regular, hence
$I \subseteq Z(R)$ if and only if $IR[X] \subseteq Z(R[X])$.

If $f,g \in R[X]$, then
$c(f)^{n+1}c(g) = c(f)^nc(fg)$ for some integer $n \geq 1$ by Dedekind-Mertens lemma
\cite[Corollary 28.3]{3}.
Hence, if $S = \{f \in R[X] \mid c(f) = R\}$, then $S$ is a (regular)
saturated multiplicative set of $R[X]$, and hence $R[X]_S$
is an overring of $R[X]$ \cite[page 18]{n62}.
The ring $R[X]_S$ is denoted by $R(X)$ and called the {\it Nagata ring} of $R$.
In \cite{a69}, Arnold related the ideal theory of $R(X)$ to that of $R$.
For example, he showed that $R$ is a Pr\"ufer domain (resp.,
Dedekind domain) if and only if $R(X)$ is a Pr\"ufer domain (resp., PID).
Anderson showed that every finitely generated locally principal ideal of $R(X)$ is principal \cite[Theorem 2]{a77},
and hence Pic$(R(X)) = \{0\}$.

Now, let $R$ be an integral domain and $N_v = \{f \in R[X] \mid f \neq 0$ and $c(f)_v = R\}$. Then
$N_v$ is a saturated multiplicative set of $R[X]$ and $R(X) \subseteq R[X]_{N_v}$,
which is called the {\it $t$-Nagata ring} of $R$.
Gilmer showed that $R$ is a Krull domain if and only if $R[X]_{N_v}$ is a PID,
and if $R$ is a P$v$MD, then $R[X]_{N_v}$ is a Bezout domain \cite{g70}.
($R$ is a Pr\"ufer $v$-multiplication domain (P$v$MD) if every nonzero
finitely generated ideal of $R$ is $t$-invertible.) The $t$-Nagata ring of $R$
was generalized via an arbitrary star-operation $*$ on $R$ by Kang \cite{k89-1}, i.e.,
he studied the ring $R[X]_{N_*}$, where $N_* = \{f \in R[X] \mid f \neq 0$ and $c(f)_* = R\}$,
and, among other things, he showed that $\Pic (R[X]_{N_*}) = \{0\}$
and $\Cl (R[X]_{N_v}) = \{0\}$.
Huckaba and Papick considered the $t$-Nagata ring in the case of a ring with zero divisors
under the additional condition that it is an additively regular ring with Property(A) \cite{hp80}.
Among other things, they showed that $R$ is a P$v$MR (i.e., each finitely generated regular ideal
is $t$-invertible) if and only if $R[X]_{U_2}$ is a Pr\"ufer ring,
where $U_2 = \{f \in R[X] \mid c(f)_v = R$ and $f$ is regular$\}$ \cite[Theorem 3.6]{hp80}.
In \cite{7}, Kang studied two generalizations of the $t$-Nagata ring of an
integral domain to a ring $R$ with zero divisors, i.e., $N_v(R) = \{f \in R[X] \mid c(f)_v = R\}$
and $N_v^r(R) = \{f \in R[X] \mid c(f)$ is regular and $c(f)_v = R\}$.

\section{The $w$-operation and the finite direct product of rings} \label{w-oper}

In this section, we review the $w$-operation on integral domains and its
two generalizations to rings with zero divisors, which motivate
the new star operation of this paper for the prime factorization property
of general Krull rings. We also study some basic properties of the finite
direct product of rings.

\subsection{The $w$-operation on integral domains}

Let $R$ be an integral domain.
A finitely generated ideal $I$ of $R$ is called a {\it GV-ideal} if $I^{-1} = R$,
where $\GV$ stands for Glaz and Vasconcelos,
and we denote by $\GV (R)$ the set of all $\GV$-ideals of $R$.
In \cite{gv77}, Glaz and Vasconcelos said that a torsion free $R$-module $M$ is {\it semi-divisorial} if $M = \{ a \in M \otimes_R T(R) \mid aJ \subseteq M \mbox{ for some } J \in \GV (R) \}$.
The $w$-operation on $R$ is a star operation defined by
\[
  I_w = \{ x \in T(R) \mid xJ \subseteq I \mbox{ for some } J \in \GV (R) \}
\]
for all $I \in \mathsf K (R)$. Then $w$ is  of finite type, $w \leq t$,
$t$-$\Max(R) = w$-$\Max(R)$, and $I_w = \bigcap_{P \in t\textnormal{-} \hspace{-2pt}\Max(R)}IR_P$ for
all $I \in F(R)$ \cite[Corollaries 2.13 and 2.17]{ac00}.
The $w$-operation was introduced by Hedstrom and Houston \cite{hh80}
and it has begun to be studied extensively after Wang and McCasland's paper \cite{wm97},
where the notation $w$ was first used, was published.

 The $w$-operation is very useful when we study the ideal theory of integral domains.
 For example, it is known that (i) $R$ is a $w$-Noetherian domain if and only if $R[X]$ is a
 $w$-Noetherian domain \cite[Theorem 1.13]{wm99}, if and only if $R[X]_{N_v}$ is a Noetherian domain
 (\cite[Corollary 2.8]{w02} and \cite[Theorem 2.2]{c05});
 (ii) $R$ is a Krull domain if and only if every nonzero (prime) ideal of $R$ is
 $w$-invertible \cite[Theorem 5.4]{wm97}, if and only if
 $R$ is an integrally closed $w$-Noetherian domain \cite[Theorem 2.8]{wm99};
 (iii) $R$ is a P$v$MD if and only if $R$ is integrally closed and $t= w$ on $R$ \cite[Theorem 3.5]{k89-1};
 and (iv) if $Q$ is a nonzero primary ideal of $R$, then
 $Q_w = Q$ if and only if $Q_w \subsetneq R$ \cite[Proposition 1.1]{wm99}.

\subsection{Two generalizations of the $w$-operation to rings with zero divisors}

The $w$-operation has been generalized to rings with zero divisors by two ways.
The first one was by Yin, Wang, Zhu, and Chen \cite{ywzc11} as follows:
Let $J$ be a finitely generated ideal of $R$.
Then $J$ is called a {\it GV-ideal}, denoted by $J \in \textnormal{GV}(R)$,
if the homomorphism $\varphi \colon R \to \Hom_R (J,R)$ given by $\varphi(r)(a) = ra$ is an isomorphism \cite[Definition 1.1]{ywzc11}.
Clearly, if $J$ is regular, then $\Hom_R (J,R) = J^{-1}$, so $\varphi$ is an isomorphism if and only if $J^{-1} = R$.
The $w$-operation on a ring $R$ with zero divisors is a star operation of finite type defined by
\[
  A_w = \{ x \in T(R) \mid xJ \subseteq A \mbox{ for some } J \in \GV (R) \}
\]
for all $A \in \mathsf K (R)$ \cite[Section 3]{ywzc11}.

Let $\Lambda = \{ J \in \mathsf K (R) \mid J \mbox{ is  finitely generated and } J^{-1} = R \}$.
Later, in \cite[Definition 2.4.21]{e19}, Elliott introduced another type of a $w$-operation as follows; $A_{w'} = \{ x \in T(R) \mid xJ \subseteq A \mbox{ for some } J \in \Lambda \}$ for all $A \in \mathsf K (R)$.

An ideal $I$ of $R$ is {\it semiregular} if $I$ contains a finitely generated ideal $I_0$ which has no nonzero annihilator,
i.e., $xI_0 = (0)$ implies that $x = 0$ for all $x \in R$.
Clearly, a regular ideal is semiregular. Also, a $\GV$-ideal is semiregular,
and hence the $w$-operation is reduced (i.e., $(0)_w = (0)$).
Let $S^{0}$ be the set of finitely generated semiregular ideals of $R$, $R[X]$ be the polynomial ring over $R$, and
\[
  Q_0 (R) = \{ u \in T(R[X]) \mid Iu \subseteq R \mbox{ for some } I \in S^{0} \} \,.
\]
Then $Q_0 (R)$, called the {\it ring of finite fractions}, is a commutative ring with identity; $T(R) \subseteq Q_0 (R) \subseteq T(R[X])$.
Moreover, if $R$ satisfies Property(A) (i.e., each semiregular ideal of $R$ is regular),
 then $Q_0 (R) = T(R)$ \cite{l93}.
It is known that a finitely generated semiregular ideal $J$ of $R$ is a $\GV$-ideal if and only if $(R :_{Q_0 (R)} J) = R$ \cite[Lemma 4.7]{wk15}.
Note that $(R :_{Q_0 (R)} J) = R$ implies that $(R :_{T(R)} J)= R$, so if $J \in \GV (R)$, then $J^{-1} = R$.
Hence $A_w \subseteq A_{w'}$ for all $A \in \mathsf K(R)$.
Clearly, if $R$ is an integral domain but not a field, then $w = w'$ on $R$.
But, $w \neq w'$ in general (see Example~\ref{ex2.7}).

Let $N_w = \{f \in R[X] \mid c(f)_w =R\}$. Then $N_w$ is a saturated multiplicative set of
$R[X]$ such that each element of $N_w$ is a regular element of $R[X]$
\cite[Lemma 3.2 and Proposition 3.3(2)]{wk15},
and hence $R[X]_{N_w}$ is an overring of $R[X]$. Moreover,
each ideal of $R[X]_{N_w}$ is a $w$-ideal \cite[Theorem 6.6.8]{wk16},
Max$(R[X]_{N_w}) = \{P[X]_{N_w} \mid P \in w$-Max$(R)\}$ \cite[Proposition 3.3(4)]{wk15},
and $R$ is a $w$-Noetherian ring if and only if $R[X]$ is $w$-Noetherian,
if and only if $R[X]_{N_w}$ is Noetherian \cite[Theorem 6.6.8]{wk16}.

\subsection{The direct product of finitely many rings}

Let $R_1$ and $R_2$ be rings. Then the direct product of $R_1$ and $R_2$,
denoted by $R_1 \times R_2$, is a commutative ring with identity.
All of the results in this subsection are stated for $R_1 \times R_2$.
However, by mathematical induction, it is easy to see that
the results of this section hold for the direct product of finitely many rings.

\begin{lemma} \label{direct product}
Let $R = R_1 \times R_2$ be the direct product of rings $R_1$ and $R_2$.
Then the following conditions are satisfied.
\begin{enumerate}
\item $T(R) = T(R_1) \times T(R_2)$.

\item Every $R$-submodule of $T(R)$ is of the form $I_1 \times I_2$ for
an $R_i$-submodule $I_i$ of $T(R_i)$, $i =1, 2$.

\item $Q$ is a prime (resp., primary) ideal of $R$ if and only if $Q = P_1 \times R_2$ or $Q = R_1 \times P_2$ for some prime (resp., primary) ideal $P_i$ of $R_i$, $i=1,2$, and in this case, if $Q$ is a prime ideal, then $R_Q = (R_i)_{P_i}$.

\item $\dim (R) = \max \{ \dim (R_1), \dim (R_2) \}$.

\item $\reg(R) = \reg(R_1) \times \reg(R_2)$. Thus,
$Z(R) = \big( Z(R_1) \times R_2 \big) \cup \big( R_1 \times Z(R_2) \big)$.

\item $N(R) = N(R_1) \times N(R_2)$.
\end{enumerate}
\end{lemma}

\begin{proof}
This is well known and an easy exercise.
\end{proof}

The next proposition can be applied so that the ideals of $R_1 \times R_2$ with
prescribed properties can be constructed by using the ideals of $R_1$ and $R_2$.

\begin{proposition} \label{star direct product}
Let $R = R_1 \times R_2$ be the direct product of rings $R_1$ and $R_2$,
$J_i$ be an $R_i$-submodule of $T(R_i)$ for $i =1, 2$, and $J = J_1 \times J_2$.
Then the following conditions are satisfied.
\begin{enumerate}
\item $J$ is fractional (resp., regular, finitely generated, principal)
if and only if both $J_1$ and $J_2$ are fractional (resp., regular, finitely generated, principal).

\item $J^{-1} = J_1^{-1} \times J_2^{-1}$.

\item $J_v = (J_1)_v \times (J_2)_v$.

\item $J_t = (J_1)_t \times (J_2)_t$.

\item $J_{w'} = (J_1)_{w'} \times (J_2)_{w'}$.

\item $J \in \GV (R)$ if and only if $J_i \in \GV (R_i)$ for $i = 1, 2$.

\item $J_w = (J_1)_w \times (J_2)_w$.

\item $J$ is invertible (resp., $v$-invertible, $t$-invertible, $w'$-invertible, $w$-invertible)
 if and only if $J_i$ is invertible (resp., $v$-invertible, $t$-invertible, $w'$-invertible, $w$-invertible) for $i = 1, 2$.
\end{enumerate}
\end{proposition}

\begin{proof}
(1), (2), (3), (4), and (5) \cite[Exercise 9 on page 169]{e19}.

(6) \cite[Proposition 1.2]{ywzc11}.

(7) Let $x = (a_1, a_2) \in R$. Then $x \in J_w$ if and only if $xA \subseteq J$ for some $A \in \GV (R)$,
 if and only if $(a_1, a_2) \big( A_1 \times A_2 \big) \subseteq J_1 \times J_2$ for some $A_i \in \GV (R_i)$, $i=1, 2$,
 by (6) and Lemma~\ref{direct product}, if and only if $a_i A_i \subseteq J_i$ for some $A_i \in \GV (R_i)$,
 if and only if $(a_1, a_2) \in (J_1)_w \times (J_2)_w$.
Thus, $J_w = (J_1)_w \times (J_2)_w$.

(8) This follows directly from (2), (3), (4), (5), and (7).
\end{proof}

\begin{corollary} \label{coro3.3}
Let $R = R_1 \times R_2$ be the direct product of rings $R_1$ and $R_2$. Then
the following conditions are satisfied.
\begin{enumerate}
\item $\Pic (R) = \Pic (R_1) \times \Pic (R_2)$.

\item $\Cl_* (R) = \Cl_* (R_1) \times \Cl_* (R_2)$ for $* = v, t, w', w$.
\end{enumerate}
\end{corollary}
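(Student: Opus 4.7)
The plan is to build an explicit group isomorphism componentwise and then pass to quotients. By Lemma~\ref{direct sum}(2), every $R$-submodule of $T(R)$ is of the form $I = I_1 \times I_2$ with $I_i$ an $R_i$-submodule of $T(R_i)$. Define $\Phi \colon \mathsf K(R) \to \mathsf K(R_1) \times \mathsf K(R_2)$ by $\Phi(I) = (I_1, I_2)$; this is a bijection. I will first verify that $\Phi$ restricts to a bijection $\Inv(R) \to \Inv(R_1) \times \Inv(R_2)$ using Proposition~\ref{star direct sum}(1) and (8), and to a bijection $\Inv_*(R) \to \Inv_*(R_1) \times \Inv_*(R_2)$ for $* = v, t, w', w$ using parts (1), (3)--(5), (7), and (8) of Proposition~\ref{star direct sum} (the ``regular'' and ``fractional'' parts come from (1), the ``$*$-closed'' part from (3)--(5) and (7), and ``$*$-invertible'' from (8)).

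Next, $\Phi$ is a group homomorphism for the product laws $I \star J = IJ$ on $\Inv(R)$ and $I \star J = (IJ)_*$ on $\Inv_*(R)$. Indeed, for $I = I_1 \times I_2$ and $J = J_1 \times J_2$ one checks directly that $IJ = (I_1 J_1) \times (I_2 J_2)$, and for $* \in \{v, t, w', w\}$ the relevant clauses of Proposition~\ref{star direct sum} give $(IJ)_* = (I_1 J_1)_* \times (I_2 J_2)_*$, which is precisely the componentwise product in $\Inv_*(R_1) \times \Inv_*(R_2)$. Finally, $\Phi$ sends $\Prin(R)$ onto $\Prin(R_1) \times \Prin(R_2)$: by Lemma~\ref{direct sum}(5) we have $\reg(R) = \reg(R_1) \times \reg(R_2)$, and for $a = (a_1, a_2) \in \reg(R)$ the regular principal ideal $aR$ equals $a_1 R_1 \times a_2 R_2$, so $\Phi$ induces the desired isomorphism on the subgroups of regular principal fractional ideals.

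Passing to quotients, $\Phi$ descends to a group isomorphism $\Pic(R) \cong \Pic(R_1) \times \Pic(R_2)$, proving (1), and to $\Cl_*(R) \cong \Cl_*(R_1) \times \Cl_*(R_2)$ for $* = v, t, w', w$, proving (2). The main obstacle is essentially nonexistent once Proposition~\ref{star direct sum} is in hand: the whole argument is a bookkeeping verification that every structure in sight (fractional, regular, $*$-closed, $*$-invertible, principal, and multiplicative product) respects the direct sum decomposition $R = R_1 \times R_2$. The one point deserving mild attention is that the twisted product $I \star J = (IJ)_*$ on $\Inv_*(R)$, which a priori mixes ordinary multiplication with $*$-closure, nonetheless factors componentwise; but this is immediate from parts (3), (4), (5), (7) of Proposition~\ref{star direct sum} applied to the finitely generated ideal $IJ$.
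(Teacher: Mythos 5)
Your proof is correct and follows essentially the same route as the paper, which simply cites Proposition~\ref{star direct sum}(1) and (8); you have merely written out the componentwise bookkeeping (including the parts (3)--(5) and (7) needed for the $*$-closure and the product law) that the paper leaves implicit. The only cosmetic quibble is your reference to ``the finitely generated ideal $IJ$'' --- $IJ$ need not be finitely generated, but this is harmless since Proposition~\ref{star direct sum} applies to arbitrary submodules.
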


\begin{proof}
This follows directly from Proposition~\ref{star direct product}(1) and (8).
\end{proof}

The following corollary seems to be well known.
But, we couldn't find it in the literature, so we include it.

\begin{corollary} \label{direct product Property(A)}
Let $R = R_1 \times R_2$ be the direct product of rings $R_1$ and $R_2$.
Then $R$ satisfies Property(A) if and only if both $R_1$ and $R_2$ satisfy Property(A).
\end{corollary}

\begin{proof}
($\Rightarrow$) For a finitely generated
ideal $J_1$ of $R_1$ with $J_1 \subseteq Z (R_1)$, let $J = J_1 \times R_2$.
Then $J$ is finitely generated and $J \subseteq Z (R)$.
Hence, by assumption, there is a nonzero element $(a,b) \in R$ such that $(a,b)J = \langle (0,0) \rangle$.
Note that $1 \in R_2$, so $b = 0$. Hence, $(a,b) \neq (0,0)$ implies that $a \neq 0$.
Note also that $aJ_1 = (0)$. Thus, $R_1$ satisfies Property(A).
Similarly, $R_2$ also satisfies Property(A).

($\Leftarrow$) Let $J = J_1 \times J_2$ be a finitely generated ideal of $R$ with $J \subseteq Z (R)$.
Then $J_i$ is finitely generated for $i=1,2$ by Proposition~\ref{star direct product}(1) and
either $J_1 \subseteq Z (R_1)$ or $J_2 \subseteq Z (R_2)$ by Lemma \ref{direct product}(5).
For convenience, assume that $J_1 \subseteq Z (R_1)$.
Then there is an element $0 \neq a \in R_1$ such that $aJ_1 = (0)$.
Hence $(a,0) \in R$, $(a,0) \neq (0,0)$, and $(a,0)J = \langle (0,0) \rangle$.
Thus, $R$ satisfies Property(A).
\end{proof}

\begin{corollary}
Let $R$ be a PIR, a general ZPI-ring, an UFR, or a $\pi$-ring.
Then $R$ has few zero divisors and satisfies Property(A).
\end{corollary}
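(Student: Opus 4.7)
The plan is to reduce the statement to the atomic summands using the structure theorems recalled in Section~\ref{2}: $R$ is a PIR (resp., general ZPI-ring, UFR, $\pi$-ring) if and only if $R$ is a finite direct sum of PIDs (resp., Dedekind domains, UFDs, $\pi$-domains) and SPRs. In each case, every summand is either an integral domain or an SPR. Hence it suffices to verify (a) that every integral domain and every SPR has few zero divisors and satisfies Property(A), and (b) that both properties are preserved under finite direct sums.

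For (a), integral domains satisfy Property(A) (explicitly mentioned in the excerpt after the definition of Property(A)) and trivially have few zero divisors since $Z(R) = (0)$ is a prime ideal. For an SPR $R$ with maximal ideal $P$ satisfying $P^n = (0)$, the only ideals of $R$ are $R, P, P^2, \dots , P^{n-1}, (0)$, so $R$ is Noetherian and therefore satisfies Property(A). Moreover, since $P$ is nilpotent, every element of $P$ is a zero divisor; combined with the fact that $P$ is the set of non-units of the local ring $R$, we obtain $Z(R) = P$, a single prime ideal, so $R$ has few zero divisors.

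For (b), the preservation of Property(A) under finite direct sums is exactly Corollary~\ref{direct sum Property(A)} (iterated if more than two summands are involved). For few zero divisors, by induction it suffices to handle $R = R_1 \times R_2$. Suppose $Z(R_i) = \bigcup_{j=1}^{n_i} P_{i,j}$ is a finite union of prime ideals of $R_i$ for $i = 1, 2$. By Lemma~\ref{direct sum}(5),
\[
Z(R) = \bigl(Z(R_1) \times R_2\bigr) \cup \bigl(R_1 \times Z(R_2)\bigr) = \bigcup_{j=1}^{n_1} \bigl(P_{1,j} \times R_2\bigr) \cup \bigcup_{k=1}^{n_2} \bigl(R_1 \times P_{2,k}\bigr),
\]
and by Lemma~\ref{direct sum}(3) each set $P_{1,j} \times R_2$ and $R_1 \times P_{2,k}$ is a prime ideal of $R$. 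Thus $Z(R)$ is a finite union of prime ideals, which completes the argument.

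There is no real obstacle here: the proof is a bookkeeping reduction to the two base cases, and the only minor check is that in an SPR the zero divisors coincide with the (nilpotent) maximal ideal, which follows immediately from nilpotency.
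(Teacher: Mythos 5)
Your proof is correct and follows essentially the same route as the paper: reduce via the structure theorems to a finite direct sum of integral domains and SPRs, then invoke Lemma~\ref{direct sum}(5) for few zero divisors and Corollary~\ref{direct sum Property(A)} for Property(A). You simply spell out the base-case verifications (that an SPR is Noetherian with $Z(R)$ equal to its maximal ideal, and that a domain has $Z(R)=(0)$) which the paper leaves implicit.
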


\begin{proof}
By the structure theorem, $R$ is a finite direct product of integral domains
and SPRs. Hence, $R$ has few zero divisors by Lemma \ref{direct product}(3) and (5).
Note that both integral domains and SPRs satisfy Property(A).
Thus, $R$ satisfies Property(A) by Corollary \ref{direct product Property(A)}.
\end{proof}

We end this section with an example of rings $R$ that are finite
direct product of rings with a unique minimal prime ideal, and in this case,
each prime ideal of $R$ contains a unique minimal prime ideal.

\begin{example}
(1) Let $R$ be an integrally closed ring such that $T(R)$ is a zero dimensional Noetherian ring.
Then the zero ideal of $T(R)$ has an irredundant primary decomposition,
say, $(0) = Q_1 \cap \cdots \cap Q_n$ in $T(R)$, and each pair of $Q_i$'s is comaximal.
Hence, $T(R) = T(R)/Q_1 \times \cdots \times  T(R)/Q_n$ and
$$R \hookrightarrow R/(Q_1 \cap R) \times \cdots \times  R/(Q_n \cap R) \hookrightarrow T(R).$$
Note that $R$ is integrally closed and $R/(Q_1 \cap R) \times \cdots \times  R/(Q_n \cap R)$ is a finitely generated $R$-module, so
$$R = R/(Q_1 \cap R) \times \cdots \times  R/(Q_n \cap R).$$
Note also that each $Q_i \cap R$ is a primary ideal of $R$. Thus, each $R/(Q_i \cap R)$
has a unique minimal prime ideal. Moreover, each prime ideal of $R$ contains a unique minimal prime ideal by
Lemma \ref{direct product}(3).

(2) Let $R$ be an integrally closed reduced ring. By the proof of (1) above,
$R$ has a finite number of
minimal prime ideals if and only if $R$ is a finite direct product of integral domains
\cite[Lemma 8.14]{4}.
In particular, if $R$ is an integrally closed reduced Noetherian ring,
then $R$ is a finite direct product of integrally closed Noetherian domains.
\end{example}

\section{A new star operation} \label{3}

Let $R$ be a ring with total quotient ring $T(R)$.
In this section, we introduce a new star operation $u$ on $R$, which is
an extension of the $w$-operation on integral domains to rings with zero divisors,
i.e., $u=w$ on an integral domain, and we study basic properties of the $u$-operation.
Now, let
\[
  \rGV (R) = \{ J \in F^{\reg} (R) \mid J \mbox{ is finitely generated and } J^{-1} = R \} \,.
\]
Then $\rGV (R)$ is a multiplicative set of regular ideals
of $R$, $R \in \rGV (R)$, and $\rGV (R) = \GV (R) \cap F^{\reg} (R)$.

\begin{proposition} \label{u-oper}
Let $\mathsf K (R)$ be the set of Kaplansky fractional ideals of a ring $R$.
For each $I \in \mathsf K (R)$, let
\[
  I_u = \{ x \in T(R) \mid xJ \subseteq I \mbox{ for some } J \in \rGV(R)\} \,.
\]
Then the following conditions hold for all $a \in T(R)$ and $I,J \in \mathsf K (R)$:
\begin{enumerate}
\item $I_u \in \mathsf K (R)$.

\item $aI_u \subseteq (aI)_u$, and equality holds when $a$ is regular.

\item $I \subseteq I_u$, and $I \subseteq J$ implies $I_u \subseteq J_u$.

\item $(I_u)_u = I_u$.

\item $R_u = R$.

\item $(0)_u = (0)$.

\item $I_u = \bigcup\{(I_0)_u \mid I_0 \in \mathsf K (R)$, $I_0 \subseteq I$, and $I_0$ is finitely generated$\}$.
\end{enumerate}
Thus, the map $u \colon \mathsf K (R) \to \mathsf K (R)$, given by $I \mapsto I_u$, is a reduced star operation of finite type on $R$.
\end{proposition}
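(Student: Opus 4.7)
The approach is direct verification of the seven listed conditions, exploiting three features of the set $\rGV(R)$: it is closed under products (so if $J_1,J_2 \in \rGV(R)$, then $J_1 J_2$ is finitely generated with $(J_1J_2)^{-1} \supseteq R$; in fact $(J_1J_2)^{-1} = R$ by a short calculation using Lemma~\ref{star} with $v$), it contains the ring $R$ itself, and every $J \in \rGV(R)$ is regular, hence contains a regular element. The final sentence (that $u$ is a reduced star operation of finite type) then follows formally from (1)--(7).

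First, for (1), given $x,y \in I_u$ with $xJ_1 \subseteq I$ and $yJ_2 \subseteq I$ for $J_1,J_2 \in \rGV(R)$, I would use the product $J_1 J_2 \in \rGV(R)$ to verify $(rx+y)J_1J_2 \subseteq I$ for all $r \in R$. For (2), the inclusion $aI_u \subseteq (aI)_u$ is immediate from the definition, and for regular $a$, the reverse inclusion follows by dividing by $a$ inside $T(R)$. For (3), containment uses $R \in \rGV(R)$, and monotonicity is tautological. Property (5) reduces to the tautology $J^{-1} = R$ for $J \in \rGV(R)$, and property (6) is where regularity of $J$ enters: if $xJ = (0)$ and $J$ contains a regular element $r$, then $xr = 0$ forces $x = 0$.

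The two steps requiring a small argument are (4) and (7), and these are where I expect the only mild obstacle (really a bookkeeping issue) to lie: one must use that elements of $\rGV(R)$ are \emph{finitely generated}. For (4), given $x \in (I_u)_u$ with $xJ \subseteq I_u$ for a finitely generated $J = (j_1,\dots,j_n) \in \rGV(R)$, each $xj_i$ lies in $I_u$, so there exist $A_1,\dots,A_n \in \rGV(R)$ with $xj_iA_i \subseteq I$. Setting $A = A_1\cdots A_n \in \rGV(R)$, one obtains $xj_i A \subseteq I$ for each $i$, hence $x(JA) \subseteq I$, and $JA \in \rGV(R)$ shows $x \in I_u$. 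For (7), given $x \in I_u$ with $xJ \subseteq I$ and $J \in \rGV(R)$, the submodule $I_0 := xJ$ is a finitely generated Kaplansky fractional subideal of $I$ with $x \in (I_0)_u$ (witnessed by $J$ itself); the reverse inclusion is immediate from monotonicity (3).

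Finally, to conclude that $u$ is a reduced star operation of finite type, I would assemble the verifications: (1)--(4) show $u$ is a star operation on $\mathsf{K}(R)$, (5) makes it unital, (6) makes it reduced, and (7) shows $u = u_f$ in the notation of Section~\ref{1}.
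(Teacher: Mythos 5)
Your proposal is correct and follows essentially the same direct verification as the paper: the key points (multiplicativity of $\rGV(R)$ for (1) and (4), regularity of $J$ for (6), and taking $I_0 = xJ$ for (7)) all match the paper's argument. The only cosmetic difference is in (5), where you observe directly that $xJ \subseteq R$ means $x \in J^{-1} = R$, whereas the paper routes the same fact through the $v$-operation; both are fine.
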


\begin{proof}
(1) It is a routine to check that $I_u \in \mathsf K (R)$, so the proof is omitted.

(2) If $x \in I_u$, there exists a $J \in \rGV (R)$ such that $xJ \subseteq I$, so
$axJ \subseteq aI$, and hence $ax \in (aI)_u$.
Thus, $aI_u \subseteq (aI)_u$.
Moreover, if $a$ is regular, then
\[
  I_u = \frac{1}{a} (aI_u) \subseteq \frac{1}{a}(aI)_u \subseteq (\frac{1}{a} (aI))_u = I_u \,,
\]
and so $\frac{1}{a}(aI)_u = I_u$. Thus, $aI_u = (aI)_u$.

(3) Clear.

(4) Let $x \in (I_u)_u$. Then $xJ \subseteq I_u$ for some $J \in \rGV (R)$.
Note that $J$ is finitely generated and $\rGV (R)$ is a multiplicative set of ideals.
Hence, there is a $J_1 \in \rGV (R)$ such that $xJJ_1 \subseteq I$. Thus, $JJ_1 \in \rGV (R)$ implies $x \in I_u$.
Therefore, $(I_u)_u \subseteq I_u$, so by (3), $(I_u)_u = I_u$.

(5) If $x \in R_u$, then $xJ \subseteq R$ for some $J \in \rGV (R)$.
Hence, $x \in xR = xJ_v \subseteq (xJ)_v \subseteq R_v = R$. Thus,
$R_u \subseteq R$, and hence $R_u = R$ by (3).

(6) Let $x \in (0)_u$. Then $xJ \subseteq (0)$ for some $J \in \rGV (R)$,
and since $J$ is regular, $x = 0$. Thus, $(0)_u \subseteq (0)$, so $(0)_u = (0)$.

(7) Let $x \in I_u$. Then $xJ \subseteq I$ for some $J \in \rGV (R)$.
Note that $xJ$ is finitely generated, so there is a finitely generated subideal $I_0$ of $I$
such that $xJ \subseteq I_0$. Hence, $x \in (I_0)_u$. This implies
$I_u = \bigcup\{(I_0)_u \mid I_0 \in \mathsf K (R)$ is finitely generated and $I_0 \subseteq I\}$.
Thus, $u$ is of finite type.
\end{proof}

As in \cite[Definition 2.4.16]{e19},
a star operation $*$ on a ring $R$ is said to be {\it stable} if
\begin{itemize}
\item $(I \cap J)_* = I_* \cap J_*$ for all $I, J \in \mathsf K (R)$.

\item $(I :_{T(R)} J)_* = (I_* :_{T(R)} J)$ for all $I, J \in \mathsf K (R)$ with $J$ finitely generated.
\end{itemize}
It is known that $*$ is stable if and only if $I_* = \{x \in T(R) \mid xJ \subseteq I$ for some $J \in \mathsf K (R)$
with $J_* = R\}$ for all $I \in \mathsf K (R)$ \cite[Proposition 2.4.23]{e19}

\begin{corollary} \label{u-stable}
The $u$-operation on a ring $R$ is stable.
\end{corollary}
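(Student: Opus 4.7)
The plan is to verify stability of $u$ via the characterization stated immediately before the corollary: it suffices to show, for every $I \in \mathsf{K}(R)$, that
\[
  I_u = \{ x \in T(R) \mid xJ \subseteq I \mbox{ for some } J \in \mathsf K (R) \mbox{ with } J_u = R \} \,.
\]
In other words, I want to replace the defining family $\rGV(R)$ by the a priori larger family of Kaplansky fractional ideals $J$ with $J_u = R$, and then show that these two families produce the same operation.

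First, I would observe the easy direction: if $J \in \rGV(R)$, then $J \subseteq R$, so $J_u \subseteq R_u = R$ by Proposition~\ref{u-oper}(3) and (5); conversely, $J \in \rGV(R)$ exhibits $1 \cdot J \subseteq J \subseteq R$, so $1 \in J_u$ and therefore $J_u = R$. Hence every test ideal occurring in the definition of $I_u$ already satisfies $J_u = R$, which gives the inclusion $I_u \subseteq \{x \in T(R) \mid xJ \subseteq I \text{ for some } J_u = R\}$.

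Next, I would prove the reverse inclusion. Suppose $x \in T(R)$ and $xJ \subseteq I$ for some $J \in \mathsf K(R)$ with $J_u = R$. Since $1 \in R = J_u$, the very definition of $(-)_u$ produces a $J' \in \rGV(R)$ with $1 \cdot J' \subseteq J$, i.e., $J' \subseteq J$. Multiplying by $x$ gives $xJ' \subseteq xJ \subseteq I$, so $x \in I_u$ by definition. Combining the two inclusions yields the desired equality, and hence stability by the quoted characterization.

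I do not anticipate any real obstacle here; the argument is essentially bookkeeping because stability is baked into the way $u$ was defined (as a star operation generated by a multiplicative family of ``test'' ideals). The only subtlety is making sure that the family $\rGV(R)$ really generates the same collection of ``trivial'' ideals as $\{J : J_u = R\}$, which is handled by the round trip above. No finite-type hypothesis, no regularity argument beyond $J \in \rGV(R)$ being regular, and no appeal to Lemma~\ref{star} are needed.
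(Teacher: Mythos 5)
Your proof is correct, but it takes a genuinely different route from the paper. The paper verifies the two conditions in the definition of stability directly: it proves $(A \cap B)_u = A_u \cap B_u$ by using that $\rGV(R)$ is multiplicatively closed (given $xJ_1 \subseteq A$ and $xJ_2 \subseteq B$, pass to $J_1J_2$), and it proves $(A :_{T(R)} B)_u = (A_u :_{T(R)} B)$ for $B$ finitely generated by a separate two-inclusion argument that exploits the finite generation of $B$ to find a single $J \in \rGV(R)$ working for all generators. You instead invoke the characterization quoted just before the corollary (namely \cite[Proposition 2.4.23]{e19}, that $*$ is stable if and only if $I_* = \{x \in T(R) \mid xJ \subseteq I \mbox{ for some } J \mbox{ with } J_* = R\}$) and reduce the whole corollary to checking that the family $\rGV(R)$ and the family $\{J \in \mathsf K(R) \mid J_u = R\}$ generate the same operation; your round-trip argument for that (every $J \in \rGV(R)$ has $J_u = R$ since $1 \cdot J \subseteq J$ and $J \subseteq R$; conversely $J_u = R$ forces some $J' \in \rGV(R)$ with $J' \subseteq J$ via $1 \in J_u$) is correct. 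Your approach is shorter and explains conceptually why stability is automatic for any operation defined by a multiplicative family of test ideals, at the cost of leaning on the external characterization; the paper's proof is longer but self-contained, verifying the definition of stability from scratch.
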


\begin{proof}
Let $A, B \in \mathsf K (R)$.

Claim 1. $( A \cap B)_u = A_u \cap B_u$. ({\em Proof.}  Clearly, $( A \cap B)_u \subseteq A_u \cap B_u$.
For the reverse containment, let $x \in A_u \cap B_u$.
Then there exist some $J_1, J_2 \in \rGV (R)$ such that $xJ_1 \subseteq A$ and $xJ_2 \subseteq B$.
Hence, $xJ_1 J_2 \subseteq A \cap B$, and since $J_1 J_2 \in \rGV (R)$, $x \in (A \cap B)_u$.
Thus, $A_u \cap B_u \subseteq (A \cap B)_u$.)

Claim 2. $(A_u :_{T(R)} B) = (A :_{T(R)} B)_u$ for $B$ finitely generated.
({\em Proof.}  Let $x \in (A :_{T(R)} B)_u$.
Then $xJ \subseteq (A :_{T(R)} B)$ for some $J \in \rGV (R)$. Hence, $xJB \subseteq A$, so
$xJB \subseteq (xJB)_u \subseteq A_u$, which implies $xB \subseteq (A_u)_u = A_u$, so that $x \in (A_u :_{T(R)} B)$.
Thus, $(A :_{T(R)} B)_u \subseteq (A_u :_{T(R)} B)$.
Conversely, let $z \in (A_u :_{T(R)} B)$.
Then $zB \subseteq A_u$, and since $B$ is finitely generated, there exists a $J \in \rGV (R)$ such that $zJB \subseteq A$.
Hence, $zJ \subseteq (A :_{T(R)} B)$, which means that $z \in (A :_{T(R)} B)_u$.
Thus, $(A_u :_{T(R)} B) \subseteq (A :_{T(R)} B)_u$.)
\end{proof}

A star operation $*$ on a ring $R$ is said to be {\it spectral} if
there exists a set $\Lambda$ of prime ideals of $R$ such that
$I_* = \underset{P \in \Lambda}{\bigcap} [I] R_{[P]}$ for all $I \in \mathsf K (R)$
\cite[Definition 3.11.2]{e19}.
The next corollary shows that the $u$-operation on a ring $R$ is spectral.

\begin{corollary}
$I_u = \underset{P \in u \textnormal{-} \hspace{-2pt}\Max (R)}{\bigcap} [I] R_{[P]}$ for all $I \in \mathsf K (R)$.
\end{corollary}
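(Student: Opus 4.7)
The plan is to prove the equality by double inclusion, exploiting that $u$ is a reduced star operation of finite type (Proposition~\ref{u-oper}) so that Proposition~\ref{prop1.2} applies with $* = u$. The key preliminary remark is that each $J \in \rGV(R)$ satisfies $J_u = R$: taking $J$ itself in the defining formula yields $1 \cdot J \subseteq J$ with $J \in \rGV(R)$, so $1 \in J_u$, and hence $R \subseteq J_u \subseteq R_u = R$. It follows that no $J \in \rGV(R)$ can lie in a proper $u$-ideal, and in particular not in any $P \in u$-$\Max(R)$, since $J \subseteq P$ would force $R = J_u \subseteq P_u = P$.

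For the inclusion $I_u \subseteq \bigcap_{P \in u\textnormal{-}\Max(R)} [I]R_{[P]}$, I would fix $x \in I_u$, so that $xJ \subseteq I$ for some $J \in \rGV(R)$, and for each $P \in u$-$\Max(R)$ choose $s \in J \setminus P$ (possible by the preliminary remark); then $xs \in xJ \subseteq I$ with $s \in R \setminus P$, which places $x$ in $[I]R_{[P]}$.

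For the reverse inclusion, given $x$ in the intersection, I would introduce the conductor $A = \{ a \in R \mid xa \in I \}$, which is an integral ideal of $R$. For every $P \in u$-$\Max(R)$ the hypothesis supplies some $s_P \in R \setminus P$ with $xs_P \in I$, i.e.\ $s_P \in A \setminus P$, so $A \nsubseteq P$, and therefore $A_u \nsubseteq P$ as well. Since $u$ is of finite type, Proposition~\ref{prop1.2}(2) forces $A_u = R$; thus $1 \in A_u$, and by the very definition of the $u$-operation there exists $J \in \rGV(R)$ with $J = 1 \cdot J \subseteq A$. Then $xJ \subseteq xA \subseteq I$ shows $x \in I_u$. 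The only subtle point is the $\rGV$-ideal observation of the first paragraph; once that is in hand, the rest is the standard finite-type/spectrality manipulation.
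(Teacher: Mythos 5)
Your proof is correct and follows essentially the same route as the paper: the forward inclusion uses $J_u = R$ for $J \in \rGV(R)$ to see $J \nsubseteq P$, and the reverse inclusion introduces the conductor $A = \{a \in R \mid xa \in I\}$, deduces $A_u = R$ from Proposition~\ref{prop1.2}(2), and extracts a $\rGV$-ideal inside $A$. The only difference is that you spell out the justifications (in particular the preliminary remark that $\rGV$-ideals have $u$-closure $R$) which the paper leaves implicit.
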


\begin{proof}
Let $x \in I_u$. Then $xJ \subseteq I$ for some $J \in \rGV (R)$.
Note that $J_u = R$, so $J \nsubseteq P$ for all $P \in u$-$\Max (R)$.
Thus, $x \in \underset{P \in u \textnormal{-} \hspace{-2pt}\Max (R)}{\bigcap} [I] R_{[P]}$.
Conversely, assume that $z \in \underset{P \in u \textnormal{-} \hspace{-2pt}\Max (R)}{\bigcap} [I] R_{[P]}$, and let $A = \{ a \in R \mid az \in I \}$.
Then $A \nsubseteq P$ for all $P \in u$-$\Max (R)$, so $A_u = R$,
which implies that $J \subseteq A$ for some $J \in \rGV (R)$.
Hence, $zJ \subseteq zA \subseteq I$, so that $z \in I_u$.
Thus, the proof is completed.
\end{proof}

\begin{remark}
Let $\Lambda$ be a multiplicative set of ideals generated by a set of finitely generated ideals $J$ of a ring $R$ with $J^{-1} = R$.
For $I \in \mathsf K (R)$, let
\[
  I_{*_{\Lambda}} = \{ x \in T(R) \mid xJ \subseteq I \mbox{ for some } J \in \Lambda \} \,.
\]
Then the following statements hold.
\begin{enumerate}
\item $*_{\Lambda}$ is a star operation of finite type on $R$.

\item Each ideal in $\Lambda$ is semiregular if and only if $*_{\Lambda}$ is reduced.

\item Let $N_{*_{\Lambda}} = \{ f \in R[X] \mid c(f)_{*_{\Lambda}} = R \}$.
Then each element in $N_{*_{\Lambda}}$ is a regular element of $R[X]$ if and only if each ideal in $\Lambda$ is semiregular.

\item $*_{\Lambda}$ is stable and spectral.
\end{enumerate}
The proof is the same as that of the $u$-operation case.
Moreover, if $\Lambda = \{ R \}$, then $*_{\Lambda} = d$, if $\Lambda = \rGV (R)$, then $*_{\Lambda} = u$,
if $\Lambda = \GV (R)$, then $*_{\Lambda} = w$, and if $\Lambda$ is the set of all finitely generated ideals $J$ of $R$ with $J^{-1} = R$, then $*_{\Lambda} = w'$.
\end{remark}

It is obvious that if $R$ is an integral domain that is not a field,
then every nonzero ideal of $R$ is regular, hence $u = w = w'$ on $R$.
We next investigate the exact relationship of $u$, $w$, $w'$, and $t$,
which is useful in the subsequent arguments.

\begin{proposition} \label{u-w-w'-oper}
Let $w$ and $w'$ be the star operations on a ring $R$ as in Section~$\ref{w-oper}$.
\begin{enumerate}
\item $I_u \subseteq I_w \subseteq I_{w'}$ for all $I \in \mathsf K (R)$,
and equalities hold when $I$ is regular.
\item If $R$ satisfies Property(A), then $u=w$ on $R$.
\end{enumerate}
\end{proposition}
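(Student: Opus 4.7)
The plan for part (1) is to deduce the two inclusions from the chain $\rGV(R) \subseteq \GV(R) \subseteq \Lambda$, where $\Lambda = \{J \in \mathsf K(R) \mid J$ is finitely generated and $J^{-1} = R\}$ is the family appearing in the definition of $w'$. The first containment is the identity $\rGV(R) = \GV(R) \cap F^{\reg}(R)$ already recorded in Section~3; the second holds because the excerpt notes that every $J \in \GV(R)$ satisfies $J^{-1} = R$, and a finitely generated element of $\mathsf K(R)$ is automatically a fractional ideal after clearing denominators of any finite generating set. Unwinding the three definitions immediately yields $I_u \subseteq I_w \subseteq I_{w'}$ for all $I \in \mathsf K(R)$.

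For the equalities when $I$ is regular, it suffices to prove $I_{w'} \subseteq I_u$. Let $x \in I_{w'}$, pick $J \in \Lambda$ with $xJ \subseteq I$, write $x = p/q$ with $q \in \reg(R)$ and $p \in R$, and fix a regular element $a \in I$. The key move is to replace $J$ by $J' = J + aqR$. Then $J'$ is finitely generated and contains the regular element $aq$, so $J' \in F^{\reg}(R)$; using the standard identity $(A+B)^{-1} = A^{-1} \cap B^{-1}$ one computes $(J')^{-1} = J^{-1} \cap (aqR)^{-1} = R \cap \tfrac{1}{aq}R = R$, the last equality because $R \subseteq \tfrac{1}{aq}R$ whenever $aq \in \reg(R)$; and $xJ' = xJ + xaqR \subseteq I + paR \subseteq I$, since $xaq = pa$ and $a \in I$. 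Hence $J' \in \rGV(R)$ and $x \in I_u$, completing the equality.

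For part (2), I would show $I_w \subseteq I_u$ under Property(A). Given $x \in I_w$ with $xJ \subseteq I$ and $J \in \GV(R)$, the injectivity of $\varphi\colon R \to \Hom_R(J,R)$ forces the annihilator of $J$ in $R$ to be zero. Property(A) applied to the finitely generated ideal $J$ then rules out $J \subseteq Z(R)$, so $J$ must contain a regular element. Combined with the finite generation of $J$ and with $J^{-1} = R$ (which holds for any GV-ideal), this places $J$ in $\rGV(R)$, whence $x \in I_u$.

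The main obstacle is the construction in part (1): the enlarged ideal $J'$ has to simultaneously be regular, have inverse equal to $R$, and still satisfy $xJ' \subseteq I$, and these three constraints are precisely what pin down the combination $aq$ of a regular element $a$ of $I$ with a regular denominator $q$ of $x$. Once that element is identified, everything else in both parts reduces to direct bookkeeping from the definitions of the three star operations.
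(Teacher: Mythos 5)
Your proof is correct and follows essentially the same route as the paper: the inclusions come from $\rGV(R)\subseteq\GV(R)\subseteq\Lambda$, the regular case is handled by enlarging $J$ with the product of a regular element of $I$ and a regular denominator of $x$ (your $aq$ is the paper's $da$), and part (2) is the observation that Property(A) turns semiregular $\GV$-ideals into regular ones. The only differences are cosmetic (you verify $(J')^{-1}=R$ via the intersection formula and make the semiregularity of $\GV$-ideals explicit through the injectivity of $\varphi$).
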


\begin{proof}
(1) Let $\Lambda = \{ J \in \mathsf K (R) \mid J \mbox{ is finitely generated and } J^{-1} = R \}$.
Then, by definition, $\rGV (R) \subseteq \GV (R) \subseteq \Lambda$, so $I_u \subseteq I_w \subseteq I_{w'}$.

Suppose that $I$ is regular. We have only to show that $I_{w'} \subseteq I_u$.
Since $I$ is regular, $I$ contains a regular element, say, $a \in R$.
Choose $x \in I_{w'}$. Then there is a finitely generated ideal $J'$ of $R$ such that $x J' \subseteq I$ and $(J')_v = R$.
Now, let $d \in \reg(R)$ be such that $dx \in R$
and $J = J' + daR$. Then $J$ is finitely generated, regular, $J_v = R$, and $xJ \subseteq I$. Hence, $x \in I_u$.
Thus, $I_{w'} \subseteq I_u$.

(2) Let $J \in \GV (R)$. Then $J$ is semiregular, and since $R$ satisfies Property(A), $J$ is regular.
Hence $J \in \rGV (R)$. Thus $\GV (R) = \rGV (R)$, which implies that $u = w$ on $R$.
\end{proof}

Let $I$ be a regular ideal of $R$. Then $I \subseteq II^{-1} \subseteq R$,
so $II^{-1}$ is a regular ideal of $R$.
Hence, the next corollary also shows that $I$ is $u$-invertible if and only if $I$ is $t$-invertible
by Proposition \ref{prop1.2}.

\begin{corollary} \label{u-maxiaml}
Let $P$ be a regular ideal of a ring $R$.
Then $P$ is a maximal $t$-ideal if and only if $P$ is a maximal $u$-ideal.
\end{corollary}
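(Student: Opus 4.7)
The plan is to build on two key observations. First, since $u$ is a star operation of finite type, we have $u \le t$ (because for any star operation $*$, $* \le v$, and hence $*_f \le t = v_f$). Consequently every $t$-ideal is a $u$-ideal. Second, and this is the decisive ingredient, I claim that if $I$ is a regular ideal of $R$ with $I_t = R$, then already $I_u = R$. Indeed, $I_t = R$ means $1 \in J_v$ for some finitely generated $J \subseteq I$; choosing a regular element $a \in I$ and setting $J_1 = J + aR \subseteq I$, we obtain a finitely generated \emph{regular} ideal with $(J_1)_v \supseteq J_v = R$, so $J_1 \in \rGV (R)$ and hence $I_u \supseteq (J_1)_u = R$. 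Contrapositively, any proper $u$-ideal that is regular remains proper under the $t$-closure.

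With these in hand, the forward implication runs as follows. Suppose $P$ is a regular maximal $t$-ideal. Since $u \le t$, we have $P_u \subseteq P_t = P$, so $P$ is a proper $u$-ideal. By Proposition~\ref{prop1.2}(2), $P$ is contained in a maximal $u$-ideal $M$, which is prime and, containing the regular ideal $P$, is itself regular. Applying the claim to the proper $u$-ideal $M$ yields $M_t \neq R$, so $M_t$ is a proper $t$-ideal containing $P$. The maximality of $P$ as a $t$-ideal forces $P = M_t \supseteq M \supseteq P$, whence $P = M$ is a maximal $u$-ideal.

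The reverse implication is symmetric. Suppose $P$ is a regular maximal $u$-ideal. The claim gives $P_t \neq R$, so $P_t$ is a proper $t$-ideal; but $t$-ideals are $u$-ideals, and $P \subseteq P_t$, so by maximality of $P$ among $u$-ideals, $P_t = P$. Thus $P$ is itself a $t$-ideal. If $P \subsetneq P'$ for some proper $t$-ideal $P'$, then $P'$ would be a proper $u$-ideal strictly containing $P$, contradicting the maximality of $P$ in $u$-$\Max (R)$. Therefore $P$ is a maximal $t$-ideal.

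The only nontrivial step is the claim that regular ideals with $I_t = R$ satisfy $I_u = R$; this is exactly the place where the regularity hypothesis on $P$ is essential, since it is what permits enlarging an arbitrary witness $J \in \GV (R)$ inside $I$ to an element of $\rGV (R)$ by adjoining a regular element of $I$. Everything else is a bookkeeping argument using $u \le t$ and the existence of maximal $u$-ideals above proper $u$-ideals (Proposition~\ref{prop1.2}(2)--(3)).
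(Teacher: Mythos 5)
Your proof is correct, but it takes a genuinely different route from the paper. The paper disposes of this corollary in one line: by Proposition~\ref{u-w-w'-oper}(1), $I_u = I_w$ for every regular $I$, and it then cites an external result of Yin--Wang--Zhu--Chen that a regular ideal is a maximal $t$-ideal if and only if it is a maximal $w$-ideal. You instead give a self-contained argument whose engine is the claim that a regular ideal $I$ with $I_t = R$ already satisfies $I_u = R$; combined with $u \le t$ this says that for regular ideals, being $t$-trivial and being $u$-trivial coincide, and the rest is the standard bookkeeping with Proposition~\ref{prop1.2}(2)--(3). Your proof of the claim --- enlarging a finitely generated witness $J \subseteq I$ with $J_v = R$ to $J_1 = J + aR \in \rGV (R)$ by adjoining a regular $a \in I$, and noting $(J_1)_v = R$ forces $J_1^{-1} = R$ --- is exactly the device the paper uses inside the proof of Proposition~\ref{u-w-w'-oper}(1), so in effect you have inlined that mechanism and bypassed the $w$-operation and the external citation entirely. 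What your version buys is independence from the cited $w$-result and a transparent view of where regularity is used; what the paper's version buys is brevity and the stronger intermediate fact $I_u = I_w = I_{w'}$ on regular ideals, which it reuses elsewhere. Both arguments are sound.
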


\begin{proof}
This follows from Proposition~\ref{u-w-w'-oper}(1) and \cite[Corollary 3.14]{ywzc11},
which states that a regular ideal is a maximal $t$-ideal if and only if it is a maximal $w$-ideal.
\end{proof}

\begin{corollary} \label{coro4.6}
$\Cl_t (R) = \Cl_w (R) = \Cl_{w'} (R) = \Cl_u (R)$.
\end{corollary}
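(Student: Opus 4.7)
The plan is to prove $\Inv_u(R) = \Inv_w(R) = \Inv_{w'}(R) = \Inv_t(R)$ as groups, from which the desired equalities of class groups follow by quotienting by the common subgroup $\Prin(R)$.

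First I would collapse $\Inv_u(R)$, $\Inv_w(R)$, and $\Inv_{w'}(R)$ using Proposition~\ref{u-w-w'-oper}(1). For any regular $I \in \mathsf K(R)$ that result gives $I_u = I_w = I_{w'}$, and since $1 \in I^{-1}$ forces $I \subseteq II^{-1}$, the ideal $II^{-1}$ is also regular. Thus the defining conditions ``$I = I_{*}$'' and ``$(II^{-1})_{*} = R$'' for membership in $\Inv_{*}(R)$ coincide verbatim for $* \in \{u,w,w'\}$ on regular fractional ideals. Because $(IJ)_u = (IJ)_w = (IJ)_{w'}$ for the regular product $IJ$, the multiplications agree too, so $\Inv_u(R) = \Inv_w(R) = \Inv_{w'}(R)$.

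Next I would show $\Inv_u(R) = \Inv_t(R)$. The inclusion $\Inv_u(R) \subseteq \Inv_t(R)$ is immediate from Proposition~\ref{prop1.5}. For the reverse inclusion, take a $t$-invertible regular fractional $t$-ideal $I$. One has $u \leq w \leq w' \leq t$ (the last inequality holds because any finitely generated $J$ with $J^{-1} = R$ satisfies $J_t = J_v = R$, so $x \in I_{w'}$ implies $x \in xR = xJ_t \subseteq (xJ)_t \subseteq I_t$), hence $I \subseteq I_u \subseteq I_t = I$ and $I$ is a $u$-ideal. Suppose $(II^{-1})_u \neq R$. Then $II^{-1} \subseteq M$ for some maximal $u$-ideal $M$, which is regular because $II^{-1}$ is; by Corollary~\ref{u-maxiaml}, $M$ is then a maximal $t$-ideal, contradicting $(II^{-1})_t = R$. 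Therefore $(II^{-1})_u = R$ and $I \in \Inv_u(R)$.

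For group compatibility between $\Inv_u(R)$ and $\Inv_t(R)$, given $I, J \in \Inv_u(R) = \Inv_t(R)$, Proposition~\ref{prop1.5} ensures that $(IJ)_u$ is already a $t$-ideal, so $(IJ)_t \subseteq ((IJ)_u)_t = (IJ)_u \subseteq (IJ)_t$, giving $(IJ)_u = (IJ)_t$. Quotienting by $\Prin(R)$ then yields $\Cl_t(R) = \Cl_w(R) = \Cl_{w'}(R) = \Cl_u(R)$. The crux of the argument --- essentially the only non-formal step --- is promoting $t$-invertibility of the regular ideal $II^{-1}$ to $u$-invertibility, where Corollary~\ref{u-maxiaml} is indispensable.
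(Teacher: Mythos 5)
Your proof is correct and follows essentially the same route as the paper: the paper likewise applies Proposition~\ref{u-w-w'-oper}(1) to the regular ideal $II^{-1}$ to collapse the $u$-, $w$-, and $w'$-class groups, and Corollary~\ref{u-maxiaml} to show $(II^{-1})_u = R$ iff $(II^{-1})_t = R$. You merely spell out in more detail the verification that the underlying groups $\Inv_*(R)$ coincide as sets with compatible multiplication, which the paper leaves implicit.
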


\begin{proof}
Let $I$ be a regular fractional ideal of $R$.
Then $II^{-1}$ is regular, and hence $(II^{-1})_u = (II^{-1})_w = (II^{-1})_{w'}$
by Proposition~\ref{u-w-w'-oper}(1).
Thus, $\Cl_u (R) = \Cl_w (R) = \Cl_{w'} (R)$.
Furthermore, $(II^{-1})_u = R$ if and only if $(II^{-1})_t = R$ by Corollary ~\ref{u-maxiaml}.
Thus, $\Cl_u (R) = \Cl_t (R)$.
\end{proof}

A Krull ring is characterized by the $t$- or $v$-operation, i.e.,
$R$ is a Krull ring if and only if each regular
principal ideal of $R$ is a finite $t$-product (or $v$-product) of prime ideals.
The following corollary shows that a Krull ring can be characterized by the $u$, $w$, or $w'$-operation.

\begin{corollary} \label{coro4.7}
Let $* = u, t, v, w$, or $w'$ and $X^1_r(R)$ be the set of minimal
regular prime ideals of a ring $R$. Then
the following statements are equivalent.
\begin{enumerate}
\item $R$ is a Krull ring.

\item $I_*$ is a finite $*$-product of prime ideals for all regular ideals $I$ of $R$.

\item Every regular principal ideal of $R$ is a finite $*$-product of prime ideals.

\item Every regular ideal of $R$ is $*$-invertible.
\end{enumerate}
In this case, $X^1_r(R) = t$-$\Max (R) \cap F^{\reg} (R)$ and
$I_u = I_v = I_t = I_w = I_{w'}$ for all $I \in F^{\reg} (R)$.
\end{corollary}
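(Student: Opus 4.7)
The plan is to leverage Kang's classical characterization of Krull rings via the $t$- and $v$-operations (from \cite[Theorem~13]{7} and the related results surveyed in Subsection~\ref{2}) and to transport the equivalences to $u$, $w$, and $w'$ by combining Proposition~\ref{u-w-w'-oper}(1), which says that these three operations coincide on regular ideals, with Corollary~\ref{u-maxiaml}, which says that the regular maximal $t$-ideals coincide with the regular maximal $u$-ideals.

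I would treat $(1) \Leftrightarrow (4)$ first, as this is the cleanest bridge. For a regular ideal $I$, the product $II^{-1}$ is regular, so any maximal $*$-ideal containing it is regular; Corollary~\ref{u-maxiaml} and Proposition~\ref{u-w-w'-oper}(1) then identify the regular maximal $t$-, $u$-, $w$- and $w'$-ideals, whence the characterization ``$*$-invertible iff not contained in any maximal $*$-ideal'' (Proposition~\ref{prop1.2}(2)) makes (4) equivalent for $* \in \{u, t, w, w'\}$. For $* = v$, Proposition~\ref{prop1.5} says every $v$-invertible ideal is $t$-invertible, and the reverse is trivial from $t \le v$. Kang's theorem that $R$ is a Krull ring iff every regular ideal is $t$-invertible then closes the loop with (1).

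Next I would settle the ``in this case'' addendum, which is the technical heart of the corollary and binds the trio $u = w = w'$ to $t$ and $v$. Kang's result that every regular prime in a Krull ring contains a $t$-invertible regular prime, combined with Corollary~\ref{coro1.6} (a $*$-invertible prime $*$-ideal is a maximal $t$-ideal), yields $X^1_r(R) \subseteq t\text{-}\Max(R) \cap F^{\reg}(R)$; the reverse inclusion follows from the standard Krull ring fact that regular maximal $t$-ideals have height one. For a regular $I$, the defining-family representation $I_v = \bigcap_{P \in X^1_r(R)}[I]R_{[P]}$ valid in a Krull ring matches the spectral formula $I_u = \bigcap_{P \in u\text{-}\Max(R)}[I]R_{[P]}$ coming from the stability of $u$ (Corollary~\ref{u-stable}), once one notes that non-regular maximal $u$-ideals $P$ satisfy $P \subseteq Z(R)$ and hence contribute $[I]R_{[P]} = T(R)$ when $I$ is regular, while the regular maximal $u$-ideals coincide with $X^1_r(R)$ by what we have just shown. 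The sandwich $I_u \subseteq I_t \subseteq I_v$ together with Proposition~\ref{u-w-w'-oper}(1) then forces $I_u = I_v = I_t = I_w = I_{w'}$ on $F^{\reg}(R)$.

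With the addendum in hand the remaining equivalences are formal. For (3): if $aR = (P_1 \cdots P_n)_*$ with $a$ regular, each $P_i$ is regular, hence $P_1 \cdots P_n$ is regular, and the addendum collapses all five $*$-closures to a common ideal, so Kang's $(1) \Leftrightarrow (3)$ for $* = t$ propagates across all five operations. For $(1) \Leftrightarrow (2)$, the implication $(2) \Rightarrow (3)$ is trivial, while $(1) \Rightarrow (2)$ follows from the classical prime factorization of regular integral $t$-ideals in a Krull ring (the ring analog of Nishimura's theorem, available in \cite{7}), upgraded from $t$ to $u$, $w$, $w'$, and $v$ via the addendum. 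The conceptual obstacle is the addendum itself: Proposition~\ref{u-w-w'-oper}(1) alone does not push the trio $u = w = w'$ up to $t$ or $v$, and closing that gap requires the full Krull ring hypothesis through the defining family of rank-one DVRs indexed by $X^1_r(R)$.
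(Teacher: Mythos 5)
Your overall strategy --- reduce everything to Kang's characterization of Krull rings for the $t$- and $v$-operations and transport the statements to $u$, $w$, $w'$ via Proposition \ref{u-w-w'-oper}(1) and Corollary \ref{u-maxiaml} --- is the same as the paper's, and your treatments of $(1)\Leftrightarrow(4)$ and of the addendum are essentially sound (the paper is in fact terser on the addendum, getting $X^1_r(R)=t$-$\Max(R)\cap F^{\reg}(R)$ from \cite[Lemma 3.2]{ck21} together with Corollary \ref{coro1.6}, rather than from a defining-family computation). However, there is a genuine circularity in your handling of $(3)\Rightarrow(1)$ for $*\in\{u,w,w'\}$. You convert the product $aR=(P_1\cdots P_n)_u$ into a $t$-product by invoking ``the addendum,'' i.e.\ $I_u=I_t$ on $F^{\reg}(R)$ --- but you established that identity only under the hypothesis that $R$ is a Krull ring, which is precisely what $(3)\Rightarrow(1)$ is supposed to prove. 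Proposition \ref{u-w-w'-oper}(1), which needs no Krull hypothesis, only collapses $u$, $w$, $w'$ on regular ideals; it says nothing about $t$ or $v$. The paper closes this gap with a one-line observation you do not make: a regular principal ideal is a $t$-ideal, and $u\le t$ gives $\big((P_1\cdots P_n)_u\big)_t=(P_1\cdots P_n)_t$, so applying $(\,\cdot\,)_t$ to $aR=(P_1\cdots P_n)_u$ yields $aR=(P_1\cdots P_n)_t$ with no Krull hypothesis, after which \cite[Theorem 13]{7} finishes the argument.

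A second, smaller slip: for $*=v$ in item (4) you appeal to Proposition \ref{prop1.5} for ``every $v$-invertible ideal is $t$-invertible,'' but that proposition applies only to star operations \emph{of finite type}, and $v$ is not of finite type. The $v$-case of the equivalence has to come from \cite[Theorem 13]{7} directly, which is how the paper disposes of it.
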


\begin{proof}
For $* = v$ or $t$, see \cite[Theorem 13]{7}.
For $* = u, w$, or $w'$, we may assume that $* = u$ by Proposition~\ref{u-w-w'-oper}.

(1) $\Rightarrow$ (2) Let $I$ be a regular ideal of $R$.
Then $I$ is $t$-invertible and $I_t = (P_1 \cdots P_n)_t$ for some prime ideals $P_1, \ldots, P_n$ of $R$ \cite[Theorem 13]{7},
so $I$ and $P_1 \cdots P_n$ are $u$-invertible by Corollary~\ref{u-maxiaml}, and hence
$$I_u = (I_u)_t = I_t = (P_1 \cdots P_n)_t = ((P_1 \cdots P_n)_u)_t =(P_1 \cdots P_n)_u$$
by Proposition~\ref{prop1.5}. Thus, $I_u = (P_1 \cdots P_n)_u$.

(2) $\Rightarrow$ (3) Clear.

(3) $\Rightarrow$ (1) It is known that a regular principal ideal is a $t$-ideal and $u \le t$.
Hence, every regular principal ideal of $R$ is a finite $t$-product of prime ideals by (3).
Thus, $R$ is a Krull ring \cite[Theorem 13]{7}.

(1) $\Leftrightarrow$ (4)  Recall from \cite[Theorem 13]{7} that $R$ is a Krull ring if and only if every regular
ideal of $R$ is $t$-invertible. Thus, the result follows directly from Proposition \ref{u-w-w'-oper}
and Corollary \ref{u-maxiaml}.

\vspace{.12cm}
Moreover, $P \in X^1_r(R)$ if and only if $P$ is a regular $t$-invertible
prime $t$-ideal of $R$ \cite[Lemma 3.2]{ck21}, if and only if $P$ is a
regular maximal $t$-ideal by Corollary \ref{coro1.6} and \cite[Theorem 13]{7}.
\end{proof}

As in \cite{d62}, we say that an ideal $I$ of a ring $R$ is a {\it $Z$-ideal} if $I \subseteq Z (R)$.
Hence, $I$ is a $Z$-ideal if and only if $I$ is not regular.

\begin{proposition} \label{regLocal}
Let $S = \reg (R)$ for a ring $R$, so $T(R) = R_S$, and $I$
be an ideal of $R$. Then the following conditions are satisfied.
\begin{enumerate}
\item  $I_u R_S = IR_S$.

\item If $IR_S \cap R = I$, then $I_u = I$. Hence, every prime $Z$-ideal is a $u$-ideal.

\item If $Q$ is a primary ideal of $R$ such that $Q_u \subsetneq R$, then $Q_u = Q$.

\item $u = d$ on $T(R)$, i.e., every ideal of $T(R)$ is a $u$-ideal.
\end{enumerate}
\end{proposition}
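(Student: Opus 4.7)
The plan is to exploit the fact that every $J \in \rGV(R)$ is, by definition, regular. This makes parts (1), (2), and (4) fall out almost mechanically, while (3) requires combining this observation with the primeness of maximal $u$-ideals from Proposition~\ref{prop1.2}. First I would establish (1): the containment $IR_S \subseteq I_u R_S$ is clear from $I \subseteq I_u$, and for the reverse I pick $x \in I_u$ with $xJ \subseteq I$ for some $J \in \rGV(R)$; since $J$ is regular, it contains a regular element $s \in S$, so $xs \in I$ and hence $x = xs/s \in IR_S$. This gives $I_u \subseteq IR_S$, so the two localizations agree.

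Part (2) then follows by a short computation once one observes that $I \subseteq R$ implies $I_u \subseteq R$, which is essentially the argument already used in the proof of Proposition~\ref{u-oper}(5): if $xJ \subseteq R$ with $J \in \rGV(R)$, then $x \in xR = xJ_v \subseteq (xJ)_v \subseteq R_v = R$. Combining with (1) and the hypothesis, $I_u = I_u \cap R \subseteq I_u R_S \cap R = IR_S \cap R = I$. For the prime $Z$-ideal assertion, $P \subseteq Z(R)$ means $P \cap S = \emptyset$, whence standard localization yields $PR_S \cap R = P$ for prime $P$, and the previous argument applies. Part (4) is also immediate from (1): an ideal $I$ of $T(R) = R_S$ satisfies $IR_S = I$, so $I \subseteq I_u \subseteq I_u R_S = IR_S = I$ forces $I_u = I$.

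The substantive part is (3). Let $Q$ be primary with radical $P = \sqrt{Q}$, and assume $Q_u \subsetneq R$. By Proposition~\ref{prop1.2}(2) the proper $u$-ideal $Q_u$ is contained in some maximal $u$-ideal $M$, which is prime by Proposition~\ref{prop1.2}(3); hence $P = \sqrt{Q} \subseteq \sqrt{M} = M$. Now take $x \in Q_u$ with $xJ \subseteq Q$ for some $J \in \rGV(R)$. Because $J_u = R$, $J \not\subseteq M$, and since $P \subseteq M$, a fortiori $J \not\subseteq P$. Choosing $j \in J \setminus P$, we get $xj \in Q$ while $j \notin \sqrt{Q}$, so primariness of $Q$ forces $x \in Q$. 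Hence $Q_u \subseteq Q$, and equality follows. The main step to watch is the containment $P \subseteq M$: this is exactly where the hypothesis $Q_u \subsetneq R$ is used, by way of the existence of a maximal $u$-ideal above $Q_u$ supplied by Proposition~\ref{prop1.2}.
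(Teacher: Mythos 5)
Your proof is correct and follows essentially the same route as the paper's: (1), (2), and (4) are argued the same way (localizing away the regular element contained in each $J\in\rGV(R)$). The only divergence is in (3), where the paper deduces $J \nsubseteq \sqrt{Q}$ directly from $Q_u \subsetneq R$ (if $J \subseteq \sqrt{Q}$ then $J^m \subseteq Q$ for some $m$, and $J^m\in\rGV(R)$ would force $Q_u = R$), while you obtain the same containment by placing $Q_u$ inside a maximal $u$-ideal via Proposition~\ref{prop1.2}; both justifications are valid.
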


\begin{proof}
(1) If $x \in I_u$, then there is a $J \in \rGV(R)$ such that $xJ \subseteq I$, so
$$x \in xR_S = xJR_S \subseteq IR_S.$$
Hence, $I_u R_S \subseteq IR_S$. The reverse containment is clear.
Thus, $I_uR_S = IR_S$.

(2) By (1), $I_u \subseteq I_u R_S \cap R = IR_S \cap R = I$. Thus, $I_u = I$.

(3) Clearly, $Q \subseteq Q_u$. For the reverse containment,
let $x \in Q_u$. Then $xJ \subseteq Q$ for some $J \in rGV(R)$.
Note that $Q_u \subsetneq R$ by assumption, so $J \nsubseteq \sqrt{Q}$, and
since $Q$ is primary, $x \in Q$.
Thus, $Q_u \subseteq Q$.

(4) Note that $\rGV ( T(R) ) = \{ T(R) \}$, so if $I$ is an ideal of $T(R)$, then $x \in I_u$
if and only if $xT(R) \subseteq I$, if and only if $x \in I$.
Thus, $I_u = I$.
\end{proof}

\begin{corollary} \label{T(R)_Noetherian}
If $R$ is a $u$-Noetherian ring, then $T(R)$ is Noetherian.
\end{corollary}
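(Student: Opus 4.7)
The plan is to use Proposition~\ref{regLocal} to pass information about $u$-ideals of $R$ to ordinary ideals of $T(R)$. Let $S = \reg (R)$, so $T(R) = R_S$. Standard localization theory says that every ideal of $T(R)$ has the form $IR_S$ for some ideal $I$ of $R$; specifically, given an ideal $J$ of $T(R)$, setting $I = J \cap R$ yields $J = IR_S$.

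First I would take an arbitrary ideal $J$ of $T(R)$ and set $I = J \cap R$. Then I would consider $I_u$, which is a $u$-ideal of $R$. Because $R$ is $u$-Noetherian, the excerpt tells us that every $u$-ideal is of finite type, so there exists a finitely generated subideal $K \subseteq I$ with $I_u = K_u$.

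Next I would apply Proposition~\ref{regLocal}(1) twice: once to get $I_u R_S = IR_S = J$, and once to get $K_u R_S = KR_S$. Combining these with $I_u = K_u$ gives $J = KR_S$, which is finitely generated as an ideal of $T(R) = R_S$ because $K$ is finitely generated. Since $J$ was arbitrary, every ideal of $T(R)$ is finitely generated, so $T(R)$ is Noetherian.

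There is no real obstacle here; the only point requiring a moment's care is the identity $J = (J \cap R)R_S$ for an ideal $J$ of the localization $R_S$, which is a standard localization fact. Everything else is a direct application of the finite type characterization of $u$-Noetherian rings together with the equality $I_u R_S = IR_S$ established in Proposition~\ref{regLocal}(1).
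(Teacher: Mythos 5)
Your proof is correct and follows essentially the same route as the paper: both reduce an arbitrary ideal of $T(R)=R_S$ to the form $IR_S$, replace $I_u$ by $K_u$ for a finitely generated $K$ using the finite-type characterization of $u$-Noetherian rings, and conclude via the identity $I_uR_S=IR_S$ from Proposition~\ref{regLocal}(1). The only cosmetic difference is that you contract ideals from $T(R)$ while the paper extends ($Z$-)ideals from $R$; the substance is identical.
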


\begin{proof}
Let $S = \reg (R)$, so $T(R) = R_S$ and every ideal of $T(R)$ is of the form $IR_S$ for some ideal $I$ of $R$.
Now, let $I$ be a $Z$-ideal of $R$.
Then, by assumption, $I_u = J_u$ for some finitely generated ideal $J$ of $R$, whence $IR_S = I_u R_S = J_u R_S = JR_S$ by Proposition~\ref{regLocal}(1). Thus, $R_S$ is a Noetherian ring.
\end{proof}

\begin{corollary} \label{u-inv regular}
A fractional ideal $I$ of $R$ is $u$-invertible if and only if $I$ is a $t$-invertible regular ideal.
\end{corollary}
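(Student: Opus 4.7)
My plan is to unwrap $u$-invertibility definitionally as $(II^{-1})_u = R$, and then separately extract ``$(II^{-1})_t = R$'' (cheap, via $u \le t$) and ``$I$ is regular'' (the interesting step) for the forward direction, reversing the roles for the converse.

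For the forward direction, suppose $(II^{-1})_u = R$. Since $u$ is of finite type, $u \le t$; combined with $II^{-1} \subseteq R$ this gives $R = (II^{-1})_u \subseteq (II^{-1})_t \subseteq R_t = R$, so $I$ is $t$-invertible. To see that $I$ is regular, unwrap $1 \in (II^{-1})_u$ to obtain $J \in \rGV(R)$ with $J \subseteq II^{-1}$, and pick a regular element $c \in J$. Write $c = \sum_{i=1}^{n} x_i y_i$ with $x_i \in I$ and $y_i \in I^{-1} \subseteq T(R)$, and choose a common denominator $d \in \reg(R)$ with $d y_i \in R$ for every $i$. Then $dc = \sum_{i=1}^{n} x_i (d y_i) \in I$ and $dc$ is regular, so $I$ is regular.

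For the converse, suppose $I$ is a regular, $t$-invertible fractional ideal. Pick a regular $a \in I$ and a regular $d \in \reg(R)$ with $dI \subseteq R$ (so $d \in I^{-1}$); then $ad \in II^{-1}$ is regular. Since $(II^{-1})_t = R$ and $t$ is of finite type, one has $1 \in K_v$ for some finitely generated $K \subseteq II^{-1}$, which forces $K^{-1} = R$. Set $K' = K + \langle ad \rangle \subseteq II^{-1}$; then $K'$ is finitely generated and regular, and $R \subseteq (K')^{-1} \subseteq K^{-1} = R$, so $K' \in \rGV(R)$. Because $1 \cdot K' \subseteq II^{-1}$, I conclude $(II^{-1})_u = R$, that is, $I$ is $u$-invertible.

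The key obstacle is the regularity assertion in the forward direction. An argument via maximal $u$-ideals is tempting but subtle: by Proposition~\ref{regLocal}(2), every prime $Z$-ideal is a $u$-ideal, so one cannot a priori rule out such maxima, and the containment $I \subseteq P$ does not entail $II^{-1} \subseteq P$. The clean workaround is the denominator-clearing identity $dc \in I$ above, which exploits only the definition of $\rGV(R)$ and the hypothesis that $I \in F(R)$.
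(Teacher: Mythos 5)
Your proof is correct, but it follows a genuinely different route from the paper's. For the key forward implication (``$u$-invertible $\Rightarrow$ regular''), the paper first reduces to $I \subseteq R$, then localizes at $S = \reg(R)$ and invokes Proposition~\ref{regLocal}(1) to get $R_S = (II^{-1})_u R_S = (II^{-1})R_S = IR_S$, concluding $I \cap S \neq \emptyset$; you instead argue at the level of elements, extracting a regular $c \in J \subseteq II^{-1}$ from the definition of $(II^{-1})_u$ and clearing denominators to produce the regular element $dc \in I$. For the remaining equivalence of $u$- and $t$-invertibility, the paper simply cites the remark preceding Corollary~\ref{u-maxiaml} (which rests on Proposition~\ref{prop1.2} and the coincidence of regular maximal $t$- and $u$-ideals), whereas you prove both directions by hand: $u \le t$ for one direction, and for the converse an explicit construction of the witness $K' = K + \langle ad\rangle \in \rGV(R)$ sitting inside $II^{-1}$. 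Both arguments are sound. The paper's version is shorter because it leans on previously established machinery (Proposition~\ref{regLocal}, Corollary~\ref{u-maxiaml}); yours is more self-contained and makes visible exactly where the regularity of ideals in $\rGV(R)$ and the fractionality of $I$ are used, at the cost of redoing some work that the maximal-ideal characterization of finite-type $*$-invertibility would give for free. Your closing caution about a maximal-$u$-ideal approach to regularity is apt but moot, since the paper's actual workaround is localization at $\reg(R)$ rather than any argument through $u$-$\Max(R)$.
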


\begin{proof}
Without loss of generality, we may assume that $I$ is an ideal of $R$, i.e., $I \subseteq R$.
Furthermore, by the remark before Corollary \ref{u-maxiaml},
it suffices to show that if $I$ is $u$-invertible, then $I$ is regular.
Suppose that $I$ is $u$-invertible, and let
$S = \reg (R)$, so $T(R) = R_S$.
Then, by Proposition~\ref{regLocal}(1),
 $$R_S = (II^{-1})_u R_S = (II^{-1})R_S = (IR_S)(I^{-1}R_S) = IR_S,$$
  where the last equality follows by $R \subseteq I^{-1} \subseteq R_S$.
Hence, $IR_S = R_S$, which implies that $I \cap S \neq \emptyset$. Thus, $I$ is regular.
\end{proof}

We end this section with a construction of a ring on which $u \lneq w \lneq w'$ by a series of examples.
For the construction of such a ring, we first need the following lemma,
which is the $u$-operation analog of Proposition \ref{star direct product}(3)-(7).

\begin{lemma} \label{u-diect product}
Let $R = R_1 \times R_2$ be the direct product of rings $R_1$ and $R_2$, $J_i$ be
an $R_i$-submodule of $T(R_i)$ for $i = 1, 2$, and $J = J_1 \times J_2$.
\begin{enumerate}
\item $J \in \rGV (R)$ if and only if $J_i \in \rGV (R_i)$ for $i = 1, 2$.

\item $J_u = (J_1)_u \times (J_2)_u$.

\item $J$ is $u$-invertible if and only if $J_i$ is $u$-invertible for $i = 1, 2$.
\end{enumerate}
\end{lemma}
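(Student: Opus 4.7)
The plan is to prove the three statements in order, each one bootstrapping off the previous, and all of them reducing to the corresponding direct-sum decompositions already recorded in Lemma~\ref{direct sum} and Proposition~\ref{star direct sum}. The overall strategy is the same as in Proposition~\ref{star direct sum}(3)--(7): identify the defining data of the notion ($\rGV$, $u$-closure, $u$-invertibility) and check that it splits componentwise across the product.

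For part~(1), I would combine Proposition~\ref{star direct sum}(1), which says that $J = J_1 \times J_2$ is finitely generated (respectively regular) if and only if each $J_i$ is, with Proposition~\ref{star direct sum}(2), which gives $J^{-1} = J_1^{-1} \times J_2^{-1}$. Since $\rGV(R)$ is, by definition, exactly the set of finitely generated regular fractional ideals satisfying $J^{-1} = R$, and since the identity of $R = R_1 \times R_2$ decomposes as $(1,1)$, we get $J^{-1} = R$ iff $J_i^{-1} = R_i$ for $i=1,2$. This proves (1).

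For part~(2), I would use the identification of $T(R)$ with $T(R_1) \times T(R_2)$ from Lemma~\ref{direct sum}(1), so that any element of $T(R)$ has the form $x = (x_1,x_2)$. If $x \in J_u$, then $xA \subseteq J$ for some $A \in \rGV(R)$; writing $A = A_1 \times A_2$ with $A_i \in \rGV(R_i)$ via part~(1), we get $x_i A_i \subseteq J_i$, so $x_i \in (J_i)_u$. Conversely, if $x_i \in (J_i)_u$, pick $A_i \in \rGV(R_i)$ with $x_i A_i \subseteq J_i$; then $A_1 \times A_2 \in \rGV(R)$ by~(1) witnesses $x \in J_u$. Here I am simply mimicking the argument used for the $w$-operation in Proposition~\ref{star direct sum}(7), now with $\rGV$ in place of $\GV$.

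For part~(3), I would compute $(JJ^{-1})_u$ by combining Proposition~\ref{star direct sum}(2) (which gives $J^{-1} = J_1^{-1} \times J_2^{-1}$) with the obvious componentwise multiplication $(J_1 \times J_2)(J_1^{-1} \times J_2^{-1}) = J_1 J_1^{-1} \times J_2 J_2^{-1}$, and then apply part~(2) of the current lemma to obtain
\[
(JJ^{-1})_u = (J_1 J_1^{-1})_u \times (J_2 J_2^{-1})_u.
\]
Since $R = R_1 \times R_2$, this product equals $R$ if and only if $(J_i J_i^{-1})_u = R_i$ for each $i$, which is precisely the statement that each $J_i$ is $u$-invertible.

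I do not expect a real obstacle: the key structural facts (decomposition of $T(R)$, of submodules, of inverses, and of $\GV$-ideals) are all already in place in Lemma~\ref{direct sum} and Proposition~\ref{star direct sum}, so the only genuine content is observing that $\rGV$ respects the direct sum in the same way that $\GV$ does in Proposition~\ref{star direct sum}(6). The mildest care is needed in part~(1), where one has to verify that regularity itself splits componentwise; this is exactly the content of Proposition~\ref{star direct sum}(1) combined with Lemma~\ref{direct sum}(5).
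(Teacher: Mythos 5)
Your proposal is correct and follows essentially the same route as the paper: part (1) from Proposition~\ref{star direct sum}(1) and (2), part (2) by repeating the argument of Proposition~\ref{star direct sum}(7) with $\rGV$ in place of $\GV$, and part (3) by combining $J^{-1} = J_1^{-1}\times J_2^{-1}$ with part (2). No gaps.
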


\begin{proof}
(1) This follows directly from Proposition~\ref{star direct product}(1) and (2).

(2) This is an immediate consequence of (1) (cf. the proof of Proposition~\ref{star direct product}(7)).

(3) $J$ is $u$-invertible if and only if $(JJ^{-1})_u = R$, if and only if $(J_i J^{-1}_i)_u = R_i$ for $i = 1, 2$
by (2) and Proposition \ref{star direct product}(2), if and only if each $J_i$ is $u$-invertible.
\end{proof}

The first construction is a ring on which $u \lneq w$.
This example also shows that semiregular ideals need not be regular.

\begin{example} \label{ex2.6}
Let $D$ be a UFD with $\dim (D) \ge 2$, $X^{1} (D)$ be the set of height-one prime ideals of $D$,
and $M = \underset{P \in X^{1} (D)}{\bigoplus} T(D/P)$.
Then $M$ is a $D$-module defined by $\alpha(\overline{x_p}) = (\overline{\alpha x_p})$ for $\alpha \in D$,
where $\overline{x_p} = x_p + P \in T(D/P)$ for all $P \in X^{1} (D)$.
Let $R = D (+) M$ be the idealization of $M$ in $D$, i.e., $R$ is a ring under the usual addition of $R \times M$
and the multiplication defined by $(a, x)(b, y) = (ab, ay + bx)$ for all
$(a, x), (b, y) \in R \times M$ \cite[Section 25]{4}.
Note that if $P$ is a nonzero prime ideal of $D$, then $P^{-1} \supsetneq D$ if and only if $P \in X^{1} (D)$.
Note also that $\bigcup_{P \in X^{1} (D)} P$ is the set of nonunits of $D$,
so $Z (R) = \{ (a,m) \in R \mid a \in P$ for some $P \in X^{1} (D) \}$ \cite[Lemma 4.1(1)]{zw15},
and thus $T(R)= R$.
Now, let $A$ be a nonzero finitely generated proper ideal of $D$ with $A^{-1} = D$
(such an ideal $A$ exists because $D$ is a UFD with dim$(D) \geq 2$).
Then $A(+)M \in$ GV$(R)$ \cite[Lemma 4.1(5)]{zw15}, so
$(A(+)M)_w = R$. However, since rGV$(R) = \{R\}$, $(A(+)M)_u = A(+)M \subsetneq R$.
Therefore, $u \lneq w$.
\end{example}

\begin{example} \label{ex2.7}
Let $R = \mathbb Z \times \mathbb Q$ and $I = d \mathbb Z \times \{0\}$ for a nonzero nonunit $d \in \mathbb Z$.
Then $I_{w'} = d \mathbb Z \times \mathbb Q \supsetneq I = I_u = I_w$.
Thus, $u = w \lneq w'$ on $R$.
\end{example}

\begin{proof}
Clearly, $R$ is a PIR, so if $J$ is a finitely generated semiregular ideal of $R$, then $J$ is principal, whence $J$ is regular.
Thus, $u = w$ on $R$.
Moreover, if $J \in \rGV (R)$, then $J = \langle (1,1) \rangle = R$, and hence $I_u = I$.

Now, note that $(\mathbb Z \times \{ 0 \} )_v = \langle (1,0) \rangle_v = R$.
Hence, if $J$ is a finitely generated ideal of $R$ with $J_v = R$, then $J = \mathbb Z \times \{ 0 \}$ or $J = R$.
Note also that $(a,q)(1,0) \in I$ if and only if $a \in d\mathbb Z$ and $q \in \mathbb Q$.
Thus, $I_{w'} = d\mathbb Z \times \mathbb Q$. Hence, $u = w \lneq w'$ on $R$.
\end{proof}

We now state a ring on which $u \lneq w \lneq w'$.

\begin{example}
Let $R_1$ be the ring of Example \ref{ex2.6},
$R_2$ be the ring of Example \ref{ex2.7}, and $R= R_1 \times R_2$
be the direct product of $R_1$ and $R_2$.
For $i =1,2$, let $J_i$ be the ideal of $R_i$ in Examples \ref{ex2.6} and \ref{ex2.7}
respectively and $J = J_1 \times J_2$. Then $J$ is an ideal of $R$ and
$(J_1)_u \times (J_2)_u \subsetneq (J_1)_w \times (J_2)_w \subsetneq
(J_1)_{w'} \times (J_2)_{w'}$ by Lemma~\ref{u-diect product}
and Proposition \ref{star direct product}, whence
$J_u \subsetneq J_w \subsetneq J_{w'}$. Thus, $u \lneq w \lneq w'$ on $R$.
\end{example}

\section{General Krull rings} \label{4}

Inspired by the name of general ZPI-rings, we will say that $R$ is a {\it general Krull ring}
if $R$ is a finite direct product of Krull domains and SPRs.
Then a general Krull ring satisfies Property(A) by Corollary~\ref{direct product Property(A)}
and it is additively regular \cite[Theorems 7.2 and 7.4]{4}.
Moreover, since a finite direct product of Krull rings is a Krull ring \cite[page 134]{9},
a general Krull ring is a Krull ring.
In this section, we develop the theory of general Krull rings with a focus on the prime factorization of ideals via the $u$-operation that is introduced in Section 4.

Let $R$ be a ring, $T(R)$ be the total quotient ring of $R$,
$X^1(R)$ be the set of height-one prime ideals of $R$, and
$X_r^1(R)$ be the set of minimal regular prime ideals of $R$.
It is clear that if $P \in X^1(R)$ is regular, then $P \in X^1_r(R)$.

\subsection{Basic properties of a general Krull ring}

A $\pi$-domain is a Krull domain, so $\pi$-rings are general Krull rings.
 Our first result summarizes the exact relationship of PIRs, general ZPI-rings, UFRs, $\pi$-rings, and
general Krull rings.

\begin{theorem} \label{chracter-cg}
Let $R$ be a general Krull ring, which is not a total quotient ring, i.e., $R \subsetneq T(R)$.
Then the following statements hold.
\begin{enumerate}
\item $R$ is a PIR if and only if $R$ is a UFR with $\dim (R) = 1$.

\item $R$ is a general ZPI-ring if and only if $\dim (R) = 1$.

\item $R$ is a UFR if and only if $\Cl (R) = \{ 0 \}$.

\item $R$ is a $\pi$-ring if and only if $\Cl (R) = \Pic (R)$.
\end{enumerate}
\end{theorem}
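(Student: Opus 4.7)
The plan is to exploit the defining decomposition of a general Krull ring: write $R = R_1 \oplus \cdots \oplus R_n$ where each $R_i$ is either a Krull domain or an SPR. By the structure theorems recalled in Section~\ref{2}, the four target classes admit parallel direct-sum descriptions: $R$ is a PIR (resp.\ general ZPI-ring, UFR, $\pi$-ring) if and only if each $R_i$ is a PID (resp.\ Dedekind domain, UFD, $\pi$-domain) or an SPR. Each equivalence will therefore reduce to showing that the corresponding invariant restricts correctly on each summand, using Lemma~\ref{direct sum}(4) for $\dim(R) = \max_i \dim(R_i)$, and Corollary~\ref{coro3.3} for the product decompositions of $\Pic(R)$ and $\Cl(R)$.

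The first preparatory step is to record the invariants of an SPR $S$: non-units of $S$ are nilpotent, hence zero divisors, so $T(S) = S$; every regular fractional ideal then contains a unit and must coincide with $S$, giving $\Pic(S) = \Cl(S) = 0$ and $\dim S = 0$. In particular, the SPR summands contribute trivially to every class-group product and to the dimension. Combining this with the classical domain facts that (a) a UFD of dimension $\leq 1$ is either a field or a PID, (b) a Krull domain of dimension $\leq 1$ is either a field or a Dedekind domain, (c) a Krull domain is a UFD iff $\Cl = 0$, and (d) a Krull domain is a $\pi$-domain iff $\Cl = \Pic$, each equivalence becomes a componentwise check.

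The hypothesis $R \subsetneq T(R)$ plays its essential role in (1) and (2): since $T(R) = \prod T(R_i)$ and $T(S) = S$ for any SPR or field $S$, the condition $R \subsetneq T(R)$ forces at least one $R_i$ to be a Krull domain of positive dimension, pinning $\dim R \geq 1$. For (1), if $R$ is a PIR then each summand is a PID (or SPR), and the non-field PID summand forces $\dim R = 1$; conversely, if $R$ is a UFR with $\dim R = 1$, then each UFD summand has dimension $\leq 1$ and is therefore a PID or a field, so $R$ is a PIR. For (2), an analogous argument (without the UFD condition) shows $\dim R = 1$ iff every Krull-domain summand is a Dedekind domain or a field, i.e., iff $R$ is a general ZPI-ring. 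For (3) and (4), Corollary~\ref{coro3.3} translates $\Cl(R) = 0$ (resp.\ $\Cl(R) = \Pic(R)$) into the componentwise conditions $\Cl(R_i) = 0$ (resp.\ $\Cl(R_i) = \Pic(R_i)$), which are exactly the characterizations of UFDs (resp.\ $\pi$-domains) among Krull domains.

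The only potentially non-routine step is verifying cleanly that $\Pic(S) = \Cl(S) = 0$ for an SPR $S$; this requires nothing more than the observation that $T(S) = S$ together with the fact that any regular ideal of a local ring with nilpotent maximal ideal must be the whole ring. Once that bookkeeping is in place, the four equivalences follow uniformly from the componentwise translation described above.
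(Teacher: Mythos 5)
Your proposal is correct and follows essentially the same route as the paper: decompose $R$ into Krull-domain and SPR summands, note that SPR summands contribute trivially to $\dim$, $\Pic$, and $\Cl$ (via Lemma~\ref{direct sum}(4) and Corollary~\ref{coro3.3}), and reduce each equivalence to the classical characterizations of PIDs, Dedekind domains, UFDs, and $\pi$-domains among Krull domains. Your extra care with field summands and with the role of the hypothesis $R \subsetneq T(R)$ is a welcome refinement of a point the paper passes over, but it does not change the argument.
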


\begin{proof}
Let $R = R_1 \times \cdots \times R_n$ for some Krull domains and SPRs $R_1, \ldots, R_n$.
It is clear that if $R_i$ is an SPR, then $\dim (R_i) = 0$ and $\Cl (R_i) = \{ 0 \}$.
Note that

\vspace{.1cm}

$\cdot$ $\dim (R) = \max \{ \dim (R_i) \mid i = 1, \ldots, n \}$ by Lemma~\ref{direct product}(4),

\vspace{.1cm}
$\cdot$ $\Cl (R) = \Cl (R_1) \times \cdots \times \Cl (R_n)$, and

\vspace{.1cm}
$\cdot$ $\Pic (R) = \Pic (R_1) \times \cdots \times \Pic (R_n)$ by Corollary~\ref{coro3.3}.

\vspace{.1cm}
\noindent
Thus, the result follows directly from the following observation: If $R$ is a Krull domain,
then (1) $R$ is a PID if and only if $R$ is a UFD with $\dim (R) = 1$,
(2) $R$ is a Dedekind domain if and only if $\dim (R) = 1$ (cf. \cite[Theorem 37.8]{3}),
(3) $R$ is a UFD if and only if $\Cl (R) = \{ 0 \}$ \cite[Proposition 6.1]{f73}, and
(4) $R$ is a $\pi$-domain if and only if $\Cl (R) = \Pic (R)$ (cf. \cite[Theorem 46.7]{3}). 
\end{proof}

Let $R$ be a general Krull ring and $P$ be a prime ideal of $R$. Then
$R_P$ is either a Krull domain or an SPR by Lemma \ref{direct product}(3)
and $P$ contains a unique minimal prime ideal.
We next give some basic properties of general Krull rings.

\begin{theorem} \label{thm5.2}
Let $R$ be a general Krull ring, which is not a total quotient ring, i.e., $R \subsetneq T(R)$.
Then the following statements hold.
\begin{enumerate}
\item If $p \in Z (R)$ is a prime element of $R$, then $p^{m} R = p^{m+1}R$ for some $m \geq 1$.

\item If $\alpha \in Z (R)$, then $\alpha = ab$ for some $a \in \reg (R)$ and $b \in Z(R)$ that is a finite product of prime elements of $R$ in $Z (R)$.

\item If $I$ is a $Z$-ideal of $R$, then $I = \alpha J$ for some $\alpha \in Z (R)$ and a regular ideal $J$ of $R$.

\item If $\alpha \in R$ and $J$ is a regular ideal of $R$, then $( \alpha J)_u = \alpha J_u = \alpha J_v = \alpha J_t$.
In particular, each principal fractional ideal of $R$ is a $u$-ideal.

\item  $X_r^1(R) = X^1(R)$.

\item $u$-$\Max(R) = X^1(R) \cup \{P \in \Max(R) \mid P \mbox{ is minimal} \}$.

\item Each minimal prime ideal of $R$ is principal and $\dim (T(R)) =0$.

\item If $R$ has a unique minimal prime ideal, then $R$ is either a Krull domain or an SPR.
\end{enumerate}
\end{theorem}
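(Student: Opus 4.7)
The plan is to decompose $R = R_1 \times \cdots \times R_n$ with each $R_i$ a Krull domain or SPR and reduce each of (1)--(8) to a componentwise statement, using Lemma~\ref{direct sum}, Proposition~\ref{star direct sum}, and Lemma~\ref{u-diect product}. The key inputs are: every prime ideal of $R$ has the form $Q_i = R_1 \times \cdots \times P_i \times \cdots \times R_n$ for a unique index $i$ and a prime $P_i$ of $R_i$; in an SPR factor $R_k$ one has $T(R_k) = R_k$, so by Proposition~\ref{regLocal}(4) every ideal of $R_k$ is a $u$-ideal; and in a Krull domain factor the $u$-operation agrees with the $w$-operation, whose maximal ideals are the height-one primes.

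I would first dispatch the structural items. For (7), the minimal primes of $R$ correspond slot by slot to the minimal prime of each factor---namely $(0)$ in a Krull domain slot, giving a generator $(1,\dots,0_k,\dots,1)$, or $\pi_k R_k$ in an SPR slot, giving a generator $(1,\dots,\pi_k,\dots,1)$---so every minimal prime is principal, and $T(R) = T(R_1) \times \cdots \times T(R_n)$ is a product of fields and SPRs, hence zero-dimensional. For (8), each factor contributes at least one minimal prime, forcing $n=1$. For (5), the height of $Q_i$ equals $\h(P_i)$ in $R_i$, so SPR factors contribute no height-one primes while a height-one prime of a Krull domain factor places a regular element of $R_i$ in its slot, making $Q_i$ regular; this identifies $X^1(R)$ with $X^1_r(R)$. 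For (6), $Q_i$ is a $u$-ideal of $R$ iff $P_i$ is a $u$-ideal of $R_i$, and is maximal as such iff $P_i$ is a maximal $u$-ideal of $R_i$---i.e., a height-one prime of a Krull domain factor or the unique prime $M_k$ of an SPR factor, the latter coinciding with primes of $R$ that are simultaneously maximal and minimal.

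Next I would handle the factorization claims (1)--(3). A prime element $p$ of $R$ must be a unit in every slot except one, where the component $p_i$ generates a prime ideal of $R_i$; the hypothesis $p \in Z(R)$ then forces either $p_i = 0$ in a Krull domain factor, giving $pR = p^2R$ outright, or $p_i = \pi_i$ in an SPR with $\pi_i^{n_i} = 0$, giving $p^{n_i}R = p^{n_i+1}R$, settling (1). For (2), write $\alpha = (\alpha_1,\dots,\alpha_n)$ and set $T = \{k : \alpha_k \in Z(R_k)\}$; in each $k \in T$ extract either the prime element $(1,\dots,0_k,\dots,1)$ (Krull domain case, $\alpha_k = 0$) or the appropriate power of $(1,\dots,\pi_k,\dots,1)$ coming from the factorization $\alpha_k = u_k\pi_k^{e_k}$ (SPR case), leaving a cofactor whose $k$-th entry is a unit for $k \in T$ and the regular element $\alpha_k$ for $k \notin T$, hence regular. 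For (3), write $I = I_1 \times \cdots \times I_n$ and observe that $I \subseteq Z(R)$ iff some $I_k \subseteq Z(R_k)$; pick $\alpha$ to have the distinguished generator of $I_k$ in each bad slot and $1$ elsewhere, and $J$ with $J_k = R_k$ in bad slots and $J_k = I_k$ in regular slots, so that $\alpha J = I$, $\alpha \in Z(R)$, and $J$ is regular.

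For (4), the identity $(\alpha J)_u = \alpha J_u$ reduces via Lemma~\ref{u-diect product} to $(\alpha_k J_k)_u = \alpha_k (J_k)_u$ in each factor: automatic when $\alpha_k$ is regular in $R_k$, trivial when $\alpha_k = 0$, and when $R_k$ is an SPR both sides equal $\alpha_k J_k$ because every ideal of $R_k$ is a $u$-ideal. The remaining equalities $J_u = J_v = J_t$ follow from Corollary~\ref{coro4.7} because $J$ is regular and $R$ is a Krull ring. The principal fractional ideal assertion then drops out of the case $J = R$, $\alpha = b \in R$ applied to $aR = (b/c)R$ with $c \in \reg(R)$, combined with the regular multiplier rule for star operations. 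The main obstacle I foresee is the bookkeeping in (1)--(3): the ``degenerate'' primes and $Z$-elements come in two qualitatively different flavours (a zero slot in a Krull domain versus a power of $\pi_k$ in an SPR), and both must be handled in parallel throughout.
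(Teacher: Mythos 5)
Your proposal is correct and follows essentially the same route as the paper: decompose $R = R_1 \times \cdots \times R_n$ into Krull domains and SPRs, identify the minimal primes with the principal ideals generated by the elements that are units in all slots but one, and reduce each of (1)--(8) to a componentwise statement via Lemma~\ref{direct sum}, Proposition~\ref{star direct sum}, Lemma~\ref{u-diect product}, Proposition~\ref{regLocal}(4), and Corollaries~\ref{u-maxiaml} and \ref{coro4.7}. The paper's bookkeeping in (1)--(3) and its treatment of the two flavours of degenerate slots (zero in a Krull domain versus a power of the nilpotent generator in an SPR) is exactly the parallel case analysis you describe.
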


\begin{proof}
Let $R = R_1 \times \cdots \times R_k \times R_{k+1} \times \cdots \times R_n$ be a finite direct product of Krull domains $R_1, \ldots, R_k$ and SPRs $R_{k+1}, \ldots, R_n$.
Now, let $P_i$ be the maximal ideal of $R_i$ for $i = k+1, \ldots, n$. Then, by Lemma~\ref{direct product}(5),
\[
\begin{aligned}
Z (R) & = \big( \{ 0 \} \times R_2 \times \cdots \times R_n \big) \cup \cdots \cup \big( R_1 \times \cdots \times \{ 0 \} \times R_{k+1} \times \cdots \times R_n \big) \\
           & \quad \cup \big( R_1 \times \cdots \times R_k \times P_{k+1} \times \cdots \times R_n \big) \cup \cdots \cup \big( R_1 \times \cdots \times R_{n-1} \times P_n \big),
\end{aligned}
\]
 so $Z (R)$ is a finite union of minimal prime ideals.
Next, let $q_i$ be the prime element of $R_i$ for $i = k+1, \ldots, n$,
so $q_i R_i$ is the maximal ideal of $R_i$ and $q_i^{m_i}R_i = (0)$ for some integer $m_i \geq 2$.
Let
\[
  p_i = \left\{ \begin{array}{ll}
                      (1, \ldots, 1, \underset{i{th}}{0}, 1, \ldots, 1) & \hbox{for $i = 1, \ldots, k$} \\
                      (1, \ldots, 1, \underset{i{th}}{q_i}, 1, \ldots, 1) & \hbox{for $i = k+1, \ldots, n$} \,.
                    \end{array} \right.
\]
Clearly, $\{ p_i R \mid i = 1, \ldots, n \}$ is the set of minimal prime ideals of $R$,
$\{p_iR \mid i=k+1, \dots , n\} \subseteq$ Max$(R)$,
$p_i R = p^{2}_i R$ for $i = 1, \ldots, k$, $p_i^{m_i}R = p_i^{m_i+1}R$ for $i = k+1, \ldots, n$,
and $Z (R) = \bigcup_{i=1}^{n} p_i R$.\\

(1) Note that $pR$ is a minimal prime ideal of $R$, so $pR = p_i R$ for some $i \in \{ 1, \ldots, n \}$.
Thus, if $i \in \{ 1, \ldots, k \}$, then $pR = p^{2}R$; and if $i \in \{ k+1, \ldots, n \}$,
then $p^mR = p^{m+1}R$ for some integer $m \geq 2$.

(2) Let $\alpha = (x_1, \ldots, x_k, y_{k+1}, \ldots, y_n)$. Then
\[
  \alpha = (x_1,1, \ldots, 1) (1, x_2, 1, \ldots, 1) \cdots (1, \ldots, 1, y_{k+1}, \ldots, 1) \cdots (1, \ldots, 1, y_n) \,,
\]
so if we let $\hat{x}_i = (1, \ldots, 1, x_i, 1, \ldots, 1)$ and $\hat{y}_i = (1, \ldots, 1, y_i, 1, \ldots, 1)$, then
$$\alpha = \hat{x}_1 \cdots  \hat{x}_k  \hat{y}_{k+1} \cdots  \hat{y}_n.$$
It is easy to see that $x_i = 0$ (resp., $x_i \neq 0$) if and only if $\hat{x}_i = p_i$ (resp., $\hat{x}_i$ is regular).
Also, $y_i R_i \subseteq q_i R_i$ (resp., $y_i R_i = R_i$) if and only if
$\hat{y}_i R = p^{z_i}_i R$ for some $z_i \ge 1$ (resp., $\hat{y}_i$ is regular).
Note that if $\hat{y}_i R = p^{z_i}_i R$ for some $z_i \ge 1$, then $\hat{y}_i = u_i p^{z_i}_i$ for some unit $u_i$ of $R$.
Thus, $\alpha = ab$ for some $a \in \reg (R)$ and $b \in Z (R)$ that is a finite product of $p_1, \ldots, p_n$.

(3) Let $I = I_1 \times \cdots \times I_n$, where $I_i$ is an ideal of $R_i$ for $i = 1, \ldots, n$.
Next, let $A_i = R_1 \times \cdots \times R_{i-1} \times I_i \times R_{i+1} \times \cdots \times R_n$ for $i = 1, \ldots, n$.
Then each $A_i$ is an ideal of $R$ and $I = A_1 \cdots A_n$.
Note that $A_i = p_i R$ or $A_i$ is regular for $i = 1, \ldots, k$.
Note also that $A_i = p^{z_i'}_i R$ for some $z_i' \ge 1$ or $A_i = R$ for $i = k+1, \ldots, n$. Now, let
\[
  e_i = \left\{ \begin{array}{ll}
                      0 & \hbox{if $A_i$ is regular} \\
                      1 & \hbox{if $A_i = p_i R$}
                    \end{array} \right.
\]
for $i = 1, \ldots, k$,
\[
  z_i = \left\{ \begin{array}{ll}
                      0     & \hbox{if $A_i = R$} \\
                      z_i' & \hbox{if $A_i = p^{z_i'}_i R$}
                    \end{array} \right.
\]
for $i = k+1, \ldots, n$, $\alpha = p^{e_1}_1 \cdots p^{e_k}_k p^{z_{k+1}}_{k+1} \cdots p^{z_n}_n$, and $J = A_{i_1} \cdots A_{i_j}$
for $1 \le i_1 \lneq \cdots \lneq i_j \le k$ with $e_{i_j} = 0$.
Then $I = \alpha J$, $\alpha$ is a finite product of prime elements of $R$ in $Z (R)$, and $J$ is regular.

(4) Let $\alpha = (a_1, \ldots, a_n)$ and $J = J_1 \times \cdots \times J_n$.
Then $\alpha J = (a_1 J_1) \times \cdots \times (a_n J_n)$, and hence $(\alpha J)_u = (a_1 J_1)_u \times \cdots \times (a_n J_n)_u$
by Lemma~\ref{u-diect product}(2).
For $i = 1, \ldots, k$, $R_i$ is a Krull domain, whence $(a_i J_i)_u = a_i (J_i)_u$.
Also, by Proposition~\ref{regLocal}(4), $(a_i J_i)_u = a_i J_i = a_i (J_i)_u$ for $i = k+1, \ldots, n$. Thus,
\begin{eqnarray*}
(\alpha J)_u & = & a_1 (J_1)_u \times \cdots \times a_n (J_n)_u \\
                    & = & \alpha \big( (J_1)_u \times \cdots \times (J_n)_u \big)
                     = \alpha(J_1 \times \cdots \times J_n)_u \\
                    & = & \alpha J_u \,.
\end{eqnarray*}
Note that $R$ is a Krull ring and $J$ is regular. Thus, $J_u = J_v = J_t$ by Corollary~\ref{coro4.7}.
In particular, if $I = \alpha R$ for some $\alpha \in T (R)$, then $d \alpha \in R$ for some $d \in \reg (R)$,
and hence $d I_u = (dI)_u = (d\alpha R)_u = d \alpha R$. Thus, $I_u = \alpha R = I$.

(5) Let $P \in X_r^1(R)$. Then $P$ contains at least one of $p_j$ for $j = 1, \dots, k$,
and hence
$P = R_1 \times \cdots \times P_j \times \cdots \times R_n$ for some nonzero minimal prime
ideal $P_j$ of $R_j$. Thus, $\h P = \h P_j = 1$. Conversely, if $Q \in X^1(R)$, then
$Q \nsubseteq Z(R)$ because $Z (R) = \bigcup_{i=1}^{n} p_i R$, so
$Q$ is regular. Thus, $Q \in X_r^1(R)$.

(6) If $P \in X^1(R)$, then $P \in X_r^1(R)$ by (5), and since
$R$ is a Krull ring, $P$ is a maximal $t$-ideal by Corollary \ref{coro4.7}.
Thus, $P$ is a maximal $u$-ideal by Corollary \ref{u-maxiaml}. Next, if $P$ is a maximal ideal of $R$
that is also minimal, then $P$ is a $u$-ideal by Proposition \ref{regLocal}(2),
and thus $P$ is a maximal $u$-ideal. For the reverse containment, let $Q \in u$-$\Max(R)$.
If $Q$ is regular, then $Q$ is a maximal $t$-ideal by Corollary \ref{u-maxiaml}, and since
$R$ is a Krull ring, $P \in X_r^1(R)$ by Corollary \ref{coro4.7}. Thus, $Q \in X^1(R)$ by (5).
Next, assume that $Q$ is a $Z$-ideal. Then $Q$ is minimal, and
since $Q$ is a maximal $u$-ideal of $R$, $Q$ is a maximal ideal.

(7) Recall that $\{p_iR \mid i = 1, \dots, n\}$ is the set of minimal prime ideals of $R$
and $Z(R) = \bigcup_{i=1}^np_iR$. Hence, each minimal prime ideal of $R$ is principal
and $\dim (T(R)) = 0$.

(8) If $R = R_1 \times \cdots \times R_n$, then $R$ has exactly $n$ minimal prime ideals
by the first paragraph of this proof. Thus, if $R$ has a unique minimal prime ideal,
then $n=1$, which implies that $R$ is either a Krull domain or an SPR.
\end{proof}

The divisor class group of a Krull domain is very important
because it measures how far away the Krull domain is from being a UFD.
Let $R$ be a Krull ring and Inv$_t(R)$ be the group of $t$-invertible regular
fractional $t$-ideals of $R$. Then Inv$_t(R)$ is a free abelian group with a basis $X^1(R)$,
and hence $Cl(R)$ is generated by the set of ideal classes in $Cl(R)$ containing a height-one prime ideal.
The next corollary with Theorem \ref{thm5.2} shows that
the class group of a general Krull ring reflects its ideal factorization property
(cf. Theorem \ref{chracter-cg}(3)).

\begin{corollary}
Let $R$ be a general Krull ring, $\mathfrak{I}_u(R)$ be the set of
integral $u$-ideals of $R$, and $\mathfrak{I}Prin(R)$
be the semigroup of integral principal ideals of $R$ under the usual ideal multiplication.
Then the following statements hold.
\begin{enumerate}
\item $\mathfrak{I}_u(R)$ is a commutative semigroup with identity under $I*J = (IJ)_u$ for all $I,J \in \mathfrak{I}_u(R)$ and
$\mathfrak{I}Prin(R)$ is a subsemigroup of $\mathfrak{I}_u(R)$.
\item $\mathfrak{I}_u(R)/\mathfrak{I}Prin(R)$ is an abelian group and $Cl(R) =  \mathfrak{I}_u(R)/\mathfrak{I}Prin(R)$.
\end{enumerate}
\end{corollary}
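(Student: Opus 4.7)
The plan for part (1) is a routine verification. Given $I, J \in \mathfrak{I}_u(R)$, the product $I \ast J := (IJ)_u$ again lies in $\mathfrak{I}_u(R)$ by idempotency of $u$. Associativity is a direct consequence of Lemma \ref{star}: $((IJ)_u K)_u = (IJK)_u = (I(JK)_u)_u$. Commutativity is inherited from ideal multiplication, and $R$ is an identity since $(IR)_u = I_u = I$. For the subsemigroup claim, Theorem \ref{thm5.2}(4) says every integral principal ideal $\alpha R$ is itself a $u$-ideal, and clearly $\alpha R \ast \beta R = (\alpha\beta R)_u = \alpha\beta R$ remains principal, so $\mathfrak{I}Prin(R) \subseteq \mathfrak{I}_u(R)$ is closed under $\ast$.

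For part (2) I would exploit the direct-sum decomposition guaranteed by the definition of a general Krull ring. Write $R = R_1 \times \cdots \times R_n$ with each $R_i$ either a Krull domain or an SPR. By Lemma \ref{u-diect product}(2), a $u$-ideal of $R$ is exactly a product $I_1 \times \cdots \times I_n$ of $u$-ideals in the factors, and the $u$-product operation as well as ``principal'' both factor componentwise. Hence
\[
  \mathfrak{I}_u(R)/\mathfrak{I}Prin(R) \;\cong\; \prod_{i=1}^{n} \mathfrak{I}_u(R_i)/\mathfrak{I}Prin(R_i).
\]
For $R_i$ an SPR, every ideal is a power of the maximal ideal and so principal, making the $i$-th factor trivial, which matches $\Cl(R_i) = \{0\}$. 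For $R_i$ a Krull domain, every nonzero integral $u$-ideal is $u$-invertible by Corollary \ref{coro4.7}, and I would set up the natural homomorphism sending an integral $u$-ideal to its divisor class in $\Cl(R_i)$: surjectivity follows because any class in $\Cl(R_i)$ has a representative that becomes integral after multiplying by a suitable nonzero element of $R_i$, and injectivity follows because an integral $u$-ideal equal in $\Cl(R_i)$ to $R_i$ is a principal fractional ideal $(\beta/\alpha) R_i$ that is also integral, hence principal. Composing with $\Cl(R) \cong \prod_{i=1}^n \Cl(R_i)$ from Corollary \ref{coro3.3} yields the desired isomorphism.

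The main technical obstacle, as I anticipate it, is that ``partially zero'' ideals $I_1 \times \cdots \times I_n$ with some $I_i = (0)$ interact badly with the semigroup quotient, because multiplication by them threatens to collapse classes. The key maneuver is Theorem \ref{thm5.2}(3): every integral $u$-ideal $I$ of $R$ admits a presentation $I = \alpha J_u$ with $\alpha \in R$ and $J$ regular, so modulo $\mathfrak{I}Prin(R)$ the class of $I$ coincides with the class of the regular, $u$-invertible $u$-ideal $J_u$. This both forces the quotient to be a group and aligns it with the group-theoretic definition $\Cl(R) = \Inv_u(R)/\Prin(R)$, making the identification in the statement natural rather than merely formal.
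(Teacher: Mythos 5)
Part (1) of your proposal is the same routine verification the paper gives, so there is nothing to compare there. For part (2) you take a genuinely different route. The paper argues intrinsically in $R$: it invokes Theorem \ref{thm5.2}(3) to write an arbitrary integral $u$-ideal as $I=\alpha J$ with $J$ regular, uses the Krull-ring property to get $(JJ^{-1})_u=R$, and exhibits $dJ^{-1}$ (for $d\in\reg(J)$) as an explicit inverse with $I*(dJ^{-1})=\alpha dR$ principal; the identification with $\Cl(R)$ is then obtained by showing that the composite $\mathfrak{I}_t(R)\hookrightarrow\mathfrak{I}_u(R)\twoheadrightarrow\mathfrak{I}_u(R)/\mathfrak{I}Prin(R)$, where $\mathfrak{I}_t(R)$ is the semigroup of regular integral $t$-ideals, is surjective with kernel the regular integral principal ideals. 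You instead decompose $R=R_1\times\cdots\times R_n$, check via Proposition \ref{star direct sum}(1) and Lemma \ref{u-diect product}(2) that both $\mathfrak{I}_u$ and $\mathfrak{I}Prin$ factor componentwise, dispose of the SPR factors, and re-derive the divisor class group of each Krull-domain factor before reassembling with Corollary \ref{coro3.3}. Both routes rest on the same two pillars ($u$-invertibility of regular ideals in a Krull ring, and the fact that principal ideals are $u$-ideals); the paper's version is shorter because it never has to verify that the semigroup quotient commutes with the direct product, while yours makes the triviality of the SPR contribution and the product formula for $\Cl(R)$ transparent. Note, though, that your ``key maneuver'' paragraph --- replacing $I=\alpha J_u$ by the class of the regular $u$-invertible ideal $J_u$ --- is precisely the paper's argument, so in the end you are running both proofs rather than avoiding the intrinsic one.

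One caveat, which you flag but do not actually resolve (and which the paper's own proof leaves equally implicit): if the congruence defining $\mathfrak{I}_u(R)/\mathfrak{I}Prin(R)$ identifies \emph{every} integral principal ideal with $R$, then generators such as $(1,0,\dots,1)$, or $0$ itself, become trivial, and multiplying by them collapses all classes onto the class of $(0)$; writing $I=\alpha J$ does not prevent this, since $[\alpha R]=[R]$ is exactly what drives the collapse. The statement is only correct under the finer reading in which the quotient identifies $I=\alpha J$ with its regular part $J_u$, i.e., in which only the regular principal ideals are genuinely inverted. Since the paper's proof makes the same silent move, this is a defect of the formulation rather than of your argument, but your claim that Theorem \ref{thm5.2}(3) ``forces the quotient to be a group'' overstates what that presentation alone delivers.
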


\begin{proof}
(1) It is routine to check that $\mathfrak{I}_u(R)$ is a commutative semigroup with
identity $R$ (as an element of $\mathfrak{I}_u(R)$).
Moreover, each principal ideal of $R$ is a $u$-ideal by Theorem \ref{thm5.2}(4)
and $R \in \mathfrak{I}Prin(R)$.
Thus, $\mathfrak{I}Prin(R)$ is a subsemigroup of $\mathfrak{I}_u(R)$.

(2) Let $I$ be an integral $u$-ideal of $R$. Then $I = \alpha J$ for some $\alpha \in R$
and a regular ideal $J$ of $R$ and $(IJ^{-1})_u = (\alpha JJ^{-1})_u = \alpha (JJ^{-1})_u$
by Theorem \ref{thm5.2}. Now, note that $R$ is a Krull ring, so $(JJ^{-1})_u = R$ by Corollary \ref{coro4.7}.
Hence, if $d \in \reg(J)$, then $dJ^{-1} \in \mathfrak{I}_u(R)$
and $I*(dJ^{-1}) = \alpha dR$. Thus, $\mathfrak{I}_u(R)/\mathfrak{I}Prin(R)$ is an abelian group.

Now, let $\mathfrak{I}_t(R)$ be the set of regular integral $t$-ideals of $R$.
A similar argument shows that $\mathfrak{I}_t(R)$ is the semigroup
under $I*_tJ = (IJ)_t$. Note that $R$ is a Krull ring, so $I_t = I_u$ for all
regular integral ideals $I$ of $R$ by Corollary \ref{coro4.7},
whence $\mathfrak{I}_t(R)$ is a subsemigroup of $\mathfrak{I}_u(R)$. Moreover,
if $\mathfrak{I}_rPrin(R)$ is the set of regular integral principal ideals of $R$,
then $\mathfrak{I}_rPrin(R)$ is a subsemigroup of both $\mathfrak{I}_t(R)$
and $\mathfrak{I}Prin(R)$ and $Cl(R) = \mathfrak{I}_t(R)/\mathfrak{I}_rPrin(R)$.
Note that the composition of the maps
$\mathfrak{I}_t(R) \hookrightarrow   \mathfrak{I}_u(R) \twoheadrightarrow \mathfrak{I}_u(R)/\mathfrak{I}Prin(R)$
is a semigroup homomorphism, which is also surjective and the kernel is $\mathfrak{I}_rPrin(R)$.
Thus, $Cl(R) =  \mathfrak{I}_u(R)/\mathfrak{I}Prin(R)$.
\end{proof}

An element $a$ of a ring $R$ is primary if $aR$ is a primary ideal.
Hence, a prime element is primary.
In \cite{s67}, Storch said that an integral domain is an almost factorial ring if it is a Krull domain with torsion divisor class group.
He showed that a Krull domain $D$ is an almost factorial ring if and only if some power of each nonzero element in $D$ is a
product of primary element, if and only if, for $a,b \in D$,
there is an integer $n = n(a,b)>0$ such that $a^nD \cap b^nD$ is principal \cite[Proposition 6.8]{f73}.
We will say that $R$ is an {\it almost UFR} if $R$ is a general Krull ring with $\Cl(R)$ torsion.
It is clear that a UFR is an almost UFR.

\begin{corollary}
The following statements are equivalent for a ring $R$.
\begin{enumerate}
\item $R$ is an almost UFR.
\item $R$ is a general Krull ring in which
some power of each nonunit element is a finite product of primary elements.

\item $R$ is a finite direct product of almost factorial domains and SPRs.
\end{enumerate}
\end{corollary}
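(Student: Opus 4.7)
My plan is to decompose $R = R_1 \times \cdots \times R_n$ as a finite direct sum of Krull domains and SPRs (available in all three conditions, since each implies $R$ is a general Krull ring) and reduce every equivalence to a coordinate-wise statement. The main tool on the Krull domain summands will be Storch's criterion, quoted just before the statement, that a Krull domain is almost factorial if and only if some power of each nonzero element is a finite product of primary elements.

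For (1) $\Leftrightarrow$ (3) I would apply Corollary~\ref{coro3.3} to get $\Cl(R) = \Cl(R_1) \times \cdots \times \Cl(R_n)$. Each SPR is its own total quotient ring (every regular element is a unit), so the only regular fractional ideals are unit multiples of the ring itself and $\Cl(\mathrm{SPR}) = \{0\}$ is trivially torsion. Hence $\Cl(R)$ is torsion precisely when $\Cl(R_i)$ is torsion for every Krull domain summand, i.e., when every such summand is almost factorial.

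For the equivalence (2) $\Leftrightarrow$ (3), the first thing I would record is the structural fact that a principal ideal $q_1 R_1 \times \cdots \times q_n R_n$ is primary in $R$ precisely when exactly one $q_i R_i$ is a proper primary ideal of $R_i$ and every other $q_j$ is a unit; equivalently, the primary elements of $R$ are, up to unit multiples, the elements $(1,\ldots,1,q,1,\ldots,1)$ with $q$ a primary element of some $R_i$. Granting this, for (3) $\Rightarrow$ (2) I would take a nonunit $\alpha = (x_1,\ldots,x_n)$ and use the decomposition $\alpha = \widehat{x}_1 \cdots \widehat{x}_n$ from the proof of Theorem~\ref{thm5.2}(2). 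On an almost factorial summand, Storch supplies an integer $N_i$ with $x_i^{N_i}$ a product of primary elements of $R_i$; on an SPR summand every element is either a unit or a unit times a power of the maximal-ideal generator, hence already primary; and on a Krull domain summand with $x_i = 0$, the element $\widehat{x}_i$ is itself primary in $R$ because $(0)$ is prime in a domain. Taking $N$ to be a common multiple of the $N_i$'s, each $\widehat{x}_i^N$ lifts coordinatewise to a product of primary elements of $R$, and multiplying gives $\alpha^N$ as required.

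For (2) $\Rightarrow$ (3) I would fix a Krull domain summand $R_i$, pick a nonzero nonunit $x \in R_i$, set $\widehat{x} = (1,\ldots,1,x,1,\ldots,1)$, and by hypothesis factor some power $\widehat{x}^N = q_1 \cdots q_k$ with each $q_l$ primary in $R$. The step I expect to be the main obstacle is to show that every $q_l$ has its unique nonunit coordinate at position $i$: projecting to any $j \neq i$ forces $\prod_l (q_l)_j = 1$, but a nonunit primary element in a Krull domain (which is a domain) or in an SPR (where it generates a nonzero proper power of the maximal ideal, since a zero coordinate would make the whole product zero) cannot contribute to a product equal to a unit. Once this concentration is established, projecting to the $i$-th coordinate exhibits $x^N$ as a product of primary elements of $R_i$, and Storch's criterion then makes $R_i$ almost factorial, finishing the argument.
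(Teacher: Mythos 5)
Your proposal is correct and follows essentially the same route as the paper: decompose $R$ as a finite direct sum of Krull domains and SPRs, prove (1)$\Leftrightarrow$(3) via the product formula for class groups with $\Cl(\mathrm{SPR})=\{0\}$, and reduce (2) to the coordinatewise claim that $\hat{x}_i^{\,q}$ is a finite product of primary elements of $R$ if and only if $x_i^{\,q}$ is one in $R_i$, which Storch's criterion then converts into almost factoriality of each Krull summand. The only difference is that you spell out the justification for the concentration of all primary factors at a single coordinate (by projecting to the other coordinates), a step the paper asserts without proof.
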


\begin{proof}
$R$ is a general Krull ring in all of (1), (2), and (3) cases.
Hence, $R = R_1 \times \cdots \times R_k \times \cdots \times R_n$
for some Krull domains $R_1, \dots , R_k$ and SPRs $R_{k+1}, \dots, R_n$.

(1) $\Leftrightarrow$ (3) By Corollaries~\ref{coro3.3} and \ref{coro4.6},
$$\Cl (R) = \Cl_u(R) = \Cl_u (R_1) \times \cdots \times \Cl_u (R_k) = \Cl (R_1) \times \cdots \times \Cl (R_k).$$
Thus, the proof is completed by observing that
$\Cl(R)$ is torsion if and only if $\Cl(R_i)$ is torsion for $i=1, \dots, k$.

(2) $\Leftrightarrow$ (3) We first note that
$R$ is an almost UFR if and only if $R_i$ is an almost factorial domain for $i=1, \dots , k$
by the proof of (1) $\Leftrightarrow$ (3).
Let $x = (x_1, \dots, x_k, \dots,  x_n) \in R$, so if we let
$\hat{x}_i = (1, \dots , x_i, \dots , 1)$ for $i =1, \dots ,n$, then
$x = \hat{x}_1 \cdots \hat{x}_k \cdots \hat{x}_n.$ Clearly,
$\hat{x}_i$  is a finite product of prime elements for $i= k+1, \dots , n$.
Note that for an integer $q \geq 1$, $\hat{x}_i^q$ is a finite product of
primary elements if and only if $x_i^q$ is (cf. Lemma \ref{direct product}(3)).
Hence, if $R$ is an almost UFR,
then there is an integer $m_i \geq 1$ such that $\hat{x_i}^{m_i}$ is a finite product of primary elements
for $i=1, \dots, k$ \cite[Proposition 6.8]{f73}.
Thus, if $m = m_1 \cdots m_k$, then $x^m = \hat{x}_1^m \cdots \hat{x}_k^m \cdots \hat{x}_n^m$ is a finite product of primary elements.
Conversely, if some power of $\hat{x}_i$ is a finite product of primary elements, then some power of $x_i$ is
a finite product of primary elements. Hence, $R_i$ is an almost factorial domain for $i=1, \dots , k$.
\end{proof}

\subsection{The prime factorization of ideals in general Krull rings}

In this subsection, we state the
prime factorization of ideals in general Krull rings via the
$u$-operation and apply the result to give a couple of characterizations
of general Krull rings. To do so,
the following lemma is needed.

\begin{lemma} \label{local-properties}
Let $P$ be a prime $u$-ideal of a ring $R$ and $A, B \in \mathsf K (R)$.
\begin{enumerate}
\item  $I_uR_P = IR_P$ for all $I \in \mathsf K (R)$.

\item $A_u = B_u$ if and only if $AR_Q = BR_Q$ for all $Q \in u$-$\Max (R)$.
\end{enumerate}
\end{lemma}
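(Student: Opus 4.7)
For part (1), the inclusion $IR_P \subseteq I_uR_P$ is immediate from $I \subseteq I_u$. For the reverse, I would begin with the key observation that every $J \in \rGV(R)$ satisfies $J_u = R$: since $R \in \rGV(R)$ and $1 \cdot J \subseteq J$, we have $1 \in J_u$. Consequently, for any prime $u$-ideal $P$, the containment $J \subseteq P$ would force $R = J_u \subseteq P_u = P$, a contradiction; hence $J \not\subseteq P$. Now if $x \in I_u$, pick $J \in \rGV(R)$ with $xJ \subseteq I$ and choose any $j \in J \setminus P$: then $xj \in I$ and $j$ is a unit in $R_P$, so $x = (xj)/j \in IR_P$. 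This yields $I_u R_P \subseteq IR_P$.

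For part (2), the direction ($\Rightarrow$) follows by a direct application of (1): $AR_Q = A_u R_Q = B_u R_Q = BR_Q$ for each $Q \in u\text{-}\Max(R)$. For ($\Leftarrow$), by symmetry it suffices to prove $A_u \subseteq B_u$. Given $x \in A_u$, I would form the ideal $C = \{r \in R \mid rx \in B\}$ and try to show $C_u = R$; since $u$ is of finite type, this would give $1 \in C_u$ and hence some $J \in \rGV(R)$ with $J \subseteq C$, from which $xJ \subseteq B$ and $x \in B_u$. To establish $C_u = R$, by Proposition~\ref{prop1.2}(2) it suffices to show $C \not\subseteq Q$ for each $Q \in u\text{-}\Max(R)$. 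By (1) and the hypothesis, $x \in A_u R_Q = A R_Q = B R_Q$, so unpacking the localization produces $s \in R \setminus Q$ with $sx \in B$, i.e., $s \in C \setminus Q$, as desired.

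The main technical point is the final localization step in ($\Leftarrow$): since $x$ lives in $T(R)$ and $R$ may have zero divisors, the assertion $x \in BR_Q$ must be converted to an honest equation $sx \in B$ with $s \in R \setminus Q$; this is the standard annihilator argument, namely if $x/1 = b/t$ in the localization with $b \in B$ and $t \in R \setminus Q$, then some $\sigma \in R \setminus Q$ kills $tx - b$, so $s := \sigma t \in R \setminus Q$ (as $Q$ is prime) satisfies $sx = \sigma b \in B$. Apart from this routine care, the entire proof hinges on the single structural observation from part (1) that no element of $\rGV(R)$ can lie inside any prime $u$-ideal.
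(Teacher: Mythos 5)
Your proof is correct and follows essentially the same route as the paper's: part (1) via the observation that no ideal in $\rGV(R)$ can be contained in a prime $u$-ideal, and part (2) via the conductor-type ideal $\{a \in R \mid ax \in B\}$ shown to lie in no maximal $u$-ideal. You merely make explicit two steps the paper leaves implicit (why $J \nsubseteq P$, and the unpacking of $x \in BR_Q$ into an honest relation $sx \in B$), which is fine.
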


\begin{proof}
(1) Let $x \in I_u$. Then $xJ \subseteq I$ for some $J \in \rGV (R)$.
Note that $J \nsubseteq P$, so $x \in xR_P = xJR_P \subseteq IR_P$.
Hence, $I_u \subseteq IR_P$. Thus, $I_uR_P = IR_P$.

(2) If $A_u = B_u$, then by (1), $AR_Q = A_u R_Q = B_u R_Q = BR_Q$ for all $Q \in u$-$\Max (R)$.
Conversely, assume that $AR_Q = BR_Q$ for all $Q \in u$-$\Max (R)$.
By symmetry, it suffices to show that $A_u \subseteq B_u$.
For $x \in A_u$, let $J = \{ a \in R \mid ax \in B \}$.
Then, by (1) and assumption, $J \nsubseteq Q$ for all $Q \in u$-$\Max (R)$, so $J_u = R$.
Hence, $x \in xR = xJ_u \subseteq (xJ)_u \subseteq B_u$.
Thus, $A_u \subseteq B_u$.
\end{proof}

We are now ready to give the prime factorization property of general Krull rings
which are the motivation of this paper.

\begin{theorem} \label{s-krull}
The following statements are equivalent for a ring $R$.
\begin{enumerate}
\item $R$ is a general Krull ring.

\item Each principal ideal of $R$ is a finite $u$-product of prime ideals.

\item For each $a \in R$, $(aR)_u$ is a finite $u$-product of prime ideals.

\item Each integral $u$-ideal of $R$ is a finite $u$-product of prime ideals.

\item $R$ is a Krull ring, each minimal prime ideal of $R$ is a principal ideal, and $\dim (T(R)) = 0$.

\item $R$ is a Krull ring, the zero element of $R$ is a finite product of prime elements, and $\dim (T(R)) = 0$.
\end{enumerate}
\end{theorem}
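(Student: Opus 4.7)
My plan is to set up the cycle in two stages. First, the forward implications $(1) \Rightarrow (2), (4), (5), (6)$ all follow from the direct-sum structure of general Krull rings together with Theorem~\ref{thm5.2}, and the closures $(2) \Rightarrow (3)$ and $(4) \Rightarrow (3)$ are essentially immediate: the former because $(2)$ already exhibits $aR$ as a $u$-product (so $(aR)_u = aR$ inherits the factorization), and the latter because $(aR)_u$ is itself an integral $u$-ideal. Second, I close the cycle via $(3) \Rightarrow (5) \Rightarrow (6) \Rightarrow (1)$. The main obstacle is $(6) \Rightarrow (1)$, which requires recovering a direct-sum decomposition from the purely multiplicative hypothesis that zero is a finite product of prime elements.

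For the forward stage, assume $R = R_1 \times \cdots \times R_n$ with each $R_i$ a Krull domain or SPR. By Lemma~\ref{u-diect product}(2) every integral $u$-ideal $I$ splits as $I = I_1 \times \cdots \times I_n$ with each $I_i$ a $u$-ideal of $R_i$; on Krull domain factors $u = w$ by Proposition~\ref{u-w-w'-oper}(2) and every $w$-ideal is a finite $w$-product of height-one primes, while on SPR factors every ideal is a power of the unique maximal. Recombining using that primes of $R$ have the shape $R_1 \times \cdots \times P \times \cdots \times R_n$ delivers $(4)$; restricting to principal ideals (which are $u$-ideals by Theorem~\ref{thm5.2}(4)) delivers $(2)$. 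Condition $(5)$ is Theorem~\ref{thm5.2}(7) together with the fact that general Krull rings are Krull rings, and $(6)$ is the componentwise identity $p_1 \cdots p_k p_{k+1}^{m_{k+1}} \cdots p_n^{m_n} = 0$ for the prime elements constructed in Theorem~\ref{thm5.2}.

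For $(3) \Rightarrow (5)$, I first show $R$ is a Krull ring. For regular $a$, if $xJ \subseteq aR$ with $J$ finitely generated and $J^{-1} = R$, then $x/a \in J^{-1} = R$, so $(aR)_{w'} = aR$, and hence $(aR)_u = aR$ by Proposition~\ref{u-w-w'-oper}(1). Then $(3)$ gives $aR = (Q_1 \cdots Q_n)_u$, and taking $t$-closures together with $(aR)_t = aR$ yields $aR = (Q_1 \cdots Q_n)_t$, so $R$ is Krull by \cite[Theorem~13]{7}. Applying $(3)$ to $a = 0$ and using $(0)_u = (0)$ forces $Q_1 \cdots Q_n = 0$ for some primes $Q_i$, so $R$ has only finitely many minimal primes (each equal to some $Q_i$). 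Since each minimal prime is a $u$-ideal by Proposition~\ref{prop1.2}(1) applied to the $u$-ideal $(0)$, applying $(3)$ to elements chosen via prime avoidance in one minimal prime but outside the others isolates principal generators for the minimal primes and precludes any non-minimal prime lying in $Z(R)$, giving $\dim T(R) = 0$.

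Finally, $(5) \Rightarrow (6)$ is handled by an Artinian-style primary decomposition of $(0)$ in the zero-dimensional $T(R)$: the maximals are $p_i T(R)$, the powers $(p_i T(R))^{e_i}$ are pairwise comaximal, and a hands-on argument exploiting the principal generators gives $(0) = \prod (p_i T(R))^{e_i}$, hence $p_1^{e_1} \cdots p_n^{e_n} = 0$. For $(6) \Rightarrow (1)$, the comaximal decomposition passes via CRT to $T(R) = S_1 \times \cdots \times S_n$ with each $S_i$ local zero-dimensional; every idempotent of $T(R)$ is integral over $R$ (root of $x^2 - x$), and since a Krull ring is integrally closed in its total quotient ring, all such idempotents lie in $R$, yielding $R = Re_1 \times \cdots \times Re_n$. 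Each summand $Re_i \subseteq S_i$ is a Krull ring: if $S_i$ is a field then $Re_i$ is a Krull domain, and if $S_i$ is a nontrivial SPR then an integral-closure plus module-finiteness argument forces $Re_i = S_i$, so $Re_i$ is an SPR. The main obstacle is justifying the primary decomposition of $(0)$ in $T(R)$ for $(5) \Rightarrow (6)$ without invoking Noetherianity of $T(R)$; I expect to sidestep this by leveraging the finitely many principal generators of the minimal primes directly, which supply a concrete CRT-style decomposition inside $T(R)$ avoiding any appeal to Noetherian hypotheses.
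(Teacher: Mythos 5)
Your overall architecture (forward implications via the direct--sum structure, then a cycle $(3)\Rightarrow(5)\Rightarrow(6)\Rightarrow(1)$) is reasonable, and the easy directions plus the Krull-ring part of $(3)\Rightarrow(5)$ are fine. But each of the three hard steps has a genuine gap, and in two of them the missing content is exactly the heart of the theorem.

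First, in $(3)\Rightarrow(5)$ you assert that applying $(3)$ to elements chosen by prime avoidance ``isolates principal generators for the minimal primes and precludes any non-minimal prime lying in $Z(R)$.'' There is no argument here. If $a\in P_1\setminus\bigcup_{j\ge 2}P_j$ and $(aR)_u=(Q_1\cdots Q_m)_u$, you can conclude that $P_1$ occurs among the $Q_j$ (the product lies in the $u$-ideal $P_1$), but nothing in this forces $P_1$ to be \emph{principal}, nor does it exclude a chain $P\subsetneq P'\subseteq Z(R)$. The paper gets both conclusions by a different mechanism: every principal ideal of $T(R)=R_{\reg(R)}$ inherits a prime factorization, so $T(R)$ is a $\pi$-ring which, being a total quotient ring, is a finite direct sum of SPRs (Gilmer's structure theorem); this yields $\dim(T(R))=0$, and principality of the minimal primes of $R$ only comes out \emph{after} splitting $R\cong R/Q_1\times\cdots\times R/Q_n$ using integral closedness. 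You need some version of that global decomposition; prime avoidance alone will not produce it.

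Second, in $(6)\Rightarrow(1)$ the claim that ``an integral-closure plus module-finiteness argument forces $Re_i=S_i$'' when $S_i$ is a nontrivial SPR is false as stated: $S_i=T(Re_i)$ is not a finitely generated $Re_i$-module in general, and integral closedness alone cannot force $Re_i=T(Re_i)$ --- the paper's Example~\ref{ex5.10} is a (completely integrally closed) Krull ring $R\subsetneq T(R)$ with $T(R)$ an SPR. The correct argument must use the principal generator $p_ie_i$ of the minimal prime of $Re_i$: if $Re_i$ had a regular height-one prime $P$, then $PR_P$ is principal (by $t$-invertibility in a Krull ring), and the computation $q=pq_1$, $q_1=qx$, $q(1-px)=0$ from the paper's proof of $(5)\Rightarrow(1)$ forces the nilpotent generator to vanish, contradicting $S_i$ being a nontrivial SPR. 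Finally, your acknowledged obstacle in $(5)\Rightarrow(6)$ is real but the easiest of the three to repair: with finitely many minimal primes $P_i=p_iR$ (finiteness needs \cite{gh93}), one has $p_1\cdots p_n\in\bigcap_iP_i=N(R)$, so $(p_1\cdots p_n)^e=0$ exhibits $0$ as a finite product of prime elements with no primary decomposition needed; the paper simply cites \cite{ac11} for this equivalence.
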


\begin{proof}
(1) $\Rightarrow$ (4) Let $R = R_1 \times \cdots \times R_n$ be a finite direct product of Krull domains and SPRs $R_1, \ldots, R_n$.
Let $I$ be a $u$-ideal of $R$. Then $I = I_1 \times \cdots \times I_n$, where $I_i$ is an ideal of $R_i$ for $i = 1, \ldots, n$
by Lemma~\ref{direct product}(2).
Note that $I_i$ is a $u$-ideal of $R_i$ by Lemma~\ref{u-diect product}(2) and $R_i$ is either a Krull domain or an SPR, so
\[
  I_i = \big( P^{e_{i1}}_{i1} \cdots P^{e_{ik_i}}_{ik_i} \big)_u
\]
for some prime ideals $P_{ij}$ of $R_i$ and integers $e_{ij} \ge 1$ by Corollary \ref{coro4.7}
and Proposition \ref{regLocal}(4).
Now, let
\[
  Q_{ij} = R_1 \times \cdots \times P_{ij} \times \cdots \times R_n
\]
for all $i, j$.
Then $Q_{ij}$ is a prime ideal of $R_i$ by Lemma~\ref{direct product}(3) and
\[
    \begin{aligned}
      I \quad & = \quad \big( P^{e_{11}}_{11} \cdots P^{e_{1 k_{1}}}_{1 k_{1}} \big)_u \times \cdots \times \big( P^{e_{n1}}_{n1} \cdots P^{e_{n k_{n}}}_{n k_{n}} \big)_u \\
        \quad & = \quad \Big( \big( P^{e_{11}}_{11} \cdots P^{e_{1 k_{1}}}_{1 k_{1}} \big) \times \cdots \times \big( P^{e_{n 1}}_{n1} \cdots P^{e_{n k_{n}}}_{n k_{n}} \big) \Big)_u \\
        \quad & = \quad \Big( \prod_{i=1}^{n} \big( \prod_{j=1}^{k_i} Q_{ij}^{e_{ij}} \big) \Big)_u
    \end{aligned}
\]
by Lemma~\ref{u-diect product}.

(1) $\Rightarrow$ (2) Let $a \in R$. Then, by Theorem \ref{thm5.2}(4),
$aR$ is a $u$-ideal. Thus, $aR$ is a finite $u$-product of prime ideals by (1) $\Rightarrow$ (4) above.

(2) $\Rightarrow$ (3) Clear.

(4) $\Rightarrow$ (3) Clear.

(3) $\Rightarrow$ (1) Let $S = \reg (R)$ and $a \in R$.
Then $(aR)_u = (P_1' \cdots P_m')_u$ for some prime ideals $P_1', \ldots, P_m'$ of $R$, whence by Proposition~\ref{regLocal},
\[
  aR_S = (aR)_u R_S = (P_1' \cdots P_m')_u R_S = (P_1' \cdots P_m')R_S = (P_1' R_S) \cdots (P_m' R_S) \,.
\]
Clearly, either $P_i' R_S = R_S$ or $P_i' R_S$ is a prime ideal of $R_S$.
Note that each principal ideal of $R_S$ is of the form $aR_S$ for some $a \in R$.
Thus, $R_S$ is a $\pi$-ring, and since $R_S$ is a total quotient ring,
$R_S$ is a finite direct product of SPRs, say, $R_S = T_1 \times \cdots \times T_n$ \cite[Theorem 46.11]{3}.
Hence, $\dim (R_S) = 0$ and $R_S$ has exactly $n$ prime ideals,
which are all maximal ideals, by Lemma~\ref{direct product}. 

Let $P_1, \ldots, P_n$ be the prime ideals of $R$ such that $\{ P_1 R_S, \ldots, P_n R_S \}$ is the set of prime ideals of $R_S$.
Note that $P^{e_i}_i R_S + P^{e_j}_j R_S = R_S$ for $i \neq j$ and positive integers $e_i, e_j$, so it follows that
\[
  (0) = (P^{e_1}_1 R_S) \cdots (P^{e_n}_n R_S) = P^{e_1}_1 R_S \cap \cdots \cap P^{e_n}_n R_S
\]
for some integers $e_i \ge 1$.
We may assume that $(P^{k_1}_1 R_S) \cdots (P^{k_n}_n R_S) = (0)$ if and only if $k_i \ge e_i$ for all $i = 1, \ldots, n$.
Let $P^{e_i}_i R_S \cap R = Q_i$ for $i = 1, \ldots, n$.
Then $Q_i$ is a $P_i$-primary ideal of $R$, $Q_1 \cap \cdots \cap Q_n = (0)$,
and $Q_i R_S = P^{e_i}_i R_S$ for $i = 1, \ldots, n$.
Note that $(bR)_u = bR$ for all $b \in \reg (R)$ by Proposition~\ref{u-oper}(2) and $u \le v$,
so every proper regular principal ideal of $R$ is a finite
$v$-product of prime ideals by assumption.
Thus, $R$ is a Krull ring by Corollary \ref{coro4.7}, so $R$ is integrally closed,
and hence $R = R/Q_1 \times \cdots \times R/Q_n$ \cite[Lemma 3.1]{ch01},
each $R/Q_i$ is a Krull ring \cite[Proposition 3.2]{am95},
and $T(R) = T(R/Q_1) \times \cdots \times T(R/Q_n)$ by Lemma~\ref{direct product}.

We first assume that $P_i$ is not a maximal ideal of $R$.
Then there is a height-one prime ideal $P$ of $R$ such that $P_i \subsetneq P$.
Thus, $R_P$ is a $\pi$-ring because $I_u R_P = IR_P$ for all ideals $I$ of $R$
by Lemma \ref{local-properties}(1), whence $R_P$ is an integral domain \cite[Theorem 46.8]{3}.
Then $P_i R_P = (0) = Q_i R_P$, so $Q_i = P_i$.
Thus, $R/Q_i = R/P_i$ is a Krull domain.
Next, if $P_i$ is a maximal ideal of $R$, then $T(R/Q_i) = R/Q_i$, and hence $R/Q_i = R/P^{e_i}_i = R_S/P_i^{e_i} R_S$ is an SPR.

(1) $\Rightarrow$ (5) A general Krull ring is a Krull ring. Thus,
the result follows directly from Theorem \ref{thm5.2}(7).

(5) $\Rightarrow$ (1) $T(R)$ is a zero-dimensional PIR by assumption, so
the proof of (3) $\Rightarrow$ (1) above
shows that $R = R/Q_1 \times \cdots \times R/Q_n$ for some primary ideals $Q_i$ of $R$ such that $Q_1 \cap \cdots \cap Q_n = (0)$.
For $Q_i$, let $P_i = \sqrt{Q_i}$. If $P_i$ is maximal, then $R/Q_i$ is an SPR
with a unique prime ideal $P_i/Q_i$.
Next, we assume that $P_i$ is not maximal.
Then $P_i$ is contained in a height-one prime ideal $P$ of $R$, which is regular, so $P$ is $t$-invetible.
Thus, $PR_P$ is principal.
Now, let $P_i R_P = qR_P$ and $PR_P = pR_P$ for some $p, q \in R$.
Then $q = pq_1$ for some $q_1 \in R_P$, and since $p \notin P_i R_P$, $q_1 \in P_i R_P$.
Thus, $q_1 = qx$ for some $x \in R_P$, whence $q = pq_1 = pqx$, implying that $q (1-px) = 0$ in $R_P$.
Since $1-px \in R_p \setminus PR_P$, it follows that $q = 0$ in $R_P$.
Thus, $R_P$ is an integral domain, and so $Q_i = P_i$.
Therefore, $R/Q_i$ is a Krull domain.

(5) $\Leftrightarrow$ (6) This follows from \cite[Theorem 10]{ac11}
that each minimal prime ideal of $R$ is  principal if and only if
the zero element of $R$ is a finite product of prime elements.
\end{proof}

If $R$ is a general Krull ring, then every principal ideal of $R$ can be written
as a finite $u$-product of prime ideals by Theorem~\ref{s-krull}.
The proof of Theorem~\ref{s-krull} shows that such prime ideals can
be chosen so that the heights are at most one.
Moreover, the height of such a prime ideal is one if and only if it is regular.

\begin{remark} \label{remark5.8}
Let $R$ be a general Krull ring and $a \in R$.
Then $aR$ is a finite $u$-product of prime ideals by Theorem \ref{s-krull} and
$a =bc$ for some $b \in \reg(R)$ and $c \in Z(R)$ that is a finite product of prime elements of $R$
in $Z(R)$ by Theorem \ref{thm5.2}(2). Note that $R$ is a Krull ring, so $bR = (P_1^{e_1} \cdots P_n^{e_n})_u$
for some $P_i \in X^1(R)$ and integers $e_i \geq 1$ and this expression is unique up to the order.
Next, $cR = (Q_1^{k_1} \cdots Q_m^{k_m})_u$ for some prime $Z$-ideals $Q_i$ and integers $k_i \geq 1$.
Note that either $Q_i^2 = Q_i$ or $Q_i \supsetneq Q_i^2 \supsetneq \cdots \supsetneq Q_i^{q_i}
= Q_i^{q_i+1}$ for some integer $q_i \geq 2$ by the proof of Theorem \ref{thm5.2}.
Hence, if we choose $k_i$ so that $k_i = 1$ or $k_i \leq q_i$, then
the expression $cR = (Q_1^{k_1} \cdots Q_m^{k_m})_u$ is unique up to the order.
So,
\begin{eqnarray*}
aR &=& bcR\\ &=& ((bR)(cR))_u
 = ((P_1^{e_1} \cdots P_n^{e_n})_u(Q_1^{k_1} \cdots Q_m^{k_m})_u)_u\\
&=& (P_1^{e_1} \cdots P_n^{e_n}Q_1^{k_1} \cdots Q_m^{k_m})_u
\end{eqnarray*}
and this expression is unique up to the order as in the case of Krull domains.
\end{remark}

Let $R$ be a general Krull ring. Then $u=w$ on $R$ because $R$ satisfies Property(A). Hence, by Theorem \ref{s-krull},
each principal ideal of $R$ is a finite $w$-product of prime ideals.
We next show that the converse is also true, so a general Krull ring
can be characterized via the $w$-operation.

\begin{corollary} \label{coro5.7}
The following statements are equivalent for a ring $R$.
\begin{enumerate}
\item $R$ is a general Krull ring.
\item Each principal ideal of $R$ is a finite $w$-product of prime ideals.
\item $(aR)_w$ is a finite $w$-product of prime ideals for all $a \in R$.
\item Each integral $w$-ideal of $R$ is a finite $w$-product of prime ideals.
\end{enumerate}
\end{corollary}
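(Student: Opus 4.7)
The equivalence (1) $\Leftrightarrow$ (4) is simply a restatement of the corresponding equivalence in Theorem~\ref{s-krull}, so the plan is to establish the loop (1) $\Rightarrow$ (2) $\Rightarrow$ (3) $\Rightarrow$ (1). For (1) $\Rightarrow$ (2): since $R$ is a general Krull ring, Corollary~\ref{direct sum Property(A)} (applied to each direct summand, noting that Krull domains and SPRs satisfy Property(A)) gives that $R$ satisfies Property(A); Proposition~\ref{u-w-w'-oper}(2) then forces $u = w$ on $R$, and combining with Theorem~\ref{s-krull}((1) $\Rightarrow$ (2)) yields that every principal ideal of $R$ is a finite $w$-product of prime ideals. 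The implication (2) $\Rightarrow$ (3) is immediate: if $aR = (P_1 \cdots P_n)_w$, then $aR$ is a $w$-ideal, so $(aR)_w = aR = (P_1 \cdots P_n)_w$ is already displayed as a finite $w$-product of primes.

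The substantive step is (3) $\Rightarrow$ (1), which the plan treats by mirroring the proof of Theorem~\ref{s-krull}((3) $\Rightarrow$ (1)) with the $w$-operation in place of $u$ wherever possible. Two ingredients fall out essentially for free. First, because $w \le t \le v$, applying the $v$-closure to the identity $(aR)_w = (P_1 \cdots P_n)_w$ for $a \in \reg(R)$ gives $aR = (aR)_v = (P_1 \cdots P_n)_v$, so every proper regular principal ideal of $R$ is a finite $v$-product of prime ideals, and $R$ is therefore a Krull ring by \cite[Theorem~13]{7}. Second, because the $w$-operation is reduced (a $\GV$-ideal is semiregular, hence has trivial annihilator), taking $a=0$ in (3) yields $(P_1 \cdots P_n)_w = (0)_w = (0)$ and thus $P_1 \cdots P_n = (0)$; in particular $R$ has only finitely many minimal prime ideals, and each one equals some $P_i$.

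Once $R$ is known to be an integrally closed Krull ring with the zero-factorization $P_1 \cdots P_n = (0)$, the plan is to finish as in Theorem~\ref{s-krull}: produce primary ideals $Q_i$ with $Q_1 \cap \cdots \cap Q_n = (0)$ whose extensions in $T(R)$ are pairwise comaximal, apply \cite[Lemma~3.1]{ch01} to obtain $R = R/Q_1 \times \cdots \times R/Q_n$, and identify each $R/Q_i$ as either a Krull domain or an SPR according to whether $\sqrt{Q_i}$ is maximal in $R$. The main obstacle is the comaximality step: in the proof of Theorem~\ref{s-krull} this rests on the identity $I_u R_S = I R_S$ from Proposition~\ref{regLocal}(1) with $S = \reg(R)$, which exploits the fact that ideals in $\rGV(R)$ are regular. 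For the $w$-operation, a $\GV$-ideal need only be semiregular, so the analogous identity $I_w R_S = IR_S$ can fail, and this is the precise point at which a new idea is required; the plan to bypass it is to use the already-established Krull structure together with the finite list of minimal primes produced by $P_1 \cdots P_n = (0)$ to force $\dim T(R) = 0$ directly, thereby recovering the pairwise comaximality needed for the decomposition.
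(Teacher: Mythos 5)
Your reductions (1) $\Leftrightarrow$ (4), (1) $\Rightarrow$ (2) $\Rightarrow$ (3), and the two ``free'' ingredients of (3) $\Rightarrow$ (1) are all fine and match the paper: Property(A) gives $u=w$ for the forward direction, $w \le v$ gives that every regular principal ideal is a finite $v$-product of primes (so $R$ is a Krull ring by \cite[Theorem 13]{7}), and reducedness of $w$ gives $P_1\cdots P_n=(0)$, hence finitely many minimal primes. The problem is the last step. You correctly identify that everything hinges on proving $\dim T(R)=0$ (equivalently Property(A), hence $u=w$), but the implication you propose to use --- that a Krull ring with finitely many minimal primes whose product is zero must have zero-dimensional total quotient ring --- is false. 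For a counterexample, let $V$ be a valuation domain with primes $(0)\subsetneq P\subsetneq M$ and let $R_0 = V(+)V/M$ be the idealization as in Example~\ref{ex5.12}. Then $T(R_0)=R_0$, so $R_0$ is (vacuously) a Krull ring; its unique minimal prime $(0)(+)V/M$ satisfies $\big((0)(+)V/M\big)^2=(0)$; yet $\dim T(R_0)=\dim V = 2$. So ``already-established Krull structure plus the finite list of minimal primes'' cannot force $\dim T(R)=0$; you must return to hypothesis (3) and extract more from it.

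The paper closes exactly this gap by localizing. For each maximal $w$-ideal $M$ and each $a\in R$, the identity $I_wR_M = IR_M$ (\cite[Corollary 3.10]{ywzc11}) turns $(aR)_w=(Q_1\cdots Q_m)_w$ into $aR_M=(Q_1R_M)\cdots(Q_mR_M)$, so $R_M$ is a $\pi$-ring with a unique maximal ideal, hence a DVR or an SPR. From this local structure each $M$ contains exactly one $P_i$, each $(P_i^{e_i})_w$ is $P_i$-primary, and $(0)=(P_1^{e_1})_w\cap\cdots\cap(P_n^{e_n})_w$ is a primary decomposition whose associated primes are the minimal primes $P_i$; this is what yields $Z(R)=\bigcup_i P_i$, $\Max(T(R))=\{P_iT(R)\}$, and finally $\dim T(R)=0$. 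Without some such use of hypothesis (3) at the local level, your argument does not go through.
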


\begin{proof}
(1) $\Rightarrow$ (2) and (4) A general Krull ring satisfies Property(A), so
$u = w$ on $R$ by Proposition \ref{u-w-w'-oper}. Thus,
these follow directly from Theorem \ref{s-krull}.

(2) $\Rightarrow$ (3) Clear.

(4) $\Rightarrow$ (3) Clear.

(3) $\Rightarrow$ (1) If $R$ satisfies Property(A), then
$u=w$ by Proposition \ref{u-w-w'-oper}(2). Hence, by Theorem \ref{s-krull},
it suffices to show that $R$ satisfies Property(A). Recall that if dim$(T(R)) = 0$, then $R$
satisfies Property(A) \cite[Corollaries 2.6 and 2.12]{4}. Hence,
we will show that dim$(T(R))=0$. Since the $w$-operation is reduced, by (3),
$$(0) = (P_1^{e_1} \cdots P_n^{e_n})_w = P_1^{e_1} \cdots P_n^{e_n}$$
for some distinct minimal prime ideals $P_i$ of $R$ and integers $e_i \geq 1$.

Let $M$ be a maximal $w$-ideal of $R$ and $a \in R$. Then
$(aR)_w = (Q_1 \cdots Q_m)_w$ for some prime ideals $Q_1, \dots, Q_m$ of $R$,
and hence, by \cite[Corollary 3.10]{ywzc11}, $$aR_M = (Q_1 \cdots Q_m)R_M = (Q_1R_M) \cdots (Q_mR_M),$$
which implies that $R_M$ is a $\pi$-ring, so $R_M$
is a DVR or an SPR. Thus, $M$ contains exactly one of $P_i$'s and
$(P_1^{e_1} \cdots P_n^{e_n})R_M   = ((P_1^{e_1})_w \cap \cdots \cap (P_n^{e_n})_w)R_M$.
Again, by \cite[Corollary 3.10 and Proposition 3.11]{ywzc11},
\begin{eqnarray*}
(0) &=& P_1^{e_1} \cdots P_n^{e_n}\\
&=& ((P_1^{e_1})_w \cap \cdots \cap (P_n^{e_n})_w)_w \\ &=& (P_1^{e_1})_w \cap \cdots \cap (P_n^{e_n})_w.
\end{eqnarray*}
Clearly, each $(P_i^{e_i})_w$ is $P_i$-primary. Hence,
$Z(R) = \bigcup_{i=1}^nP_i$ and $\Max \big( T(R) \big) = \{P_i T(R) \mid i =1, \dots , n\}$.
Thus, $\dim(T(R))=0$.
\end{proof}

An integral domain is a Krull domain if and only if it is an integrally
closed $u$-Noetherian domain \cite[Theorem 2.8]{wm97}, while
a Krull ring need not be a $u$-Noetherian ring (e.g., a non-Noetherian
total quotient ring). The next corollary shows that a general Krull ring
is $u$-Noetherian.

\begin{corollary} \label{T(R) PIR}
The following statements hold for a general Krull ring $R$.
\begin{enumerate}
\item $T(R)$ is a zero-dimensional PIR.
\item $R$ satisfies the ascending chain condition on integral $u$-ideals. Hence,
$R$ is a $u$-Noetherian ring.
\end{enumerate}
\end{corollary}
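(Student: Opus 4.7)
The plan is to reduce both assertions to the factor-by-factor statements for Krull domains and SPRs, using the structure theorem that a general Krull ring $R$ is a finite direct sum $R = R_1 \times \cdots \times R_k \times R_{k+1} \times \cdots \times R_n$ of Krull domains $R_1,\ldots,R_k$ and SPRs $R_{k+1},\ldots,R_n$. Proposition~\ref{star direct sum}, Lemma~\ref{direct sum}, and Lemma~\ref{u-diect product} will then let me pass properties between $R$ and the summands.

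For part (1), I would first compute $T(R) = T(R_1) \times \cdots \times T(R_n)$ by Lemma~\ref{direct sum}(1). When $R_i$ is a Krull domain, $T(R_i)$ is a field, and when $R_i$ is an SPR, every element of $R_i$ is either a unit or nilpotent (hence a zero divisor), so $\mathrm{reg}(R_i)$ consists entirely of units and $T(R_i) = R_i$, which is a zero-dimensional PIR. Therefore $T(R)$ is a finite direct sum of fields and SPRs, and by Lemma~\ref{direct sum}(3)--(4) it is a zero-dimensional PIR. Alternatively, this already follows from Theorem~\ref{s-krull}(5) together with the observation that $T(R)$ has only finitely many maximal ideals and each localization is a field or an SPR.

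For part (2), I intend to show that each $R_i$ is $u$-Noetherian and then transfer the ACC across the direct sum via Lemma~\ref{u-diect product}(2). If $R_i$ is a Krull domain then $u = w$ on $R_i$ (since $R_i$ is a domain, so $\rGV(R_i) = \GV(R_i)$), and a Krull domain is $w$-Noetherian by \cite[Theorem 2.8]{wm99} cited in Section~\ref{w-oper}; if $R_i$ is an SPR it is already Noetherian (it has only finitely many ideals), so automatically $u$-Noetherian. Given any ascending chain $I^{(1)} \subseteq I^{(2)} \subseteq \cdots$ of integral $u$-ideals of $R$, Lemma~\ref{u-diect product}(2) lets me write $I^{(j)} = I^{(j)}_1 \times \cdots \times I^{(j)}_n$ with each $I^{(j)}_i$ an integral $u$-ideal of $R_i$; each coordinate chain stabilizes, and by taking the maximum of the stabilization indices the original chain stabilizes as well.

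The only potentially subtle step is confirming that the coordinate ideals $I^{(j)}_i$ genuinely are $u$-ideals of $R_i$ (so that the ACC for $u$-ideals in the summands applies), which is exactly what Lemma~\ref{u-diect product}(2) guarantees, and that $u = w$ on a Krull domain so we may quote the $w$-Noetherian theorem. Everything else is bookkeeping, so no serious obstacle arises.
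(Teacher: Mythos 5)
Your proposal is correct, but for part (2) it takes a genuinely different route from the paper. For part (1) the two arguments amount to the same thing: the paper cites the proof of Theorem~\ref{s-krull} (which shows $T(R)$ is a finite direct sum of SPRs), while you recompute $T(R) = T(R_1) \times \cdots \times T(R_n)$ directly from Lemma~\ref{direct sum}(1) and observe that each factor is a field or an SPR; this is just a more self-contained phrasing of the same fact. For part (2), however, the paper does \emph{not} reduce to the direct summands. Instead it works globally in $R$: by Theorem~\ref{s-krull}(4) every integral $u$-ideal is a finite $u$-product of prime ideals, and by Remark~\ref{remark5.8} any $u$-ideal containing $I_u = (P_1^{e_1}\cdots P_n^{e_n})_u$ must have the form $(P_1^{k_1}\cdots P_n^{k_n})_u$ with $0 \le k_i \le e_i$, so every ascending chain of integral $u$-ideals above $I$ has length at most $\sum_i e_i$. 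That argument yields a uniform bound on chain lengths over a fixed ideal, which is strictly more information than ACC. Your argument instead decomposes a chain coordinatewise via Lemma~\ref{direct sum}(2) and Lemma~\ref{u-diect product}(2), invokes the cited fact that a Krull domain is $w$-Noetherian (hence $u$-Noetherian, since $u=w$ on a domain) and that an SPR is Noetherian, and takes the maximum of the stabilization indices. This is perfectly valid --- the coordinate ideals are indeed $u$-ideals of the summands by Lemma~\ref{u-diect product}(2), and the extension of that lemma to $n$ factors is covered by the paper's standing remark on induction --- but it leans on the external strong Mori theorem for Krull domains, whereas the paper's proof is internal to the factorization theory it has just developed and buys the explicit chain-length bound as a byproduct.
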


\begin{proof}
(1) The proof of Theorem~\ref{s-krull} shows that $T(R)$ is a finite direct product of SPRs.
Thus, $T(R)$ is a zero-dimensional PIR.

(2) Let $I$ be a $u$-ideal of $R$.
Then $I_u = (P^{e_1}_1 \cdots P^{e_n}_n)_u$ for some distinct prime ideals $P_1, \ldots, P_n$ and positive integers $
e_1, \ldots, e_n$ by Theorem~\ref{s-krull}.
If $J$ is a $u$-ideal of $R$ with $I \subseteq J$, then $J$ is a finite $u$-product of prime ideals,
which is of the form $J = (P^{k_1}_1 \cdots P^{k_n}_n)_u$ for some integers $k_i$ with $0 \le k_i \le e_i$ for $i = 1, \ldots, n$
by Remark \ref{remark5.8}.
Thus, if $I = I_1 \subsetneq \cdots \subsetneq I_m$ is a chain of integral $u$-ideals of $R$,
then $m \le \sum_{i=1}^{n} e_i < \infty$.
\end{proof}

The following corollary is a general Krull ring analog of \cite[Proposition 3.2]{am95}
that if $R$ is a finite direct product of rings, then
$R$ is a Krull ring if and only if each direct summand of $R$ is a Krull ring.

\begin{corollary} \label{SKR direct product}
Let $R = R_1 \times R_2$ be the direct product of rings $R_1$ and $R_2$.
Then $R$ is a general Krull ring if and only if both $R_1$ and $R_2$ are general Krull rings.
\end{corollary}

\begin{proof}
($\Leftarrow$) Clear.
($\Rightarrow$) For $a \in R_1$, let $I = (a,1)R$.
Then $I = aR_1 \times R_2$, and since $R$ is a general Krull ring,
\[
  aR_1 \times R_2 = (Q_1 \cdots Q_k)_u \,
\]
for some prime ideals $Q_1, \ldots, Q_k$ of $R$.
Clearly, each $Q_i = P_i \times R_2$ for some prime ideal $P_i$ of $R_1$ by Lemma \ref{direct product}. Hence,
\[
  aR_1 \times R_2 = \big( (P_1 \cdots P_k) \times R_2 \big)_u = (P_1 \cdots P_k)_u \times R_2 \,,
\]
and thus $aR_1 = (P_1 \cdots P_k)_u$.
Thus, by Theorem~\ref{s-krull}, $R_1$ is a general Krull ring.
Similarly, $R_2$ is also a general Krull ring.
\end{proof}

A general Krull ring is a Krull ring whose total quotient ring is a PIR. However,
we next give a Krull ring $R$ such that $T(R)$ is an SPR but $R$ is not a general Krull ring.
This example shows that the two conditions of Theorem \ref{s-krull}(5) are the best for
a Krull ring to be a general Krull ring.

\begin{example} \label{ex5.10}
Let $V$ be a rank-two discrete valuation ring, $Q$ be a primary ideal of $V$ such that
$\h (\sqrt{Q}) = 1$ and $Q \subsetneq \sqrt{Q}$, and $R = V/Q$ be the factor ring of $V$ modulo $Q$.
Then the following conditions hold.
\begin{enumerate}
\item $T(R)$ is an SPR.

\smallskip
\item $R$ is a Krull ring.

\smallskip
\item $R$ is not a general Krull ring.
\end{enumerate}
\end{example}

\begin{proof}
(1)  Let $P = \sqrt{Q}$ and $\overline{x} = x + Q \in R$ for all $x \in V$.
For $a, b \in V$, assume that $\overline{a} \cdot \overline{b} = \overline{0}$ in $R$
and $\overline{a} \notin P$. Then $ab \in Q$
and $a \notin P$, and since $Q$ is primary, $b \in Q$, whence $\overline{b} = \overline{0}$ in $R$.
Hence, $\reg (R) = R \setminus P/Q$ and $$T(R) = (V/Q)_{P/Q} \cong V_P / QV_P.$$
Thus, $T(R)$ is an SPR because $V_P$ is a DVR.

\smallskip
(2) Let $M$ be the maximal ideal of $V$. Then $M/Q$ is a regular prime
ideal of $R$ by the proof of (1), reg-ht$(M/Q) = 1$, and $M/Q$ is principal.
Hence, $R$ is a rank-one discrete valuation ring. Thus, $R$ is a Krull ring.

\smallskip
(3) Assume that $R$ is a general Krull ring. Then the minimal prime ideals of $R$
are principal by Theorem \ref{s-krull}, so
$\overline{P} = P/Q$ is principal, which implies that $P$ is principal, a contradiction.
Thus, $R$ is not a general Krull ring.
\end{proof}

A Krull ring $R$ such that $T(R)$ is a PIR
need not be a general Krull ring by Example \ref{ex5.10}.
We next characterize when $R$ is a general Krull ring.

\begin{theorem} \label{Krull and strong Krull}
The following statements are equivalent for a ring $R$.
\begin{enumerate}
\item $R$ is a general Krull ring.

\item $R$ is a Krull ring, $T(R)$ is a PIR, and $R_P$ is a DVR for all $P \in X^{1}_r (R)$.

\item $R$ satisfies the following conditions.
      \begin{enumerate}
      \item $R = \underset{P \in X^{1}_r (R)}{\bigcap} R_{[P]}$.

      \item $R_P$ is a DVR for all $P \in X^{1}_r (R)$.

      \item Each regular element of $R$ is contained in only finitely many prime ideals in $X^{1}_r (R)$.

      \item $T(R)$ is a PIR.
      \end{enumerate}
\end{enumerate}
\end{theorem}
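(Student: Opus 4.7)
The plan is to establish the three conditions as equivalent via the cycle $(1) \Rightarrow (2) \Rightarrow (3) \Rightarrow (1)$. For $(1) \Rightarrow (2)$, I write $R = R_1 \times \cdots \times R_n$ as a direct sum of Krull domains and SPRs: this immediately yields that $R$ is a Krull ring (a direct sum of Krull rings is Krull), that $T(R)$ is a PIR by Corollary~\ref{T(R) PIR}, and that any $P \in X^1_r(R) = X^1(R)$ (by Theorem~\ref{thm5.2}(5)) has the form $R_1 \times \cdots \times P_j \times \cdots \times R_n$ with $P_j \in X^1(R_j)$ for a Krull-domain factor $R_j$; then $R_P \cong (R_j)_{P_j}$ is a DVR. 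For $(2) \Rightarrow (3)$, condition (a) is the unique defining family representation of a Krull ring (\cite{12,ao90,ck21}), condition (c) is the locally-finite condition in the Krull ring definition, and conditions (b) and (d) are part of hypothesis (2).

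The substantive direction is $(3) \Rightarrow (1)$, where I aim to exhibit $R$ as a finite direct sum of Krull domains and SPRs. Condition (d) says $T(R)$ is a PIR coinciding with its own total quotient ring, so every regular element of $T(R)$ must be a unit; this forces the PIR structure-theorem decomposition of $T(R)$ to involve only SPR summands, giving $T(R) = T_1 \times \cdots \times T_n$ with each $T_i$ an SPR (fields allowed as a special case). The next step is to lift the orthogonal idempotents $e_1, \ldots, e_n$ of $T(R)$ into $R$: for each $P \in X^1_r(R)$, since $R_P$ is a domain by (b), each $e_j$ has image $0$ or $1$ in $R_P$, so there is $s_P \in R \setminus P$ with $e_j s_P \in R$, giving $e_j \in R_{[P]}$; condition (a) then forces $e_j \in R$. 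This yields $R = R_1 \times \cdots \times R_n$ with $R_i = R e_i$ and $T(R_i) = T_i$, and by the direct-sum calculations of Lemma~\ref{direct sum} and Lemma~\ref{u-diect product} each $R_i$ inherits the analogues of (a), (b), (c) relative to $X^1_r(R_i)$.

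What remains is the crux: each $R_i$ must be shown to be either a Krull domain or an SPR. When $T_i$ is a field, $R_i$ is a domain and the intersection/DVR hypotheses give $R_i$ as a classical Krull domain. The hard case, which I expect to be the main obstacle, is when $T_i$ is an SPR with nonzero nilpotent maximal ideal: then $R_i$ inherits nilpotents, and I must prove $R_i = T_i$. Assume for contradiction $R_i \neq T_i$, so there is a regular non-unit in $R_i$ and hence $X^1_r(R_i) \neq \emptyset$. For any nonzero nilpotent $x \in R_i$ and each $P \in X^1_r(R_i)$, the DVR $(R_i)_P$ is a domain, so $x$ maps to $0$ and there exists $s_P \in R_i \setminus P$ with $s_P x = 0$; thus $\mathrm{Ann}(x) \not\subseteq P$ for every $P \in X^1_r(R_i)$. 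On the other hand, since $T(R_i) = T_i$ is an SPR, one checks $Z(R_i) = N(R_i)$ and that this is the unique minimal prime of $R_i$, so $\mathrm{Ann}(x) \subseteq Z(R_i) = N(R_i) \subseteq P$ for every prime $P$, a contradiction. Hence $R_i$ is reduced and therefore a domain; but this forces $T_i$ to be a field, contradicting our case assumption. Thus $R_i = T_i$ is an SPR, and $R = R_1 \times \cdots \times R_n$ is a general Krull ring. This rigidity argument, via the DVR/nilpotent interplay, is precisely what excludes the Krull-ring pathology of Example~\ref{ex5.10}.
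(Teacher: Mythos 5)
Your proposal is correct in substance, but the decisive direction is handled quite differently from the paper's. The paper proves $(2)\Rightarrow(1)$: since $T(R)$ is a PIR it is Noetherian, so $(0)$ has an irredundant primary decomposition $(0)=Q_1\cap\cdots\cap Q_n$ in $R$; then $R\hookrightarrow R/Q_1\times\cdots\times R/Q_n\hookrightarrow T(R)$, and because a Krull ring is integrally closed and the middle term is a finitely generated $R$-module, $R$ equals the product; finally each $R/Q_i$ is a Krull domain or an SPR, by localizing at some $P_i\in X^1_r(R)$ containing $Q_i$ (the DVR hypothesis forces $Q_iR_{P_i}=(0)$, hence $Q_i=\sqrt{Q_i}$) when $\sqrt{Q_i}$ is not maximal, and by $R/Q_i=T(R/Q_i)$ otherwise. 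You instead prove $(3)\Rightarrow(1)$ by lifting the idempotents of $T(R)=T_1\times\cdots\times T_n$ into $R$ via conditions (a) and (b) (the computation $a(a-s)=0$ with $R_P$ a domain does give $e_j\in R_{[P]}$), and then excluding a proper nilpotent factor by the annihilator clash $\mathrm{Ann}(x)\subseteq Z(R_i)=N(R_i)\subseteq P$ versus $\mathrm{Ann}(x)\nsubseteq P$ for $P\in X^1_r(R_i)$. Your route buys independence from two external inputs the paper uses (integral closedness of Krull rings and the finitely-generated-module argument) and works directly from $(3)$ without first passing through $(2)$; the paper's route is shorter precisely because it leans on those facts. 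Two small repairs are needed: the inference ``$R_i\neq T_i$, so there is a regular non-unit, hence $X^1_r(R_i)\neq\emptyset$'' is not justified as stated, since a regular prime need not contain a minimal regular prime in general --- the correct justification is that if $X^1_r(R_i)=\emptyset$ then the empty intersection in the inherited condition (a) already forces $R_i=T(R_i)=T_i$. Also, the closing sentence ``Hence $R_i$ is reduced and therefore a domain'' is logically out of place, since your contradiction already yields $R_i=T_i$ directly; that sentence should simply be deleted.
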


\begin{proof}
(1) $\Rightarrow$ (2) Let $R = R_1 \times \cdots \times R_n$ be a direct product of Krull domains and SPRs.
So if $Q \in X^{1}_r (R)$, then $Q = R_1 \times \cdots \times P_i \times \cdots \times R_n$
for some height-one prime ideal $P_i$ of a Krull domain $R_i$.
Hence, $R_Q \cong (R_i)_{P_i}$, and since $R_i$ is a Krull domain, $(R_i)_{P_i}$ is a DVR.
The other properties follow from Theorem~\ref{s-krull} and Corollary~\ref{T(R) PIR}.

(2) $\Leftrightarrow$ (3) Assume that $R_P$ is a DVR for all $P \in X^{1}_r (R)$.
Then $( R_{[P]}, [P]R_{[P]} )$ is a rank-one DVR \cite[Theorem 1]{ck02}.
Thus, $R$ is a Krull ring if and only if
$R$ satisfies the conditions (a) and (c) \cite[Theorem 3.5]{ck21}.

(2) $\Rightarrow$ (1) Since $T(R)$ is a PIR, $(0)$ has an irredundant primary decomposition in $R$, say, $(0) = Q_1 \cap \cdots \cap Q_n$. Then
\[
  R \hookrightarrow R/Q_1 \times \cdots \times R/Q_n \hookrightarrow T( R/Q_1 ) \times \cdots \times T (R/Q_n ) \cong T(R) \,.
\]
Note that $R$ is integrally closed and $R/Q_1 \times \cdots \times R/Q_n$ is a finitely generated $R$-module,
so $R = R/Q_1 \times \cdots \times R/Q_n$.
Now, if $\sqrt{Q_i}$ is not a maximal ideal, then $(R/Q_i)_{P_i/Q_i} \cong R_{P_i}/Q_i R_{P_i}$
for some $P_i \in X^1_r(R)$ containing $Q_i$.
Note that $R_{P_i}$ is a DVR by assumption, so $Q_i R_{P_i} = (0)$. Hence, $R/Q_i$ is an integral domain,
and since $R$ is a Krull ring, $R/Q_i$ is a Krull domain.
Next, if $\sqrt{Q_i}$ is a maximal ideal, then $R/Q_i = T(R/Q_i)$, and since $T(R)$ is a PIR, $R/Q_i$ is an SPR.
Thus, $R$ is a general Krull ring.
\end{proof}

\begin{corollary} \label{krull overring}
Let $R$ be a general Krull ring and $S$ be an overring of $R$. Then
$S$ is a Krull ring if and only if $S$ is a general Krull ring.
\end{corollary}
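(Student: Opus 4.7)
The forward direction is immediate: general Krull rings are Krull rings, as observed in the introduction. So the plan is to attack the converse, assuming $S$ is a Krull overring of $R$ and aiming to express $S$ as a finite direct sum of Krull domains and SPRs.

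My strategy is to exploit the idempotents coming from the decomposition of $R$. Writing $R = R_1 \times \cdots \times R_n$ with each $R_i$ a Krull domain or an SPR, the primitive idempotents $e_1, \dots, e_n$ lie in $R$, hence in $S$, and therefore induce a decomposition $S = e_1 S \times \cdots \times e_n S$. Since $T(R) = T(R_1) \times \cdots \times T(R_n)$ by Lemma~\ref{direct sum}(1), the chain $R \subseteq S \subseteq T(R)$ restricts, component-by-component, to $R_i \subseteq e_i S \subseteq T(R_i)$, so each $e_i S$ is an overring of $R_i$.

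Next I analyze each summand separately. If $R_i$ is an SPR, then $T(R_i) = R_i$ (SPRs are their own total quotient rings), forcing $e_i S = R_i$, which is already an SPR. If $R_i$ is a Krull domain, then $e_i S$ is a subring of the quotient field of $R_i$, hence an integral domain. Now I invoke the direct-sum-of-Krull-rings theorem (\cite[Proposition 3.2]{am95}, used in the proof of Corollary \ref{SKR direct sum}): a finite direct sum of rings is a Krull ring iff each summand is. Since $S$ is assumed a Krull ring, each $e_i S$ is a Krull ring; being an integral domain and a Krull ring makes it a Krull domain. Therefore $S$ is a finite direct sum of Krull domains and SPRs, which is precisely the definition of a general Krull ring.

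There is no real obstacle here; the main point is the structural observation that an overring of a ring $R$ with nontrivial idempotents automatically inherits the product decomposition, after which the problem reduces to the purely domain-theoretic fact that an integral-domain overring of a Krull domain which is itself a Krull ring is a Krull domain. The only thing to verify carefully is the identification $e_i T(R) = T(R_i)$ and that $R_i \subseteq e_i S$, both of which follow immediately from Lemma~\ref{direct sum}.
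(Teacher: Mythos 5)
Your proof is correct, but it follows a genuinely different route from the paper's. The paper proves the converse by verifying the hypotheses of Theorem~\ref{Krull and strong Krull}: it notes $T(S)=T(R)$ is a PIR, then for each $Q\in X^1_r(S)$ sets $P=Q\cap R$, uses the chain $R_P\hookrightarrow S_{R\setminus P}\hookrightarrow T(R)_{R\setminus P}\hookrightarrow T(R_P)$ together with the fact that $R_P$ is a domain to conclude $S_Q$ is a domain, and finally uses $t$-invertibility of $Q$ in the Krull ring $S$ to make $QS_Q$ principal, so that $S_Q$ is a DVR. You instead push the idempotents of the decomposition $R=R_1\times\cdots\times R_n$ into $S$, obtaining $S=e_1S\times\cdots\times e_nS$ with $R_i\subseteq e_iS\subseteq T(R_i)$, and then handle each factor: the SPR factors are rigid because $T(R_i)=R_i$, and the domain factors are Krull domains because a direct summand of a Krull ring is a Krull ring (\cite[Proposition 3.2]{am95}, the same fact the paper cites elsewhere) and an integral domain that is a Krull ring is a Krull domain (its defining family consists of classical DVRs of the quotient field, giving Nagata's characterization). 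Your argument is more structural and avoids both the localization analysis and Theorem~\ref{Krull and strong Krull} entirely; what the paper's proof buys in exchange is a template that does not presuppose access to the direct-sum decomposition of $R$ and illustrates how the characterization of Theorem~\ref{Krull and strong Krull} is meant to be applied. Both proofs are complete; the one point worth making explicit in yours is the (easy) verification that an integral domain which is a Krull ring in the sense of Kennedy's definition is a Krull domain, which the paper uses implicitly throughout (e.g., in Lemma~\ref{reduced_strong_Krull}).
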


\begin{proof}
By Theorem \ref{s-krull}, a general Krull ring is a Krull ring. Conversely,
assume that $S$ is a Krull ring. Note that $T(S) = T(R)$, so $T(S)$
is a PIR by Corollary \ref{T(R) PIR}. Now, let $Q \in X^1_r(S)$ and $P = Q \cap R$.
Then $P$ is regular, and since $R$ is a general Krull ring, $R_P$ is an integral domain.
Note that $$R_P \hookrightarrow S_{R \setminus P} \hookrightarrow T(R)_{R \setminus P} \hookrightarrow T(R_P),$$
so $S_{R \setminus P}$ is an integral domain, whence $S_Q$ is also an integral domain.
By assumption, $S$ is a Krull ring, so $Q \in X^1_r(S)$ implies $Q \subsetneq QQ^{-1}$. Hence,
$QS_Q$ is principal so that $S_Q$ is a DVR. Thus,
$S$ is a general Krull ring by Theorem \ref{Krull and strong Krull}.
\end{proof}

Let $R$ be a general Krull ring.
It is easy to see that $R$ is reduced
if and only if $R$ is a finite direct product of Krull domains
(cf. Proposition \ref{polynomial}). Hence, the following corollary shows
that $R/N(R)$ is a finite direct product of Krull domains.

\begin{corollary}
Let $R$ be a general Krull ring and $A$ be an ideal of $R$ contained in $N(R)$.
Then $R/A$ is a general Krull ring.
\end{corollary}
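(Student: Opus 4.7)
The plan is to reduce to the direct-sum decomposition guaranteed by the definition of a general Krull ring and then handle each summand separately. First, I would write $R = R_1 \times \cdots \times R_k \times R_{k+1} \times \cdots \times R_n$, where $R_1, \dots, R_k$ are Krull domains and $R_{k+1}, \dots, R_n$ are SPRs, so that by Lemma~\ref{direct sum}(6),
\[
  N(R) = N(R_1) \times \cdots \times N(R_n) = (0) \times \cdots \times (0) \times N(R_{k+1}) \times \cdots \times N(R_n).
\]
Since $R_i$ is an integral domain for $i \le k$, we have $N(R_i) = (0)$; and for $i > k$, if $P_i$ denotes the unique (nilpotent) maximal ideal of the SPR $R_i$, then $N(R_i) = P_i$.

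Next, by Lemma~\ref{direct sum}(2), every ideal $A$ of $R$ has the form $A = A_1 \times \cdots \times A_n$ with $A_i$ an ideal of $R_i$. The hypothesis $A \subseteq N(R)$ then forces $A_i = (0)$ for $i = 1, \dots, k$ and $A_i \subseteq P_i$ for $i = k+1, \dots, n$. Consequently,
\[
  R/A \;\cong\; R_1 \times \cdots \times R_k \times R_{k+1}/A_{k+1} \times \cdots \times R_n/A_n,
\]
so the first $k$ summands remain Krull domains. For each $i > k$, since $R_i$ is an SPR every proper ideal is a power of $P_i$, hence $A_i = P_i^{j_i}$ for some $1 \le j_i \le m_i$ (where $P_i^{m_i} = 0$). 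Then $R_i/A_i$ is a local ring whose unique prime ideal $P_i/A_i$ is nilpotent, i.e.\ an SPR (allowing the degenerate case $j_i = 1$, where $R_i/A_i$ is a field, which is an SPR under the convention of Section~\ref{2}).

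Putting these together, $R/A$ is exhibited as a finite direct sum of Krull domains and SPRs, so $R/A$ is a general Krull ring. The argument is essentially bookkeeping once the direct-sum decomposition is in place; the only mild subtlety is making sure that the residue rings of the SPR factors are still recognized as SPRs (or fields, which we treat as SPRs), which follows immediately from the structure of ideals in an SPR. No obstacle is anticipated beyond invoking Lemma~\ref{direct sum} cleanly.
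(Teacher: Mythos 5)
Your proof is correct, but it takes a genuinely different and more elementary route than the paper. You decompose $R = R_1 \times \cdots \times R_n$ into Krull domains and SPRs, use Lemma~\ref{direct sum}(2) and (6) to see that $A = A_1 \times \cdots \times A_n$ with $A_i = (0)$ on the Krull-domain factors and $A_i$ a power of the nilpotent maximal ideal on the SPR factors, and then observe that $R/A \cong \prod_i R_i/A_i$ is again a finite direct sum of Krull domains and SPRs (the quotient of an SPR by a proper ideal being an SPR, with fields admitted as degenerate SPRs). The paper instead proves the corollary intrinsically, without passing to the decomposition of $R/A$: it verifies the four conditions (a)--(d) of Theorem~\ref{Krull and strong Krull} for $R/A$, identifying $X^1_r(R/A)$ with $\{P/A \mid P \in X^1_r(R)\}$, showing $(R/A)_{(P/A)} \cong R_P/AR_P$ is a DVR, that $T(R/A) \cong T(R)/AT(R)$ is a PIR, and that $R/A$ equals the intersection of its localizations at minimal regular primes. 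Your argument is shorter and more transparent given that the definition of a general Krull ring is already a direct-sum statement; the paper's argument has the virtue of illustrating the valuation-theoretic characterization of Theorem~\ref{Krull and strong Krull} and of not requiring one to track how ideals and quotients interact with the product decomposition. Both are complete proofs.
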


\begin{proof}
Note that $R$ is a finite direct product of Krull domains and SPRs,
so $x \in \reg (R)$ if and only if $x + A \in \reg (R/A)$, and $A T(R) = A$ by Lemma~\ref{direct product}(1) and (6).
Moreover, $X^{1}_r (R/A) = \{ P/A \mid P \in X^{1}_r (R) \}$. Hence, the following statements hold.
\begin{enumerate}
\item[(b)] $(R/A)_{(P/A)} \cong R_P / A R_P$ is a DVR for all $P \in X^{1}_r (R)$.
\smallskip
\item[(c)] Each regular element of $R/A$ is contained in only finitely many prime ideals in $X^{1}_r (R/A)$.
\smallskip
\item[(d)] $T(R/A) \cong T(R) / A T(R)$ is a PIR.
\smallskip
\item[(a)] Now, let $\alpha \in \underset{ P \in X^{1}_r (R)}{\bigcap} (R/A)_{[P/A]}$.
           Note that $T(R/A) = T(R)/ A T(R) = T(R)/A$, so $\alpha = x + A$ for some $x \in T(R)$.
           Let $I = \{ a \in R \mid (a + A) (x + A) \in R/A \}$.
           Then $I$ is a regular ideal of $R$ such that $A \subsetneq I \nsubseteq P$ for all $P \in X^{1}_r (R)$, whence $I_u = R$.
           Note that $(I/A) (x + A) \subseteq R/A$, so $xI \subseteq R$, and hence $x I_u \subseteq R$.
           Thus, $I = I_u = R$, which implies that $\alpha = x + A \in R/A$.
           Therefore, $R/A = \underset{P \in X^{1}_r (R)}{\bigcap} (R/A)_{[P/A]}$.
\end{enumerate}
Thus, by Theorem~\ref{Krull and strong Krull}, $R/A$ is a general Krull ring.
\end{proof}

Let $S$ be a multiplicative set of a ring $R$.
It is known that if $R$ is a Krull ring,
then $R_{[S]}$ is a Krull ring by \cite[Lemmas 4.1 and 4.2]{ck21}
(\cite[Corollary 43.6]{3} for the integral domain case and \cite[Proposition 37]{12} for the Marot Krull ring case),
while $R_S$ need not be a Krull ring by the following example.

\begin{example} \label{ex5.12}
Let $V$ be a valuation domain with prime ideals $(0) \subsetneq P \subsetneq M$
such that $V_P$ is not a DVR and $M$ is the maximal ideal of $V$. Let $V/M$ be the factor ring of $V$
modulo $M$ that is also a $V$-module, $R_0 = V(+)V/M$
be the idealization of $V/M$ in $V$ (see Example \ref{ex2.6}), and
$S = (V \setminus P)(+)V/M$. Then $T(R_0) = R_0$, $N(R_0) = (0)(+)V/M$, which is the minimal
prime ideal of $R_0$ \cite[Theorems 25.1(3) and 25.3]{4}), and $(R_0)_S = V_P(+)(V/M)_P$ \cite[Lemma 25.4]{4}.
Note that $(V/M)_P = (0)$ because every element of $V$ in $M \setminus P$ is a zero divisor of $V/M$,
so $(R_0)_S = V_P$. Now, let $D$ be a Krull domain with quotient field $K$,
$R = R_0 \times D$ be the direct product of $R_0$ and $D$, and assume $D \subsetneq K$.
Then $R$ is a Krull ring and $R \subsetneq T(R) = R_0 \times K$. Moreover, $S \times D$
is a multiplicative subset of $R$ and $R_{S \times D} = (R_0)_S = V_P$. Thus, $R_S$ is not a Krull ring.
\end{example}

It is worth noting that $R_{[S]}$ is an overring of $R$ but $R_S$ need not be an overring of $R$.
We next show that if $R$ is a general Krull ring, then
both $R_S$ and $R_{[S]}$ are general Krull rings. We first need a lemma.

\medskip
\begin{lemma} \label{lemma5.12}
Let $S$ be a multiplicative set of a ring $R$.
\begin{enumerate}
\item If $J \in \rGV (R)$, then $JR_S \in \rGV (R_S)$.

\item $\big( I_u R_S \big)_u = (IR_S)_u$ for all $I \in \mathsf K (R)$.
\end{enumerate}
\end{lemma}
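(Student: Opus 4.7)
The plan is to establish (1) first and then deduce (2) using the ring homomorphism $\Phi\colon T(R)\to T(R_S)$ induced by localization. First I would note that the natural map $R\to R_S$ sends regular elements to regular elements: if $a\in\reg(R)$ and $(a/1)(b/s)=0$ in $R_S$, then $tab=0$ in $R$ for some $t\in S$, hence $tb=0$, hence $b/s=0$. In particular, $\Phi\colon r/u\mapsto (r/1)/(u/1)$ is a well-defined ring homomorphism $T(R)\to T(R_S)$, and for any $J\in\rGV(R)$ a regular element $\rho\in J$ ensures that $JR_S$ is a regular finitely generated ideal of $R_S$ whose underlying set agrees with $\Phi(J)R_S$. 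What remains for (1) is to verify $(JR_S)^{-1}=R_S$ in $T(R_S)$.

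To that end, enlarge the generating set so that $J=(\rho,a_1,\dots,a_n)$ with $\rho\in\reg(R)$, and fix $y\in T(R_S)$ with $y\cdot JR_S\subseteq R_S$. Set $a_0=\rho$ and $b_i=y a_i\in R_S$ for $i=0,1,\dots,n$. The identity $b_i a_0=b_0 a_i$ then holds in $R_S$. Writing $b_i=B_i'/s$ with a common denominator $s\in S$ and choosing a uniform annihilator $t\in S$ for all the equalities $B_i' a_0=B_0' a_i$ in $R_S$, I obtain $B_i a_0=B_0 a_i$ in $R$, where $B_i=tB_i'$. This yields $B_0 J\subseteq \rho R$, so $B_0/\rho\in T(R)$ satisfies $(B_0/\rho)J\subseteq R$; the hypothesis $J^{-1}=R$ then forces $B_0=C\rho$ for some $C\in R$. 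Since $B_0=tsy\rho$ in $T(R_S)$ and $\rho$ is regular in $T(R_S)$, cancellation gives $y=C/(ts)\in R_S$, as desired.

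For (2), the containment $(IR_S)_u\subseteq(I_uR_S)_u$ is immediate from $I\subseteq I_u$. For the reverse, it suffices to show $I_uR_S\subseteq(IR_S)_u$. Given $a\in I_u$ there is $J\in\rGV(R)$ with $aJ\subseteq I$, so $\Phi(a)\cdot(JR_S)\subseteq IR_S$, and by (1) $JR_S\in\rGV(R_S)$, so $\Phi(a)\in(IR_S)_u$. Because $(IR_S)_u$ is an $R_S$-submodule of $T(R_S)$, every $R_S$-linear combination of elements of $\Phi(I_u)$ lies in $(IR_S)_u$, yielding $I_uR_S\subseteq(IR_S)_u$. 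Applying $u$ and invoking $u$-idempotence closes the argument.

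The main obstacle is the bookkeeping in (1): because $R\to R_S$ need not be injective, the identities $b_i a_0=b_0 a_i$ hold a priori only in $R_S$, and lifting them to honest equalities in $R$ requires a single $t\in S$ that kills the discrepancy for all $i$ simultaneously. Once that uniform lift is in place, the reduction to $J^{-1}=R$ inside $T(R)$ and the final cancellation of the regular element $\rho$ in $T(R_S)$ are routine.
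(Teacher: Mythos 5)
Your proof is correct and follows essentially the same route as the paper: part (2) is argued via the identical double containment, reducing everything to part (1). The only difference is in (1), where the paper simply cites Gilmer's localization formula $(JR_S)^{-1}=J^{-1}R_S$ for finitely generated $J$, while you verify $(JR_S)^{-1}=R_S$ by hand (correctly handling the fact that $T(R_S)$ need not be a localization of $T(R)$ and that a uniform $t\in S$ is needed to lift the relations from $R_S$ to $R$); this is a legitimate, slightly more self-contained substitute for the citation.
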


\begin{proof}
(1) Since $J$ is a finitely generated regular ideal,
$JR_S$ is a finitely generated regular ideal of $R_S$. Moreover,
$(JR_S)^{-1} = J^{-1}R_S$ \cite[Theorem 4.4]{3}, and hence $(JR_S)^{-1} = R_S$.
Thus, $JR_S \in \rGV (R_S)$.

(2) If $x \in I_u$, then $xJ \subseteq I$ for some $J \in \rGV (R)$.
Hence, $xJR_S \subseteq IR_S$, and since $JR_S \in \rGV (R_S)$ by (1), $x \in (IR_S)_u$.
Thus, $I_u \subseteq (IR_S)_u$, whence $I_u R_S \subseteq (IR_S)_u$.
Now, note that $IR_S \subseteq I_u R_S$, so
\[
  (IR_S)_u \subseteq \big( I_u R_S \big)_u \subseteq \big( (IR_S)_u \big)_u = (IR_S)_u \,.
\]
Thus, $(IR_S)_u = \big( I_u R_S \big)_u$.
\end{proof}

\smallskip
\begin{corollary} \label{local-s-Krull}
Let $S$ be a multiplicative set of
a general Krull ring $R$. Then
both $R_S$ and $R_{[S]}$ are general Krull rings.
\end{corollary}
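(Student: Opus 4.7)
The plan is to verify, for both $R_S$ and $R_{[S]}$, the factorization criterion of Theorem \ref{s-krull}(3): every principal ideal has $u$-closure equal to a finite $u$-product of prime ideals. For $R_S$ I will push a $u$-factorization of $aR$ from $R$ into $R_S$ via Lemma \ref{lemma5.12}(2). For $R_{[S]}$, since it is an overring of $R$ inside $T(R)$, the known preservation of the Krull property under large-quotient overrings (\cite[Lemmas 4.1 and 4.2]{ck21}) makes $R_{[S]}$ a Krull ring, and then Corollary \ref{krull overring} upgrades this to the general-Krull property.

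For the $R_S$ case, let $\alpha \in R_S$, written $\alpha = a/s$ with $a \in R$ and $s \in S$; since $s$ is a unit in $R_S$, $\alpha R_S = a R_S$. By Theorem \ref{s-krull} applied in $R$, there are prime ideals $P_1, \ldots, P_k$ of $R$ with $aR = (P_1 \cdots P_k)_u$ (the $u$ here being the $u$-operation of $R$). Extending to $R_S$ and $u$-closing inside $R_S$, Lemma \ref{lemma5.12}(2) yields
\[
(\alpha R_S)_u = \bigl((aR)R_S\bigr)_u = \bigl((P_1 \cdots P_k)_u R_S\bigr)_u = \bigl((P_1 \cdots P_k) R_S\bigr)_u = \bigl((P_1 R_S)(P_2 R_S) \cdots (P_k R_S)\bigr)_u.
\]
Each factor $P_i R_S$ is either a prime ideal of $R_S$ (when $P_i \cap S = \emptyset$) or equals $R_S$ (when $P_i \cap S \neq \emptyset$); discarding the latter factors, $(\alpha R_S)_u$ is exhibited as a finite $u$-product of prime ideals of $R_S$. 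Theorem \ref{s-krull}(3) then forces $R_S$ to be a general Krull ring.

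For $R_{[S]}$, note that $R \subseteq R_{[S]} \subseteq T(R)$, so $R_{[S]}$ is an overring of $R$. Since $R$ is a Krull ring (every general Krull ring is a Krull ring), the cited \cite[Lemmas 4.1 and 4.2]{ck21} guarantee that $R_{[S]}$ is again a Krull ring. Corollary \ref{krull overring}, applied to the overring $R_{[S]}$ of the general Krull ring $R$, immediately delivers that $R_{[S]}$ is a general Krull ring.

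The main obstacle is the careful bookkeeping between the $u$-operation on $R$ and the $u$-operation on $R_S$ in the first half: the identity $(I_u R_S)_u = (IR_S)_u$ of Lemma \ref{lemma5.12}(2) is precisely the bridge that transfers a factorization between the two star-operation settings, and once invoked the rest is formal. The $R_{[S]}$ half is short because the nontrivial content—preservation of Krull under large-quotient overrings—is already in the cited literature, and the transition from ``Krull'' to ``general Krull'' for overrings of a general Krull ring is exactly what Corollary \ref{krull overring} provides.
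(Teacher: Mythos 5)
Your proposal is correct and follows essentially the same route as the paper: for $R_S$ it reduces to a principal ideal generated by an element of $R$, applies Theorem \ref{s-krull} in $R$, and transfers the factorization via Lemma \ref{lemma5.12}(2); for $R_{[S]}$ it combines the cited preservation of the Krull property under $R_{[S]}$ with Corollary \ref{krull overring}. The only additions are the explicit remarks that $\alpha R_S = aR_S$ and that trivial factors $P_iR_S = R_S$ may be discarded, both of which the paper leaves implicit.
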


\begin{proof}
We first show that $R_S$ is a general Krull ring. Let $a \in R$.
Then, by Theorem~\ref{s-krull}, $aR = (P_1 \ldots P_n)_u$ for some prime ideals $P_1, \cdots, P_n$ of $R$.
Hence, $aR_S = (P_1 \cdots P_n)_u R_S$, so by Lemma~\ref{lemma5.12},
\[
  (aR_S)_u = \big( (P_1 \cdots P_n)_u R_S \big)_u = \big( (P_1 \cdots P_n)R_S \big)_u = \big( (P_1 R_S) \cdots (P_n R_S) \big)_u \,.
\]
Thus, $R_S$ is a general Krull ring by Theorem~\ref{s-krull} again.

Next, note that $R_{[S]}$ is an overring of $R$, and since $R$ is a Krull ring,
$R_{[S]}$ is also a Krull ring. Thus, $R_{[S]}$ is a general Krull ring by Corollary \ref{krull overring}.
\end{proof}

\smallskip
\subsection{Kaplansky-type Theorem}
\smallskip

Kaplansky's theorem states that an integral domain $R$ is a UFD
if and only if every nonzero prime ideal of $R$ contains a nonzero prime element \cite[Theorem 5]{8}.
In this subsection, we study the Kaplansky-type theorems of general Krull rings,
$\pi$-rings, and UFRs.

In \cite[Theorem 3.6]{k89}, Kang gave a Kaplansky-type characterization of a Krull domain.
That is, he proved that an integral domain $R$ is a Krull domain if (and only
if) every nonzero prime ideal of $R$ contains a $t$-invertible prime ideal.
He also generalized his result to rings with zero divisors as follows:
$R$ is a Krull ring if and only if every regular prime ideal of $R$ contains a $t$-invertible regular prime ideal
\cite[Theorem 13]{7}. Our first result states the general Krull ring analog.

\medskip
\begin{theorem} \label{SKR u-invertible}
A ring $R$ is a general Krull ring if and only if every nonprincipal prime ideal of $R$ contains a $u$-invertible prime ideal.
\end{theorem}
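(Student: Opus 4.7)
My plan is to treat the two implications separately. For the forward direction, I would use the direct-sum decomposition: writing $R = R_1 \times \cdots \times R_n$ with each $R_i$ a Krull domain or an SPR, Lemma \ref{direct sum}(3) forces any nonprincipal prime $P$ of $R$ to have the form $R_1 \times \cdots \times P_i \times \cdots \times R_n$ where $R_i$ is necessarily a Krull domain (the unique prime of an SPR is principal, as is the zero prime of a Krull domain in $R$) and $P_i$ is a nonzero prime of $R_i$. Choosing a height-one prime $Q_i \subseteq P_i$, the ideal $Q = R_1 \times \cdots \times Q_i \times \cdots \times R_n$ lies in $X^1(R) = X^1_r(R)$, so it is $t$-invertible in the Krull ring $R$ by Corollary \ref{coro4.7}, hence $u$-invertible by Corollary \ref{u-inv regular}.

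For the converse, I would verify the conditions of Theorem \ref{s-krull}(5). First, $R$ is a Krull ring: a regular prime that is principal and generated by a regular element is invertible, while a nonprincipal regular prime contains a $u$-invertible (equivalently, $t$-invertible regular, by Corollary \ref{u-inv regular}) prime by hypothesis; Kang's characterization \cite[Theorem 13]{7} then yields the Krull property.

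Second, every prime $Z$-ideal of $R$ is principal, since a $u$-invertible prime is regular and therefore cannot sit inside $Z(R)$, so the contrapositive of the hypothesis forces any prime $Z$-ideal to be principal. Specializing to a minimal prime $Q$: if $Q$ were nonprincipal, the hypothesis would supply a $u$-invertible prime $P \subseteq Q$, which equals $Q$ by minimality, making $Q$ itself regular; but a minimal prime cannot be regular, since for a regular $a \in Q$ the element $a/1 \in R_Q$ would be both regular and nilpotent ($QR_Q$ is the only prime of $R_Q$, hence its nilradical). Thus every minimal prime of $R$ is principal, and by \cite[Theorem 10]{ac11} the zero element factors as a finite product $p_1 \cdots p_n$ of prime elements whose associated principal ideals $p_iR$ enumerate the minimal primes of $R$.

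Finally, I would argue $\dim T(R) = 0$: every prime $Z$-ideal contains $0 = p_1\cdots p_n$, hence contains some $p_iR$, and then the identification $Z(R) = \bigcup_i p_iR$ combined with prime avoidance pins the given prime $Z$-ideal down to a minimal prime $p_iR$. Theorem \ref{s-krull}(5) then concludes that $R$ is a general Krull ring. The main obstacle will be establishing $Z(R) = \bigcup_i p_iR$ (equivalently $\dim T(R) = 0$): the potentially problematic case is that of a zero divisor annihilated only by elements of $N(R) = \bigcap_i p_iR$, and ruling this out under our hypothesis requires combining the complete integral closedness coming from the Krull structure of $R$ with the principality of prime $Z$-ideals and the DVR structure of localizations $R_{P_0}$ at candidate non-minimal prime $Z$-ideals $P_0$.
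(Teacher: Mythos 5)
Your forward direction is correct but takes a different route from the paper: the paper picks a regular element $a$ of a regular prime $Q$, factors $aR=(P_1\cdots P_n)_u$ by Theorem \ref{s-krull}, and extracts a $u$-invertible factor $P_i\subseteq Q$, whereas you read off a height-one prime of a Krull-domain component from the decomposition $R=R_1\times\cdots\times R_n$. Both work; yours avoids the factorization theorem at the cost of leaning on Theorem \ref{thm5.2}(5) and Corollary \ref{coro4.7}.

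The converse has a genuine gap, and you have flagged it yourself: you never actually establish $\dim T(R)=0$, i.e., that there is no strict chain of prime $Z$-ideals. Your proposed repair --- proving $Z(R)=\bigcup_i p_iR$ by ``combining complete integral closedness \dots with the DVR structure of localizations $R_{P_0}$ at candidate non-minimal prime $Z$-ideals'' --- is not carried out, and the tools named do not obviously apply: a localization at a prime $Z$-ideal is not a DVR a priori (it need not even be a domain before you prove anything), and complete integral closedness says nothing direct about chains inside $Z(R)$. The missing step follows in one line from a fact you already have. Since every prime $Z$-ideal of $R$ is principal and the prime ideals of $T(R)$ are exactly the extensions of the prime $Z$-ideals of $R$, every prime ideal of $T(R)$ is principal; by Kaplansky's theorem that a ring whose prime ideals are all principal is a PIR (see \cite{8}), $T(R)$ is a PIR, hence a finite direct sum of PIDs and SPRs, and since $T(R)$ is its own total quotient ring each PID summand is a field, so $\dim T(R)=0$. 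This is precisely how the paper's proof proceeds (``In particular, $T(R)$ is a PIR, and hence $\dim(T(R))=0$''). With that substitution, the rest of your converse --- Kang's criterion \cite[Theorem 13]{7} for the Krull property, minimal primes being principal $Z$-ideals, and Theorem \ref{s-krull}(5) --- matches the paper and is correct.
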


\begin{proof}
($\Rightarrow$) Let $Q$ be a prime ideal of $R$.
If $Q \subseteq Z (R)$, then $Q$ is principal by the proof of Theorem~\ref{thm5.2}.
Now, assume that $Q \nsubseteq Z (R)$. Then $Q$ is regular.
Choose $a \in \reg (Q)$, then by Theorem \ref{s-krull}, $aR = (P_1 \cdots P_n)_u$ for some prime ideals $P_1, \ldots, P_n$ of $R$.
Clearly, each $P_i$ is $u$-invertible and $aR \subseteq Q$ implies that $P_i \subseteq Q$ for some $i$.
Thus, $Q$ contains a $u$-invertible prime ideal.

($\Leftarrow$) Let $P$ be a prime ideal of $R$.
Note that a $u$-invertible ideal is regular by Corollary~\ref{u-inv regular}, so if $P \subseteq Z (R)$, then $P$ must be principal by assumption.
In particular, $T(R)$ is a PIR, and hence $\dim (T(R)) = 0$.
Now, assume that $P \nsubseteq Z (R)$. Then $P$ is regular,
and hence $P$ contains a $u$-invertible prime ideal, which is $t$-invertible by Corollary \ref{u-inv regular}.
Hence, $R$ is a Krull ring \cite[Theorem 13]{7}.
Thus, $R$ is a general Krull ring by Theorem~\ref{s-krull}.
\end{proof}

The next corollary is the $\pi$-ring analog of Theorem \ref{SKR u-invertible},
which is an extension of Kang's result that an integral domain $D$ is a $\pi$-domain
if and only if every nonzero prime ideal of $D$ contains an invertible
ideal \cite[Theorem 4.1]{k89}. Kang also showed that $R$ is a regular $\pi$-ring
if and only if each regular prime ideal of $R$ contains an invertible ideal \cite[Lemma 2]{k91}.

\begin{corollary} \label{pi-ring}
The following statements are equivalent for a ring $R$.
\begin{enumerate}
\item $R$ is a $\pi$-ring.

\item Every nonprincipal prime ideal of $R$ contains an invertible prime ideal.

\item $R$ is a regular $\pi$-ring, each minimal prime ideal of $R$ is principal, and $\dim (T(R)) = 0$.
\item $R$ is a general Krull ring in which each minimal regular prime ideal is invertible.
\end{enumerate}
\end{corollary}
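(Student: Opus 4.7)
The plan is to establish the equivalences by separately verifying $(1) \Leftrightarrow (3)$, $(1) \Leftrightarrow (4)$, and $(1) \Leftrightarrow (2)$, each drawing on a different piece of the machinery developed in the previous sections. Whenever $R$ is known to be a general Krull ring I would write $R = R_1 \times \cdots \times R_n$ as a finite direct sum of Krull domains and SPRs and apply the direct-sum analysis provided by Proposition~\ref{star direct sum}, Lemma~\ref{direct sum}, and Theorem~\ref{thm5.2}.

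For $(1) \Leftrightarrow (3)$: since $\pi$-domains are Krull domains, any $\pi$-ring is a general Krull ring, and Theorem~\ref{s-krull}(5) rephrases ``general Krull ring'' as ``Krull ring with each minimal prime principal and $\dim T(R) = 0$''. On a general Krull ring the $\pi$-ring condition is equivalent to $\Cl (R) = \Pic (R)$ by Theorem~\ref{chracter-cg}(4), and on a Krull ring this same class-group condition is the regular $\pi$-ring condition by Theorem~\ref{Krull ring property}(1); composing the equivalences yields $(1) \Leftrightarrow (3)$. For $(1) \Leftrightarrow (4)$: Theorem~\ref{thm5.2}(5) combined with Lemma~\ref{direct sum}(3) shows that every minimal regular prime of $R$ has the form $R_1 \times \cdots \times P_i \times \cdots \times R_n$ for some height-one prime $P_i$ of a Krull-domain summand $R_i$, and by Proposition~\ref{star direct sum}(8) this ideal is invertible in $R$ if and only if $P_i$ is invertible in $R_i$. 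Thus (4) says each Krull-domain summand is a $\pi$-domain, which by Mori's structure theorem is exactly (1).

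For $(1) \Rightarrow (2)$: let $P$ be a nonprincipal prime of the $\pi$-ring $R$. By Theorem~\ref{thm5.2} we have $Z(R) = \bigcup_{i=1}^{n} p_i R$ with each $p_i R$ a principal minimal prime, so if $P \subseteq Z(R)$ then prime avoidance forces $P \subseteq p_i R$ for some $i$, and combined with the fact that $P$ must contain some minimal prime $p_j R$ this forces $P = p_i R$, contradicting nonprincipality. Hence $P$ is regular; choosing $a \in \reg(P)$, the $\pi$-ring hypothesis supplies an honest product $a R = P_1 \cdots P_n$ of primes, and since $aR$ is invertible a routine factorwise-inversion argument shows each $P_i$ is invertible, with $a R \subseteq P$ forcing some $P_i \subseteq P$.

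For $(2) \Rightarrow (1)$: invertibility trivially implies $u$-invertibility, so Theorem~\ref{SKR u-invertible} immediately makes $R$ a general Krull ring, and it suffices to verify condition (4). Fix $P \in X_r^1(R)$; if $P$ is principal then a generator must itself be regular and so $P$ is invertible, so assume $P$ is nonprincipal and let $Q \subseteq P$ be the invertible prime supplied by (2). Then $Q$ is regular, and comparing components in $Q \subseteq P$ shows that $Q$ and $P$ must come from the same Krull-domain summand $R_j$ (otherwise some full factor $R_k$ would lie inside the proper prime $P_k$, which is absurd); restricting to $R_j$ gives a nonzero invertible prime $Q_j$ inside the height-one prime $P_j$, forcing $Q_j = P_j$ and hence $Q = P$. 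The main obstacle is precisely this last step: one must exclude the possibility that the invertible prime supplied by (2) is strictly smaller than the height-one regular prime $P$, and this depends on the structural description of primes in a finite direct sum of Krull domains and SPRs together with the Krull-domain fact that a nonzero invertible prime contained in a height-one prime equals it.
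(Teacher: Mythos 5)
Your proposal is correct, but it is organized quite differently from the paper's proof, which establishes the single cycle $(1) \Rightarrow (4) \Rightarrow (3) \Rightarrow (2) \Rightarrow (1)$: there, $(4) \Rightarrow (3)$ follows because a Krull ring in which every minimal regular prime is invertible is a regular $\pi$-ring, $(3) \Rightarrow (2)$ extracts a $u$-invertible (hence regular) prime via Theorem~\ref{SKR u-invertible} and then invokes the regular $\pi$-ring hypothesis, and $(2) \Rightarrow (1)$ gets the general Krull decomposition from Theorem~\ref{SKR u-invertible} and then quotes Kang's theorem \cite[Theorem 4.1]{k89} that a domain in which every nonzero prime contains an invertible prime is a $\pi$-domain. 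You instead anchor all three equivalences at $(1)$. Your $(1) \Leftrightarrow (2)$ is the most interesting divergence: the forward direction is an elementary prime-avoidance plus factorwise-inversion argument avoiding the detour through $(3)$, and the backward direction reduces $(2)$ to $(4)$ by showing that a regular invertible prime inside a minimal regular prime must equal it in the relevant Krull-domain summand, which replaces the citation of \cite[Theorem 4.1]{k89} with a self-contained argument. Two small caveats. First, your $(1) \Leftrightarrow (3)$ runs through the class-group characterizations of Theorems~\ref{chracter-cg}(4) and \ref{Krull ring property}(1), both of which assume $R \subsetneq T(R)$; you should dispose of the case $R = T(R)$ separately (there a general Krull ring is a finite direct sum of SPRs, hence a PIR, and Theorem~\ref{s-krull}(5) closes the loop), whereas the paper's route avoids this. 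Second, in $(4) \Rightarrow (1)$ you need that a Krull domain all of whose height-one primes are invertible is a $\pi$-domain; this is standard (the classes of height-one primes generate $\Cl$, so $\Cl = \Pic$ and \cite[Theorem 46.7]{3} applies), but it deserves a word, since ``Mori's structure theorem'' only handles the reassembly of the summands.
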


\begin{proof}
(1) $\Rightarrow$ (4) It is clear that a $\pi$-ring is a general Krull ring. Also,
a $\pi$-ring is a regular $\pi$-ring. Hence, each minimal regular prime ideal is invertible.

(4) $\Rightarrow$ (3) A general Krull ring is a Krull ring, so $R$ is a regular $\pi$-ring by (4).
Thus, the result follows from Theorem~\ref{s-krull}.

(3) $\Rightarrow$ (2) Let $Q$ be a nonprincipal prime ideal of $R$.
Then, by Theorems~\ref{s-krull} and \ref{SKR u-invertible}, $Q$ contains a $u$-invertible prime ideal,
and since a $u$-invertible ideal is regular by Corollary~\ref{u-inv regular}, $Q$ is regular.
Thus, $Q$ contains an invertible prime ideal by assumption.

(2) $\Rightarrow$ (1) By Theorem~\ref{SKR u-invertible}, $R$ is a general Krull ring, so $R$ is a finite direct product of Krull domains and SPRs.
Let $D$ be a direct summand of $R$ that is a Krull domain.
Then, by Proposition~\ref{star direct product}(8), each nonzero prime ideal of $D$ contains an invertible prime ideal,
 whence $D$ is a $\pi$-domain \cite[Theorem 4.1]{k89}.
Thus, $R$ is a $\pi$-ring.
\end{proof}

In \cite[Theorem 2.6]{am95}, Anderson and Markanda showed that $R$ is a UFR if and only if (i) every minimal prime ideal of $R$ is principal and
(ii) every prime ideal of height $>$ 0 contains a principal prime ideal of height $>$ 0.

\begin{corollary}
The following statements are equivalent for a ring $R$.
\begin{enumerate}
\item $R$ is a UFR.

\item Every nonprincipal prime ideal of $R$ contains a regular prime element.

\item $R$ is a factorial ring, each minimal prime ideal of $R$ is principal, and $\dim (T(R)) = 0$.

\item $R$ is a general Krull ring in which each minimal regular prime ideal is principal.
\end{enumerate}
\end{corollary}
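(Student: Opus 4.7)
The plan is to establish the chain $(1) \Rightarrow (4) \Rightarrow (3) \Rightarrow (2) \Rightarrow (1)$, paralleling Corollary~\ref{pi-ring} with \emph{principal}/\emph{prime element} replacing \emph{invertible}. The main tools will be the structure theorem for UFRs, Theorem~\ref{s-krull}, Theorem~\ref{SKR u-invertible}, the fact from Theorem~\ref{thm5.2}(4) that every principal ideal of a general Krull ring is a $u$-ideal, and Kaplansky's theorem on UFDs.

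For $(1) \Rightarrow (4)$, a UFR is a finite direct sum of UFDs and SPRs and so a general Krull ring; each minimal regular prime of $R$ corresponds under the direct-sum decomposition to a height-one prime of a UFD summand, which is principal. For $(4) \Rightarrow (3)$, Theorem~\ref{s-krull} already gives that $R$ is a Krull ring with $\dim(T(R)) = 0$ and each minimal prime principal, so only factoriality is new: any regular nonunit $a$ has $aR = (P_1^{e_1} \cdots P_n^{e_n})_u$ with each $P_i \in X^{1}_r(R)$ principal by hypothesis, say $P_i = p_i R$ with $p_i$ necessarily regular, and the $u$-closure collapses because the principal ideal $p_1^{e_1} \cdots p_n^{e_n} R$ is already a $u$-ideal, yielding $a$ as a unit times a product of regular prime elements.

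For $(3) \Rightarrow (2)$, a factorial ring is a Krull ring (every regular principal ideal is a finite $t$-product of principal primes), so together with the remaining hypotheses of (3) and Theorem~\ref{s-krull}(5), $R$ is a general Krull ring. Given a nonprincipal prime $Q$, Theorem~\ref{SKR u-invertible} produces a $u$-invertible prime $P \subseteq Q$, which is regular by Corollary~\ref{u-inv regular}; picking a regular $a \in P$ and factoring $a = p_1 \cdots p_k$ into prime elements (possible by factoriality), every $p_i$ must be regular — since a zero-divisor factor would make $a$ itself a zero-divisor — and primality of $P$ forces some $p_i \in P \subseteq Q$. For $(2) \Rightarrow (1)$, a regular prime element generates an invertible prime ideal, so (2) implies the hypothesis of Theorem~\ref{SKR u-invertible} and $R = R_1 \times \cdots \times R_n$ is a finite direct sum of Krull domains and SPRs. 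For each Krull-domain summand $D = R_i$, Kaplansky's theorem reduces UFD-ness to finding a nonzero prime element in every nonzero prime $P$ of $D$; one may assume $P$ is nonprincipal, lift it to the nonprincipal prime $Q = R_1 \times \cdots \times P \times \cdots \times R_n$ of $R$, and extract a regular prime element $(b_1, \ldots, b_n) \in Q$. Because $(b_1, \ldots, b_n) R$ is prime in a direct sum, exactly one coordinate $b_j$ is a non-unit, and the containment $b_i \in P \subsetneq D$ forces that non-unit coordinate to sit at index $i$, giving a regular prime element of $D$ inside $P$.

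The main obstacle will be the coordinate-tracking step in $(2) \Rightarrow (1)$: one must correctly lift nonprincipality of $P$ in $D$ to nonprincipality of $Q$ in $R$, and then identify the unique non-unit coordinate of the extracted prime element with the correct index~$i$. A secondary delicate point is the forced regularity of every prime factor of a regular element in $(3) \Rightarrow (2)$, which is what allows the factorization inside a $u$-invertible prime to produce a \emph{regular} prime element rather than merely a prime element of $R$.
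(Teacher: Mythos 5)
Your argument is correct, but it takes a genuinely different route from the paper's. The paper disposes of this corollary in two lines: by Theorem~\ref{chracter-cg}(3) a UFR is exactly a $\pi$-ring with $\Cl(R)=\{0\}$, and by Theorem~\ref{Krull ring property}(2) a factorial ring is exactly a regular $\pi$-ring with $\Cl(R)=\{0\}$; since $\Cl(R)=\{0\}$ converts ``invertible prime ideal'' into ``prime ideal generated by a regular prime element,'' each of the four statements is read off from the corresponding item of Corollary~\ref{pi-ring}. You instead run the cycle $(1)\Rightarrow(4)\Rightarrow(3)\Rightarrow(2)\Rightarrow(1)$ from scratch, using Fletcher's structure theorem, Theorem~\ref{thm5.2}, Theorem~\ref{s-krull}, Theorem~\ref{SKR u-invertible}, and Kaplansky's theorem applied to the Krull-domain summands. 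Both approaches work. The paper's reduction is shorter but buries the translation between ``invertible'' and ``principal'' inside the class-group identities; your version makes that translation explicit --- the collapse $(P_1^{e_1}\cdots P_n^{e_n})_u = p_1^{e_1}\cdots p_n^{e_n}R$ via Theorem~\ref{thm5.2}(4) in $(4)\Rightarrow(3)$, and the coordinate-tracking of the extracted regular prime element in $(2)\Rightarrow(1)$ --- at the cost of redoing some work already packaged in Corollary~\ref{pi-ring}. The two delicate points you flag are handled correctly: nonprincipality of a prime $P$ of a summand does lift to nonprincipality of the corresponding prime of $R$ by Proposition~\ref{star direct sum}(1), and every prime factor of a regular element is forced to be regular, which is what makes the factorization inside a regular prime of a factorial ring produce a regular prime element.
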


\begin{proof}
Note that $R$ is a UFR (resp., factorial ring)
if and only if $R$ is a $\pi$-ring (resp., regular $\pi$-ring) with $\Cl (R) = \{0\}$ by Theorem~\ref{chracter-cg}
(resp., Theorem \ref{Krull ring property}).
Note also that $R$ is a factorial ring if and only if
every regular prime ideal of $R$ contains a regular prime element \cite[Theorem]{am95-2}.
Thus, the result follows directly from Corollary~\ref{pi-ring}.
\end{proof}

\subsection{Mori-Nagata Theorem I}

Mori-Nagata theorem states that the integral closure of a Noetherian domain is a Krull domain.
It was conjectured by Krull \cite[page 108]{k35}, then
the local case was proved
by Mori \cite[Theorem 1]{m53}, and the general case was proved by Nagata \cite[Theorem 2]{n55}.
Huckaba showed that the integral closure of a Noetherian ring is a Krull ring \cite[Corollary 2.3]{h76}.
Chang and Kang generalized Huckaba's result to r-Noetherian rings,
i.e., they showed that the integral closure of an r-Noetherian ring is a Krull ring \cite[Theorem 13]{ck02}.
However, the next example shows that the integral closure of a Noetherian ring $R$
need not be a general Krull ring even though $T(R)$ is an SPR.

\begin{example} \label{ex4.8}
Let $\mathbb Q$ be the field of rational numbers, $\mathbb Q [X]$ be the polynomial ring over $\mathbb Q$, and $A = \mathbb Q [X] / (X^{2})$; so $A$ is an SPR.
Let $\mathrm{m} = (X)/(X^2)$, $Y$ be an indeterminate over $A$, and $R = A[Y]$.
Then $R$ is a one-dimensional Noetherian ring and $T(R) = A(Y)$, so $T(R)$ is an SPR \cite[Lemma 18.7]{4}.
But, note that $N(A) = \mathrm{m}$ and $N (R) = N(A)[Y]$,
so $N(R)= \mathrm{m}[Y]$ is a prime ideal of $R$.
Hence, if $\overline{R}$ is the integral closure of $R$, then $N (\overline{R}) = \mathrm{m}T(R)$,
which is a nonzero prime ideal of $\overline{R}$, but since $N(\overline{R}) \cap R = \mathrm{m}[Y]$,
$N(\overline{R})$ is not a maximal ideal of $\overline{R}$.
Thus, $\overline{R}$ is a Krull ring \cite[Corollary 2.3]{h76} but $\overline{R}$ is not a general Krull ring.
\end{example}

By Mori-Nagata theorem, an integrally closed Noetherian domain is a Krull domain.
Hence, the next result is a partial corresponding part of general Krull rings
to Mori-Nagata theorem.

\begin{theorem} \label{icNSKR}
Let $R$ be an integrally closed Noetherian ring.
Then $R$ is a general Krull ring if and only if $T(R)$ is a PIR.
\end{theorem}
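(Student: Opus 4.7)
The forward implication is immediate from Corollary \ref{T(R) PIR}(1), which ensures $T(R)$ is a (zero-dimensional) PIR whenever $R$ is a general Krull ring, so the content is in the converse. Assume throughout that $R$ is Noetherian and integrally closed in $T(R)$, and that $T(R)$ is a PIR. The plan is first to observe that $T(R)$ is in fact zero-dimensional: by the structure theorem for PIRs it decomposes as a finite direct sum of PIDs and SPRs, and because $T(T(R)) = T(R)$ every PID summand must coincide with its own total quotient ring, hence be a field. Thus $T(R)$ is a finite direct product of fields and non-field SPRs.

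Next, I would use this to lift a direct product decomposition to $R$ itself. Write the irredundant primary decomposition of the zero ideal in $T(R)$ as $(0) = Q_1 \cap \cdots \cap Q_n$; the $Q_i$ are pairwise comaximal, so $T(R) = T(R)/Q_1 \times \cdots \times T(R)/Q_n$ with each factor either a field or a non-field SPR. The associated idempotents $e_1, \ldots, e_n \in T(R)$ satisfy $X^2 - X = 0$ and are therefore integral over $R$; since $R$ is integrally closed, each $e_i \in R$. This yields $R = R_1 \times \cdots \times R_n$ with $R_i := Re_i$ Noetherian, integrally closed in $T(R_i) = T(R)/Q_i$, and, because $Q_i \cap R$ is the contraction of a primary ideal, possessing a unique minimal prime. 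By Corollary \ref{SKR direct sum} it then suffices to show each $R_i$ is a Krull domain or an SPR.

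If $T(R_i)$ is a field then $R_i$ embeds in a field, so $R_i$ is a Noetherian integrally closed domain, and the classical theorem (a special case of Mori-Nagata) identifies it as a Krull domain.

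The main obstacle is the case when $T(R_i)$ is a non-field SPR with nonzero nilradical; here I would prove $\dim R_i = 0$, which combined with $R_i$ being Artinian local forces $R_i = T(R_i)$ to be an SPR. Suppose for contradiction that $\dim R_i \ge 1$, and choose a prime $\mathfrak{p}'$ of $R_i$ properly containing $N(R_i)$. The local ring $R_{i,\mathfrak{p}'}$ is Noetherian and integrally closed; moreover $T(R_{i,\mathfrak{p}'}) = T(R_i)$ because elements of $R_i \setminus \mathfrak{p}'$ are already units of $T(R_i)$. Pick any regular non-unit $\pi$ in the maximal ideal of $R_{i,\mathfrak{p}'}$ and consider an arbitrary nilpotent $\eta$ with $\eta^m = 0$. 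For every $k \ge 1$ the element $\eta/\pi^k \in T(R_i)$ satisfies $X^m = 0$ and is therefore integral over $R_{i,\mathfrak{p}'}$; integral closedness then gives $\eta \in \pi^k R_{i,\mathfrak{p}'}$ for all $k$. Krull's intersection theorem forces $\eta = 0$, so $R_{i,\mathfrak{p}'}$ is reduced, which makes its localization $T(R_i)$ reduced as well and contradicts the nonzero nilradical of $T(R_i)$. This completes the plan.
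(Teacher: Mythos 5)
Your proof is correct and follows essentially the same route as the paper's: both split $R$ into factors $R/(Q_i \cap R)$ with primary zero ideal using integral closedness (you via lifting the idempotents of $T(R)$, the paper via a finitely generated $R$-module argument), and both show that a positive-dimensional factor must be reduced, hence an integrally closed Noetherian (Krull) domain, by trapping a nilpotent in every power of the maximal ideal of a suitable localization and invoking Krull's intersection theorem. The only substantive difference is that you carry out this last step directly where the paper cites Huckaba's Theorem 11.1.
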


\begin{proof}
If $R$ is general Krull, then $T(R)$ is a PIR by Corollary~\ref{T(R) PIR}.
Conversely, assume that $T(R)$ is a PIR, and let $(0) = Q_1 \cap \cdots \cap Q_n$ be an irredundant primary decomposition of $(0)$ in $R$.
Then $\h (\sqrt{Q_i}) = 0$ for $i = 1, \ldots, n$ and
\[
  R \hookrightarrow R/Q_i \times \cdots \times R/Q_n \hookrightarrow T(R/Q_1) \times \cdots \times T(R/Q_n) \cong T(R) \,.
\]
Note that $R$ is integrally closed and $R/Q_1 \times \cdots \times R/Q_n$ is a finitely generated
$R$-submodule of $T(R)$, so $R = R/Q_1 \times \cdots \times R/Q_n$.
Hence, it suffices to show that each $R/Q_i$ is a general Krull ring by Corollary~\ref{SKR direct product}.
For convenience, let $Q = Q_i$ and $P = \sqrt{Q_i}$.
Note that $R/Q$ is Noetherian, $Z (R/Q) = P/Q$, and $R/Q$ is integrally closed.
If $\dim (R/Q) = 0$, then $R/Q = T(R/Q)$, and hence $R/Q$ is an SPR by assumption.

Next, assume that $\dim (R/Q) \ge 1$, and let $M$ be a maximal ideal of $R$ such that $Q \subsetneq M$ and $\h (M/Q) \ge 1$.
Put $\overline{S} = \reg (R/Q \setminus M/Q)$.
Then $(R/Q)_{\overline{S}}$ is an integrally closed Noetherian ring and
$N \big( (R/Q)_{\overline{S}} \big) = Z \big( (R/Q)_{\overline{S}} \big) = (P/Q)_{\overline{S}}$,
whence the Jacobson radical $J \big( (R/Q)_{\overline{S}} \big)$ of
$(R/Q)_{\overline{S}}$ is not contained in $Z \big( (R/Q)_{\overline{S}} \big)$.
Thus, $N \big( (R/Q)_{\overline{S}} \big) = (0)$ \cite[Theorem 11.1]{4}, which
implies that $P = Q$.
Therefore, $R/Q$ is an integrally closed Noetherian domain. Thus, $R/Q$ is a Krull domain \cite[Theorem 2]{n55}.
\end{proof}

\begin{corollary} \label{coro5.18}
Let $K$ be a field, $n \ge 2$ be an integer, $X_1, \ldots, X_n$ be indeterminates over $K$, $K[X_1, \ldots, X_n]$ be the polynomial ring over $K$,
$A$ be an ideal of $K[X_1, \ldots, X_n]$, $R = K[X_1, \ldots, X_n] / A$, and $\overline{R}$ be the integral closure
of $R$. Then $\overline{R}$ is a general Krull ring if and only if $T(R)$ is a PIR and $\overline{R}$ is a Noetherian ring.
\end{corollary}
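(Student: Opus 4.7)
The plan is to reduce the statement directly to Theorem~\ref{icNSKR}. Two preliminary observations are used throughout: $\overline{R}$ is integrally closed in its total quotient ring by construction, and $T(\overline{R}) = T(R)$, since $\overline{R} \subseteq T(R)$ and every regular element of $R$ remains regular in $\overline{R}$, which forces equality after inverting $\reg(\overline{R})$.

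The ``if'' direction is then immediate: assuming $T(R)$ is a PIR and $\overline{R}$ is Noetherian, Theorem~\ref{icNSKR} applied to the integrally closed Noetherian ring $\overline{R}$, whose total quotient ring is the PIR $T(R)$, yields that $\overline{R}$ is a general Krull ring. Conversely, if $\overline{R}$ is a general Krull ring, then $T(R) = T(\overline{R})$ is a (zero-dimensional) PIR by Corollary~\ref{T(R) PIR}, and the real task is to prove that $\overline{R}$ is Noetherian.

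To prove Noetherianity, I will decompose $\overline{R} = D_1 \times \cdots \times D_n$ as a finite direct sum of Krull domains and SPRs; since a finite direct product of Noetherian rings is Noetherian, it suffices to treat each summand. Every SPR summand is Artinian, hence Noetherian. For a Krull-domain summand $D_i$, the primitive idempotents of the PIR $T(R)$ lie in $\overline{R}$ (they satisfy $x^2 - x = 0$, hence are integral over $R$) and induce a compatible splitting $T(R) = T_1 \times \cdots \times T_n$ with $T_i = T(D_i)$. Because $D_i$ is a domain, $T_i$ is a field. Writing $E_i$ for the image of $R$ in $T_i$, this image is isomorphic to the quotient of $R$ by the prime $\ker(R \to T_i)$, hence is an affine domain over $K$. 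One then checks that $T_i$ coincides with the fraction field $T(E_i)$ (any element of $T_i$ has the form $e_i r / e_i s$ with $r \in R$ and $s \in \reg(R)$, and $e_i s \neq 0$ since $\reg(R)$ avoids $\ker(R \to T_i)$), and that $D_i$ is precisely the integral closure of $E_i$ in $T(E_i)$. The classical finiteness-of-normalization theorem for affine domains over a field then implies that $D_i$ is a finite $E_i$-module, hence Noetherian; therefore $\overline{R}$ is Noetherian.

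The hard part is this last identification: matching the decomposition of $\overline{R}$ coming from its structure as a general Krull ring with the idempotent decomposition of $T(R)$ coming from its PIR structure, and thereby recognizing each Krull-domain summand $D_i$ as the normalization of the affine domain $E_i = R/\ker(R \to T_i)$ in its own fraction field. Once this matching is in hand, the Noetherian conclusion is immediate from the classical theorem on normalizations of affine algebras.
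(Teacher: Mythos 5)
Your proof is correct, and both directions ultimately rest on the same two pillars as the paper's: Theorem~\ref{icNSKR} for sufficiency, and the finiteness of the normalization of an affine domain over a field for necessity. The difference lies only in which ring gets decomposed first. The paper splits $R$ itself: since $T(R)$ is a PIR, the zero ideal of $R$ admits a primary decomposition $(0)=Q_1\cap\cdots\cap Q_n$ with $R\hookrightarrow R/Q_1\times\cdots\times R/Q_n\hookrightarrow T(R)$ (as in the proof of Theorem~\ref{icNSKR}), whence $\overline{R}=\overline{(R/Q_1)}\times\cdots\times\overline{(R/Q_n)}$; each factor is a Krull domain or an SPR by Theorem~\ref{thm5.2}(8), and in the Krull-domain case $R/Q_i$ is already an affine domain over $K$, so the finiteness theorem (\cite[Theorem 3]{d62}) applies at once. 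You instead split $\overline{R}$ first using its general Krull structure and then must descend each Krull-domain factor $D_i$ to the affine domain $E_i=R/\ker(R\to T_i)$ --- the ``matching'' you rightly identify as the only real work in your route. That matching is sound (idempotents of $T(R)$ satisfy $x^2-x=0$ and hence lie in $\overline{R}$; $T_i=T(E_i)$; and $D_i$ is an integrally closed integral extension of $E_i$ inside $T_i$, hence equals its normalization), and your $E_i$ coincide with the paper's $R/Q_i$; the paper's order of operations simply makes that identification unnecessary, since decomposing $R$ before normalizing hands you the affine domains directly.
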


\begin{proof}
($\Leftarrow$) Theorem~\ref{icNSKR}.
($\Rightarrow$) $T(R)$ is a PIR by Corollary~\ref{T(R) PIR}.
Then, by the proof of Theorem~\ref{icNSKR},
\[
  R \hookrightarrow R/Q_1 \times \cdots \times R/Q_n \hookrightarrow T(R)
\]
for some primary ideals $Q_1, \ldots, Q_n$ of $R$,
and hence $\overline{R} = \overline{(R/Q_1)} \times \cdots \times \overline{(R/Q_n)}$,
where $\overline{(R/Q_i)}$ is the integral closure of $R/Q_i$ for $i =1, \dots, n$.
Note that $\overline{R}$ is a general Krull ring by assumption,
so each $\overline{(R/Q_i)}$ is a Krull domain or an SPR by Theorem \ref{thm5.2}(8).
If $\overline{(R/Q_i)}$ is a Krull domain, then $R/Q_i$ is a Noetherian domain,
and hence $\overline{(R/Q_i)}$ is a Noetherian domain \cite[Theorem 3]{d62}.
Next, if $\overline{(R/Q_i)}$ is an SPR, then $\overline{(R/Q_i)}$ is Noetherian.
Thus, $\overline{R}$ is Noetherian.
\end{proof}

\begin{remark}
(1) Let $R$ be a Noetherian ring and $\overline{R}$ be the integral closure of $R$.
By Theorem \ref{icNSKR}, if $\overline{R}$ is Noetherian, then $\overline{R}$
is a general Krull ring if and only if $T(R)$ is a PIR. However, there
is a Noetherian domain $D$ whose integral closure is a non-Noetherian Krull domain \cite[Proposition 2]{n54}.
Then $R: = D \times \mathbb{Q}$ is a Noetherian ring, $\overline{R}$ is not Noetherian,
but $\overline{R}$ is a general Krull ring. Hence, Corollary \ref{coro5.18}
is not true in general.

(2) Let the notation be as in Corollary \ref{coro5.18}.
Chang and Kang showed that $\overline{R}$ is an r-Noetherian ring \cite[Theorem 3.2]{ck21-1}
and studied when $\overline{R}$ is a Noetherian ring \cite[Theorem 3.3]{ck21-1}. Among them, they proved that
$\overline{R}$ is a Noetherian ring if and only if $N(R) = N(\overline{R})$.
\end{remark}

Krull-Akizuki theorem says that every overring of a one-dimensional Noetherian domain is a Noetherian domain \cite[Theorem 33.2]{n62}.
But, in \cite{n54}, Nagata constructed (i) a two-dimensional Noetherian domain $R$ such that there is a
non-Noetherian ring between $R$ and its integral closure
and (ii) a three-dimensional Noetherian domain whose integral closure is not Noetherian.
However, the integral closure of two-dimensional Noetherian domain is Noetherian, which was proved by Mori for local rings \cite{m53}
 and generalized by Nagata \cite[Theorem 3]{n55}.
Huckaba constructed an $n$-dimensional Noetherian ring whose integral closure is not Noetherian for any integer $n \ge 1$ \cite[Proposition 3.1]{h76},
while he showed that if $R$ is a Noetherian ring with $\dim (R) \le 2$, then the integral closure of $R$ is r-Noetherian \cite[Theorem B]{h76}.
Davis and Anderson-Huckaba studied when the integral closure of a Noetherian ring of dimension $\le$ 2 is Noetherian in \cite{d62} and \cite{ah78} respectively.

\begin{corollary}
Let $R$ be a Noetherian ring with $\dim (R) \le 2$ and $\overline{R}$ be the integral closure of $R$.
Then $\overline{R}$ is a general Krull ring if and only if $T(R)$ is a PIR and $\overline{R}$ is a Noetherian ring.
\end{corollary}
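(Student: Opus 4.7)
The plan is to prove $(\Leftarrow)$ directly from Theorem \ref{icNSKR} and to establish $(\Rightarrow)$ via the structure theorem for general Krull rings together with the Mori--Nagata theorem in dimension $\le 2$. For $(\Leftarrow)$, observe that $T(\overline{R}) = T(R)$ since $\overline{R}$ sits between $R$ and $T(R)$, so $\overline{R}$ is an integrally closed Noetherian ring whose total quotient ring is a PIR, and Theorem \ref{icNSKR} applies verbatim.

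For $(\Rightarrow)$, Corollary \ref{T(R) PIR} immediately yields that $T(R) = T(\overline{R})$ is a PIR. It remains to show that $\overline{R}$ is Noetherian. By definition $\overline{R}$ decomposes as a finite direct sum $\overline{R} = A_1 \times \cdots \times A_n$ in which each $A_i$ is a Krull domain or an SPR, so $\overline{R}$ is Noetherian if and only if each $A_i$ is. The SPR summands are automatically Noetherian, so only the Krull-domain summands require work.

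Fix such a summand $A_i$, let $e_i$ be its primitive idempotent in $\overline{R}$, and set $\mathfrak{p}_i := (1-e_i)\overline{R}$ and $\mathfrak{q}_i := \mathfrak{p}_i \cap R$. The integral extension $R \hookrightarrow \overline{R}$ descends to an integral extension $R/\mathfrak{q}_i \hookrightarrow \overline{R}/\mathfrak{p}_i \cong A_i$, and $R/\mathfrak{q}_i$ is a Noetherian domain of Krull dimension at most $2$. I would then argue that $A_i$ is precisely the integral closure of $R/\mathfrak{q}_i$ in $\mathrm{Frac}(R/\mathfrak{q}_i)$: every element of $\overline{R}$ lies in $T(R)$, hence can be written as $r/s$ with $r \in R$ and $s \in \mathrm{reg}(R)$; projecting to $A_i$ yields $\bar{r}/\bar{s}$, where $\bar{s} \ne 0$ because $s$ remains regular in $\overline{R}$ and $A_i$ is a domain. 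Hence every element of $A_i$ already lies in $\mathrm{Frac}(R/\mathfrak{q}_i)$, forcing $\mathrm{Frac}(A_i) = \mathrm{Frac}(R/\mathfrak{q}_i)$, and $A_i$---integrally closed as a Krull domain and integral over $R/\mathfrak{q}_i$---is the classical integral closure. The Mori--Nagata theorem in dimension $\le 2$ (Mori \cite{m53}, Nagata \cite[Theorem 3]{n55}) then forces $A_i$ to be Noetherian, and hence $\overline{R}$ is Noetherian.

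The main obstacle is the identification $\mathrm{Frac}(A_i) = \mathrm{Frac}(R/\mathfrak{q}_i)$: without it, $A_i$ would only be the integral closure of $R/\mathfrak{q}_i$ in some potentially larger field, and the classical Mori--Nagata theorem in dimension $\le 2$ would not be directly applicable. Once this identification is secured, everything else is either bookkeeping or direct citation.
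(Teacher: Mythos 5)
Your proof is correct, but the forward direction takes a genuinely different route from the paper's. Both arguments agree on $(\Leftarrow)$ (quote Theorem \ref{icNSKR}) and on getting $T(R)$ a PIR from Corollary \ref{T(R) PIR}. To show $\overline{R}$ is Noetherian, the paper verifies the hypotheses of Cohen's theorem directly on $\overline{R}$: a prime $Z$-ideal of the general Krull ring $\overline{R}$ is principal (by the structure established in the proof of Theorem \ref{thm5.2}), while a regular prime ideal is finitely generated by Huckaba's result that the integral closure of a Noetherian ring of dimension $\le 2$ is r-Noetherian (\cite[Theorem B]{h76}). You instead split $\overline{R} = A_1 \times \cdots \times A_n$ into Krull domains and SPRs, identify each Krull-domain summand $A_i$ as the classical integral closure of the Noetherian domain $R/\mathfrak{q}_i$ (of dimension $\le 2$) inside its own quotient field, and invoke the domain-case Mori--Nagata theorem in dimension $\le 2$. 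The one delicate point in your route --- that $A_i \subseteq \mathrm{Frac}(R/\mathfrak{q}_i)$ --- you handle correctly: every $x \in \overline{R}$ is $r/s$ with $s \in \reg(R)$, $s$ stays a unit in $T(\overline{R})$, so its image in the domain summand is nonzero and the projection of $x$ lands in $\mathrm{Frac}(R/\mathfrak{q}_i)$. The trade-off: the paper's argument is shorter but leans on Huckaba's zero-divisor version of the dimension-$\le 2$ integral closure theorem as a black box, whereas yours needs only the classical domain statement (which the paper itself cites in the surrounding discussion) at the cost of the idempotent and fraction-field bookkeeping.
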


\begin{proof}
($\Leftarrow$) Theorem~\ref{icNSKR}.
($\Rightarrow$) $T(\overline{R}) = T(R)$, so $T(R)$ is a PIR by Corollary~\ref{T(R) PIR}.
Now, let $P$ be a prime ideal of $\overline{R}$. If $P \subseteq Z (\overline{R})$,
then $P$ is principal by the proof of Theorem~\ref{thm5.2}.
Next, if $P \nsubseteq Z (R)$, then $P$ is regular, and hence $P$ is finitely generated
\cite[Theorem B]{h76}. Thus, by Cohen's Theorem, $\overline{R}$ is Noetherian.
\end{proof}

\subsection{Mori-Nagata Theorem II}

A general Krull ring need not be reduced (see Example \ref{ex4.8}).
The next result shows when a general Krull ring is reduced.

\begin{proposition} \label{polynomial}
The following statements are equivalent for a ring $R$.
\begin{enumerate}
\item $R$ is a reduced general Krull ring.

\item $R$ is a finite direct product of Krull domains.

\item $R[X]$ is a general Krull ring.

\item $R[X]$ is a Krull ring.
\end{enumerate}
\end{proposition}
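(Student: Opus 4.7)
The plan is to establish the cycle $(1) \Leftrightarrow (2) \Rightarrow (3) \Rightarrow (4) \Rightarrow (1)$. For the equivalence $(1) \Leftrightarrow (2)$, I rely on the observation that a reduced SPR must be a field: its maximal ideal $M$ satisfies $M^n = (0)$, so every element of $M$ is nilpotent and hence zero under reducedness. Thus the SPR summands of a reduced general Krull ring are all fields (themselves Krull domains), and the decomposition collapses to a finite direct sum of Krull domains; conversely, a finite direct sum of Krull domains is reduced and is a general Krull ring by definition. For $(2) \Rightarrow (3)$, I write $R = D_1 \times \cdots \times D_n$ for Krull domains $D_i$, use $R[X] = D_1[X] \times \cdots \times D_n[X]$, and invoke the classical fact that the polynomial ring over a Krull domain is a Krull domain. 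The implication $(3) \Rightarrow (4)$ is immediate from the definition.

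The substantive direction is $(4) \Rightarrow (1)$. Given that $R[X]$ is a Krull ring, I first show $R$ is reduced. If $a \in R$ satisfies $a^n = 0$, then $a/X^k \in T(R[X])$ satisfies $(a/X^k)^m = 0$ for $m \geq n$, so $X^{k(n-1)} \cdot (a/X^k)^m \in R[X]$ for every $m \geq 0$, making $a/X^k$ almost integral over $R[X]$. Since a Krull ring is completely integrally closed by Kennedy's theorem, $a/X^k \in R[X]$ for every $k \geq 1$; taking $k = 1$ gives $a = X g(X)$ for some $g \in R[X]$, and comparing constant terms forces $a = 0$. Next, $R$ is integrally closed in $T(R)$: any $\alpha \in T(R)$ integral over $R$ is integral over $R[X]$, hence lies in $R[X]$, and an element of $R[X] \cap T(R)$ must be constant (clear the denominator $b \in \reg(R)$ in $\alpha = a/b$ and compare coefficients of the resulting polynomial identity).

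The main obstacle is showing that $R$ has only finitely many minimal primes. For this I use that $X$ is regular in the Krull ring $R[X]$, so by Corollary~\ref{coro4.7} applied to $R[X]$, one has $XR[X] = (XR[X])_t = (P_1^{e_1} \cdots P_k^{e_k})_t$ for finitely many $P_i \in X^1_r(R[X])$, each containing $X$. A prime $P$ of $R[X]$ minimal over $XR[X]$ therefore contains $P_1^{e_1} \cdots P_k^{e_k}$, hence some $P_i$; since $P_i \supseteq XR[X]$, minimality of $P$ forces $P = P_i$. Thus the primes minimal over $XR[X]$ form a finite subset of $\{P_1, \dots, P_k\}$, and via the isomorphism $R[X]/XR[X] \cong R$ they correspond bijectively to the minimal primes of $R$, so $R$ has only finitely many minimal primes.

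Finally, with $R$ reduced, integrally closed, and possessing finitely many minimal primes, the reasoning of Example~3.7 in the paper (using \cite[Lemma 8.14]{4}) yields $R = D_1 \times \cdots \times D_n$ for integrally closed domains $D_i$. Then $R[X] = D_1[X] \times \cdots \times D_n[X]$ is Krull, and by the Krull-ring direct sum decomposition \cite[Proposition 3.2]{am95} each summand $D_i[X]$ is a Krull ring; being a domain, each is a Krull domain. The classical fact that $D[X]$ Krull implies $D$ Krull (via contraction of height-one primes of $D[X]$ to $D$) then shows that each $D_i$ is a Krull domain, yielding (2) and closing the cycle.
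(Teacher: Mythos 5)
Your proof is correct, but it diverges substantially from the paper's on the only nontrivial implication. The paper handles $(1)\Leftrightarrow(2)\Rightarrow(3)\Rightarrow(4)$ exactly as you do (reduced SPR is a field; $R[X]\cong D_1[X]\times\cdots\times D_n[X]$ with each factor Krull), but for $(4)\Leftrightarrow(2)$ it simply cites \cite[Theorem 5.7]{aam85} and gives no argument. You instead prove $(4)\Rightarrow(1)$ from scratch: reducedness of $R$ via the almost integrality of $a/X$ over the completely integrally closed ring $R[X]$, integral closedness by contracting to constants, finiteness of the minimal primes of $R$ by pulling them back to the primes minimal over $XR[X]$ and bounding those by the $t$-factorization $XR[X]=(P_1^{e_1}\cdots P_k^{e_k})_t$ from Corollary~\ref{coro4.7}, and finally the splitting of an integrally closed reduced ring with finitely many minimal primes into domains, followed by descent of the Krull property from $D_i[X]$ to $D_i$. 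This buys a self-contained argument built from the paper's own machinery (Corollary~\ref{coro4.7}, the direct-sum lemmas, Kennedy's complete integral closure) at the cost of length, whereas the paper's citation is shorter but opaque. Two cosmetic points: the example you invoke for the splitting is the unnumbered example closing Section~\ref{w-oper} (your ``Example 3.7'' may not match the compiled numbering), and the cleanest justification for ``$D[X]$ Krull $\Rightarrow D$ Krull'' is $D=D[X]\cap K$ inside $K(X)$ rather than contraction of height-one primes; neither affects correctness.
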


\begin{proof}
(1) $\Rightarrow$ (2) Let $D$ be a direct summand of $R$. Then $D$ is either a Krull domain or an SPR.
If $D$ is an SPR that is not a field, then $D$ contains a nonzero nilpotent element, so $R$
also contains a nonzero nilpotent element, a contradiction. Thus, $D$ is a Krull domain.

(2) $\Rightarrow$ (1) Clear.

(2) $\Rightarrow$ (3) Let $R = D_1 \times \cdots \times D_n$ be a finite direct product of Krull domains $D_1, \ldots, D_n$. Then
$R[X] \cong D_1 [X] \times \cdots \times D_n [X]$
and each $D_i [X]$ is a Krull domain \cite[Theorem 43.11]{3}.
Thus, $R[X]$ is a general Krull ring.

(3) $\Rightarrow$ (4) Clear.

(4) $\Leftrightarrow$ (2) \cite[Theorem 5.7]{aam85}.
\end{proof}

As we noted in Example \ref{ex4.8}, the integral closure of a Noetherian
ring need not be a general Krull ring. However, we next show
that the integral closure of a reduced Noetherian ring is a general Krull ring.
We first need the following lemma.

\begin{lemma} \label{reduced_strong_Krull}
Let $R$ be a reduced ring with a finite number of minimal prime ideals.
Then $R$ is a Krull ring if and only if $R$ is a general Krull ring.
\end{lemma}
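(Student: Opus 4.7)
The $(\Leftarrow)$ direction is immediate: by definition a general Krull ring is a finite direct sum of Krull domains and SPRs, which (as noted in the paragraph before Theorem~\ref{chracter-cg}) is a Krull ring. So the real content is the forward implication, and for that I would simply assemble three facts already available in the paper.

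First I would invoke that a Krull ring is completely integrally closed (Kennedy \cite[Proposition 2.2]{9}, as cited in Subsection~2.5); in particular a Krull ring is integrally closed. Since $R$ is additionally assumed reduced with only finitely many minimal prime ideals, the Example at the end of Section~\ref{w-oper} (the ``integrally closed reduced'' example) applies verbatim and yields a direct-sum decomposition
\[
  R \;=\; R/P_1 \times \cdots \times R/P_n,
\]
where $P_1, \dots, P_n$ are the minimal primes of $R$. Thus $R$ is already exhibited as a finite direct sum of integral domains.

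Next I would argue that each summand $D_i := R/P_i$ is itself a Krull domain. The key point is \cite[Proposition 3.2]{am95} (cited just before Corollary~\ref{SKR direct sum}), which states that a finite direct sum of rings is a Krull ring if and only if each direct summand is a Krull ring. Applying this to the decomposition above, each $D_i$ is a Krull ring; being an integral domain, $D_i$ is a Krull domain.

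Finally, with $R = D_1 \times \cdots \times D_n$ a finite direct sum of Krull domains, $R$ is a general Krull ring by definition (equivalently by the implication (2)~$\Rightarrow$~(1) of Proposition~\ref{polynomial}). I do not anticipate a real obstacle here: the only subtlety is to recognize that the reduced+finitely-many-minimal-primes hypothesis is exactly what is needed to invoke the Section~\ref{w-oper} example, after which everything is a direct appeal to previously stated results. The lemma is essentially the reduced analogue of the general result that a Krull ring with $T(R)$ a PIR and appropriate local behavior is a general Krull ring (Theorem~\ref{Krull and strong Krull}), but in the reduced case one does not need to pass through $T(R)$ at all.
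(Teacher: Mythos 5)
Your proposal is correct and follows essentially the same route as the paper: the paper's proof also decomposes $R \cong R/P_1 \times \cdots \times R/P_n$ via \cite[Lemma 8.14]{4} (using that $R$ is integrally closed under either hypothesis) and then observes that $R$ is a Krull ring if and only if each $R/P_i$ is a Krull domain, if and only if $R$ is a general Krull ring. Your write-up merely makes explicit the auxiliary facts (complete integral closedness of Krull rings, \cite[Proposition 3.2]{am95}) that the paper leaves implicit.
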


\begin{proof}
Let $P_1, \ldots, P_n$ be the minimal prime ideals of $R$.
Then $P_1 \cap \cdots \cap P_n = (0)$, $\dim (T(R)) =0$, and $R$ is integrally closed, so $R \cong R/P_1 \times \cdots \times R/P_n$
\cite[Lemma 8.14]{4}. Thus, $R$ is a Krull ring if and only if
each $R/P_i$ is a Krull domain if and only if $R$ is a general Krull ring.
\end{proof}

It is well known that the integral closure of a reduced Noetherian ring is a
finite direct product of Krull domains. This fact is also called Mori-Nagata theorem because
it is an immediate consequence of Mori-Nagata theorem.
Hence, by Proposition \ref{polynomial}, the following result is a restatement of Mori-Nagata theorem.

\begin{proposition} \label{icNoetherian}
If $R$ is a reduced Noetherian ring,
then the integral closure of $R$ is a general Krull ring.
\end{proposition}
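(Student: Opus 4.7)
The plan is to invoke Huckaba's theorem \cite[Corollary 2.3]{h76}, which guarantees that the integral closure of a Noetherian ring is a Krull ring, and then upgrade to a \emph{general} Krull ring via Lemma~\ref{reduced_strong_Krull}. Letting $\overline{R}$ denote the integral closure of $R$ in $T(R)$, Huckaba immediately yields that $\overline{R}$ is Krull, so it remains only to verify the two hypotheses of Lemma~\ref{reduced_strong_Krull}: that $\overline{R}$ is reduced and has finitely many minimal prime ideals.

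Reducedness is immediate: $T(R) = R_{\reg(R)}$ is a localization of the reduced ring $R$, hence itself reduced, and the subring $\overline{R} \subseteq T(R)$ inherits this. For the finite minimal prime count, let $P_1, \ldots, P_n$ be the minimal primes of the Noetherian ring $R$; since the associated primes of $(0)$ in a reduced Noetherian ring coincide with the minimal primes, $Z(R) = \bigcup_i P_i$. The primes of $T(R)$ correspond to primes of $R$ contained in $\bigcup_i P_i$; by prime avoidance each such prime lies in some $P_i$, and minimality forces equality, so $T(R)$ has exactly the $n$ primes $L_1, \ldots, L_n$ (the localizations of the $P_i$), with $\bigcap_i L_i = (0)$ by reducedness. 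Contracting to $\overline{R}$ yields primes $Q_i := L_i \cap \overline{R}$ whose intersection is still $(0)$, so every minimal prime of $\overline{R}$ must equal some $Q_i$, and there are at most $n$ of them. Lemma~\ref{reduced_strong_Krull} then concludes that $\overline{R}$ is a general Krull ring.

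The main (if modest) technical step I expect to work through is the finite minimal prime count for $\overline{R}$; it is not deep, but it rests on the specifically Noetherian-and-reduced structural fact that $T(R)$ has only finitely many primes, where prime avoidance and the coincidence of associated and minimal primes both enter. Everything else is either an appeal to Huckaba's theorem or the routine inheritance of reducedness by a subring.
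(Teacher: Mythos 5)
Your proposal is correct and follows essentially the same route as the paper: invoke Huckaba's theorem to get that $\overline{R}$ is a Krull ring, then apply Lemma~\ref{reduced_strong_Krull} after checking that $\overline{R}$ is reduced with finitely many minimal primes. The paper merely asserts these last two facts, while you supply correct supporting details.
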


\begin{proof}
Let $\overline{R}$ be the integral closure of $R$. Then,
by assumption, $\overline{R}$ is reduced.
Moreover, $R$ being Noetherian implies that $R$ and $\overline{R}$ have a finite number of minimal prime ideals.
Finally, $\overline{R}$ is a Krull ring \cite[Corollary 2.3]{h76}.
Thus, $\overline{R}$ is a general Krull ring by Lemma~\ref{reduced_strong_Krull}.
\end{proof}

We end this section with a corollary, which
can be applied so that we can construct an easy example
of Krull rings that are not general Krull rings.

\begin{corollary}
Let $K$ be a field, $n \ge 2$ be an integer, $X_1, \ldots, X_n$ be indeterminates over $K$,
$K[X_1, \ldots, X_n]$ be the polynomial ring over $K$, $f \in K[X_1, \ldots, X_n]$ be a nonconstant polynomial,
$R = K[X_1, \ldots, X_n] / (f)$, and $\overline{R}$ be the integral closure of $R$.
Then $\overline{R}$ is a Krull ring. Moreover, the following statements are equivalent.
\begin{enumerate}
\item $\overline{R}$ is Noetherian.

\item $R$ is reduced.

\item $\overline{R}$ is a general Krull ring.

\item $f$ is square-free.
\end{enumerate}
\end{corollary}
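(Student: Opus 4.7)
The plan is to prove that $\overline{R}$ is a Krull ring first (immediate) and then establish the four-way equivalence via the cycle $(4) \Leftrightarrow (2) \Rightarrow (3) \Rightarrow (1) \Rightarrow (2)$. That $\overline{R}$ is a Krull ring follows at once from Huckaba's theorem \cite[Corollary 2.3]{h76}, since $R$ is Noetherian. For $(2) \Leftrightarrow (4)$, I use that $S := K[X_1,\ldots,X_n]$ is a UFD: writing $f = p_1^{e_1}\cdots p_k^{e_k}$ with distinct irreducibles $p_i$, the ideal $(f)$ is radical exactly when every $e_i = 1$, so $R$ is reduced if and only if $f$ is square-free. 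The implication $(2) \Rightarrow (3)$ is an immediate application of Proposition~\ref{icNoetherian} to the reduced Noetherian ring $R$, and $(3) \Rightarrow (1)$ is direct from Corollary~\ref{coro5.18}.

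The main obstacle is $(1) \Rightarrow (2)$, which I plan to prove by contrapositive: assuming $f$ is not square-free, I will show $\overline{R}$ is not Noetherian. Writing $f = p_1^{e_1}\cdots p_k^{e_k}$ with some $e_i \ge 2$, the CRT applied in the UFD $S$ gives $R \cong \prod_{i=1}^{k} S/(p_i^{e_i})$, whence $\overline{R} \cong \prod_i \overline{S/(p_i^{e_i})}$. A product of rings is Noetherian if and only if each factor is, so it suffices to show $\overline{S/(p^e)}$ is not Noetherian whenever $p$ is irreducible and $e \ge 2$. The main tool here is the criterion from \cite[Theorem 3.3]{ck21-1}, recalled in the remark following Corollary~\ref{coro5.18}, that $\overline{R}$ is Noetherian if and only if $N(R) = N(\overline{R})$; the problem thus reduces to producing a nilpotent element of $T(S/(p^e))$ lying outside $N(S/(p^e))$, since every nilpotent of the total quotient ring satisfies $x^e = 0$ and so lies in $\overline{R}$.

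Set $V = S_{(p)}$, which is a DVR because $p$ is a prime element of the UFD $S$; then $T(R) = V/p^e V$ is an SPR with nilradical $pV/p^e V$, while $N(R) = (p)/(p^e)$ sits inside $T(R)$ as $(pS + p^e V)/p^e V$. A brief DVR computation shows that $pS + p^e V = pV$ is equivalent to $S + p^{e-1}V = V$, i.e., to the natural map $S/(p^{e-1}) \to V/p^{e-1}V$ being surjective. Here is precisely where the hypothesis $n \ge 2$ is essential: since $\dim S/(p) = n-1 \ge 1$, the domain $S/(p)$ admits a non-unit, so I pick $h \in S \setminus (p)$ whose image in $S/(p)$ is a non-unit. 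Then $\bar h$ cannot be a unit in $S/(p^{e-1})$ (else reducing modulo $p$ would contradict the choice of $h$), yet $h$ is a unit in $V$, so $1/\bar h \in V/p^{e-1}V$ escapes the image of $S/(p^{e-1})$. This yields $N(R) \subsetneq N(\overline{R})$, the criterion forces $\overline{R}$ not to be Noetherian, and the contrapositive---hence the full cycle---is complete.
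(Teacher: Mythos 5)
Your overall strategy is sound and in places more self-contained than the paper's (which disposes of $(1)\Leftrightarrow(2)\Leftrightarrow(4)$ by citing \cite[Corollaries 3.5 and 3.8(5)]{ck21-1}): the Krull-ring claim, $(2)\Leftrightarrow(4)$, $(2)\Rightarrow(3)$ via Proposition~\ref{icNoetherian}, and $(3)\Rightarrow(1)$ via Corollary~\ref{coro5.18} are all correct, and the DVR computation at the end of your $(1)\Rightarrow(2)$ argument --- including the observation that $n\ge 2$ is exactly what forces $S/(p)$ to have a nonunit --- is valid.

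There is, however, a genuine error at the start of your contrapositive argument: the claim that the Chinese Remainder Theorem gives $R \cong \prod_{i=1}^{k} S/(p_i^{e_i})$ is false, because the ideals $(p_i^{e_i})$ need not be pairwise comaximal in $S=K[X_1,\ldots,X_n]$ when $n\ge 2$. For example, with $f = X_1X_2$ in $K[X_1,X_2]$ one has $(X_1)+(X_2)=(X_1,X_2)\ne S$, and indeed $K[X_1,X_2]/(X_1X_2)$ has connected spectrum and hence no nontrivial idempotents, so it cannot be a nontrivial direct product. What is true --- and what you actually need --- is the decomposition of the \emph{integral closure}, $\overline{R}\cong\overline{R_1}\times\cdots\times\overline{R_k}$: the associated primes of $(f)$ are exactly the $(p_i)$, so after inverting $S\setminus\bigcup_i(p_i)$ these primes do become comaximal and $T(R)\cong\prod_i S_{(p_i)}/(p_i^{e_i})S_{(p_i)}=\prod_i T(R_i)$; since the idempotents of $T(R)$ are integral over $R$, the integrally closed ring $\overline{R}$ splits along them into the product of the $\overline{R_i}$. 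This is precisely how the paper obtains the product decomposition in its proof of $(3)\Rightarrow(4)$ (and earlier in Theorem~\ref{icNSKR}). Once you replace your CRT step by this argument, the reduction to showing $\overline{S/(p^e)}$ non-Noetherian for $e\ge 2$, and the subsequent verification that $pS+p^eV\subsetneq pV$ in the DVR $V=S_{(p)}$, go through and yield a correct, self-contained proof of $(1)\Rightarrow(2)$ in place of the paper's citation.
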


\begin{proof}
Clearly, $R$ is Noetherian. Thus, $\overline{R}$ is a Krull ring \cite[Corollary 2.3]{h76}.

(1) $\Leftrightarrow$ (2) $\Leftrightarrow$ (4)  \cite[Corollaries 3.5 and 3.8(5)]{ck21-1}.

(2) $\Rightarrow$ (3) Proposition~\ref{icNoetherian}.

(3) $\Rightarrow$ (4) Let $f = f^{e_1}_1 \cdots f^{e_m}_m$ be the prime factorization of $f$ in $K[X_1, \ldots, X_n]$. Then
$(f) = (f^{e_1}_1) \cap \cdots \cap (f^{e_m}_m)$,
so if we let $R_i = K[X_1, \ldots, X_n] / (f^{e_i}_i)$
and $\overline{R_i}$ be the integral closure of $R_i$
for $i = 1, \ldots, m$, then
$$\overline{R} \cong \overline{R_1} \times \cdots \times \overline{R_m} \quad \text{ and } \quad T(R) = T(R_1) \times \cdots \times T(R_n).$$
By assumption, $\overline{R}$ is a general Krull ring,
and since each $\overline{R_i}$ has a unique minimal prime, $\overline{R_i}$ is either a Krull domain or an SPR
by Theorem \ref{thm5.2}(8).
If $R_i$ is a Krull domain, then $e_i = 1$, while if $R_i$ is an SPR,
then $(f_i)$ must be a maximal ideal of $K[X_1, \ldots, X_n]$, a contradiction.
Thus, $e_1 = \cdots = e_m = 1$, and hence $f$ is square-free.
\end{proof}

\section{Nagata rings} \label{5}

Let $R$ be a ring, $X$ be an indeterminate over $R$, $R[X]$ be the
polynomial ring over $R$, $N_u = \{ f \in R[X] \mid c(f)_u = R \}$,
$N_w = \{ f \in R[X] \mid c(f)_w = R \}$, and $N_v = \{ f \in R[X] \mid f \neq 0$ and $c(f)_v = R \}$,
so $N_u \subseteq N_w \subseteq N_v$. It is clear that if $R$ is an integral domain,
then $N_u = N_w = N_v$ and $R[X]_{N_u}$
is the ($t$-)Nagata ring of $R$.
In this section, we first show that $N_u$ is a saturated multiplicative set of
$R[X]$. We then study the ideal theory of $R[X]_{N_u}$,
which includes the general Krull ring analog
of \cite[Theorem 2.2]{g70} that $R$ is a Krull domain if and only if $R[X]_{N_u}$ is a PID
and the $u$-Noetherian ring analog of \cite[Theorem 2.2]{c05} that $R$ is a $u$-Noetherian
domain if and only if $R[X]_{N_u}$ is a Noetherian domain.

\begin{lemma}
The following statements are satisfied for a ring $R$.
\begin{enumerate}
\item $N_u$ is a saturated multiplicative set of $R[X]$.
\item Each element of $N_u$ is regular. Hence, $R[X]_{N_u}$
is an overring of $R[X]$.
\end{enumerate}
\end{lemma}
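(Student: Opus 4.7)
The plan is to verify each claim directly from the definitions of $N_u$ and the $u$-operation, relying on Dedekind--Mertens and McCoy's theorem as the two main inputs.

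For (1), I would first establish that $N_u$ is closed under multiplication. Suppose $f,g \in N_u$, so $c(f)_u = c(g)_u = R$. A routine induction using Lemma \ref{star} gives $(c(f)^n)_u = R$ for every $n \ge 1$, since $(c(f) \cdot c(f)_u)_u = (c(f) \cdot R)_u = c(f)_u$. By Dedekind--Mertens, there exists $n \ge 1$ with $c(f)^{n+1}c(g) = c(f)^n c(fg)$. Applying the $u$-operation to both sides and invoking Lemma \ref{star} twice, the left side becomes $((c(f)^{n+1})_u c(g))_u = c(g)_u = R$, while the right side becomes $((c(f)^n)_u c(fg))_u = c(fg)_u$. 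Hence $c(fg)_u = R$, i.e., $fg \in N_u$. Saturation is easier: from $c(fg) \subseteq c(f)c(g) \subseteq c(f)$ (and the analogous inclusion in $c(g)$), if $fg \in N_u$ then $R = c(fg)_u \subseteq c(f)_u$, forcing $c(f)_u = R$, and similarly $c(g)_u = R$.

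For (2), I would show that every $f \in N_u$ has a regular content, which suffices by McCoy's theorem. The point is that $c(f)_u = R$ means $1 \in c(f)_u$, so there is some $J \in \rGV(R)$ with $J = 1 \cdot J \subseteq c(f)$. Since $J$ is regular by the very definition of $\rGV(R)$, $c(f)$ contains a regular element. A regular ideal has trivial annihilator in $R$, so if $af = 0$ for some $a \in R$, then $a \cdot c(f) = 0$ and in particular $a$ kills a regular element, giving $a = 0$. By McCoy's theorem, $f$ is a regular element of $R[X]$. Consequently $N_u$ is a saturated multiplicative set of regular elements, so the localization $R[X]_{N_u}$ embeds into $T(R[X])$ and is an overring of $R[X]$.

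I do not anticipate any genuine obstacle: the only nontrivial ingredient is the Dedekind--Mertens manipulation for multiplicativity, and the regularity claim reduces directly to the definition of $\rGV(R)$ plus McCoy. The proof is essentially a transcription of the corresponding $w$-operation argument of \cite{wk15} with $\GV(R)$ replaced by $\rGV(R)$, the replacement being harmless because elements of $\rGV(R)$ are automatically regular.
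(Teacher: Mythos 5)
Your proof is correct and follows essentially the same route as the paper: Dedekind--Mertens plus Lemma \ref{star} for multiplicativity, the containment $c(fg)\subseteq c(f)c(g)$ for saturation, and McCoy's theorem for regularity. The only cosmetic difference is in (2), where you extract a regular element of $c(f)$ from some $J\in\rGV(R)$ with $J\subseteq c(f)$, whereas the paper argues via $(0)=(0)_u=(ac(f))_u\supseteq ac(f)_u=aR$; both hinge on the same fact that $\rGV$-ideals are regular.
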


\begin{proof}
(1) Let $f, g \in R[X]$. If $f, g \in N_u$, then $c(f)^{n+1} c(g) = c(f)^{n} c(fg)$
for some integer $n \ge 1$ \cite[Theorem 28.1]{3} and $c(f)_u = c(g)_u = R$. Hence, $$(c(f)^{n+1} c(g))_u = (c(f)^{n} c(fg))_u,$$
by which $c(fg)_u = R$. Thus, $fg \in N_u$.
Conversely, if $fg \in N_u$, then
$R = c(fg)_u \subseteq (c(f) c(g))_u \subseteq R$, and hence $(c(f) c(g))_u = R$.
Thus, $c(f)_u = c(g)_u = R$.

(2) Let $f \in N_u$, and assume that $f$ is a zero divisor of $R[X]$.
Then, by McCoy's theorem, there is a nonzero $a \in R$ such that $af = 0$.
Hence, $$(0) = (0)_u = (a c(f))_u \supseteq a c(f)_u = aR,$$ so $aR = (0)$, and hence $a = 0$, a contradiction.
Thus, $f$ is regular.
\end{proof}

\begin{remark} \label{remark6.2}
(1) Let $N_v^r(R) = \{f \in R[X] \mid c(f)$ is regular and $c(f)_v = R\}$.
Then $N_v^r(R) \subseteq N_u$ by Corollary \ref{u-maxiaml}. Moreover, if
$f \in N_u$, then $c(f)_u = R$, and hence $c(f)$ is regular by Lemma \ref{u-inv regular},
so $f \in N_v^r(R)$. Thus, $N_v^r(R) = N_u$. In \cite{7},
Kang studied a couple of ring-theoretic properties of $R[X]_{N_v^r(R)}$.
Kang also used $R[X]_{N_v^r(R)}$ to show that if every regular prime ideal
of $R$ contains a $t$-invertible regular prime ideal, then $R$ is a Krull ring \cite[Theorem 12]{7}.

(2) Let $S = \{f \in R[X] \mid c(f)_v = R\}$. It is worthwhile to note that
(i) $S$ is a saturated multiplicative set of $R[X]$ \cite[Proposition 1]{7},
(ii) $S = R[X]$ if and only if $R= T(R)$, (iii) $N_v \subseteq S$, and equality holds if and only if $R \neq T(R)$, and (iv) $N_v$ is not a multiplicative set if and only if $R =T(R)$ and $R$ is not a field.
\end{remark}

The next result is a $u$-operation analog of the fact that
 that if $R$ is an integral domain, then $\Max (R[X]_{N_u}) = \{P[X]_{N_u} \mid P \in u$-$\Max(R)\}$
and $(IR[X]_{N_u})_t = I_tR[X]_{N_u}$ for a nonzero fractional ideal $I$ of $R$ \cite[Propositions 2.1 and 2.2]{k89-1}.
By Remark \ref{remark6.2}(1), $R[X]_{N_u} = R[X]_{N_v^r(R)}$, so
the second property of Proposition \ref{maximal} is an immediate consequence of \cite[Proposition 5]{7}
that $(IR[X]_{N_v^r(R)})_t = I_tR[X]_{N_v^r(R)}$ for a regular fractional
ideal $I$ of $R$.

\begin{proposition} \label{maximal}
The following statements are satisfied for a ring $R$.
\begin{enumerate}
\item $\Max (R[X]_{N_u}) = \{ P[X]_{N_u} \mid P \in $u-$\Max (R) \}$.

\item If $P \in u$-$\Max (R)$ is regular, then $P[X]_{N_u}$ is a $t$-ideal of $R[X]_{N_u}$.
\end{enumerate}
\end{proposition}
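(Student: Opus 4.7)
The plan is to prove (1) by first verifying that $\{P[X]_{N_u} : P \in u\textnormal{-}\Max(R)\} \subseteq \Max(R[X]_{N_u})$ and then the reverse inclusion. For the easy direction, I will check that $P[X]_{N_u}$ is a proper prime ideal by showing $P[X] \cap N_u = \emptyset$: any $f$ in the intersection would satisfy $c(f) \subseteq P$ together with $c(f)_u = R$, giving $R = c(f)_u \subseteq P_u = P$ because $P$ is a $u$-ideal. Since $P[X]$ is prime (as $R/P$ is a domain, using that $P$ is prime by Proposition \ref{prop1.2}(3)), $P[X]_{N_u}$ is a proper prime ideal of $R[X]_{N_u}$.

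The heart of the argument is the following lemma that I will establish next: every prime ideal $\mathfrak{Q}$ of $R[X]$ disjoint from $N_u$ is contained in $P[X]$ for some $P \in u\textnormal{-}\Max(R)$. Setting $c(\mathfrak{Q}) = \sum_{g \in \mathfrak{Q}} c(g)$, the lemma reduces to the claim that $c(\mathfrak{Q})_u \ne R$, since then Proposition \ref{prop1.2}(2) produces a maximal $u$-ideal $P$ containing $c(\mathfrak{Q})$, forcing $\mathfrak{Q} \subseteq c(\mathfrak{Q})[X] \subseteq P[X]$. The main obstacle is manufacturing a single polynomial lying in both $\mathfrak{Q}$ and $N_u$ from the assumption $c(\mathfrak{Q})_u = R$. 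I plan to overcome this by a standard shifting trick: the finite-type property of $u$ from Proposition \ref{u-oper} produces some $J \in \rGV(R)$ with $J \subseteq c(\mathfrak{Q})$, and finite generation of $J$ yields $f_1, \ldots, f_m \in \mathfrak{Q}$ with $J \subseteq c(f_1) + \cdots + c(f_m)$. Taking $F = f_1 + X^{N_1} f_2 + \cdots + X^{N_{m-1}} f_m$ with exponents $N_i$ large enough that the summands have pairwise disjoint supports gives $c(F) = c(f_1) + \cdots + c(f_m) \supseteq J$, so $c(F)_u = R$ and thus $F \in N_u \cap \mathfrak{Q}$, contradicting $\mathfrak{Q} \cap N_u = \emptyset$.

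Given the lemma, a maximal ideal $\mathfrak{M}$ of $R[X]_{N_u}$ equals $\mathfrak{Q} R[X]_{N_u}$ for some prime $\mathfrak{Q}$ of $R[X]$ disjoint from $N_u$, so $\mathfrak{M} \subseteq P[X]_{N_u}$ for some $P \in u\textnormal{-}\Max(R)$, and maximality of $\mathfrak{M}$ forces equality. Conversely, each $P[X]_{N_u}$ is proper and thus lies in some maximal ideal of $R[X]_{N_u}$, which by the above is of the form $P'[X]_{N_u}$; then $P \subseteq P'$ and maximality of $P$ among $u$-ideals give $P = P'$, so $P[X]_{N_u}$ is itself maximal.

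For (2), my plan is to combine the identification $N_u = N_v^r(R)$ from Remark \ref{remark6.2} with \cite[Proposition 5]{7}, which states $(IR[X]_{N_v^r(R)})_t = I_t R[X]_{N_v^r(R)}$ for every regular fractional ideal $I$ of $R$. Since $P$ is a regular maximal $u$-ideal, Corollary \ref{u-maxiaml} gives that $P$ is a maximal $t$-ideal, so $P_t = P$; specializing the displayed identity at $I = P$ yields $(P[X]_{N_u})_t = P_t R[X]_{N_u} = P[X]_{N_u}$, which is the desired conclusion.
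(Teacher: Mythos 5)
Your proof is correct. For part (1), your argument is essentially the paper's: the same content-splicing trick (padding the $f_i$ with powers of $X$ so their contents add) is the engine in both, and your reduction of the containment $c(\mathfrak{Q})_u \subsetneq R$ via Proposition \ref{prop1.2}(2) matches the paper's ``$\subseteq$'' direction. The only organizational difference is in the ``$\supseteq$'' direction: the paper shows directly that any prime $Q \supsetneq P[X]$ meets $N_u$ (via the polynomial $g = a_0 + \cdots + a_nX^n + X^{n+1}f$), whereas you deduce maximality of $P[X]_{N_u}$ from your lemma together with incomparability of maximal $u$-ideals; both are fine, and yours reuses the lemma more economically (though you should note explicitly that $P[X]_{N_u} \cap R = P$, which is what gives $P \subseteq P'$). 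For part (2) you take a genuinely different, shorter route: you invoke $N_u = N_v^r(R)$ and Kang's result $(IR[X]_{N_v^r(R)})_t = I_tR[X]_{N_v^r(R)}$ directly --- a derivation the paper itself points out is available in the paragraph preceding the proposition --- whereas the paper's written proof is self-contained, first establishing $\big(JR[X]_{N_u}\big)^{-1} = J^{-1}R[X]_{N_u}$ for regular fractional $J$ and then applying it twice to get the $v$-closure identity. Your version buys brevity at the cost of an external citation; the paper's buys a reusable identity (the inverse-commutation formula) that it calls on again later, e.g.\ in the proof of Theorem \ref{Krull ring Property(A)}.
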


\begin{proof}
(1) ($\supseteq$) Let $P \in u$-$\Max (R)$, and assume that $Q$ is a prime ideal of $R[X]$ with $P[X] \subsetneq Q$.
Choose $f \in Q \setminus P[X]$. Then $(P + c(f))_u = R$, and since $u$ is of finite type,
$((a_0, \ldots, a_n) + c(f))_u = R$ for some $(a_0, \ldots, a_n) \subseteq P$.
Now, let $$g = a_0 + a_1 X + \cdots + a_n X^{n} + X^{n+1}f.$$
Then $g \in Q$ and $c(g)_u = R$, and therefore $Q \cap N_u \neq \emptyset$.
Clearly, $P[X] \cap N_u = \emptyset$. Thus, $P[X]_{N_u} \in \Max (R[X]_{N_u})$.

($\subseteq$) Let $A \in \Max (R[X]_{N_u})$, so $A = M_{N_u}$ for some prime ideal $M$ of $R[X]$.
Let $I = \sum_{f \in M} c(f)$; then $I$ is an ideal of $R$.
If $I_u = R$, then there are some $f_1, \ldots, f_k \in M$ such that $(c(f_1) + \cdots + c(f_k))_u = R$.
Next, let $n_i = \deg (f_i)$ and
\[
  f = f_1 + f_2 X^{n_1 + 1} + f_3 X^{n_1 + n_2 + 2} + \cdots + f_k X^{n_1 + \ldots + n_{k-1} + (k-1)} \,.
\]
Then $f \in M$ and $c(f) = c(f_1) + \cdots + c(f_k)$, so $f \in M \cap N_u$, a contradiction.
Hence, $I_u \subsetneq R$, so there is a maximal $u$-ideal of $R$, say, $P$, such that
$I \subseteq P$. Note that $M \subseteq  I[X] \subseteq P[X]$,
$P[X] \cap N_u = \emptyset$, and $M_{N_u}$ is maximal. Thus, $M_{N_u} = P[X]_{N_u}$.

(2) Let $J$ be a regular fractional ideal of $R$.
It is clear that $J^{-1}R[X]_{N_u} \subseteq \big( JR[X]_{N_u} \big)^{-1}$.
For the reverse containment, let $x \in \big( JR[X]_{N_u} \big)^{-1}$.
Then $xJ \subseteq x \big( JR[X]_{N_u} \big) \subseteq R[X]_{N_u}$, and since $J$ is regular, $x \in T(R)[X]_{N_u}$.
Hence, $x =  \frac{f}{g}$ for some $f \in T(R)[X]$ and $g \in N_u$.
Now, let $a \in J$.
Then $ax = \frac{fa}{g} \in R[X]_{N_u}$, whence $\frac{fa}{g} = \frac{k}{h}$ for some $k \in R[X]$ and $h \in N_u$.
Then $afh = gk$, so by Dedekind-Mertens lemma \cite[Theorem 28.1]{3},
$$a c(f) \subseteq (a c(f))_u = c(afh)_u = c(gk)_u = c(k)_u \subseteq R.$$
Hence, $c(f) \subseteq J^{-1}$, so $x = \frac{f}{g} \in J^{-1}R[X]_{N_u}$.
Therefore, $\big( JR[X]_{N_u} \big)^{-1} \subseteq J^{-1}R[X]_{N_u}$.
Thus, $\big( JR[X]_{N_u} \big)^{-1} = J^{-1}R[X]_{N_u}$.

Now, note that since $P$ is regular, $P \in t$-$\Max (R)$ by Corollary \ref{u-maxiaml}.
Hence, it suffices to show that if $I \subseteq P$ is a finitely generated regular ideal of $R$,
then $\big( IR[X]_{N_u} \big)_v = I_v R[X]_{N_u}$.
But, note that if $I$ is a fractional ideal of $R$, then $I^{-1}$ is always a regular fractional ideal of $R$.
Thus, by the first paragraph, $\big( IR[X]_{N_u} \big)_v = I_v R[X]_{N_u}$.
\end{proof}

\begin{corollary} \label{picard group}
Every finitely generated locally principal ideal of $R[X]_{N_u}$ is principal. Hence, $\Pic (R[X]_{N_u}) = \{0\}$.
In particular, if $R$ satisfies Property(A), then $\Cl (R[X]_{N_u}) = \{ 0 \}$.
\end{corollary}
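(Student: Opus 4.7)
The plan is to adapt Kang's Nagata-ring Picard-group argument (\cite{k89-1} in the integral-domain setting) to $S := R[X]_{N_u}$. For the first claim, I would take a finitely generated locally principal ideal $I$ of $S$, write $I = JS$ with $J = (f_1,\ldots,f_n)R[X]$, and set $A := c(J) = c(f_1)+\cdots+c(f_n) \subseteq R$. The candidate generator is
\[
f := f_1 + X^{N_2}f_2 + \cdots + X^{N_n}f_n \in J,
\]
with $0<N_2<\cdots<N_n$ large enough that the monomial supports of the $X^{N_i}f_i$ are pairwise disjoint; then $c(f)=A$ and $fS \subseteq JS = I$.

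The reverse containment I would verify at each maximal ideal $M=P[X]_{N_u}$ of $S$ (Proposition \ref{maximal}(1)), where $S_M$ is canonically the classical Nagata ring $R_P(X)$ of the local ring $R_P$. The hypothesis says $IR_P(X) = JR_P(X)$ is principal. A Dedekind--Mertens content analysis in $R_P[X]$, together with the key observation that $N_u$ inverts every polynomial whose content has $u$-closure $R$ (so the ``principal-in-$R(X)$-with-non-principal-content'' pathology never survives in $R[X]_{N_u}$), forces $AR_P$ to be principal in $R_P$, say $AR_P = aR_P$. Writing $f_i = a\tilde f_i$ in $R_P[X]$ then gives $f = a\tilde f$ with $c(\tilde f)R_P = R_P$, so $\tilde f$ is primitive and hence a unit in $R_P(X)$; thus $fR_P(X) = aR_P(X) = JR_P(X) = IS_M$. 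Since $fS$ and $I$ agree at every maximal ideal of $S$, they are equal, so $I$ is principal, and $\Pic(R[X]_{N_u}) = \{0\}$ follows.

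For the final assertion, assume $R$ satisfies Property(A); then $u = w$ on $R$ by Proposition \ref{u-w-w'-oper}(2). Let $I$ be a regular $t$-invertible $t$-ideal of $S$. By Proposition \ref{prop1.5}, $I = J_t$ for a finitely generated regular $J \subseteq S$, which we may take of the form $J = (g_1,\ldots,g_n)S$ with $g_i \in R[X]$. The same content analysis identifies $A := c(g_1)+\cdots+c(g_n) \subseteq R$ as $t$-invertible, hence $u$-invertible by Corollary \ref{u-maxiaml}, with $A = A_t$; Proposition \ref{maximal}(2) then shows $AS = A_t S = (AS)_t$ is itself a $t$-ideal of $S$. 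A local-to-global check parallel to the first part identifies $I = J_t = AS$, and the first part makes $AS$ principal, so $[I] = 0$ in $\Cl(S)$.

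The main obstacle will be the content lemma above: proving that principality of $JR_P(X)$ in the classical Nagata ring $R_P(X)$ forces $AR_P$ to be principal in $R_P$. In a generic Nagata ring a principal ideal can genuinely have non-principal content, and the delicate point is that precisely those ``bad'' ideals have already been trivialized when we passed from $R(X)$ to the larger $R[X]_{N_u}$. Once this content lemma is secured, the remaining polynomial manipulations are routine and follow the integral-domain template.
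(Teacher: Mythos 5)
There is a genuine gap, and it sits exactly where you placed your own warning: the ``content lemma'' asserting that principality of $JR_P(X)$ in $R_P(X)$ forces $AR_P$ to be principal is false, even after passing to $R[X]_{N_u}$. Take $R = k[x,y]/(x,y)^2$, a quasi-local total quotient ring with maximal ideal $M=(x,y)$ and $M^2=(0)$. Here $\reg(R)$ is the set of units, so $\rGV(R)=\{R\}$, $u=d$, $N_u=\{f \mid c(f)=R\}$, and $R[X]_{N_u}=R(X)$ is quasi-local with $u$-$\Max(R)=\{M\}$. The ideal $I=(x+yX)R(X)$ is principal, hence finitely generated and locally principal, yet $A=c(x+yX)=M$ requires two generators; so your intermediate claim fails while the conclusion of the corollary holds trivially for this $I$. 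Your parenthetical hope that the ``principal with non-principal content'' pathology is trivialized in $R[X]_{N_u}$ is therefore not correct: the local rings $S_{P[X]_{N_u}}=R_P(X)$ are ordinary Nagata rings of quasi-local rings and retain all such ideals. Since you cannot in general factor $f_i = a\tilde f_i$, the reverse containment $IS_M \subseteq fS_M$ is not established, and the first (and main) assertion remains unproved along the proposed route.

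For comparison, the paper does not reprove this statement: by Remark \ref{remark6.2} one has $N_u=N_v^r(R)$, so the first two assertions are exactly \cite[Theorem 2]{7} combined with the description $\Max(R[X]_{N_u})=\{P[X]_{N_u} \mid P\in u\textnormal{-}\Max(R)\}$ from Proposition \ref{maximal}(1). The Anderson--Kang argument you are trying to reconstruct avoids your lemma entirely: with $IS_M=gS_M$ one shows only the \emph{equality} of contents $c(f)R_P=c(g)R_P$ (not their principality) via Dedekind--Mertens, writes $fs=gh$ with $s\in N_u$, deduces $c(g)^{m+1}c(h)=c(g)^{m+1}$, and concludes by Nakayama that $c(h)=R_P$, so $h$ is a unit of $R_P(X)$ and $g\in fS_M$, the nilpotent-content case being handled separately. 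For the class group the paper's route is also shorter than yours: if $A$ is a regular $t$-invertible ideal of $R[X]_{N_u}$, then $AA^{-1}$ is regular, and under Property(A) any $P[X]_{N_u}$ containing it has $P$ regular, hence is a $t$-ideal by Proposition \ref{maximal}(2), contradicting $(AA^{-1})_t=R[X]_{N_u}$; so $A$ is invertible and then principal because $\Pic(R[X]_{N_u})=\{0\}$. Your proposed descent of $I$ to an extended ideal $AS$ with $I=J_t=AS$ is an additional unverified step that this argument makes unnecessary.
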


\begin{proof}
The proof is the same as that of \cite[Theorem 2]{7} with Proposition~\ref{maximal}.
In particular, if $A$ is a $t$-invertible regular ideal of $R[X]_{N_u}$,
then $AA^{-1} \nsubseteq P[X]_{N_u}$ for all $P \in u$-$\Max (R)$ by Property(A) and Proposition~\ref{maximal}(2),
whence $A$ is invertible.
Thus, $\Cl ( R[X]_{N_u}) = \Pic (R[X]_{N_u}) = \{ 0 \}$.
\end{proof}

The following lemma is a very useful tool for studying when $R[X]_{N_u}$ is a (regular) PIR or a Noetherian ring
 (see \cite[Lemma 2.1]{c05} for a domain case).

\begin{lemma} \label{u-closure}
The following statements hold for an ideal $I$ of a ring $R$.
\begin{enumerate}
\item $IR[X]_{N_u} \cap T(R) = I_u$.

\item $I_u R[X]_{N_u} = IR[X]_{N_u}$.

\item $I_u$ is of finite type if and only if $IR[X]_{N_u}$ is finitely generated.
\end{enumerate}
\end{lemma}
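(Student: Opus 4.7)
The plan is to prove the three parts in sequence, deriving (2) and (3) as easy consequences of (1). The main work is to establish (1) via a standard trick: extract the Dedekind--Mertens style coefficient ideal from a representative, and conversely, realize an $\rGV$-witness for membership in $I_u$ as the content of an element of $N_u$.

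For (1), to show $IR[X]_{N_u} \cap T(R) \subseteq I_u$, I would take $x \in T(R)$ with $xg \in IR[X]$ for some $g \in N_u$. Since $c(g)_u = R$ and $u$ is of finite type, there is a $J \in \rGV(R)$ with $J \subseteq c(g)$. Viewing $x$ as a constant in $T(R)[X]$, the identity $c(xg) = x\,c(g)$ gives $x\,c(g) \subseteq I$ (as the coefficients of $xg$ lie in $I$), and hence $xJ \subseteq I$, so $x \in I_u$. For the reverse inclusion $I_u \subseteq IR[X]_{N_u} \cap T(R)$, given $x \in I_u$, pick $J = (a_0,\ldots,a_n) \in \rGV(R)$ with $xJ \subseteq I$ and form $g = a_0 + a_1 X + \cdots + a_n X^n$. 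Then $c(g) = J$, so $g \in N_u$ (since $J \in \rGV(R)$ forces $J_u = R$: indeed $1\cdot J \subseteq J$ witnesses $1 \in J_u$), while $xg \in IR[X]$ because each $xa_i \in I$. Thus $x = (xg)/g \in IR[X]_{N_u}$.

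Part (2) follows immediately: the inclusion $IR[X]_{N_u} \subseteq I_u R[X]_{N_u}$ is trivial from $I \subseteq I_u$, while (1) gives $I_u \subseteq IR[X]_{N_u}$, so multiplying by $R[X]_{N_u}$ yields the reverse inclusion. For (3), if $I_u = J_u$ for some finitely generated $J \subseteq I$, then by (2), $IR[X]_{N_u} = I_u R[X]_{N_u} = J_u R[X]_{N_u} = JR[X]_{N_u}$ is finitely generated. Conversely, if $IR[X]_{N_u}$ is finitely generated, clear denominators in a finite generating set so the generators lie in $IR[X]$; let $J$ be the (finitely generated) ideal of $R$ spanned by all coefficients of those polynomials. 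Then $J \subseteq I$ and $JR[X]_{N_u} = IR[X]_{N_u}$, whence by (1) applied to both sides, $J_u = JR[X]_{N_u} \cap T(R) = IR[X]_{N_u} \cap T(R) = I_u$.

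The main obstacle is the reverse inclusion of (1): one needs the fact that every $J \in \rGV(R)$ is itself a witness for its own $u$-closure being $R$ (so that the polynomial $g$ built from generators of $J$ actually lies in $N_u$). This is the crucial link between the purely ideal-theoretic definition of $u$ and the multiplicative set $N_u$, and it is where the regularity built into $\rGV(R)$ (as opposed to $\GV(R)$ or the $w'$-analogue) is used so that every coefficient polynomial has a genuine regular content, ensuring the element of $N_u$ is regular and the fraction $(xg)/g$ is well-defined in the overring $R[X]_{N_u}$.
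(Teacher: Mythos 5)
Your proposal is correct and follows essentially the same route as the paper: both directions of (1) are proved by the same two moves (realizing an $\rGV$-witness $J$ for $x\in I_u$ as the content of a polynomial in $N_u$, and conversely extracting $c(g)$ with $c(g)_u=R$ from a representative $f/g$), and (2) and (3) are deduced from (1) exactly as in the paper. The only cosmetic difference is that in the forward inclusion of (1) you pick an $\rGV$-ideal inside $c(g)$, whereas the paper runs the chain $b\in bc(g)_u\subseteq (bc(g))_u=c(f)_u\subseteq I_u$ directly; these are interchangeable.
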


\begin{proof}
(1) ($\supseteq$) Let $a \in I_u$. Then there exists a $J \in \textnormal{rGV}(R)$ such that $aJ \subseteq I$.
It is clear that $JR[X] \cap N_u \neq \emptyset$, so $a \in aR[X]_{N_u} = aJR[X]_{N_u} \subseteq IR[X]_{N_u}$.
Thus, $I_u \subseteq IR[X]_{N_u} \cap T(R)$.

($\subseteq$) Let $b \in IR[X]_{N_u} \cap T(R)$. Then $b = \frac{f}{g}$ for some $f \in IR[X]$ and $g \in N_u$.
Hence, $bg = f$ by which we have
\[
  b \in bR = b c(g)_u \subseteq (b c(g))_u = c(f)_u \subseteq I_u \,.
\]
Thus, $IR[X]_{N_u} \cap T(R) \subseteq I_u$.

(2) This follows directly from (1).

(3) Assume that $I_u$ is of finite type, and let $J$ be a finitely generated ideal of $R$ such that
$I_u = J_u$. Then, by (2), $$IR[X]_{N_u} = I_uR[X]_{N_u} = J_uR[X]_{N_u} = JR[X]_{N_u}.$$
Thus, $IR[X]_{N_u}$ is finitely generated. Conversely, suppose that
$IR[X]_{N_u}$ is finitely generated. Then
$IR[X]_{N_u} = (f_1, \dots , f_n)R[X]_{N_u}$ for some $f_1, \dots , f_n \in IR[X]$.
Let $J = c(f_1) + \cdots + c(f_n)$. Then $J$ is
a finitely generated ideal of $R$ and $IR[X]_{N_u} = JR[X]_{N_u}$.
Hence, by (1), $$I_u = IR[X]_{N_u} \cap T(R) = JR[X]_{N_u} \cap T(R) = J_u.$$
Thus, $I_u$ is of finite type.
\end{proof}

A Krull ring does not satisfy Property(A) in genral. For example, let $D$ be a Krull domain, $S$ be a
total quotient ring that does not satisfy Property(A) (see, for example, \cite[Example 2 on page 174]{4}),
and $R=D \times S$. Then $R$ is a Krull ring but $R$ does not satisfy Property(A)
by Corollary \ref{direct product Property(A)}.
The following theorem gives a new characterization of Krull rings with Property(A)
via the ring $R[X]_{N_u}$.

\begin{theorem} \label{Krull ring Property(A)}~
The following statements are equivalent for a ring $R$ with Property(A).
\begin{enumerate}
\item $R$ is a Krull ring.

\item $R[X]_{N_u}$ is a Krull ring.

\item $R[X]_{N_u}$ is a regular $\pi$-ring.

\item $R[X]_{N_u}$ is a factorial ring.

\item $R[X]_{N_u}$ is a Dedekind ring.

\item $R[X]_{N_u}$ is a regular PIR.
\end{enumerate}
\end{theorem}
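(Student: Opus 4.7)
The plan is to prove the two substantive implications $(1) \Rightarrow (6)$ and $(2) \Rightarrow (1)$, and then obtain the remaining ones from the standard hierarchy of Krull ring subtypes (regular PIR $\Rightarrow$ Dedekind ring and factorial ring $\Rightarrow$ regular $\pi$-ring $\Rightarrow$ Krull ring), visible in the diagram preceding Theorem~\ref{Krull ring property}. The two main levers are Corollary~\ref{picard group}, which under Property(A) gives $\Pic(R[X]_{N_u}) = \Cl(R[X]_{N_u}) = \{0\}$, and Theorem~\ref{Krull ring property}(4), which says a Krull ring is a regular PIR iff $\rdim = 1$ and $\Cl = \{0\}$.

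For $(1) \Rightarrow (6)$, these reduce the task to showing $R[X]_{N_u}$ is a Krull ring with $\rdim = 1$. For the Krull property I apply Corollary~\ref{coro4.7}: a regular principal ideal of $R[X]_{N_u}$, via Property(A) and clearing denominators, may be written as $fR[X]_{N_u}$ with $f \in R[X]$ and $c(f)$ regular in $R$. Since $R$ is Krull, Corollary~\ref{coro4.7} gives $c(f)_u = (P_1^{e_1}\cdots P_k^{e_k})_u$ with $P_i \in X^1_r(R)$, and Lemma~\ref{u-closure}(2) propagates this to $c(f)R[X]_{N_u} = (P_1^{e_1}\cdots P_k^{e_k})R[X]_{N_u}$. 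A Dedekind--Mertens argument, cancelling the $u$-invertible ideal $c(f)$, then yields $fR[X]_{N_u} = c(f)R[X]_{N_u}$, producing the desired $u$-factorization into primes. For $\rdim = 1$, any regular prime $QR[X]_{N_u}$ lies under $P[X]_{N_u}$ for some $P \in u$-$\Max(R)$ by the proof of Proposition~\ref{maximal}(1); choosing $f \in Q$ regular forces $c(f) \subseteq P$ to be regular, hence $P \in X^1_r(R)$ by Corollary~\ref{coro4.7}. Minimality of $P$ among regular primes of $R$, combined with the rank-one DVR structure of $R_{[P]}$ in the defining family of the Krull ring $R$, should then force $Q = P[X]$, making $QR[X]_{N_u}$ maximal.

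For $(2) \Rightarrow (1)$, take a regular ideal $I$ of $R$. Then $IR[X]_{N_u}$ is a regular ideal of the Krull ring $R[X]_{N_u}$, hence $u$-invertible by Corollary~\ref{coro4.7}. Using $(IR[X]_{N_u})^{-1} = I^{-1}R[X]_{N_u}$ (a direct extension of the computation in the proof of Proposition~\ref{maximal}(2)), $u$-invertibility yields that $(II^{-1})R[X]_{N_u}$ lies in no maximal $u$-ideal of $R[X]_{N_u}$, so by Proposition~\ref{maximal}(1), $II^{-1} \nsubseteq P$ for every regular $P \in u$-$\Max(R)$. Since $II^{-1}$ is itself regular it cannot lie in any $u$-maximal ideal of $R$ at all, and hence $(II^{-1})_u = R$. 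Thus every regular ideal of $R$ is $u$-invertible, and $R$ is a Krull ring by Corollary~\ref{coro4.7}.

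The main obstacle lies in $(1) \Rightarrow (6)$: both the Gauss-type identity $fR[X]_{N_u} = c(f)R[X]_{N_u}$ (requiring Dedekind--Mertens together with the cancellability granted by $u$-invertibility of $c(f)$) and the dimension-theoretic step ruling out a prime of $R[X]$ strictly inside $P[X]$ that still contains a regular polynomial. Both are analogs of facts underlying Gilmer's theorem that $D[X]_{N_v}$ is a PID for a Krull domain $D$, transposed to the ring-with-zero-divisors setting via the $u$-operation and Kang's rank-one DVR description of the defining family of a Krull ring.
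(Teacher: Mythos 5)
Your $(2)\Rightarrow(1)$ argument is essentially the paper's own: $t$-/$u$-invertibility of $IR[X]_{N_u}$, the identity $(IR[X]_{N_u})^{-1}=I^{-1}R[X]_{N_u}$ from the proof of Proposition~\ref{maximal}(2), regularity of $II^{-1}$, and Proposition~\ref{prop1.2}(2). For $(1)\Rightarrow(6)$, however, you take a genuinely different route. The paper argues directly: given a regular ideal $A_{N_u}$ of $R[X]_{N_u}$, it shows the content ideal $I=\sum_{f\in A}c(f)$ is regular and $u$-invertible, hence $I_u=c(f)_u$ for a single concatenated polynomial $f\in A$, and then uses local principality of the $u$-invertible ideal $c(f)$ at each $P\in u$-$\Max(R)$ to get $A_{N_u}=c(f)R[X]_{N_u}=fR[X]_{N_u}$. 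You instead prove only that regular \emph{principal} ideals factor (so $R[X]_{N_u}$ is Krull by Corollary~\ref{coro4.7}), then invoke $\Cl(R[X]_{N_u})=\{0\}$ from Corollary~\ref{picard group} and Theorem~\ref{Krull ring property}(4), which additionally requires $\rdim(R[X]_{N_u})=1$. The paper's route buys a single self-contained computation; yours buys a cleaner conceptual decomposition at the price of an extra dimension-theoretic input (and of the harmless edge case $R=T(R)$, which Theorem~\ref{Krull ring property} excludes but where $R[X]_{N_u}$ is its own total quotient ring and the claim is vacuous).

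The one genuine gap is precisely that extra input. Your justification for $\rdim(R[X]_{N_u})=1$ --- ``minimality of $P$ among regular primes of $R$, combined with the rank-one DVR structure of $R_{[P]}$ \dots should then force $Q=P[X]$'' --- is not a proof, and the lever you reach for is the wrong one: minimality of $P$ in $R$ does not by itself exclude a regular prime $Q$ of $R[X]$ with $Q\cap R\subsetneq P$ and $(Q\cap R)[X]\subsetneq Q\subsetneq P[X]$ (an ``upper'' to a $Z$-prime), and the DVR $R_{[P]}$ lives in the base ring, not in $R[X]_{N_u}$. The step can be closed, but using the Krull property of $R[X]_{N_u}$ that you have just established rather than the structure of $R$: by Proposition~\ref{maximal}, a regular prime $Q_{N_u}$ lies in a maximal ideal $P[X]_{N_u}$ with $P$ regular, and $P[X]_{N_u}$ is then a regular maximal $t$-ideal of the Krull ring $R[X]_{N_u}$, hence belongs to $X^1_r(R[X]_{N_u})$ by the last assertion of Corollary~\ref{coro4.7}; since $Q_{N_u}$ is a regular prime contained in a \emph{minimal} regular prime, $Q_{N_u}=P[X]_{N_u}$. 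With that repair (and with the Gauss-type identity $fR[X]_{N_u}=c(f)R[X]_{N_u}$ spelled out via the same local-principality argument the paper uses), your route goes through.
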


\begin{proof}
(1) $\Rightarrow$ (6) Let $A_{N_u}$ be a regular ideal of $R[X]_{N_u}$ and $I = \sum_{f \in A} c(f)$
for an ideal $A$ of $R[X]$.
Then $I$ is regular because $A \subseteq IR[X]$, $A$ is regular, and $R$ satisfies Property(A).
Hence, $I$ is $u$-invertible by (1) and Corollary~\ref{coro4.7}.
Thus, $I_u = J_u$ for some finitely generated regular ideal $J$ of $R$ with $J \subseteq I$
by Proposition \ref{prop1.5}.
Since $J$ is finitely generated, there are some polynomials
$f_1, \ldots, f_m \in A$ so that $J \subseteq c(f_1) + \cdots + c(f_m)$.
Now, let $k_i = \deg (f_i)$ and
\[
  f = f_1 + f_2 X^{k_1 + 1} + \cdots + f_m X^{k_1 + k_2+ \cdots + k_{m-1} + (m-1)} \,,
\]
so $c(f) = c(f_1) + \cdots + c(f_m)$. Then, by Lemma~\ref{u-closure},
\[
  A_{N_u} \subseteq IR[X]_{N_u} = I_u R[X]_{N_u} = J_u R[X]_{N_u} = JR[X]_{N_u} \subseteq c(f) R[X]_{N_u} \,.
\]
Note that $c(f)$ is a regular ideal of $R$, so $c(f)$ is $u$-invertible by (1)
and Corollary \ref{coro4.7}. Hence, $c(f)R_P$ is a regular principal ideal
of $R_P$, which implies $$(fR[X]_{N_u})_{P[X]_{N_u}} = (c(f)R[X]_{N_u})_{P[X]_{N_u}},$$
for all $P \in u$-Max$(R)$. Thus, by Proposition~\ref{maximal}(1),
$c(f) R[X]_{N_u} = fR[X]_{N_u} \subseteq A_{N_u},$ and hence
$fR[X]_{N_u} = A_{N_u}$.
Therefore, $R[X]_{N_u}$ is a regular PIR.

(6) $\Rightarrow$ (5) $\Rightarrow$ (3) $\Rightarrow$ (2) Clear.

(6) $\Rightarrow$ (4) $\Rightarrow$ (2) Clear.

(2) $\Rightarrow$ (1) Let $I$ be a regular ideal of $R$.
Then $IR[X]_{N_u}$ is a regular ideal of $R[X]_{N_u}$, and hence $IR[X]_{N_u}$ is $t$-invertible by assumption.
Note that $(IR[X]_{N_u})^{-1} = I^{-1}R[X]_{N_u}$ by the proof of Proposition~\ref{maximal}(2), so
$$(IR[X]_{N_u})(IR[X]_{N_u})^{-1} = (II^{-1})R[X]_{N_u}.$$
Moreover, $II^{-1}$ is regular.
Hence, by Proposition~\ref{maximal}, $II^{-1} \nsubseteq P$ for all $P \in u$-$\Max (R)$.
Since the $u$-operation is of finite type, $(II^{-1})_u = R$ by Proposition \ref{prop1.2}(2).
Thus, $R$ is a Krull ring by Corollary \ref{coro4.7}.
\end{proof}

\begin{corollary}
If $R$ is the integral closure of a Noetherian ring, then
$R[X]_{N_u}$ is a regular PIR.
\end{corollary}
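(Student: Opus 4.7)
The plan is to deduce this corollary directly from Theorem~\ref{Krull ring Property(A)}, which asserts that if $R$ is a Krull ring with Property(A), then $R[X]_{N_u}$ is a regular PIR. Accordingly, letting $S$ denote the Noetherian ring with integral closure $R$, it suffices to establish two facts: (i)~$R$ is a Krull ring, and (ii)~$R$ satisfies Property(A). The Krull ring condition is immediate from Huckaba's theorem \cite[Corollary 2.3]{h76}, which was recalled in Section~\ref{2} and asserts that the integral closure of a Noetherian ring is a Krull ring.

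The main content of the argument is thus to verify Property(A) for $R$. First, since $S$ is Noetherian, $S$ has Property(A) (noted in Section~\ref{1}), and since Property(A) passes between a ring and its total quotient ring \cite[Corollary 2.6]{4}, $T(S)$ satisfies Property(A). I would then argue that $T(R) = T(S)$. Taking the integral closure inside $T(S)$, we have $S \subseteq R \subseteq T(S)$. Regular elements of $S$ are units of $T(S)$, hence remain regular in $R$, which yields $T(S) = S_{\reg(S)} \subseteq R_{\reg(R)} = T(R)$. Conversely, if $x \in R$ is regular in $R$ and $xy = 0$ for some $y \in T(S)$, writing $y = a/b$ with $a \in S$ and $b \in \reg(S)$ gives $xa = 0$ with $a \in S \subseteq R$, forcing $a = 0$ and hence $y = 0$; thus $x$ is a unit of $T(S)$, giving $T(R) \subseteq T(S)$. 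Consequently $T(R) = T(S)$ satisfies Property(A), and once more invoking \cite[Corollary 2.6]{4} yields Property(A) for $R$ itself.

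Combining the two facts, Theorem~\ref{Krull ring Property(A)} applies to $R$ and concludes that $R[X]_{N_u}$ is a regular PIR. There is no serious obstacle; the only mild technicality is the identification $T(R) = T(S)$, which is a well-known consequence of working inside a total quotient ring, but writing it out carefully is the only real step beyond quoting the previously established theorem.
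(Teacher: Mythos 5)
Your proposal is correct and follows essentially the same route as the paper: Huckaba's theorem gives that $R$ is a Krull ring, Property(A) is transferred from the Noetherian ring to its integral closure via \cite[Corollary 2.6]{4}, and Theorem~\ref{Krull ring Property(A)} finishes. The only cosmetic difference is that you verify $T(R)=T(S)$ explicitly and route Property(A) through the total quotient ring, whereas the paper quotes the overring form of the same corollary directly.
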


\begin{proof}
A Noetherian ring satisfies Property(A) \cite[Theorem 82]{8}
and each overring of a ring with Property(A) satisfies Property(A) \cite[Corollary 2.6]{4}.
Hence, $R$ satisfies Property(A).
Moreover, $R$ is a Krull ring \cite[Corollary 2.3]{h76}.
Thus, $R[X]_{N_u}$ is a regular PIR by Theorem~\ref{Krull ring Property(A)}.
\end{proof}

Let $N = \{ f \in R[X] \mid c(f) = R \}$ and $W = \{ f \in R[X] \mid f$ is a monic polynomial$\}$.
Then $N$ and $W$ are both multiplicative sets of $R[X]$ and $W \subseteq N \subseteq N_u$.
Hence, we have the following two types of quotient rings;
\[
  R(X) = R[X]_N \quad \mbox{ and } \quad R \langle X \rangle = R[X]_W \,.
\]

\noindent
For some divisibility properties of $R(X)$ and $R \langle X \rangle$,
see \cite[Section 18]{4}.

We next state an interesting characterization of general Krull rings,
which is motivated by \cite[Theorem 18.8]{4} and \cite[Theorem 2.2]{g70} that (i) $R$ is a general ZPI-ring
if and only if $R(X)$ is a PIR and (ii)
$R$ is a Krull domain if and only if $R[X]_{N_u}$ is a PID, respectively.

\begin{theorem} \label{SKR[X]}
The following statements are equivalent for a ring $R$.
\begin{enumerate}
\item $R$ is a general Krull ring.

\item $R[X]_{N_u}$ is a general Krull ring.

\item $R[X]_{N_u}$ is a $\pi$-ring.

\item $R[X]_{N_u}$ is a UFR.

\item $R[X]_{N_u}$ is a general ZPI-ring.

\item $R[X]_{N_u}$ is a PIR.

\item $R \langle X \rangle$ is a general Krull ring.

\item $R(X)$ is a general Krull ring.
\end{enumerate}
\end{theorem}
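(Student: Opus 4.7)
My strategy is to establish the cycle $(1)\Rightarrow(6)\Rightarrow(5),(4),(3),(2)\Rightarrow(1)$ together with the chain $(1)\Rightarrow(7)\Rightarrow(8)\Rightarrow(2)$. The implications $(6)\Rightarrow(5),(4),(3),(2)$ are immediate from the structure theorems (PIR/UFR/general ZPI-ring/$\pi$-ring are finite direct sums of SPRs with PIDs/UFDs/Dedekind domains/$\pi$-domains, respectively) together with the chains $\text{PID}\subseteq\text{UFD}\subseteq\pi\text{-domain}\subseteq\text{Krull domain}$ and $\text{PID}\subseteq\text{Dedekind}\subseteq\pi\text{-domain}$. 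Moreover $W\subseteq N\subseteq N_u$ exhibits $R(X)$ and $R[X]_{N_u}$ as successive localizations of $R\langle X\rangle$, so Corollary~\ref{local-s-Krull} delivers $(7)\Rightarrow(8)\Rightarrow(2)$.

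For $(1)\Rightarrow(6)$, I would write $R=R_1\times\cdots\times R_n$ with each $R_i$ a Krull domain or an SPR. Lemma~\ref{u-diect product}(2) and the product decomposition of contents give $N_u(R)=\prod_i N_u(R_i)$, whence
\[
R[X]_{N_u}\;\cong\;\prod_{i=1}^{n} R_i[X]_{N_u(R_i)}.
\]
For a Krull domain $R_i$, this factor is the classical $t$-Nagata ring, a PID by Theorem~\ref{Krull ring Property(A)}. For an SPR $R_i$ with nilpotent maximal ideal $M_i$, Proposition~\ref{regLocal}(4) yields $u=d$ on $R_i$, so the factor equals $R_i(X)$; since every maximal ideal of $R_i[X]$ contains the nilradical $M_i[X]$ and can be lifted from an irreducible in $(R_i/M_i)[X]$ to a monic polynomial in $N(R_i)$, the ring $R_i(X)$ is local with principal nilpotent maximal ideal, an SPR. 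Thus $R[X]_{N_u}$ is a finite direct sum of PIDs and SPRs, hence a PIR. The same scheme, replacing $N_u$ by $W$ and carrying out a degree-equalization to establish $R\langle X\rangle\cong\prod_i R_i\langle X\rangle$, handles $(1)\Rightarrow(7)$.

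The crux is $(2)\Rightarrow(1)$. Fix $a\in R$; Theorem~\ref{s-krull} and Remark~\ref{remark5.8}, applied to the general Krull ring $R[X]_{N_u}$, give
\[
aR[X]_{N_u}=(Q_1\cdots Q_m)_u
\]
with each $Q_i$ either a regular height-one prime or a minimal prime of $R[X]_{N_u}$. Theorem~\ref{thm5.2}(5), Corollary~\ref{coro4.7} and Corollary~\ref{u-maxiaml} identify each regular height-one $Q_i$ as a regular maximal $u$-ideal, so Proposition~\ref{maximal}(1) forces $Q_i=P_i[X]_{N_u}$ for some regular $P_i\in u$-$\Max(R)$; and the standard description of the minimal primes of $R[X]$ as extensions of those of $R$ does the same for each minimal $Q_i$. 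Hence $Q_1\cdots Q_m=(P_1\cdots P_m)R[X]_{N_u}$. A Dedekind--Mertens content argument, mirroring the proof of Proposition~\ref{maximal}(2), shows that the extension of a $u$-ideal of $R$ remains $u$-closed in $R[X]_{N_u}$; combined with Lemma~\ref{u-closure}(2) this yields
\[
aR[X]_{N_u}=\bigl((P_1\cdots P_m)R[X]_{N_u}\bigr)_u=(P_1\cdots P_m)_u R[X]_{N_u}.
\]
Contracting via Lemma~\ref{u-closure}(1) gives $(aR)_u=(P_1\cdots P_m)_u$, so Theorem~\ref{s-krull} yields that $R$ is a general Krull ring.

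The main obstacle is justifying the commutation $\bigl(IR[X]_{N_u}\bigr)_u=I_u R[X]_{N_u}$ for $u$-ideals $I$ of $R$, that is, that extension from $R$ to $R[X]_{N_u}$ preserves $u$-closedness. This is the $u$-operation analog of the identity $\bigl(IR[X]_{N_u}\bigr)_v=I_v R[X]_{N_u}$ for regular fractional ideals established inside the proof of Proposition~\ref{maximal}(2); the same Dedekind--Mertens content manipulation applies, exploiting that $u$ coincides with $v$ on regular ideals by Proposition~\ref{u-w-w'-oper}(1).
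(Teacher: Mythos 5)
Your architecture (the cycle $(1)\Rightarrow(6)\Rightarrow(5),(4),(3),(2)\Rightarrow(1)$ plus $(1)\Rightarrow(7)\Rightarrow(8)\Rightarrow(2)$), your direct-sum decomposition for $(1)\Rightarrow(6)$ and $(1)\Rightarrow(7)$, and your use of Corollary~\ref{local-s-Krull} for $(7)\Rightarrow(8)\Rightarrow(2)$ all coincide with the paper. The problem is $(2)\Rightarrow(1)$, where you diverge from the paper and where your argument has a genuine gap, concentrated in exactly the step you flag as ``the crux.''

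First, your identification $Q_i=P_i[X]_{N_u}$ for the regular height-one factors is not yet justified: Proposition~\ref{maximal}(1) describes the \emph{maximal ideals} of $R[X]_{N_u}$, but a regular height-one prime of a general Krull ring is only known to be a maximal \emph{$u$-ideal} (Theorem~\ref{thm5.2}(6)), not a maximal ideal. Showing that such a $Q_i$ actually equals some $P[X]_{N_u}$ amounts to showing $\dim(R[X]_{N_u})\le 1$ above the regular primes, and one also needs Property(A) for $R$ to rule out a regular $Q_i$ sitting inside $P[X]_{N_u}$ for a semiregular $Z$-ideal $P$. Second, the commutation $\bigl((P_1\cdots P_m)R[X]_{N_u}\bigr)_u=(P_1\cdots P_m)_uR[X]_{N_u}$ is not established. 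Proposition~\ref{maximal}(2) only gives $(JR[X]_{N_u})^{-1}=J^{-1}R[X]_{N_u}$ for \emph{regular} fractional $J$, hence controls $v$ and $t$, not $u$; and your cited justification is a misreading --- Proposition~\ref{u-w-w'-oper}(1) says $I_u=I_w=I_{w'}$ on regular ideals, not $I_u=I_v$ (that identity holds in Krull rings by Corollary~\ref{coro4.7}, which you cannot yet invoke for $R$). Worse, when $a\in Z(R)$ the product $P_1\cdots P_m$ contains minimal prime $Z$-ideal factors and is not regular, so even a correct regular-ideal lemma would not apply; this is precisely the new case beyond the Krull-ring setting that the theorem is supposed to handle.

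The paper sidesteps all of this by first proving that every nonminimal prime of $R[X]_{N_u}$ is a regular height-one maximal ideal of the form $P[X]_{N_u}$ (using Proposition~\ref{maximal}, Corollary~\ref{coro4.7}, and Theorem~\ref{thm5.2}(5), after establishing $\dim(T(R))=0$ and Property(A) for $R$). Hence $\dim(R[X]_{N_u})=1$ and, by Theorem~\ref{chracter-cg}(2), $R[X]_{N_u}$ is a general ZPI-ring, so $aR[X]_{N_u}$ is an \emph{honest} product $(P_1\cdots P_n)R[X]_{N_u}$ of prime ideals with no closure to commute; contracting with Lemma~\ref{u-closure}(1) then gives $(aR)_u=(P_1\cdots P_n)_u$ immediately. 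If you want to keep your route, you should either prove the dimension-one statement first (at which point you are reproducing the paper's argument) or supply a genuine proof that ideals of $R[X]_{N_u}$ extended from $u$-ideals of $R$ are $u$-closed, including the non-regular case.
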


\begin{proof}
(1) $\Rightarrow$ (6) Since $R$ is a general Krull ring,
$R = R_1 \times \cdots \times R_n$ for some
Krull domains and SPRs $R_1, \dots, R_n$.
Let $N_{u_i} = \{f \in R_i[X] \mid c(f)_u = R_i\}$ for $i =1, \dots , n$.
Then, by Lemma~\ref{u-diect product},
\[
  R[X]_{N_u} = R_1 [X]_{N_{u_1}} \times \cdots \times R_n [X]_{N_{u_n}}  \,
\]
and $R_i [X]_{N_{u_i}}$ is a PID (when $R_i$ is a Krull domain) \cite[Theorem 2.2]{g70} or an SPR
(when $R_i$ is an SPR) \cite[Lemma 18.7]{4}.
Thus, $R[X]_{N_u}$ is a PIR.

(6) $\Rightarrow$ (4) $\Rightarrow$ (3) $\Rightarrow$ (2) Clear.

(6) $\Rightarrow$ (5) $\Rightarrow$ (3) Clear.

(2) $\Rightarrow$ (1) A general Krull ring is a Krull ring and $R[X]_{N_u} \cap T(R) = R$,
so $R$ is integrally closed and there exists an irredundant primary decomposition of $(0)$ in $R$,
say, $(0) = Q_1 \cap \cdots \cap Q_n$,
such that each $\sqrt{Q_i}$ is a minimal prime ideal of $R$.
Hence, $R \cong R/Q_1 \times \cdots \times R/Q_n.$ Moreover,
$\dim \big( T(R/Q_i) \big) = 0$ because $Q_i$ is primary for $i = 1, \dots, n$,
so $\dim (T(R)) = 0$ by Lemma~\ref{direct product}.
Hence, $R$ satisfies Property(A) \cite[Corollaries 2.6 and 2.12]{4}.

Now, let $Q$ be a prime ideal of $R[X]_{N_u}$.
If $Q$ is minimal, then $Q \cap R$ is a minimal prime ideal of $R$ and $Q = (Q \cap R)[X]_{N_u}$.
Next, assume that $Q$ is not minimal.
Then $Q$ is regular, $Q \subseteq P[X]_{N_u}$ for some $P \in u$-$\Max (R)$ by Proposition~\ref{maximal},
and $P$ is regular because $R$ satisfies Property(A).
Note that $R[X]_{N_u}$ is general Krull by assumption
and $P[X]_{N_u} \in X^1_r(R[X]_{N_u})$ by Proposition \ref{maximal} and Corollary \ref{coro4.7},
hence ht$(P[X]_{N_u}) =1$ by Theorem \ref{thm5.2}(5).
This implies that $Q = P[X]_{N_u}$ and $\h Q = 1$.
In particular, $R[X]_{N_u}$ is a general ZPI-ring by Theorem \ref{chracter-cg}(2).
Let $a \in R$.
Then $aR[X]_{N_u}$ is a finite direct product of prime ideals, and hence
\[
  aR[X]_{N_u} = \big( P_1 [X]_{N_u} \big) \cdots \big( P_n [X]_{N_u} \big) = (P_1 \cdots P_n)R[X]_{N_u}
\]
for some prime ideals $P_1, \ldots, P_n$ of $R$. Thus, by Lemma~\ref{u-closure},
\[
  (aR)_u = \big( aR[X]_{N_u} \big) \cap T(R) = (P_1 \cdots P_n)R[X]_{N_u} \cap T(R) = (P_1 \cdots P_n)_u \,.
\]
Hence, $R$ is a general Krull ring by Theorem~\ref{s-krull}.

(1) $\Rightarrow$ (7) If $R$ is a general Krull ring, then $R = R_1 \times \cdots \times R_n$ is a finite direct product of Krull domains and SPRs.
Note that $R[X] = R_1 [X] \times \cdots \times R_n [X]$, so $R \langle X \rangle = R_1 \langle X \rangle \times \cdots \times R_n \langle X \rangle$.
If $R_i$ is a Krull domain, then $R_i \langle X \rangle$ is a Krull domain \cite[Theorem 5.2]{aam85}.
If $R_i$ is an SPR, then $R_i \langle X \rangle$ is an SPR \cite[Lemma 18.7]{4}.
Thus, $R \langle X \rangle$ is a general Krull ring.

(7) $\Rightarrow$ (8) Note that $W \subseteq N$, so $R(X) = R[X]_N = (R[X]_W)_N = (R \langle X \rangle)_N$.
Since $R \langle X \rangle$ is a general Krull ring, the result follows by Corollary~\ref{local-s-Krull}.

(8) $\Rightarrow$ (2) Note that $N \subseteq N_u$, so $R[X]_{N_u} = (R[X]_N)_{N_u} = (R(X))_{N_u}$.
Thus, the assertion follows directly from Corollary~\ref{local-s-Krull}.
\end{proof}

As in \cite{wm97}, we say that a $w$-Noetherian domain is a {\it strong Mori domain}
(an SM domain). Hence, a $u$-Noetherian domain is just an SM domain.
It is known that an integral domain $R$ is an SM domain if and only if $R[X]$ is an SM domain,
if and only if $R[X]_{N_u}$ is a Noetherian domain \cite[Theorem 2.2]{c05}.
A PIR is a Noetherian ring, so if $R$ is a general Krull ring, then $R[X]_{N_u}$ is a Noetherian ring by Theorem \ref{SKR[X]}.
We next study when $R[X]$ and $R[X]_{N_u}$ are Noetherian.

\begin{theorem} \label{u-Noetherian}
The following statements are equivalent for a ring $R$.
\begin{enumerate}
\item $R$ is a $u$-Noetherian ring.

\item $R$ is a $w$-Noetherian ring.

\item $R[X]$ is a $u$-Noetherian ring.

\item $R[X]$ is a $w$-Noetherian ring.

\item $R[X]_{N_u}$ is a Noetherian ring.

\item $R[X]_{N_w}$ is a Noetherian ring.
\end{enumerate}
In this case, $u = w$ on $R$.
\end{theorem}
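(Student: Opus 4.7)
The plan is to prove the six equivalences by pinning down the two key reductions $(5) \Rightarrow (1)$ and $(1) \Rightarrow u = w$ on $R$, and then invoking \cite[Theorem 6.6.8]{wk16} together with the identification $N_u = N_w$ to fold in the $w$-Noetherian statements. First, I would show $(5) \Rightarrow (1)$: assuming $R[X]_{N_u}$ is Noetherian, every extended ideal $IR[X]_{N_u}$ is finitely generated, and Lemma \ref{u-closure}(3) then forces $I_u = I$ to be of finite type for every $u$-ideal $I$ of $R$.

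Next, for $(1) \Rightarrow (2)$ together with the final claim $u = w$ on $R$, I would use Corollary \ref{T(R)_Noetherian} to deduce that $T(R)$ is Noetherian, hence $T(R)$ satisfies Property(A) and so does $R$ by \cite[Corollary 2.6]{4}. Proposition \ref{u-w-w'-oper}(2) then forces $u = w$ on $R$, which both proves the final assertion and gives $(1) \Leftrightarrow (2)$ since the $u$- and $w$-ideals of $R$ now coincide. The equivalence $(2) \Leftrightarrow (4) \Leftrightarrow (6)$ is the content of \cite[Theorem 6.6.8]{wk16}, and $(3) \Leftrightarrow (4)$ follows from the fact that $R[X]$ always satisfies Property(A) by \cite[Corollary 2.9]{4}, so $u = w$ on $R[X]$ by Proposition \ref{u-w-w'-oper}(2). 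To close the cycle I would prove $(1) \Rightarrow (5)$: under $(1)$ we have $u = w$ on $R$, hence $N_u = N_w$ and $R[X]_{N_u} = R[X]_{N_w}$, and the chain $(1) \Rightarrow (2) \Rightarrow (6)$ then yields $R[X]_{N_u}$ Noetherian.

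The main obstacle will be the reduction $(1) \Rightarrow u = w$ on $R$; without it the six conditions naturally split into the $u$-group $\{(1),(3),(5)\}$ and the $w$-group $\{(2),(4),(6)\}$ with no evident direct bridge. The crucial input is Corollary \ref{T(R)_Noetherian} combined with the descent of Property(A) from $T(R)$ to $R$ via \cite[Corollary 2.6]{4}: once Property(A) on $R$ is secured, Proposition \ref{u-w-w'-oper}(2) forces $u = w$, the multiplicative sets $N_u$ and $N_w$ become identical, and the identification $R[X]_{N_u} = R[X]_{N_w}$ collapses the two groups into a single equivalence class, at which point all six conditions are pairwise equivalent.
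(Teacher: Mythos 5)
Your proposal correctly handles $(5)\Rightarrow(1)$ via Lemma \ref{u-closure}(3), $(1)\Rightarrow(5)$, $(3)\Leftrightarrow(4)$ via Property(A) of $R[X]$, and the citation of \cite[Theorem 6.6.8]{wk16} for $(2)\Leftrightarrow(4)\Leftrightarrow(6)$ — all of which matches the paper. Your route to ``$u=w$ on $R$'' under hypothesis (1), namely Corollary \ref{T(R)_Noetherian} giving $T(R)$ Noetherian, hence Property(A) for $T(R)$ and then for $R$ by \cite[Corollary 2.6]{4}, and finally $u=w$ by Proposition \ref{u-w-w'-oper}(2), is valid and is a genuinely different (and self-contained) derivation from the paper's.

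However, there is a genuine gap: you never establish any implication from the $w$-group $\{(2),(4),(6)\}$ back to the $u$-group $\{(1),(5)\}$. Your claim that the argument ``gives $(1)\Leftrightarrow(2)$ since the $u$- and $w$-ideals of $R$ now coincide'' is circular for the direction $(2)\Rightarrow(1)$: the coincidence $u=w$ was derived \emph{under hypothesis} (1), via Corollary \ref{T(R)_Noetherian}, whose hypothesis is that $R$ is $u$-Noetherian. Under (2) alone you have no way to secure Property(A) on $R$ (note that $u\le w$ only yields that $w$-ideals form a subfamily of the $u$-ideals, so ACC on $w$-ideals does not transfer to $u$-ideals, and the $w$-analogue of Proposition \ref{regLocal}(1) fails because $\GV$-ideals need not be regular — see Example \ref{ex2.6}). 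The paper closes this direction by citing \cite[Corollary 6.8.24]{wk16}, which says that a $w$-Noetherian ring satisfies Property(A); with that input, $u=w$ follows under hypothesis (2) and the two groups genuinely collapse. Without such an input, your implication graph has the strongly connected components $\{(1),(5)\}$ and $\{(2),(3),(4),(6)\}$ joined only by the one-way edge $(1)\Rightarrow(2)$, so the six statements are not shown to be equivalent.
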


\begin{proof}
(1) $\Rightarrow$ (2) This follows because $u \le w$ by Proposition \ref{u-w-w'-oper}.

(2) $\Rightarrow$ (1) If $R$ is $w$-Noetherian, then $R$ satisfies Property(A) \cite[Corollary 6.8.24]{wk16},
and hence $u = w$ on $R$ by Proposition~\ref{u-w-w'-oper}.
Thus, $R$ is $u$-Noetherian.

(2) $\Leftrightarrow$ (4) $\Leftrightarrow$ (6) \cite[Theorem 6.8.8]{wk16}.

(3) $\Leftrightarrow$ (4) This follows directly from the equivalence of (1) and (2) above.

(4) $\Rightarrow$ (5) If $R[X]$ is $w$-Noetherian, then $R$ is $w$-Noetherian
by the equivalence of (2) and (4) above,
whence $u = w$ on $R$ and $N_u = N_w$. Thus, $R[X]_{N_u}$ is Noetherian
by the equivalence of (4) and (6).

(5) $\Rightarrow$ (1) Let $I$ be a $u$-ideal of $R$.
Then $IR[X]_{N_u}$ is finitely generated by assumption.
Thus, $I$ is of finite type by Lemma~\ref{u-closure}(3).
\end{proof}

A $w$-Noetherian ring is a $u$-Noetherian ring by Theorem \ref{u-Noetherian}.
Hence, a finite direct product of $u$-Noetherian rings is a $u$-Noetherian ring
\cite[Proposition 4.8]{ywzc11}. The next corollary shows that the converse of \cite[Proposition 4.8]{ywzc11}
is also true.

\begin{corollary} \label{coro6.10}
Let $R= R_1 \times R_2$ be a direct product of rings $R_1$ and $R_2$.
Then $R$ is a $u$-Noetherian ring if and only if $R_i$ is a $u$-Noetherian ring for $i=1,2$.
\end{corollary}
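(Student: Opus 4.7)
The plan is to deduce the corollary directly from the compatibility of the $u$-operation with finite direct sums that was established in Lemma~\ref{u-diect product}, together with the basic decomposition of ideals of a product ring given in Lemma~\ref{direct sum}. Alternatively, one can invoke Theorem~\ref{u-Noetherian} and reduce everything to Noetherianness of a polynomial Nagata ring, where the classical fact ``a product of two rings is Noetherian iff each factor is'' does the work.

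For the $(\Leftarrow)$ direction I would take an ascending chain $I_1 \subseteq I_2 \subseteq \cdots$ of integral $u$-ideals of $R = R_1 \times R_2$. By Lemma~\ref{direct sum}(2), each $I_n$ decomposes as $I_n = A_n \times B_n$ with $A_n$ an ideal of $R_1$ and $B_n$ an ideal of $R_2$, and Lemma~\ref{u-diect product}(2) gives $(A_n)_u \times (B_n)_u = (I_n)_u = I_n = A_n \times B_n$, so each $A_n$ is a $u$-ideal of $R_1$ and each $B_n$ is a $u$-ideal of $R_2$. Containment in $R$ of course forces $A_n \subseteq A_{n+1}$ and $B_n \subseteq B_{n+1}$, so both chains stabilize by the $u$-Noetherian hypothesis on $R_1$ and $R_2$, and hence so does $\{I_n\}$.

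For the $(\Rightarrow)$ direction, given a chain $A_1 \subseteq A_2 \subseteq \cdots$ of integral $u$-ideals of $R_1$, I would lift it to $R$ by setting $I_n = A_n \times R_2$. Since $(R_2)_u = R_2$ by Proposition~\ref{u-oper}(5) applied inside $R_2$, Lemma~\ref{u-diect product}(2) shows $(I_n)_u = (A_n)_u \times (R_2)_u = A_n \times R_2 = I_n$, so each $I_n$ is a $u$-ideal of $R$. The chain $\{I_n\}$ therefore stabilizes in $R$, and projecting onto the first coordinate stabilizes $\{A_n\}$. The symmetric argument handles $R_2$.

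I do not expect a real obstacle here: once Lemma~\ref{u-diect product} is available, the only thing to verify is that the ``filling in with $R_2$'' trick preserves the $u$-ideal property, which is immediate from the componentwise formula $J_u = (J_1)_u \times (J_2)_u$. As a sanity check one can re-derive the same statement from Theorem~\ref{u-Noetherian} and the product decomposition $R[X]_{N_u} \cong R_1[X]_{N_{u_1}} \times R_2[X]_{N_{u_2}}$ used in the proof of Theorem~\ref{SKR[X]}, since a finite direct product of commutative rings is Noetherian exactly when each factor is.
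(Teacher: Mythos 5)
Your proposal is correct, and your primary argument takes a genuinely different route from the paper's. The paper proves this corollary by passing to the Nagata ring: it observes that $R[X]_{N_u} \cong R_1[X]_{N_{u_1}} \times R_2[X]_{N_{u_2}}$ (as in the proof of Theorem~\ref{SKR[X]}), invokes Theorem~\ref{u-Noetherian} to translate $u$-Noetherianness of each ring into Noetherianness of the corresponding Nagata ring, and then uses the classical fact that a finite product of rings is Noetherian iff each factor is --- exactly the ``sanity check'' you sketch in your last paragraph. Your main argument instead works directly with ascending chains of integral $u$-ideals, using only Lemma~\ref{direct sum}(2) to decompose ideals of $R_1 \times R_2$ and Lemma~\ref{u-diect product}(2) to see that $I = A \times B$ is a $u$-ideal of $R$ precisely when $A$ and $B$ are $u$-ideals of $R_1$ and $R_2$; the lifting $A_n \mapsto A_n \times R_2$ for the forward direction is handled correctly since $(R_2)_u = R_2$. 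This direct approach is more elementary and self-contained --- it does not depend on Theorem~\ref{u-Noetherian}, which itself rests on several results about $w$-Noetherian rings imported from the literature --- whereas the paper's proof is shorter on the page because it leans on machinery already established. Both are valid; your chain argument could even be placed earlier in the paper, immediately after Lemma~\ref{u-diect product}, since it needs nothing from Section~\ref{5}.
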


\begin{proof}
Let $N_{u_i} = \{f \in R_i[X] \mid c(f)_u = R_i\}$ for $i=1, 2$.
Then $$R[X]_{N_u} = R_1[X]_{N_{u_1}} \times R_2[X]_{N_{u_2}}$$ (see the proof of Theorem \ref{SKR[X]}). Thus,
$R$ is $u$-Noetherian if and only if $R[X]_{N_u}$ is Noetherian by Theorem \ref{u-Noetherian},
if and only if each $R_i[X]_{N_{u_i}}$ is Noetherian, if and only if
each $R_i$ is $u$-Noetherian.
\end{proof}

The following corollary is an analog of Theorem \ref{SKR[X]}, which characterizes
general Krull rings via the $w$-Nagata ring $R[X]_{N_w}$.

\begin{corollary} \label{coro6.11}
The following statements are equivalent for a ring $R$.
\begin{enumerate}
\item $R$ is a general Krull ring.

\item $R[X]_{N_w}$ is a general Krull ring.

\item $R[X]_{N_w}$ is a $\pi$-ring.

\item $R[X]_{N_w}$ is a UFR.

\item $R[X]_{N_w}$ is a general ZPI-ring.

\item $R[X]_{N_w}$ is a PIR.
\end{enumerate}
\end{corollary}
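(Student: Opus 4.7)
The plan is to mirror the structure of Theorem \ref{SKR[X]} and reduce everything to the $N_u$-version. The key pivot is that whenever $R$ satisfies Property(A) we have $u = w$ on $R$ by Proposition~\ref{u-w-w'-oper}(2), hence $N_u = N_w$, so $R[X]_{N_w} = R[X]_{N_u}$. Once we establish Property(A) on $R$, Theorem~\ref{SKR[X]} takes care of all remaining equivalences.

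For (1) $\Rightarrow$ (6): A general Krull ring satisfies Property(A) by Corollary~\ref{direct sum Property(A)} (applied to the decomposition into Krull domains and SPRs), so $u = w$ on $R$ and $N_u = N_w$. The implication (1) $\Rightarrow$ (6) of Theorem~\ref{SKR[X]} then shows $R[X]_{N_w} = R[X]_{N_u}$ is a PIR. The chain (6) $\Rightarrow$ (5) $\Rightarrow$ (3), (6) $\Rightarrow$ (4) $\Rightarrow$ (3), and (3) $\Rightarrow$ (2) is immediate from the structure theorems: a PIR is both a general ZPI-ring and a UFR, each of these is a $\pi$-ring, and a $\pi$-ring is a general Krull ring.

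The main step is (2) $\Rightarrow$ (1). Assume $R[X]_{N_w}$ is a general Krull ring. By Corollary~\ref{T(R) PIR}(2), $R[X]_{N_w}$ is $u$-Noetherian. Now every ideal of $R[X]_{N_w}$ is a $w$-ideal, and since $u \leq w$ (Proposition~\ref{u-w-w'-oper}(1)) every ideal is also a $u$-ideal. Hence $u$-Noetherian coincides with Noetherian on $R[X]_{N_w}$, so $R[X]_{N_w}$ is Noetherian. By the equivalence (2) $\Leftrightarrow$ (6) of Theorem~\ref{u-Noetherian}, $R$ is $w$-Noetherian, and therefore satisfies Property(A). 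Consequently $u = w$ on $R$, $N_u = N_w$, and $R[X]_{N_u} = R[X]_{N_w}$ is a general Krull ring. Theorem~\ref{SKR[X]} then yields that $R$ is a general Krull ring.

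The only real obstacle is the step in (2) $\Rightarrow$ (1) that upgrades ``general Krull'' on $R[X]_{N_w}$ to ``Noetherian'' on $R[X]_{N_w}$, since a general Krull ring is in general only $u$-Noetherian, not Noetherian. This upgrade is special to Nagata-type rings and hinges on the fact that every ideal of $R[X]_{N_w}$ is a $w$-ideal; without this, the passage from the $w$-Nagata formalism back to $R$ via Theorem~\ref{u-Noetherian} would not go through.
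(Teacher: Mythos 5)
Your proposal is correct and follows essentially the same route as the paper: reduce to Theorem \ref{SKR[X]} by showing $u=w$ on $R$ (you get this from Property(A) in (1)$\Rightarrow$(6) where the paper invokes $u$-Noetherianness via Theorem \ref{u-Noetherian}, an immaterial difference), and for (2)$\Rightarrow$(1) use that every ideal of $R[X]_{N_w}$ is a $w$-ideal, hence a $u$-ideal, so general Krull forces Noetherian, which pulls back to $w$-Noetherianness of $R$ and hence $N_u = N_w$. No gaps.
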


\begin{proof}
(1) $\Rightarrow$ (6) By Corollary~\ref{T(R) PIR},
$R$ is a $u$-Noetherian ring, and hence $u = w$ on $R$ by Theorem~\ref{u-Noetherian}.
Thus, $R[X]_{N_w}$ is a PIR by Theorem~\ref{SKR[X]}.

(6) $\Rightarrow$ (5) $\Rightarrow$ (2) Clear.

(6) $\Rightarrow$ (4) $\Rightarrow$ (3) $\Rightarrow$ (2) Clear.

(2) $\Rightarrow$ (1) Note that each ideal of $R[X]_{N_w}$ is a $w$-ideal \cite[Theorem 6.6.18]{wk16},
so $d = w = u$ on $R[X]_{N_w}$.
Hence, by Corollary~\ref{T(R) PIR}, $R[X]_{N_w}$ is Noetherian,
and thus $u = w$ on $R$ by Theorem~\ref{u-Noetherian}.
Thus, $R$ is a general Krull ring by Theorem~\ref{SKR[X]}.
\end{proof}

It is known that an integral domain $R$ is a strong Mori domain
if and only if $R_P$ is Noetherian for all $P \in w$-$\Max (R)$
and each nonzero element of $R$ is contained in only finitely
 many maximal $w$-ideals of $R$ \cite[Theorem 1.9]{wm99}.

\begin{proposition}
A ring $R$ is a $u$-Noetherian ring if and only if $R$ satisfies the following three properties;
\begin{enumerate}
\item [(i)] $R_P$ is a Noetherian ring for all $P \in u$-$\Max (R)$,
\item [(ii)] every prime $Z$-ideal of $R$ is of finite type, and
\item [(iii)] each regular element of $R$ is contained in only finitely many maximal $u$-ideals of $R$.
\end{enumerate}
\end{proposition}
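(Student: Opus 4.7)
For the forward direction, I would argue each of (i), (ii), (iii) separately. For (i), any ideal $K$ of $R_P$ is of the form $IR_P$ with $I = K \cap R$; the $u$-Noetherian hypothesis gives $I_u = J_u$ for some finitely generated $J \subseteq I$, and Lemma~\ref{local-properties}(1) then yields $K = IR_P = I_u R_P = J_u R_P = JR_P$, so $R_P$ is Noetherian. Property (ii) is immediate since every prime $Z$-ideal is a $u$-ideal by Proposition~\ref{regLocal}(2), hence of finite type by hypothesis. The delicate point is (iii): for $a \in \reg (R)$, my plan is to transfer to the Noetherian Nagata ring $R[X]_{N_u}$ (Theorem~\ref{u-Noetherian}); in this Noetherian ring, $a R[X]_{N_u}$ has only finitely many associated primes, and via the correspondence $\Max(R[X]_{N_u}) = \{P[X]_{N_u} \mid P \in u\text{-}\Max(R)\}$ from Proposition~\ref{maximal}(1), combined with the finite $u$-primary decomposition of the $u$-ideal $aR$ available in the $u$-Noetherian ring $R$, one obtains the finiteness of $\{P \in u\text{-}\Max(R) \mid a \in P\}$. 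This step will be the main obstacle of the proof.

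For the backward direction, my plan is to show that (i)+(ii)+(iii) imply every prime $u$-ideal of $R$ is of finite type, and then to invoke a $u$-operation analogue of Cohen's theorem to conclude that every $u$-ideal of $R$ is of finite type. The Cohen-type reduction proceeds by Zorn's lemma on the family $\mathcal{S}$ of $u$-ideals not of finite type: chains in $\mathcal{S}$ admit upper bounds via $u$-closures of their directed unions (using that $u$ is of finite type, so that if the limit were of finite type, finitely many generators would already fit inside some member of the chain). A maximal element $I \in \mathcal{S}$ must be prime, for if $ab \in I$ with $a, b \notin I$, then by maximality both $(I+aR)_u$ and the colon ideal $(I :_R a)$ (which is a $u$-ideal thanks to the stability of $u$ from Corollary~\ref{u-stable}) are of finite type; writing $(I+aR)_u = J_u$ with $J = (i_1 + c_1 a, \dots, i_n + c_n a)R \subseteq I + aR$ and $(I :_R a) = (d_1, \dots, d_m)_u$, a direct $u$-closure manipulation (using Lemma~\ref{star} and the fact that products of elements of $\rGV(R)$ stay in $\rGV(R)$) then yields $I = (i_1, \dots, i_n, ad_1, \dots, ad_m)_u$, contradicting $I \in \mathcal{S}$.

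Finally, assuming (i)+(ii)+(iii), I would verify that every prime $u$-ideal $P$ of $R$ is of finite type. If $P \subseteq Z(R)$, this is immediate from (ii). If $P$ is regular, pick $a \in \reg (P)$; by (iii) the set $\mathcal{F} = \{M \in u\text{-}\Max(R) \mid a \in M\}$ is finite. For each $M \in \mathcal{F}$ containing $P$, the Noetherianity of $R_M$ from (i) provides finitely many $b_{M,j} \in P$ with $PR_M = (a, b_{M,j})R_M$. For each $M \in \mathcal{F}$ with $P \nsubseteq M$, prime avoidance applied to the finite union of such $M$'s supplies a single $p \in P$ lying outside every such $M$. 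The finitely generated ideal $J := aR + \sum_{M,j} b_{M,j} R + pR \subseteq P$ then satisfies $JR_N = PR_N$ for every $N \in u\text{-}\Max(R)$: if $a \notin N$ then $a$ is a unit in $R_N$ forcing $JR_N = R_N = PR_N$; if $a \in N$ and $P \subseteq N$ then $N \in \mathcal{F}$ and equality holds by construction; if $a \in N$ but $P \nsubseteq N$ then $p \notin N$ makes $p$ a unit in $R_N$, giving $JR_N = R_N = PR_N$. Lemma~\ref{local-properties}(2) then concludes $P = J_u$, completing the proof.
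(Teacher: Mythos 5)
There is a genuine gap in item (iii) of the forward direction, precisely at the step you flagged as ``the main obstacle.'' Knowing that $R[X]_{N_u}$ is Noetherian and that $aR[X]_{N_u}$ has only finitely many associated primes does not by itself bound the number of maximal ideals of $R[X]_{N_u}$ containing $a$: in a Noetherian ring a regular element can lie in infinitely many maximal ideals (e.g.\ $x$ in $k[x,y]$), and a maximal ideal containing $a$ need not be associated to $aR[X]_{N_u}$. The missing ingredient is that every maximal ideal of $R[X]_{N_u}$ containing the regular element $a$ is $P[X]_{N_u}$ for a \emph{regular} $P\in u$-$\Max(R)$, hence is a divisorial ideal of the Noetherian ring $R[X]_{N_u}$ by Proposition~\ref{maximal}(2); picking $x\in (P[X]_{N_u})^{-1}\setminus R[X]_{N_u}$, the element $ax$ lies in $R[X]_{N_u}\setminus aR[X]_{N_u}$ and $(aR[X]_{N_u}: ax)=P[X]_{N_u}$, so these maximal ideals really are associated primes of $aR[X]_{N_u}$ and finiteness follows. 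Your appeal to a ``finite $u$-primary decomposition of $aR$'' does not close the gap either, since a prime occurring in such a decomposition could a priori lie in infinitely many maximal $u$-ideals. The paper avoids the Nagata ring altogether here: if infinitely many distinct $P_1,P_2,\dots\in u$-$\Max(R)$ contained $a$, then $I_k=P_1\cap\cdots\cap P_k$ would be strictly decreasing divisorial ideals, so $a(I_k)^{-1}$ would be a strictly increasing chain of integral $u$-ideals, contradicting $u$-Noetherianity.

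The rest of your argument is sound. Items (i) and (ii) of the forward direction match the paper (Lemma~\ref{local-properties}(1) and Proposition~\ref{regLocal}(2)). Your backward direction is correct but takes a genuinely different route: you prove a Cohen-type theorem for the $u$-operation directly (Zorn's lemma on $u$-ideals not of finite type, with the colon ideal $(I:_R a)$ handled via the stability of $u$ from Corollary~\ref{u-stable}), and then verify that regular prime $u$-ideals are of finite type by an explicit patching argument ending with Lemma~\ref{local-properties}(2). The paper instead first shows $T(R)$ is Noetherian from (ii), deduces Property(A) and $u=w$, invokes Wang--Kim's Cohen-type theorem for the $w$-operation, and finishes inside the locally Noetherian ring $R[X]_{N_u}$ using finite character. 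Your version is more self-contained and does not need $u=w$; the paper's is shorter because it outsources the Cohen step.
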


\begin{proof}
$(\Rightarrow)$ By Lemma \ref{local-properties}(1), (i) is satisfied. (ii) is clear.
For $a \in \reg(R)$, let $\Lambda = \{P \in u\textnormal{-} \hspace{-2pt}\Max(R) \mid a \in P\}$.
If $\Lambda$ is infinite, then we can choose a countably infinite subset $\{P_i \mid i =1, 2, \dots \}$ of $\Lambda$.
Now, let $I_k = P_1 \cap \cdots \cap P_k$ for all integers $k \geq 1$. Then
$a \in I_k$, $(I_k)_v = I_k$, and $I_{k+1} \subsetneq I_{k}$
 by Corollary \ref{u-maxiaml} and the assumption that
$R$ is $u$-Noetherian.
Hence, $a(I_{k})^{-1} \subsetneq a(I_{k+1})^{-1} \subsetneq R$ for all $k \geq 1$,
which is an infinite ascending chain of $u$-ideals of $R$,
a contradiction. Thus, (iii) is satisfied.

$(\Leftarrow)$ By (ii), Proposition \ref{regLocal}(1), and Cohen's theorem,
$T(R)$ is a Noetherian ring. Hence, $T(R)$ satisfies Property(A).
Thus, $R$ satisfies Property(A) \cite[Corollary 2.6]{4} and
$u = w$ on $R$ by Proposition \ref{u-w-w'-oper}(2). Hence, it suffices to show that
each prime $u$-ideal of $R$ is of finite type \cite[Theorem 6.8.5]{wk16}.

Let $P$ be a maximal $u$-ideal of $R$.
Then $R_P$ is a Noetherian ring by (i), and hence $R_P(X)$ is a Noetherian ring.
Thus, $R[X]_{N_u}$ is a locally Noetherian ring by Proposition \ref{maximal}(1).
Now, let $I$ be a prime ideal of $R$. If $I$ is a regular ideal of $R$, then $IR[X]_{N_u}$
is contained in only finitely many maximal ideals of $R[X]_{N_u}$ by (iii) and Proposition \ref{maximal},
and since $R[X]_{N_u}$ is locally Noetherian, $IR[X]_{N_u}$ is finitely generated.
Thus, $I_u$ is of finite type by Lemma \ref{u-closure}. Next, assume that $I \subseteq Z(R)$.
Then $I$ is a $u$-ideal of finite type by (ii) and Proposition \ref{regLocal}(2).
\end{proof}

\section{Almost Dedekind rings} \label{6}

A general Krull ring is a $u$-Noetherian ring by Corollary \ref{T(R) PIR}(2)
or by Theorems \ref{SKR[X]} and \ref{u-Noetherian}. It is known that
an integral domain is a Krull domain
if and only if it is an integrally closed $u$-Noetherian ring \cite[Theorem 2.8]{wm99}.
In this section, we study when a $u$-Noetherian ring is a general Krull ring.

\begin{theorem} \label{u-Noetherian strong Krull}
The following statements are equivalent for a ring $R$.
\begin{enumerate}
\item $R$ is a general Krull ring.

\item $R$ is a $u$-Noetherian ring such that $R_P$ is a DVR or an SPR for all maximal $u$-ideals $P$ of $R$.

\item $R$ is a $w$-Noetherian ring such that $R_P$ is a DVR or an SPR for all maximal $w$-ideals $P$ of $R$.
\end{enumerate}
\end{theorem}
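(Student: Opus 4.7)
The plan is to prove the three implications $(1) \Rightarrow (2)$, $(2) \Leftrightarrow (3)$, and $(2) \Rightarrow (1)$.

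For $(1) \Rightarrow (2)$, a general Krull ring $R$ is $u$-Noetherian by Corollary~\ref{T(R) PIR}, so only the local condition requires verification. Writing $R = R_1 \times \cdots \times R_n$ as a finite direct sum of Krull domains and SPRs, Theorem~\ref{thm5.2}(6) identifies $u$-$\Max(R)$ with $X^1(R) \cup \{P \in \Max(R) \mid P \text{ is minimal}\}$, and Lemma~\ref{direct sum}(3) then shows that each localization $R_P$ is a DVR (when $P \in X^1(R)$ pulls back to a height-one prime of a Krull domain summand) or an SPR (when $P$ pulls back to the maximal ideal of an SPR summand).

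The equivalence $(2) \Leftrightarrow (3)$ is immediate from Theorem~\ref{u-Noetherian}, which shows that $u$-Noetherianity and $w$-Noetherianity coincide and yields $u = w$ on $R$ in either case; consequently, $u$-$\Max(R) = w$-$\Max(R)$, so the hypothesis on $R_P$ transfers automatically between the two formulations.

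For the hard direction $(2) \Rightarrow (1)$, my plan is to show that $R[X]_{N_u}$ is a PIR and then apply Theorem~\ref{SKR[X]}. By Theorem~\ref{u-Noetherian}, $R[X]_{N_u}$ is Noetherian. Proposition~\ref{maximal}(1) identifies the maximal ideals of $R[X]_{N_u}$ as the ideals $P[X]_{N_u}$ with $P \in u$-$\Max(R)$, and the localization $(R[X]_{N_u})_{P[X]_{N_u}}$ is canonically isomorphic to the Nagata ring $R_P(X)$, which is a DVR whenever $R_P$ is (standard, cf.~\cite[Theorem 33.4]{3}) and an SPR whenever $R_P$ is (\cite[Lemma 18.7]{4}). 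Hence $R[X]_{N_u}$ is a Noetherian ring of Krull dimension at most one whose localization at every maximal ideal is a DVR or an SPR, and Corollary~\ref{picard group} gives $\Pic(R[X]_{N_u}) = \{0\}$; these combined should force $R[X]_{N_u}$ to be a PIR, whereupon Theorem~\ref{SKR[X]} yields that $R$ is a general Krull ring.

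The main obstacle is the last implication: concluding from ``Noetherian, one-dimensional, locally DVR or SPR, trivial Picard group'' that $R[X]_{N_u}$ decomposes as a finite direct sum of PIDs and SPRs. This requires a structural argument producing a finite irredundant primary decomposition of $(0)$ in $R[X]_{N_u}$ compatible with the maximal ideal structure, so that $R[X]_{N_u}$ becomes a general ZPI ring (cf.~Asano's theorem, i.e., the discussion after Definition of general ZPI ring in Section~\ref{2}), and then upgrading each Dedekind direct summand to a PID via the triviality of the Picard group. An alternative route would verify the four conditions of Theorem~\ref{Krull and strong Krull} directly, the key technical step being that every $P \in X^1_r(R)$ is itself a regular maximal $u$-ideal (so $R_P$ is a DVR by hypothesis), which one obtains by embedding $P$ in some maximal $u$-ideal $Q$, noting $Q$ must be regular, and forcing $PR_Q = QR_Q$ in the DVR $R_Q$; the verification of $T(R)$ being a PIR would then follow from a case analysis distinguishing whether maximal primes of $T(R)$ lift to regular or non-regular maximal $u$-ideals of $R$.
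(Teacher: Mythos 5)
Your treatment of $(1)\Rightarrow(2)$ and $(2)\Leftrightarrow(3)$ matches the paper and is fine. The problem is the hard direction $(2)\Rightarrow(1)$: you reduce it to the claim that a Noetherian ring of dimension at most one that is locally a DVR or an SPR at every maximal ideal and has trivial Picard group must be a PIR, and you yourself flag this as ``the main obstacle'' and only gesture at two ways it might be closed. As written, that is a genuine gap. Note in particular that you cannot invoke Corollary~\ref{general zpi} (``Noetherian $+$ locally DVR or SPR $\Rightarrow$ general ZPI-ring'') to close it, since in the paper that corollary is \emph{deduced from} Theorem~\ref{u-Noetherian strong Krull}; using it here would be circular. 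To make your route work you would have to either import the classical characterization of general ZPI-rings from outside the paper (Larsen--McCarthy, or the Butts--Phillips theory of almost multiplication rings), or carry out the structural argument yourself: take an irredundant primary decomposition of $(0)$ in the Noetherian ring $R[X]_{N_u}$, use the total ordering of ideals in each localization $R_P(X)$ to force the primary components to be pairwise comaximal, apply CRT, show each factor with nonmaximal radical is actually a domain (its primary component dies in every DVR localization) and hence a Dedekind domain, and finally use $\Pic(R[X]_{N_u})=\{0\}$ together with Corollary~\ref{coro3.3} to upgrade each Dedekind summand to a PID. That argument does go through, so your strategy is salvageable, but none of it is on the page.

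For comparison, the paper avoids $R[X]_{N_u}$ entirely and works with $R$ itself: Corollary~\ref{T(R)_Noetherian} makes $T(R)$ Noetherian, an irredundant primary decomposition of $(0)$ in $T(R)$ is shown to have comaximal components by exactly the ``ideals are totally ordered in $R_{M\cap R}$'' device you would need for $R[X]_{N_u}$, then $R$ is shown to be integrally closed from the local hypothesis, which forces $R=R/(Q_1\cap R)\times\cdots\times R/(Q_n\cap R)$; each factor with nonmaximal radical is an integrally closed $u$-Noetherian domain, hence a Krull domain by \cite[Theorem 2.8]{wm99}, and each factor with maximal radical is an SPR. So the paper's key external input is the characterization of Krull domains as integrally closed $u$-Noetherian domains, whereas your route would lean on the structure theory of general ZPI-rings plus Theorem~\ref{SKR[X]}; both are legitimate, but yours is currently only a plan at the decisive step.
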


\begin{proof}
(1) $\Rightarrow$ (2) Clearly, $R$ is a $u$-Noetherian ring.
Now, let $Q$ be a maximal $u$-ideal of $R$ and $R = R_1 \times \cdots \times R_n$ be a finite direct product of Krull domains and SPRs
$R_1, \dots, R_n$. Then, by Lemma \ref{direct product}(3), there exists a prime ideal $P_i$ of $R_i$ such that $Q = R_1 \times \cdots \times P_i \times \cdots \times R_n$.
Recall that $Q = R_1 \times \cdots \times (P_i)_u \times \cdots \times R_n$
by Lemma \ref{u-diect product}, so $(P_i)_u = P_i$.
Thus, if $R_i$ is a Krull domain, then $R_Q = (R_i)_{P_i}$ is a DVR.
Next, if $R_i$ is an SPR, then $R_Q = (R_i)_{P_i} = R_i$ is an SPR.

(2) $\Rightarrow$ (1) By Corollary~\ref{T(R)_Noetherian}, $T(R)$ is Noetherian, so there is an irredundant primary decomposition of
$(0)$ in $T(R)$, say, $(0) = Q_1 \cap \cdots \cap Q_n$.
If $Q_i + Q_j \subseteq M$ for some maximal ideal $M$ of $T(R)$,
then $M \cap R$ is a $u$-ideal of $R$ by Proposition~\ref{regLocal}(2) and $T(R)_M = R_{M \cap R}$.
Hence, $T(R)_M$ is a DVR or an SPR, so $(Q_i)_M$ and $(Q_j)_M$ are comparable under inclusion.
Thus, $Q_i$ and $Q_j$ are comparable, a contradiction. Hence, $Q_1, \ldots, Q_n$ are comaximal, $\dim (T(R)) =0$, and
\begin{eqnarray*}
R &\hookrightarrow& R / (Q_1 \cap R) \times \cdots \times R / (Q_n \cap R) \\
  &\hookrightarrow& T(R) / Q_1 \times \cdots \times T(R) / Q_n \\
 &=& T(R).
\end{eqnarray*}
Now, let $x \in T(R)$ be integral over $R$. Note that, for each $P \in u$-$\Max (R)$,
$$R_P \hookrightarrow T(R)_P \hookrightarrow T(R_P)$$ and $x$ is integral over $R_P$.
Since $R_P$ is a DVR or an SPR by assumption, $x \in R_P$ for all $P \in u$-$\Max (R)$.
Next, let $I = \{ a \in R \mid ax \in R \}$. Then $I_u = I$ and $I \nsubseteq P$ for all $P \in u$-$\Max (R)$, so $I = R$.
Thus, $x \in R$.

Clearly, $R/ (Q_1 \cap R) \times \cdots \times R / (Q_n \cap R)$ is a finitely generated $R$-module,
and since $R$ is integrally closed by the previous paragraph,
\[
  R = R / (Q_1 \cap R) \times \cdots \times R / (Q_n \cap R) \,.
\]
Let $P_i = \sqrt{Q_i \cap R}$ for $i=1, \dots , n$.
Assume that $P_i$ is not a maximal ideal of $R$,
and let $M$ be a maximal ideal of $R$ with $P_i \subsetneq M$. Choose $a \in M \setminus P_i$,
and let $P$ be a minimal prime ideal of $(Q_i \cap R) +aR$ that is
contained in $M$. Then $P$ is a minimal regular prime ideal of $R$,
so $P$ is a prime $u$-ideal of $R$. By assumption, $R_P$ is a DVR, which implies that $Q_i \cap R$ is a prime ideal.
In this case, $R/ (Q_i \cap R)$ is an integrally closed $u$-Noetherian domain
by assumption and Corollary \ref{coro6.10}. Thus, $R/ (Q_i \cap R)$ is a Krull domain.
Next, if $P_i$ is a maximal ideal of $R$, then $R/ (Q_i \cap R) \cong R_{P_i}/{(Q_i \cap R)R_{P_i}}$,
and since $R_{P_i}$ is an SPR by assumption, $R / (Q_i \cap R)$ is an SPR.
Thus, $R$ is a finite direct product of Krull domains and SPRs.

(2) $\Leftrightarrow$ (3) This follows from Theorem~\ref{u-Noetherian}.
\end{proof}

A Noetherian ring is a $u$-Noetherian ring, so by Theorem \ref{u-Noetherian strong Krull},
we have

\begin{corollary}
A Noetherian ring $R$ is a general Krull ring if and only if
$R_P$ is a DVR or an SPR for all maximal $u$-ideals $P$ of $R$.
\end{corollary}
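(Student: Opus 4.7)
The plan is to derive this corollary immediately from Theorem~\ref{u-Noetherian strong Krull}. The key observation is that a Noetherian ring is automatically $u$-Noetherian, so the extra hypothesis in condition (2) of that theorem comes for free.

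First I would recall from the end of Subsection~1.1 the fact that if $R$ is Noetherian, then $R$ is $*$-Noetherian for every star operation $*$ on $R$; in particular $R$ is $u$-Noetherian. This is because the ascending chain condition on all ideals forces the ascending chain condition on the smaller collection of integral $u$-ideals. Thus the Noetherian hypothesis subsumes the $u$-Noetherian hypothesis.

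Next I would apply the equivalence $(1) \Leftrightarrow (2)$ of Theorem~\ref{u-Noetherian strong Krull}. For the forward direction, a general Krull ring is $u$-Noetherian by Corollary~\ref{T(R) PIR}, and one reads off from (1)$\Rightarrow$(2) in that theorem that $R_P$ is a DVR or an SPR for every maximal $u$-ideal $P$. For the converse direction, the Noetherian assumption gives $u$-Noetherian, and combined with the local condition on $R_P$ this is exactly condition (2), so Theorem~\ref{u-Noetherian strong Krull} yields that $R$ is a general Krull ring.

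Since there is no real obstacle here — the work has all been done in Theorem~\ref{u-Noetherian strong Krull} — the proof is essentially a one-line reduction. The only thing to be careful about is to explicitly invoke the fact that Noetherian $\Rightarrow$ $u$-Noetherian so that the reader sees why condition (2) of the theorem is fulfilled.
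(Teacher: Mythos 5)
Your proposal is correct and is exactly the paper's argument: the paper introduces this corollary with the observation that a Noetherian ring is a $u$-Noetherian ring and then cites Theorem~\ref{u-Noetherian strong Krull}. Your only addition is spelling out why Noetherian implies $u$-Noetherian, which the paper already recorded at the end of Subsection~1.1.
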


The next corollary is a general ZPI-ring analog
of a Dedekind domain that an integral domain $R$ is a Dedekind domain if and only if $R$
is Noetherian and $R_M$ is a DVR for all $M \in$ Max$(R)$ \cite[Theorem 13.1]{f73}.

\begin{corollary} \label{general zpi}
A ring $R$ is a general ZPI-ring if and only if $R$ is Noetherian and $R_M$
is a DVR or an SPR for all $M \in \Max(R)$.
\end{corollary}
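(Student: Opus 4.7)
The plan is to prove the forward direction via Asano's structure theorem and the reverse direction by first upgrading the hypothesis to that of Theorem~\ref{u-Noetherian strong Krull}, applying it to conclude $R$ is general Krull, and then using the localization condition to force every Krull-domain summand to be Dedekind.

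For ($\Rightarrow$), Asano's theorem (stated in Section~2) gives $R = R_1 \times \cdots \times R_n$ with each $R_i$ a Dedekind domain or an SPR, so $R$ is Noetherian. By Lemma~\ref{direct sum}(3), each maximal ideal $M$ of $R$ localizes to $(R_i)_{M_i}$ for some maximal $M_i$ of some summand, which is a DVR (if $R_i$ is Dedekind) or an SPR (if $R_i$ is an SPR, in which case $(R_i)_{M_i} = R_i$).

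For ($\Leftarrow$), since $R$ is Noetherian it is $u$-Noetherian, so by Theorem~\ref{u-Noetherian strong Krull} it suffices to verify that $R_P$ is a DVR or SPR for every maximal $u$-ideal $P$. Fix a maximal ideal $M \supseteq P$. If $P = M$, apply the hypothesis directly. Otherwise $P \subsetneq M$, and this case is the main obstacle: one must show that $R_M$ cannot be an SPR (an SPR has a unique prime, forcing $P = M$, contradiction), so $R_M$ is a DVR with primes $0 \subsetneq MR_M$. Since $P \subsetneq M$, one has $PR_M = 0$, hence $P \subseteq Q := \ker(R \to R_M)$. Every element of $Q$ is annihilated by some $t \notin M$, so $Q \subseteq Z(R)$, and Proposition~\ref{regLocal}(2) then gives that $Q$ is a prime $u$-ideal. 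Maximality of $P$ among $u$-ideals forces $P = Q$, whence $R_P = (R_M)_{0} = T(R_M)$ is a field — which satisfies the SPR definition trivially (maximal ideal $(0)$ is principal and nilpotent).

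Having verified the hypotheses of Theorem~\ref{u-Noetherian strong Krull}, $R$ is a general Krull ring, so $R = R_1 \times \cdots \times R_n$ with each $R_i$ a Krull domain or an SPR. For each Krull-domain summand $R_i$ and each maximal $M_i$ of $R_i$, the ideal $R_1 \times \cdots \times M_i \times \cdots \times R_n$ is a maximal ideal of $R$, so by hypothesis its localization $(R_i)_{M_i}$ is a DVR or an SPR; since $R_i$ is a domain, the SPR option degenerates to a field and the DVR option gives $\h M_i = 1$. Hence every maximal ideal of $R_i$ has height $\le 1$, so $\dim(R_i) \le 1$ and $R_i$ is a Dedekind domain. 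Thus $R$ is a finite direct sum of Dedekind domains and SPRs, i.e., a general ZPI-ring by Asano's theorem.

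The main obstacle is the identification of $P$ in the case $P \subsetneq M$: the argument relies on combining the structure of primes in the DVR $R_M$ with the fact (Proposition~\ref{regLocal}(2)) that prime $Z$-ideals are automatically $u$-ideals, so that maximality of $P$ among $u$-ideals pins $P$ down as $\ker(R \to R_M)$. The subsequent passage from general Krull to general ZPI is routine once the dimension of each Krull-domain summand is controlled.
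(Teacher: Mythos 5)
Your proof is correct and follows essentially the same route as the paper's: both directions reduce to Theorem~\ref{u-Noetherian strong Krull} (the forward direction via the structure theorem, the reverse via the $u$-Noetherian characterization of general Krull rings). In fact your argument is more complete than the paper's one-line proof, since you explicitly bridge the gap between the hypothesis on $\Max(R)$ and the condition on $u$-$\Max(R)$ required by Theorem~\ref{u-Noetherian strong Krull} --- showing that a maximal $u$-ideal $P \subsetneq M$ forces $R_P$ to be the quotient field of the DVR $R_M$, hence an SPR --- a point the paper leaves implicit.
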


\begin{proof}
A general ZPI-ring is a general Krull ring with (Krull) dimension $\leq 1$
and a $u$-Noetherian ring with (Krull) dimension $\leq 1$ is a Noetherian ring.
Thus, the result follows from Theorem \ref{u-Noetherian strong Krull}.
\end{proof}

We will say that $R$ is an {\it almost Dedekind ring} (resp., a {\it $u$-almost Dedekind ring})
if $R_M$ is a DVR or an SPR for all maximal ideals (resp., maximal $u$-ideals) $M$ of $R$.
Hence, by Corollary \ref{general zpi} (resp., Theorem \ref{u-Noetherian strong Krull}),
$R$ is a general ZPI-ring (resp., general Krull ring) if and only if $R$ is a Noetherian
almost Dedekind ring  (resp., a $u$-Noetheria $u$-almost Dedekind ring).
Moreover, it is easy to see that an integral domain is an almost Dedekind ring
(resp., a $u$-almost Dedekind ring) if and only if it is an almost Dedekind domain \cite[Section 36]{3}
(resp., a $t$-almost Dedekind domain \cite[Section IV]{k89-1});
an almost Dedekind ring is a $u$-almost Dedekind ring;
and a $u$-almost Dedekind ring is integrally
closed by the proof of Theorem \ref{u-Noetherian strong Krull}.

\begin{lemma} \label{lemma-74}
Let $R = R_1 \times \cdots \times R_n$ be a finite direct product of rings $R_1, \dots , R_n$.
Then $R$ is an almost Dedekind ring $($resp., a $u$-almost Dedekind ring$)$
if and only if $R_i$ is an almost Dedekind ring $($resp., a $u$-almost Dedekind ring$)$ for $i =1, \dots, n$.
\end{lemma}

\begin{proof}
This follows directly from the following three observations:
(i) $Q$ is a prime ideal of $R$ if and only if
$Q = R_1 \times \cdots \times P_i \times \cdots \times R_n$ for some prime ideal $P_i$ of $R_i$;
(ii) $R_Q = (R_i)_{P_i}$ by Lemma \ref{direct product}(3),
and (iii) $Q$ is a $u$-ideal if and only if $P_i$ is a $u$-ideal by Lemma \ref{u-diect product}(2).
\end{proof}

The next proposition is an almost Dedekind ring analog of a general ZPI-ring.
The (3) $\Leftrightarrow$ (5) $\Rightarrow$ (2) of Proposition \ref{almost dedekind}
holds without the assumption that $R$ is an almost Dedekind ring (see \cite[Theorem 10]{ac11}).

\begin{proposition} \label{almost dedekind}
Let $R$ be an almost Dedekind ring. Then the following statements are equivalent.
\begin{enumerate}
\item $R$ is a finite direct product of almost Dedekind domains and SPRs.
\item $R$ has a finite number of minimal prime ideals.
\item The zero element of $R$ is a finite product of prime elements.
\item $R$ has few zero divisors.
\item Each minimal prime ideal of $R$ is principal.
\end{enumerate}
\end{proposition}

\begin{proof}
(1) $\Rightarrow$ (3) and (5).
It is easy to check that it is indeed true of a ring
that is a finite direct product of integral domains and SPRs (cf. the proof of Theorem \ref{thm5.2}).

(3) $\Rightarrow$ (2) Clear.

(2) $\Rightarrow$ (1) and (4). Let $P_1, \dots , P_n$ be the minimal prime ideals of $R$.
If $x \in R \setminus \bigcup_{i=1}^nP_i$, then $x$ is regular in $R_M$,
because $R_M$ is a DVR or an SPR, for all
maximal ideals $M$ of $R$. Hence, if we let $I = \{a \in R \mid ax =0\}$,
then $IR_M = (0)$ for all maximal ideals $M$ of $R$, which implies that $I = (0)$
and $x$ is regular in $R$. Hence, $Z(R) = \bigcup_{i=1}^nP_i$. Note that
$R_{P_i}$ is either a field or an SPR,
so $P_i = P_i^2$ (resp., $P_i^{k_i-1} \supsetneq P_i^{k_i} = P_i^{k_i+1}$ for some $k_i \geq 2$)
if $R_{P_i}$ is a field (resp., not a field).
Without loss of generality, we may assume that
$R_{P_1}, \dots, R_{P_m}$ are fields and $R_{P_{m+1}}, \dots, R_{P_n}$ are not fields. Then
$P_iR_{P_i} = (0)$ for $i =1, \dots , m$ and $P_i^{k_i}R_{P_i} = (0)$ for $i= m+1, \dots , n$,
and hence
\[
  P_1 \cap \cdots \cap P_m \cap P_{m+1}^{k_{m+1}} \cap \cdots \cap P_n^{k_n} = (0) \, \mbox{ in } \, R \,.
\]
It is clear that each pair of distinct minimal prime ideals of $R$ is comaximal. Hence, by Chinese Remainder Theorem,
$$R = R/P_1 \times \cdots \times R/P_m \times R/P_{m+1}^{k_{m+1}} \times \cdots \times R/P_{n}^{k_{n}} \,.$$
Then, by Lemma \ref{lemma-74}, $R/P_i$ is an almost Dedekind domain for $i =1, \dots, m$
and $R/P_{i}^{k_{i}}$ is an SPR for $i = m+1, \dots , n$.
Thus, the proof is completed.

(4) $\Rightarrow$ (2) Let $Q_1, \dots, Q_n$ be prime ideals of $R$ such that
$Z(R) = \bigcup_{i=1}^nQ_i$. If $P$ is a minimal prime ideal of $R$, then
$P \subseteq Q_i$ for some $i$, and since $R$ is an almost Dedekind ring,
either $P=Q_i$ or $P$ is a unique prime
ideal of $R$ with $P \subsetneq Q_i$. Thus, the number of minimal prime
ideals of $R$ is at most $n < \infty$.

(5) $\Rightarrow$ (2) This follows from \cite[Theorem 1.6]{gh93} that
if each minimal prime ideal of an ideal $I$ is the radical of a finitely
generated ideal, then $I$ has finitely many minimal prime ideals.
\end{proof}

It is known that an integral domain $R$ is an almost Dedekind domain (resp., a $t$-almost Dedekind domain)
if and only if  $R(X)$ (resp., $R[X]_{N_v}$)
is an almost Dedekind domain \cite[Proposition 36.7]{3} (resp., \cite[Theorem 4.4]{k89-1}).

\begin{proposition} \label{prop7.7}
A ring $R$ is an almost Dedekind ring (resp., a $u$-almost Dedekind ring)
if and only if $R(X)$ (resp., $R[X]_{N_u}$) is an almost Dedekind ring.
\end{proposition}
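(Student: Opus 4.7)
The plan is to reduce both equivalences to a single local statement: for a local ring $(T, \mathfrak{m})$, $T$ is a DVR or an SPR if and only if $T(X)$ is a DVR or an SPR. The forward direction is standard, with the DVR case being classical and the SPR case being \cite[Lemma 18.7]{4}. For the converse, the embedding $T \hookrightarrow T(X)$ transfers the property of being an integral domain (resp., of having a single nilpotent prime) back to $T$, while a generator of the principal maximal ideal $\mathfrak{m} T(X)$ can be replaced, via Dedekind--Mertens together with the identity $\mathfrak{m} T(X) \cap T = \mathfrak{m}$, by a single element of $\mathfrak{m}$ that still generates $\mathfrak{m}$.

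For the almost Dedekind equivalence I will invoke the standard bijection $\Max(R) \leftrightarrow \Max(R(X))$ given by $M \mapsto MR(X)$ together with the localization identity $R(X)_{MR(X)} = R_M(X)$. Applying the local lemma to each $R_M$, the condition that every $R_M$ be a DVR or SPR translates bijectively to the condition that every localization of $R(X)$ at a maximal ideal be a DVR or SPR, which is exactly the statement that $R(X)$ is almost Dedekind.

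For the $u$-almost Dedekind equivalence I will use Proposition \ref{maximal}(1), which identifies $\Max(R[X]_{N_u})$ with $\{P[X]_{N_u} \mid P \in u\textnormal{-}\Max(R)\}$. The crux is to verify that $(R[X]_{N_u})_{P[X]_{N_u}} = R_P(X)$ for each $P \in u\textnormal{-}\Max(R)$: since $P$ is a $u$-ideal, any $f \in N_u$ has $c(f) \not\subseteq P$, so $N_u \subseteq R[X] \setminus P[X]$ and the iterated localization collapses to $R[X]_{R[X] \setminus P[X]}$; a direct coefficient-ideal calculation (using that $g \in R[X]$ satisfies $c(g) \not\subseteq P$ precisely when $g \notin P[X]$) then identifies this ring with $R_P(X)$. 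Combining this identification with the local lemma yields the equivalence. The main obstacle will be the reverse direction of the local lemma, specifically recovering principality of $\mathfrak{m}$ inside $T$ from principality of $\mathfrak{m} T(X)$ inside $T(X)$, which requires a careful extraction of an honest generator of $\mathfrak{m}$ from the coefficients of a polynomial generator.
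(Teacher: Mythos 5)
Your proposal is correct and follows essentially the same route as the paper's proof: the bijections $\Max(R(X)) = \{MR(X)\}$ and $\Max(R[X]_{N_u}) = \{P[X]_{N_u} \mid P \in u\textnormal{-}\Max(R)\}$, the localization identities $R(X)_{MR(X)} = R_M(X)$ and $(R[X]_{N_u})_{P[X]_{N_u}} = R_P(X)$, and the transfer of the DVR/SPR property between $R_P$ and $R_P(X)$. The only difference is that you propose to prove the local transfer lemma directly, where the paper simply cites it (the SPR case from \cite[Lemma 18.7]{4}).
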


\begin{proof}
This follows directly from the following observation:
(i) $\Max (R[X]_{N_u}) = \{PR[X]_{N_u} \mid P \in u$-$\Max(R)\}$ and $\Max (R(X)) = \{MR(X) \mid M \in \Max (R)\}$ \cite[Proposition 33.1]{3},
(ii) $R(X)_{MR(X)}$ = $R_M(X)$ and $(R[X]_{N_u})_{PR[X]_{N_u}} = R_P(X)$,
(iii) $R_P$ is a DVR if and only if $R_P(X)$ is a DVR, and
(iv) $R_P$ is an SPR if and only if $R_P(X)$ is an SPR \cite[Lemma 18.7]{4}.
\end{proof}

A general Krull ring is a $u$-almost Dedekind ring with
a finite number of minimal prime ideals by Theorem \ref{u-Noetherian strong Krull}.
We next characterize the $u$-almost Dedekind ring that has a finite number of minimal prime ideals.
This is a generalization of the result of Proposition \ref{almost dedekind} to $u$-almost Dedekind rings.

\begin{corollary} \label{u-almost dedekind}
Let $R$ be a $u$-almost Dedekind ring.
Then the following statements are equivalent.
\begin{enumerate}
\item $R$ is a finite direct product of $t$-almost Dedekind domains and SPRs.
\item $R$ has a finite number of minimal prime ideals.
\item The zero element of $R$ is a finite product of prime elements.
\item $R$ has few zero divisors.
\item Each minimal prime ideal of $R$ is principal.
\end{enumerate}
\end{corollary}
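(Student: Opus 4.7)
The plan is to mirror the argument of Proposition~\ref{almost dedekind}, replacing localizations at maximal ideals by localizations at maximal $u$-ideals throughout, and to use integrality plus the structure of $T(R)$ to obtain the direct-sum decomposition. For the easy implications, if $R=A_1\times\cdots\times A_n$ with each $A_i$ a $t$-almost Dedekind domain or an SPR, then Lemma~\ref{direct sum} identifies the minimal primes as the usual ideals $A_1\times\cdots\times P_i\times\cdots\times A_n$ (with $P_i=(0)$ in the domain case and $P_i$ the nilpotent maximal ideal in the SPR case), which are principal; $Z(R)$ is the finite union of these minimal primes, and $0$ factors as the product of the corresponding prime generators raised to the nilpotency indices in the SPR factors. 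This delivers (3), (4), and (5). The implication (3)$\Rightarrow$(2) is immediate because every minimal prime is minimal over $(0)=p_1\cdots p_m$ and therefore equals some $p_iR$; and (5)$\Rightarrow$(2) follows directly from \cite[Theorem 1.6]{gh93}, exactly as in Proposition~\ref{almost dedekind}.

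The essential content is the implication (2)$\Rightarrow$(1),(4). Let $P_1,\ldots,P_n$ be the minimal primes of $R$. The first step is to prove $Z(R)=\bigcup_i P_i$. Take $x\in R\setminus\bigcup_i P_i$ and suppose $xy=0$. At each maximal $u$-ideal $M$, $R_M$ is a DVR or an SPR, and its unique minimal prime corresponds to a unique minimal prime $P_{i_0}$ of $R$ contained in $M$ (with $P_{i_0}=M$ in the SPR case). Since $x\notin P_{i_0}$, the image of $x$ in $R_M$ is a nonzerodivisor (in fact a unit in the SPR case), so $y$ vanishes in $R_M$, i.e., $\operatorname{Ann}(y)\nsubseteq M$. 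As $M$ ranges over all maximal $u$-ideals, Proposition~\ref{prop1.2} forces $\operatorname{Ann}(y)_u=R$, so some $J\in\rGV(R)$ satisfies $J\subseteq\operatorname{Ann}(y)$; since $J$ contains a regular element, $y=0$. This gives (4) and shows that the primes of $R$ contained in $Z(R)$ are exactly $P_1,\ldots,P_n$, whence $\dim T(R)=0$ and $T(R)$ has precisely these $n$ pairwise comaximal maximal primes.

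Next, each $P_i$ is now a $Z$-ideal, hence a $u$-ideal by Proposition~\ref{regLocal}, and therefore lies in some maximal $u$-ideal $M_i$. The DVR/SPR dichotomy applied to $R_{M_i}$ forces $R_{P_i}$ to be either a field (if $P_i\subsetneq M_i$ and $R_{M_i}$ is a DVR, in which case $P_iR_{M_i}=(0)$) or an SPR (if $R_{M_i}$ is an SPR, in which case $P_i=M_i$). The primary ideals $Q_i=\ker(T(R)\to R_{P_i})$ are pairwise comaximal with $\bigcap_i Q_i=(0)$, so CRT produces $T(R)=\prod_{i=1}^n R_{P_i}$, a finite direct product of fields and SPRs; in particular $T(R)$ is a PIR. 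The orthogonal idempotents of this product satisfy $e^2=e$, hence are integral over $R$; because a $u$-almost Dedekind ring is integrally closed, these idempotents lie in $R$ and yield a decomposition $R=A_1\times\cdots\times A_n$ with $T(A_i)=R_{P_i}$. Each $A_i$ inherits the $u$-almost Dedekind property as a direct summand via Proposition~\ref{star direct sum} and Lemma~\ref{u-diect product}. If $R_{P_i}$ is an SPR, then $A_i$ has a unique prime and coincides with $T(A_i)=R_{P_i}$, so $A_i$ is an SPR; if $R_{P_i}$ is a field, then $A_i$ is an integrally closed $u$-almost Dedekind domain, hence a $t$-almost Dedekind domain by definition, completing (1).

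The principal obstacle is establishing $Z(R)=\bigcup_iP_i$ with only local data at maximal $u$-ideals. The saving move is to recast the local vanishing of $y$ as the global statement $\operatorname{Ann}(y)_u=R$ and then exploit the fact that every ideal in $\rGV(R)$ contains a regular element to upgrade this to $y=0$. After this hurdle, the remaining direct-product decomposition of $T(R)$ and the lift to $R$ via orthogonal idempotents is a routine application of integrality, closely paralleling the arguments already used in the proofs of Theorem~\ref{Krull and strong Krull} and Proposition~\ref{almost dedekind}.
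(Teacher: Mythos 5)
Your architecture is essentially right, and your proof of (2)$\Rightarrow$(4) is a genuine (and arguably cleaner) alternative to the paper's: the paper passes to the Nagata ring $R[X]_{N_u}$, which is an almost Dedekind ring by Proposition \ref{prop7.7}, runs the localization argument there, and then descends to $R$, whereas you work directly at maximal $u$-ideals and globalize via $((0):_R y)_u=R$ together with the fact that every ideal in $\rGV(R)$ contains a regular element. That step is correct. Nevertheless there are two genuine gaps.

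First, statement (4) is never used as a hypothesis anywhere in your proof: you prove (1)$\Rightarrow$(3),(4),(5), (3)$\Rightarrow$(2), (5)$\Rightarrow$(2) and (2)$\Rightarrow$(1),(4), which makes (1),(2),(3),(5) equivalent and shows they imply (4), but leaves (4) a dead end, so the five statements are not shown to be equivalent. You must add (4)$\Rightarrow$(2): writing $Z(R)=Q_1\cup\cdots\cup Q_n$ with the $Q_i$ prime, each $Q_i$ is a prime $Z$-ideal, hence a $u$-ideal by Proposition \ref{regLocal}(2), hence contained in a maximal $u$-ideal $M_i$; since $R_{M_i}$ is a DVR or an SPR it has a unique minimal prime, so $Q_i$ contains exactly one minimal prime of $R$, and every minimal prime of $R$ lies in $Z(R)$ and hence in some $Q_i$ by prime avoidance. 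Second, in the SPR case of (2)$\Rightarrow$(1) the sentence ``$A_i$ has a unique prime and coincides with $T(A_i)=R_{P_i}$'' is precisely the nontrivial point, and you give no argument for it; Example \ref{ex5.10} exhibits a Krull ring $A$ with $T(A)$ an SPR, a nonzero nilpotent minimal prime, and a further regular prime above it, so the $u$-almost Dedekind hypothesis must be invoked to exclude this. The missing argument: every maximal $u$-ideal $M$ of $A_i$ contains the unique minimal prime $\mathfrak{p}_i$, and $\mathfrak{p}_i(A_i)_M\neq(0)$ because its further localization $\mathfrak{p}_i(A_i)_{\mathfrak{p}_i}$ is nonzero; hence $(A_i)_M$ is not reduced, so it cannot be a DVR and must be an SPR, forcing $M=\mathfrak{p}_i$. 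Thus $u$-$\Max(A_i)=\{\mathfrak{p}_i\}$, and since $\mathfrak{p}_i\subseteq Z(A_i)$, the corollary following Proposition \ref{prop1.2} gives $A_i=(A_i)_{[\mathfrak{p}_i]}=T(A_i)$, the desired SPR. (The paper's version of this step argues instead that a prime strictly containing $P_i$ would produce a maximal $u$-ideal whose localization is neither a DVR nor an SPR; either route works, but some such argument is required.)
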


\begin{proof}
(1) $\Rightarrow$ (3) $\Rightarrow$ (2)
and (1) $\Rightarrow$ (5) $\Rightarrow$ (2).
These can be proved by the same arguments
as the proofs of the counterparts in Proposition \ref{almost dedekind}.

(2) $\Rightarrow$ (1) Let $\{P_1, \dots, P_m, P_{m+1}, \dots , P_n\}$ be the set of minimal
prime ideals of $R$ such that
$R_{P_i}$ is a field for $i=1, \dots, m$ and $R_{P_i}$ is an SPR for $i = m+1, \dots, n$.
Then, by an argument similar to the proof of Proposition \ref{almost dedekind},
$$R \hookrightarrow R/P_1 \times \cdots \times R/P_m \times R/P_{m+1}^{k_{m+1}} \times \cdots \times R/P_{n}^{k_{n}}
\hookrightarrow T(R)$$
for some integers $k_i \geq 2$, $i=m+1, \dots , n$.
Note that $R$ is integrally closed and $R/P_1 \times \cdots \times R/P_{n}^{k_{n}}$
is a finitely generated $R$-module, so
$$R = R/P_1 \times \cdots \times R/P_m \times R/P_{m+1}^{k_{m+1}} \times \cdots \times R/P_{n}^{k_{n}}.$$
Hence, by Lemma \ref{lemma-74},
$R/P_i$ is a $t$-almost Dedekind domain for $i=1, \dots, m$.
Next, assume that $P_i$ is not maximal for $i =m+1, \dots, n$.
Then $P_i$ must be contained in a maximal $u$-ideal of $R$, say, $Q$,
but $R_Q$ is neither a DVR nor an SPR, a contradiction. Thus,
$R/P_{i}^{k_{i}} = T(R)/P_{i}^{k_{i}}T(R)$, which is an SPR.

(2) $\Rightarrow$ (4) By Proposition \ref{prop7.7}, $R[X]_{N_u}$ is an almost Dedekind ring.
Note that if $\{P_1, \dots , P_n\}$ is the set of minimal prime ideals of $R$,
then $\{P_i[X]_{N_u} \mid i=1, \dots , n\}$ is the set of minimal prime
ideals of $R[X]_{N_u}$. Hence, by the proof of (2) $\Rightarrow$ (1) in
Proposition \ref{almost dedekind}, $Z(R[X]_{N_u}) = \bigcup_{i=1}^nP_i[X]_{N_u}$.
Thus, $Z(R) =  \bigcup_{i=1}^nP_i$.

(4) $\Rightarrow$ (2) Let $P$ be a prime $Z$-ideal of $R$. Then $P$ is a $u$-ideal
by Proposition \ref{regLocal}. Thus, the proof is completed by the
same argument as the proof of (4) $\Rightarrow$ (2) in Proposition \ref{almost dedekind}.
\end{proof}

It is easy to see that an integral domain is a Krull domain if and only if
it is a $t$-almost Dedekind domain in which each principal ideal has a finite
number of minimal prime ideals. The next result is a general Krull ring analog, which
can be used as the definition of general Krull rings as in the case of Krull domains.

\begin{corollary} \label{char of strong Krull}
The following statements are equivalent for a ring $R$.
\begin{enumerate}
\item $R$ is a general Krull ring.

\item $R$ satisfies the following conditions.
      \begin{enumerate}
      \item $R = \underset{P \in X^{1}_r (R)}{\bigcap} R_{[P]}$.

      \item $R_P$ is a DVR for all $P \in X^{1}_r (R)$ and $R_P$ is
       an SPR for all prime $Z$-ideals $P$ of $R$.

      \item Each principal ideal of $R$ has a finite number of minimal prime ideals.
      \end{enumerate}
\item $R$ is a $u$-almost Dedekind ring in which each principal ideal has a finite number of minimal prime ideals.
\end{enumerate}
\end{corollary}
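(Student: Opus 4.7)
The plan is to establish the cycle (1) $\Rightarrow$ (2) $\Rightarrow$ (1) together with the equivalence (1) $\Leftrightarrow$ (3). For (1) $\Rightarrow$ (2), I write $R = R_1 \times \cdots \times R_n$ as a direct sum of Krull domains and SPRs. Condition (a) holds because a general Krull ring is a Krull ring. For (b), primes of $R$ correspond by Lemma~\ref{direct sum}(3) to primes supported on a single factor: an element of $X^{1}_r(R)$ arises from a height-one prime of a Krull domain factor (so $R_P$ is a DVR), while a prime $Z$-ideal arises from either the zero ideal of a Krull domain factor (giving $R_P$ a field, hence an SPR) or the maximal ideal of an SPR factor. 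For (c), I invoke the $u$-Noetherian property of general Krull rings (Corollary~\ref{T(R) PIR}), which ensures finitely many minimal primes for each principal ideal; alternatively the explicit factorization in Remark~\ref{remark5.8} exhibits them.

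For (2) $\Rightarrow$ (1), I apply Theorem~\ref{Krull and strong Krull}. Our (a) and (b) match directly. The finite-character condition there---each regular element lies in only finitely many $P \in X^{1}_r(R)$---follows from our (c): for regular $a \in P$ with $P \in X^{1}_r(R)$, any minimal prime of $aR$ contained in $P$ is regular (it contains $a$) and hence equals $P$ by minimality of $P$ among regular primes, so the $P$'s containing $a$ form a subset of the finitely many minimal primes of $aR$. To verify that $T(R)$ is a PIR, I use that $R_P$ being an SPR forces $P$ to be the unique prime of $R$ contained in $P$ (the primes of $R_P$ correspond bijectively to primes of $R$ in $P$, and an SPR has a unique prime), so every prime $Z$-ideal is a minimal prime of $R$. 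Applying our (c) to the zero ideal yields finitely many minimal primes, hence finitely many prime $Z$-ideals. These correspond bijectively to the primes of $T(R)$ via $P \mapsto PT(R)$, with $T(R)_{PT(R)} \cong R_P$ an SPR; so $T(R)$ is zero-dimensional with finitely many pairwise comaximal maximal ideals, and the Chinese Remainder Theorem decomposes $T(R)$ as a finite direct product of SPRs, i.e., a PIR.

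For (1) $\Leftrightarrow$ (3), the forward direction is immediate from Theorem~\ref{u-Noetherian strong Krull} combined with the $u$-Noetherian property of general Krull rings. The converse (3) $\Rightarrow$ (1) is the main obstacle. Applying (3)(c) to the zero ideal, $R$ has finitely many minimal prime ideals, so by Corollary~\ref{u-almost dedekind} we may decompose $R = R_1 \times \cdots \times R_n$ as a finite direct sum of $t$-almost Dedekind domains and SPRs, and the problem reduces to upgrading each $t$-almost Dedekind domain factor $R_i$ to a Krull domain. I first descend the finite-minimal-primes hypothesis to $R_i$: for $a \in R_i$, the element $(0, \ldots, a, \ldots, 0) \in R$ generates an ideal whose minimal primes, decomposed via Lemma~\ref{direct sum}(3), consist of the minimal primes of $aR_i$ lifted to $R$ together with a fixed finite contribution from the other factors, so (3)(c) transfers. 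In a $t$-almost Dedekind domain the maximal $t$-ideals coincide with the height-one primes (the DVR localizations have Krull dimension one), and minimal primes of a nonzero principal ideal are height-one by Krull's Hauptidealsatz; thus the descended finite-character condition together with the DVR-localization hypothesis gives Nagata's characterization of a Krull domain, completing the proof.
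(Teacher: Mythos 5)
Your proof is correct and follows essentially the same route as the paper's: (2) $\Rightarrow$ (1) is reduced to Theorem \ref{Krull and strong Krull} by producing the PIR total quotient ring from the finitely many minimal primes of $(0)$, and (3) $\Rightarrow$ (1) goes through Corollary \ref{u-almost dedekind} and then upgrades each $t$-almost Dedekind summand to a Krull domain via the descended finite-character condition. The only repair needed is in that last step: Krull's Hauptidealsatz requires Noetherianity and a $t$-almost Dedekind domain need not be Noetherian, but the fact you need still holds, since a minimal prime $Q$ of a nonzero principal ideal $aD$ is a $t$-ideal by Proposition \ref{prop1.2}(1), hence is contained in a maximal $t$-ideal $P$ with $D_P$ a DVR, and $0 \neq aD_P \subseteq QD_P$ forces $Q = P$, so $Q$ has height one.
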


\begin{proof}
(1) $\Rightarrow$ (2) By Theorem \ref{Krull and strong Krull}, (a) holds.
Note that $u$-$\Max(R) = X^1_r(R) \cup \{P \in \Max(R) \mid P \mbox{ is minimal} \}$ by Theorem \ref{thm5.2}(5)-(6),
so (b) holds by Theorem \ref{u-Noetherian strong Krull}.
For (c), let $a \in R$ be a nonunit. If $a$ is regular, then each
minimal prime ideal of $aR$ is in $X_r^1(R)$, and since $R$ is a Krull ring,
such prime ideals are finite. Next, assume $a \in Z(R)$. Then $a=bc$ for some $b \in \reg(R)$ and $c \in Z(R)$ that
is a finite product of prime elements of $R$ in $Z(R)$ by Theorem \ref{thm5.2}(2).
Note that $bR$ has a finite number of minimal prime ideals, which are prime ideals in $X_r^1(R)$.
Note also that if $P$ is a minimal prime ideal of $bcR$,
then $P$ is minimal over either $bR$ or $cR$. Thus, $aR = bcR$ has a finite number of minimal prime ideals.

(2) $\Rightarrow$ (1) By Theorem \ref{Krull and strong Krull}, it suffices
to show that $T(R)$ is a PIR. By (b), $\dim(T(R)) =0$ and $T(R)$ is a locally PIR.
Moreover, by (c), $T(R)$ has a finite number of maximal ideals. Thus, $T(R)$ is a PIR.

(1) $\Rightarrow$ (3) $R$ is a $u$-almost Dedekind ring by Theorem \ref{u-Noetherian strong Krull}.
Thus, the proof is completed by the proof of (1) $\Rightarrow$ (2).

(3) $\Rightarrow$ (1) Clearly, $R$ has a finite number of minimal prime ideals.
Hence, $R$ is a finite direct product of $t$-almost Dedekind domains and SPRs by Corollary \ref{u-almost dedekind}.
Moreover, if $D$ is a direct summand of $R$ that is a $t$-almost Dedekind domain, then each nonzero
element of $D$ is contained in only finitely many height-one prime ideals of $D$.
Hence, $D$ is a Krull domain. Thus, $R$ is a general Krull ring.
\end{proof}

It is reasonable to say that $R$ is a regular almost Dedekind ring
(resp., regular $t$-almost Dedekind ring) if $(R_{[P]}, [P]R_{[P]})$
is a rank-one DVR for all regular maximal ideals (resp., regular maximal $t$-ideals) $P$ of $R$.
It is easy to see that (i) Dedekind rings (resp., Krull rings) are regular almost Dedekind ring (resp., regular $t$-almost Dedekind ring);
(ii) regular almost Dedekind rings are regular $t$-almost Dedekind rings and its regular dimension is at most one;
(iii) regular $t$-almost Dedekind rings are completely integrally closed;
and (iv) almost Dedekind rings (resp.,$u$-almost Dedekind rings) are
regular almost Dedekind rings (resp., regular $t$-almost Dedekind rings) \cite[Theorem 1]{ck02}.

\begin{proposition}
Let $R$ be a regular almost Dedekind ring (resp., regular $t$-almost Dedekind ring).
Then $R$ is a Dedekind ring (resp., Krull ring) if and only if $R$ is an r-Noetherian ring
(resp., $R$ satisfies the ascending chain condition on integral regular $t$-ideals).
\end{proposition}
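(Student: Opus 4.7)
The forward implications are quick. If $R$ is a Dedekind ring, every regular ideal is invertible and hence finitely generated, so $R$ is r-Noetherian. If $R$ is a Krull ring, Kennedy's theorem \cite[Proposition 2.2]{9} delivers the ACC on integral regular $v$-ideals, and since $v=t$ on regular ideals in any Krull ring by Corollary~\ref{coro4.7}, this is exactly the ACC on integral regular $t$-ideals.

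For the converse of the Krull statement, my plan is to route through Matsuda's theorem \cite[Theorem 5]{m82}: $R$ is a Krull ring if and only if $R$ is completely integrally closed and satisfies the ACC on integral regular $v$-ideals. The hypothesis that $R$ is regular $t$-almost Dedekind gives complete integral closure by fact (iii) in the paragraph preceding the proposition. For the chain condition, I would first observe that every $v$-ideal is a $t$-ideal: if $I=I_v$, then $I\subseteq I_t\subseteq I_v=I$ forces $I_t=I$. So an ascending chain of regular $v$-ideals is an ascending chain of regular $t$-ideals, and the hypothesized ACC on regular $t$-ideals does the job.

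For the converse of the Dedekind statement, I would reduce to the Krull case. If $R=T(R)$ the claim is vacuous, so assume $R\neq T(R)$. Being r-Noetherian means every regular ideal is finitely generated, so in particular any ascending chain of regular $t$-ideals (each of which is a regular ideal, with union again regular and hence finitely generated) stabilizes. By fact (ii) preceding the proposition, $R$ is regular $t$-almost Dedekind with $\rdim(R)\leq 1$, so the Krull half of the proposition, just proved, shows $R$ is a Krull ring. Theorem~\ref{Krull ring property}(3) then promotes a Krull ring of regular dimension one to a Dedekind ring.

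The main obstacle I anticipate is the bookkeeping between the $t$- and $v$-operations on regular ideals, making sure that the ACC on $t$-ideals hypothesized in the proposition truly matches the chain condition that Matsuda's theorem demands on $v$-ideals, and analogously that the r-Noetherian hypothesis in the Dedekind case genuinely yields ACC on regular $t$-ideals. Once this and the elementary facts (ii)--(iii) are in hand, the proposition follows by a clean application of Matsuda's theorem together with Theorem~\ref{Krull ring property}(3).
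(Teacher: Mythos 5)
Your proof is correct and follows essentially the same route as the paper: the Krull case is Kennedy's necessity plus Matsuda's converse (complete integral closure coming from the regular $t$-almost Dedekind hypothesis, the chain condition from the observation that $v$-ideals are $t$-ideals), and the Dedekind case reduces to the Krull case and then applies Theorem~\ref{Krull ring property}(3) via $\rdim(R)=1$. The only cosmetic difference is that the paper invokes Kang's characterization \cite[Theorem 13]{7} directly in the Dedekind case rather than citing the Krull half of the proposition, and your explicit bookkeeping between the $t$- and $v$-operations fills in a step the paper leaves implicit.
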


\begin{proof}
(Krull ring case) This case follows from \cite[Proposition 2.2]{9} and \cite[Theorem 5]{m82}
that $R$ is a Krull ring if and only if $R$ satisfies the ascending chain condition on integral regular $v$-ideals
and is completely integrally closed.

(Dedekind ring case) It is clear that a Dedekind ring is r-Noetherian. Conversely,
assume that $R$ is an r-Noetherian ring. Then $R$ satisfies the ascending chain condition on integral regular $v$-ideals.
Thus, $R$ is a Krull ring \cite[Theorem 13]{7}. Moreover, since $\rdim (R) \leq 1$ by assumption,
$R$ is a Dedekind ring by Theorem \ref{Krull ring property}.
\end{proof}

Let $*$ be a star operation on a ring $R$. We say that $R$ is a {\it $*$-multiplication ring}
if, for all ideals $A$ and $B$ of $R$ with $A \subseteq B$, there exists an ideal $C$ of $R$ such that $A_* = (BC)_*$.
Hence, $d$-multiplication rings are just multiplication rings.
It is known that an integral domain is a Dedekind domain if and only if it is a multiplication ring \cite[Theorem 37.1]{3},
a general ZPI-ring is a multiplication ring, but a multiplication ring
need not be a general ZPI-ring \cite[Exercise 8 on p.224]{10}.

\begin{remark} \label{remark7.9}
A ring $R$ is an almost multiplication ring if $R_P$ is a multiplication ring
for all prime ideals $P$ of $R$ \cite[Theorem 2.7]{bp63}.
It is known that $R$ is an almost multiplication ring
if and only if $R_P$ is a general ZPI-ring for all prime ideals $P$ of $R$ \cite[Theorem 2.0]{bp63}.
Note that a general ZPI-ring with a unique maximal ideal is a DVR or
an SPR \cite[page 83]{a51}. Hence, the almost Dedekind ring of this paper is just
the almost multiplication ring (cf. \cite[page 761]{a76}). Thus,
the $R(X)$ of Proposition \ref{prop7.7} comes from \cite[Theorem 8(3)]{a76} and
the equivalence of (1), (2) and (4) in Proposition \ref{almost dedekind}
is from \cite[Theorem 12]{mott69} and \cite[Theorem 5]{a76}.
\end{remark}

It is known that a multiplication
ring is an almost multiplication ring \cite[Lemma 2.4]{bp63}
and a Noetherian almost multiplication ring is a multiplication ring \cite[Exercise 10 on page 224]{10}.
Hence, by Corollary \ref{general zpi},  $R$ is a general ZPI-ring if and only if $R$ is a Noetherian multiplication ring.

\begin{lemma} \label{u-multi}
\begin{enumerate}
\item A $u$-multiplication ring is a $u$-almost Dedekind ring.
\item A finite direct product of $u$-multiplication rings is a $u$-multiplication ring.
\end{enumerate}
\end{lemma}

\begin{proof}
(1) Let $R$ be a $u$-multiplication ring and $P$ be a maximal $u$-ideal of $R$.
Now, let $A$ and $B$ be ideals of $R_P$ with $A \subseteq B$.
Then $A = IR_P$ and $B = JR_P$ for some ideals $I$ and $J$ of $R$ with $I \subseteq J$ \cite[Theorems 4.3 and 4.4]{3}.
Hence, $I_u = (JC)_u$ for some ideal $C$ of $R$, so by Lemma~\ref{local-properties}(1),
\[
  A = IR_P = I_u R_P = (JC)_u R_P = (JC)R_P = (JR_P)(CR_P) = B(CR_P) \,.
\]
Thus, $R_P$ is a multiplication ring.
Then, by Remark \ref{remark7.9}, $R_P$ is either a DVR or an SPR.

(2) This follows directly from Lemmas \ref{direct product}(2) and \ref{u-diect product}(2).
\end{proof}

The next result is a general Krull ring analog of \cite[Proposition 39.4]{3}
that $R$ is a general ZPI-ring if and only if $R$ is a Noetherian multiplication ring.

\begin{corollary}
A ring $R$ is a general Krull ring if and only if
 $R$ is a $u$-Noetherian $u$-multiplication ring.
\end{corollary}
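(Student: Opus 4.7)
The plan is to prove both directions by appealing to results already in the paper, with the hard work concentrated in one direction.

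For the implication ($\Leftarrow$), if $R$ is a $u$-Noetherian $u$-multiplication ring, then Lemma \ref{u-multi} shows that $R$ is a $u$-almost Dedekind ring, i.e., $R_P$ is a DVR or an SPR for every maximal $u$-ideal $P$ of $R$. Theorem \ref{u-Noetherian strong Krull} then immediately yields that $R$ is a general Krull ring.

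For the implication ($\Rightarrow$), assume $R$ is a general Krull ring. Corollary \ref{T(R) PIR}(2) gives that $R$ is $u$-Noetherian, so the remaining task is to verify the $u$-multiplication property. Using the structure $R = R_1 \times \cdots \times R_n$ into Krull domains and SPRs, for ideals $A \subseteq B$ of $R$ I would write $A = A_1 \times \cdots \times A_n$ and $B = B_1 \times \cdots \times B_n$ with $A_i \subseteq B_i$ (Lemma \ref{direct sum}(2)). It then suffices to produce, for each $i$, an ideal $C_i$ of $R_i$ with $(A_i)_u = (B_i C_i)_u$; by Lemma \ref{u-diect product}(2), $C = C_1 \times \cdots \times C_n$ will then satisfy $A_u = (BC)_u$.

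The construction of $C_i$ splits into two cases. If $R_i$ is an SPR, then every nonunit is nilpotent, so $T(R_i) = R_i$, and by Proposition \ref{regLocal}(4) the $u$-operation is the identity on $R_i$; since the ideals of $R_i$ are linearly ordered powers of the maximal ideal $M_i$, I would write $A_i = M_i^{a}$ and $B_i = M_i^{b}$ with $b \le a$ and set $C_i = M_i^{a-b}$. If $R_i$ is a Krull domain, then the only $Z$-ideal is $(0)$ (handled trivially by $C_i = (0)$ if $B_i = (0)$, or $C_i = (0)$ if $A_i = (0) \subsetneq B_i$); otherwise $A_i$ and $B_i$ are regular, so Corollary \ref{coro4.7} (applied to the Krull ring $R_i$) gives $(A_i)_u = (P_1^{e_1} \cdots P_m^{e_m})_u$ and $(B_i)_u = (P_1^{f_1} \cdots P_m^{f_m})_u$ for a common list of distinct $P_j \in X^1(R_i)$ and integers $e_j, f_j \ge 0$. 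I would then set $C_i = (P_1^{e_1-f_1} \cdots P_m^{e_m-f_m})_u$ and verify $(B_i C_i)_u = (A_i)_u$ using Lemma \ref{star}. The main obstacle will be justifying $e_j \ge f_j$ for every $j$, which I would deduce from $A_i \subseteq B_i \Rightarrow (A_i)_u \subseteq (B_i)_u$ combined with the uniqueness of the $t$-factorization of $t$-ideals in Krull domains.
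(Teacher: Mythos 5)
Your proof is correct, and the backward direction matches the paper exactly (Lemma \ref{u-multi} plus Theorem \ref{u-Noetherian strong Krull}), but in the forward direction you take a genuinely different route for the key case of a Krull domain summand. The paper's argument is shorter: for nonzero ideals $A \subseteq B$ of a Krull domain $D$, it uses the fact that $B$ is $u$-invertible and simply sets $C = B^{-1}A$, so that $(BC)_u = ((BB^{-1})_u A)_u = A_u$ by Lemma \ref{star}; no factorization into primes is needed. Your route instead factors $(A_i)_u$ and $(B_i)_u$ into $u$-products of height-one primes and divides exponents, which works but forces you to establish that $A_i \subseteq B_i$ implies $e_j \ge f_j$ for every $j$. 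Be aware that uniqueness of the $t$-factorization alone does not yield this inequality; the clean justification is to localize at each $P_j$ (where $(R_i)_{P_j}$ is a DVR, so $(A_i)_{P_j} = P_j^{e_j}(R_i)_{P_j}$ by Lemma \ref{local-properties}(1)) and compare valuations. With that point filled in, your argument is complete; the paper's invertibility trick simply buys you a one-line construction of $C$ and avoids the exponent bookkeeping entirely, while your version makes the divisor-theoretic structure of the answer explicit. The remaining pieces (reduction to direct summands via Lemmas \ref{direct sum} and \ref{u-diect product}, the SPR case, and the zero-ideal case) agree with the paper.
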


\begin{proof}
Suppose that $R$ is a general Krull ring.
Then $R$ is a $u$-Noetherian ring by Theorem~\ref{u-Noetherian strong Krull}
and $R$ is a finite direct product of Krull domains and SPRs.
Clearly, an SPR is a $u$-multiplication ring.
Now, let $D$ be a Krull domain and
$A \subseteq B$ be ideals of $D$. If $A = (0)$, then $(0) = (0)_u = (BA)_u$, so we may assume that $A \neq (0)$.
Then $B$ is $u$-invertible by assumption, and hence
if we let $C = B^{-1}A$, then $C$ is an ideal of $D$ such that $A_u = (BC)_u$.
Hence, $D$ is a $u$-multiplication ring.
Therefore, $R$ is a finite direct product of $u$-multiplication rings.
Thus, by Lemmas \ref{u-multi}(2), $R$ is a $u$-multiplication ring.
The converse follows by Theorem~\ref{u-Noetherian strong Krull} and Lemma~\ref{u-multi}(1).
\end{proof}

We end this paper with the
following diagram, which summarizes the exact relationship of the $15$ different types of rings which are
studied in this paper.

\begin{figure}[h]
\resizebox{.8\textwidth}{!}{%
\begin{tikzpicture}[node distance=2cm]
\title{Untergruppenverband der}
\node(PID)    at (1.5,1.5)     {\bf PID};
\node(PIR)    at (1.5,0)       {\bf PIR};
\node(rPIR)   at (1.5,-1.5)    {\bf regular PIR};

\node(UFD)    at (4.5,3.7)     {\bf UFD};
\node(UFR)    at (4.5,2.2)     {\bf UFR};
\node(F)      at (4.5,0.7)     {\bf Factorial ring};
\node(D)      at (4.5,-0.7)    {\bf Dedekind domain};
\node(ZPI)    at (4.5,-2.2)    {\bf general ZPI ring};
\node(DR)     at (4.5,-3.7)    {\bf Dedekind Ring};

\node(PiD)    at (7.5,1.5)     {\bf $\pi$-domain};
\node(PiR)    at (7.5,0)       {\bf $\pi$-ring};
\node(rPi)    at (7.5,-1.5)    {\bf regular $\pi$-ring};

\node(KD)     at (11,1.5)      {\bf Krull domain};
\node(SKR)    at (11,0)        {\bf $\framebox{ \quad  general Krull ring\quad}$};
\node(KR)     at (11,-1.5)     {\bf Krull ring};

\draw[->]         (PID)  -- (UFD);
\draw[->]         (PID)  -- (D);
\draw[->]         (UFD)  -- (PiD);
\draw[->]         (D)    -- (PiD);
\draw[->]         (PiD)  -- (KD);

\draw[->]         (PID)  -- (PIR);
\draw[dashed, ->] (UFD)  -- (UFR);
\draw[->]         (D)    -- (ZPI);
\draw[->]         (PiD)  -- (PiR);
\draw[->]         (KD)   -- (SKR);

\draw[dashed, ->] (PIR)  -- (UFR);
\draw[->]         (PIR)  -- (ZPI);
\draw[dashed, ->] (UFR)  -- (PiR);
\draw[->]         (ZPI)  -- (PiR);
\draw[->]         (PiR)  -- (SKR);

\draw[->]         (PIR)  -- (rPIR);
\draw[dashed, ->] (UFR)  -- (F);
\draw[->]         (ZPI)  -- (DR);
\draw[->]         (PiR)  -- (rPi);
\draw[->]         (SKR)  -- (KR);

\draw[dashed, ->] (rPIR) -- (F);
\draw[->]         (rPIR) -- (DR);
\draw[dashed, ->] (F)    -- (rPi);
\draw[->]         (DR)   -- (rPi);
\draw[->]         (rPi)  -- (KR);
\end{tikzpicture}
}
\end{figure}

\noindent
It is well known and easy to see that none of the implications are reversible and the three counter parts of each type of rings are the same when the rings are integral domains.

\begin{remark}
While correcting this paper according to the referee's comments, we found Juett's recent work \cite{j22}
where he used the $w$-operation to introduce the class of
general $w$-ZPI rings. Let $R$ be a ring. Juett called $R$ a general $w$-ZPI ring if every proper $w$-ideal of $R$ is a finite
$w$-product of prime $w$-ideals. Then, among other things, he proved that
$R$ is a general $w$-ZPI ring if and only if $R$ is a finite direct product of Krull domains and SPRs,
if and only if $R[X]_{N_w}$ is an Euclidean ring. Therefore, Juett's general $w$-ZPI ring is
exactly the general Krull ring of this paper (see Corollary \ref{coro5.7}).

Let $S$ be a commutative cancellative additive monoid and $R[S]$ be the semigroup ring of $S$ over $R$.
Juett has studied several factorization properties of $R[S]$ including Dedekind rings, $\pi$-rings, and UFRs (see, for example, \cite{jmn21, jmr21}).
In \cite{j22}, Juett also studied when $R[S]$ is a general $w$-ZPI ring.
\end{remark}

\vspace{.2cm}
\noindent
\textbf{Acknowledgements.}
The authors would like to thank the anonymous referee for his/her helpful comments.
This work was supported by Basic Science Research Program through the National Research Foundation of Korea (NRF) funded by the Ministry of Education (2017R1D1A1B06029867).

\bigskip

\end{document}